\definecolor{mygray}{gray}{0.85}
\newcommand{\pureindep}[1][]{%
	\mathrel{
		\mathop{
			\vcenter{
				\hbox{\oalign{\noalign{\kern-.3ex}\hfil$\vert$\hfil\cr
						\noalign{\kern-.7ex}
						$\smile$\cr\noalign{\kern-.3ex}}}
			}
		}\displaylimits_{#1}
	}
}
\newcommand{\indep}[2]{%
	\mathrel{
		\mathop{
			\vcenter{
				\hbox{%
					\oalign{
						\noalign{\kern-.3ex}\hfil$\vert$\hfil\cr
						\noalign{\kern-.7ex}
						$\smile$\cr\noalign{\kern-.3ex}
					}
				}
			}
		}^{\!\!\!\!\!#2}_{\!\!\hspace{-0.1em}#1}
	}
}
\newcommand{\displayindep}[2]{%
	\mathrel{
		\mathop{
			\vcenter{
				\hbox{%
					\oalign{
						\noalign{\kern-.3ex}\hfil$\vert$\hfil\cr
						\noalign{\kern-.7ex}
						$\smile$\cr\noalign{\kern-.3ex}
					}
				}
			}
		}^{\!\!\hspace{-0.1em}#2}_{\!\!\hspace{-0.1em}#1}
	}
}
\newcommand{\displayfindep}[2]{%
	\mathrel{
		\mathop{
			\vcenter{
				\hbox{%
					\oalign{
						\noalign{\kern-.3ex}\hfil$\vert$\hfil\cr
						\noalign{\kern-.7ex}
						$\smile$\cr\noalign{\kern-.3ex} 
					}
				}
			}
		}^{\!\hspace{-0.14em}#2}_{\!\!\hspace{-0.05em}#1}
	}
}
\newcommand{\mrm}[1]{\mathrm{#1}}
\renewcommand{\leq}{\leqslant}
\renewcommand{\geq}{\geqslant}
\newcommand{\specialonleq}{\leqslant_{\mrm{o}}^{\mrm{n}}}
\newcommand{\onleq}{\leqslant_{\mrm{o}}}
\newcommand{\nonleq}{\nleqslant_{\mrm{o}}}
\newcommand{\oleq}{\leqslant_{\mrm{o}}}
\newcommand{\noleq}{\nleqslant_{\mrm{o}}}
\newcommand{\hleq}{\leq_{\mrm{HF}}}
\newcommand{\type}{\mathrm{tp}}
\newcommand{\atype}{\mathrm{tp}^\mrm{qf}}
\newcommand{\ptype}{\mathrm{tp}^\mrm{p}}
\newcommand{\itype}{\mathrm{tp}^\mrm{i}}
\def\Ind{\setbox0=\hbox{$x$}\kern\wd0\hbox to 0pt{\hss$\mid$\hss}
	\lower.9\ht0\hbox to 0pt{\hss$\smile$\hss}\kern\wd0}
\def\Notind{\setbox0=\hbox{$x$}\kern\wd0\hbox to 0pt{\mathchardef
		\nn=12854\hss$\nn$\kern1.4\wd0\hss}\hbox to
	0pt{\hss$\mid$\hss}\lower.9\ht0 \hbox to 0pt{\hss$\smile$\hss}\kern\wd0}
\def\ind{\mathop{\mathpalette\Ind{}}}
\def\nind{\mathop{\mathpalette\Notind{}}}
\def\subsection{\@startsection{subsection}{3}%
  \z@{.5\linespacing\@plus.7\linespacing}{.3\linespacing}%
  {\bfseries\centering}}
\def\subsubsection{\@startsection{subsubsection}{3}%
  \z@{.5\linespacing\@plus.7\linespacing}{.3\linespacing}%
  {\centering}}
\def\myfnt{\ifx\protect\@typeset@protect\expandafter\footnote\else\expandafter\@gobble\fi}
\renewcommand{\restriction}{ {\upharpoonright} }
\newtheorem{theorem}{Theorem}[section]
\newtheorem*{theorem*}{Main Theorem}
\theoremstyle{definition}
\newtheorem{corollary}[theorem]{Corollary}
\newtheorem{definition}[theorem]{Definition}
\newtheorem{lemma}[theorem]{Lemma}
\newtheorem{proposition}[theorem]{Proposition}
\newtheorem{example}[theorem]{Example}
\newtheorem{fact}[theorem]{Fact}
\newtheorem{remark}[theorem]{Remark}
\newtheorem{notation}[theorem]{Notation}
\newtheorem{context}[theorem]{Context}
\newtheorem{assumption}[theorem]{Assumption}
\newtheorem{technical}[theorem]{Technical assumption}
\newcounter{claimcounter}
\numberwithin{claimcounter}{theorem}
\begin{document}
%%%%%%%%%%%%%%%%%%

\begin{abstract} Taking inspiration from \cite{tent, funk2,paolini&hyttinen}, we develop a general framework to deal with the model theory of open incidence structures. In this first paper we focus on the study of systems of points and lines (rank $2$). This has a number of applications, in particular we show \mbox{that for any of the following $\mathcal{C}$:}
\newline (1) $(k, n)$-Steiner systems (for $2 \leq k < n$);
\newline (2) generalised $n$-gons (for $n \geq 3$);
\newline (3) $k$-nets (for $k \geq 3$);
\newline (4) affine planes;
\newline (5) projective M\"obius, Laguerre and Minkowski planes;
\newline all the non-degenerate free structures in $\mathcal{C}$ are elementarily  equivalent and their common theory is decidable, strictly stable \mbox{and with no prime model.}
\end{abstract}

\title[{On the Model Theory of Open Incidence Structures}]{On the Model Theory of Open Incidence Structures: The Rank 2 Case}

\thanks{The authors were supported by project PRIN 2022 ``Models, sets and classifications", prot. 2022TECZJA. The first author was also supported by INdAM Project 2024 (Consolidator grant) ``Groups, Crystals and Classifications''. The second author was also supported by an INdAM post-doc grant. We wholeheartedly thank John Baldwin and Tapani Hyttinen for  useful discussions related to this paper. We are also thankful to Matteo Bisi, Domenico Zambella and one anonymous reviewer for comments on a prior draft of this paper.}

\author{Gianluca Paolini}
\author{Davide Emilio Quadrellaro}
\address{Department of Mathematics “Giuseppe Peano”,
	University of Torino,
	Via Carlo Alberto 10,
	10123 Torino, Italy.}
\email{gianluca.paolini@unito.it}
\email{davideemilio.quadrellaro@unito.it}

\address{Istituto Nazionale di Alta Matematica ``Francesco Severi'',
	Piazzale Aldo Moro 5
	00185 Roma, Italy.}
\email{quadrellaro@altamatematica.it}

\subjclass[2020]{03C45, 03C65, 03C98, 05B30, 51B05, 51E05}

\date{\today}
\maketitle

%\setcounter{tocdepth}{2}
%\tableofcontents

\section{Introduction}

As witnessed by the 1420 page book ``Handbook of Incidence Geometry" \cite{handbook}, the subject of incidence geometry is a vast one, with interactions with various other parts of mathematics, such as algebra, combinatorics, geometry and topology. In this article we develop an abstract framework to deal with the model theory of open incidence structures, and we use it to give model theoretic applications to cases of incidence geometries of \mbox{interest, focusing here on systems of points and lines.}

Originally, the notion of open projective plane was introduced by Hall in \cite{hall_proj} as a tool to study free projective planes, which he described as the analogue of free groups in the context of projective geometry. He defined \emph{the free projective plane of rank $k$} as the inductive closure of the finite configuration $\pi^k_0$ consisting of a line $\ell$, $k-2$ points on $\ell$ and two points off of $\ell$, under the following operations:
\begin{enumerate}[(a)]
	\item if two points are not joined by a line, then add a new line that joins them;
	\item if two lines are parallel, then add a new point intersecting them.
\end{enumerate}
To study the free projective planes, Hall defined the notion of \emph{open plane}: a plane $P$ is open if every finite configuration $C\subseteq P$ contains at least one line $\ell$ incident with at most two points in $C$, or it contains at least one point $p$ incident with at most two lines in $C$. Hall then proved that free projective planes are always open and, conversely, that finitely generated open projective planes are free (the assumption of finite generation is necessary, cf.~\cite{kope}). In \cite{paolini&hyttinen} Hyttinen and Paolini used Hall's notion of open projective plane (which is a first-order notion) to axiomatise the theory of non-degenerate (i.e., infinite) free projective planes, proving various model theoretic results, in particular stability and non-superstability of the theory.

The idea of free constructions does not apply only to projective planes, but to a vast number of cases of incidence geometries. The first generalisation of Hall's construction is probably due to Tits \cite{tits}, who introduced a notion of free completion in the context of \emph{generalised $n$-gons}, combinatorial structures which Tits introduced to give a geometric interpretation of groups of Lie type in rank $2$ (in fact, generalised $n$-gons are simply the rank $2$ case of spherical buildings). As in the case of projective planes (i.e., generalised $3$-gons), one can introduce a notion of \emph{open} generalised $n$-gon. Relying on this, in \cite{tent} Ammer and Tent used the theory of non-degenerate open generalised $n$-gons to axiomatise the theory of free generalised $n$-gons, generalising all the results of Hyttinen and Paolini \cite{paolini&hyttinen} from $n = 3$ to any $3 \leq n < \omega$. Apart from the increased level of generality, the methods from \cite{tent} were decidedly more elegant than those from \cite{paolini&hyttinen}, and made clearer the crucial role played by {\em Hrushovski constructions} in this setting. Our paper rests on the fundamental insights of \cite{tent}.

Given the previous work from \cite{paolini&hyttinen,tent}, it is natural to ask to what extent these results indicate a pattern, and whether, whenever in a given geometric context a notion of open incidence structure is well-defined, then it can be used to axiomatise the free incidence structures in that class. The starting point of this investigation is of course the analysis of systems of points and lines (as in the case of projective planes and generalised $n$-gons), which we will refer to as {\em the rank $2$ case}. To our knowledge, the most detailed and general study of rank $2$ free constructions in incidence geometry is contained in Chapter 13 ``Free Constructions'' by Funk and Strambach in the already mentioned ``Handbook of Incidence Geometry" \cite{handbook}. There it is shown that, in all naturally occurring theories of incidence structures $T$, there is a notion of free construction and a corresponding notion of free object of rank $k$ (in the sense of number of generators). In \cite[Ch.~13]{handbook} a general theory of free extensions is proposed for $T$ an arbitrary $\forall\exists$-theory of incidence structures, and this theory is tested on the following cases:
\begin{enumerate}[(1)]
	\item $(k, n)$-Steiner systems (for $2 \leq k < n$);
	\item generalised $n$-gons (for $n \geq 3$);
	\item $k$-nets (for $k \geq 3$);
	\item affine planes;
	\item projective M\"obius, Laguerre and Minkowski planes.
\end{enumerate}
\noindent In particular, Funk and Strambach provide a uniform definition of free completions in incidence geometry, and they also show that one can always define a notion of open configuration (generalising the definition of open projective plane considered above). Commenting on their results, Funk and Strambach  \cite[p.~741]{funk2} remark:

\begin{quote}
	{\small Besides a survey on significant results obtained thus far, the main purpose of this article is the development of a unifying treatment including all classes of incidence geometries which can be characterised by a set $\Sigma$ of axioms formulated in a first-order language $\mathcal{L}$ (e.g., projective planes, affine planes, generalised $n$-gons, Benz planes, etc.). By using rather simple model-theoretic tools, we can define the notions of \emph{(hyper-)free}, \emph{open}, \emph{confined}, \emph{closed}, \emph{{(hyper-)} free extensions}, and \emph{degenerate geometries} without knowing $\Sigma$ and $\mathcal{L}$ explicitly. To a very vast extent, we succeed in reformulating and proving the main results concerning free extensions within this general frame. The application of these results reduces to an easy verification of some model-theoretic conditions on the axioms. Thus it becomes clear that the real nature of free extensions in fact lies beyond geometry. We hope that this insight will contribute to stop splitting research on that subject.}
\end{quote}

\noindent Our present contribution is fully in this spirit. In this paper we develop an abstract model-theoretic framework that we believe will cover all concrete instances of free incidence structures and, using that, we extend the results from \cite{tent, paolini&hyttinen} to \emph{all examples of incidence geometries of rank $2$} known to us. In particular, this covers all concrete cases considered in \cite{funk2} and listed above. We take as a special sign of strength of our approach the fact that we were also able to apply our methods to cases of incidence geometries with notions of parallelism, namely affine planes and M\"obius planes (cf.~Sections~\ref{section:affine_planes} and \ref{last_section}). To the best of our knowledge this has not been considered elsewhere (apart from \cite{funk2}). Our main results can be summarised in the following theorem, which vastly generalises  \cite{tent, paolini&hyttinen}:

\begin{theorem*}\label{main_application} The theory $T^+$ of open non-degenerate models of $T$ is complete, decidable and strictly stable for any of the following theories $T$:
	\begin{enumerate}[(1)]
		\item $(k, n)$-Steiner systems (for $2 \leq k < n$);
		\item generalised $n$-gons (for $n \geq 3$);
		\item $k$-nets (for $k \geq 3$);
		\item affine planes;
		\item projective M\"obius, Laguerre and Minkowski planes.
	\end{enumerate}
Thus, for any of the above classes, all the non-degenerate free incidence structures in that class are elementarily equivalent. Further, the theory $T^+$ does not have a prime model, it is not model complete, and it does not admit quantifier elimination. Finally, for any $A, B, C \subseteq \mathfrak{M}$ we have that $B \ind_A C$ if and only if $\mrm{icl}(ABC) = \mrm{icl}(AB) \otimes_{\mrm{icl}(A)} \mrm{icl}(AC))$ (where $\mathfrak{M}$ is the monster model of $T^+$ and the right-hand part of the equivalence is defined in Section~\ref{sec_general}).
\end{theorem*}

Our paper has two parts, an abstract one (Section~\ref{sec_general}) and one that deals with concrete applications (Section~\ref{sec:application}). The previous theorem puts together our general results from Section~\ref{sec_general} with the applications from Section~\ref{sec:application}.  We stress here however that all our results in Section \ref{sec_general} are stated in a purely axiomatic fashion (following the example of Baldwin and Shi's seminal paper on stable generic structures in Hrushovski constructions \cite{baldwin_generic}), and we expect to find applications of our methods beyond the scope of incidence geometries of rank $2$. In a work in preparation we intend to explore the general rank $k$ case, with particular focus on connections to buildings, and thus naturally to the fundamental references \cite{ziegler_free, tent2}. But as the present paper is already of considerable length and covers many case studies we decided to split the treatment of the subject into two separate papers.

In addition to the case studies listed above, we decided to include here a further one (cf.~Section~\ref{sec:n-open-grpahs}) with a very different motivation and history, i.e., the case of {\em $n$-open graphs}. These structures were recently introduced by Baldwin, Freitag and Mutchnik in \cite{baldwin_generic} to give the first example, for every $n < \omega$, of a stable theory $T_n$ admitting a type $p$ such that $F_{Mb}(p) = n$, and so that $F_{Mb}(p) \leq n$ for all types $p$. This case fits perfectly in our setting and shows that, apart from the intrinsic geometric interest, our approach could be exploited by model theorists to give counterexamples to conjectures in classification theory. From our point of view, an additional key element of interest is that these graphs make for an example where the notion of openness applies, but where strictly speaking there is no notion of free extension (we elaborate this further in Section \ref{sec_general}). Additionally, $n$-open graphs are one-sorted structures and, with the exception of the case $n=1$ of the free pseudoplane, they do not have a natural interpretation in terms of geometries of rank $2$. We show that our framework also covers these structures and we prove a version of the theorem above (cf.~Corollary \ref{corollary:ngraphs}). We stress that our results in this case are not novel, but provide nonetheless an interesting application of our techniques beyond the context of free incidence geometries of rank $2$.

We now come to the technical side of the story, which is intended mainly for the model theorists, but also explains to the general reader the techniques used to establish our results. The fundamental technical tool behind this study is the use of {\em Hrushovski constructions}. The relevance of this was already realised by the first author and Hyttinen in \cite[Section~8]{paolini&hyttinen}, but the idea was fully developed only in \cite{tent}. In this study we owe a great deal to the elegant treatment of the subject presented in \cite{tent} in the case of generalised $n$-gons. At the same time we also make a crucial use of the notion of $\mrm{HF}$-order, which was introduced in \cite{paolini&hyttinen} as a generalisation of the existing notion from the incidence geometry literature (cf.~e.g.~\cite{sieben}). These $\mrm{HF}$-orders are an important technical tool available in open incidence structures, which make for a very handy instrument when dealing with these kinds of Hrushovski constructions and, moreover, also play a fundamental role in the proof of stability.

What we feel to be the most interesting contribution of this study from the technical point of view is the isolation of some axiomatic conditions which make the whole machinery work. The axioms are actually split into different sets, namely the finitary Conditions \ref{context} and the assumptions on the theory $T^+$ from \ref{main_th}(\hyperref[completeness_axiom]{C1})-(\hyperref[the_K_homogeneous_lemma]{C2}), \ref{ass:hf_closure}(\hyperref[the_hf_axiom]{C3}), \ref{assumption:no-superstability_2}(\hyperref[CP]{C4}) and \ref{assumptions:no_prime}(\hyperref[F=C]{C5})-(\hyperref[delta-rank]{C7}). The set of axioms from \ref{context} isolates some conditions on the Hrushovski class of finite structures $(\mathcal{K}, \oleq)$ under consideration, expanding the axiomatic framework of Baldwin and Shi in \cite{baldwin_generic} to the case of open incidence structures. Crucially, some of the axioms from \ref{context} (most notably \ref{context}(\ref{condition:I})) are {\em false} in most Hrushovski constructions arising from a predimension. We believe that this is not a defect of our approach but rather one of its main strengths, as our axioms are exactly what make things work in the more specific context of open incidence structures (a context which is specific but general enough to cover all the known cases of free incidence structures in rank $2$). The second set of axioms, namely \ref{main_th}(\hyperref[completeness_axiom]{C1})-(\hyperref[the_K_homogeneous_lemma]{C2}), isolates some conditions on infinite models of the intended axiomatisation of the generic for $(\mathcal{K}, \oleq)$, namely, the theory $T^+$ defined in \ref{the_theory}, which in all concrete cases will correspond to the theory of  open incidence structures of the given type (generalising \cite{tent, paolini&hyttinen}). The assumptions from \ref{context} and \ref{main_th}(\hyperref[completeness_axiom]{C1})-(\hyperref[the_K_homogeneous_lemma]{C2}) are enough by themselves to prove that the theory $T^+$ is complete, and thus they already suffice to show that in all intended applications the free incidence structures are all elementarily equivalent. Assumption \ref{ass:hf_closure}(\hyperref[the_hf_axiom]{C3}) is the key ingredient in the proof of stability and in the characterisation of forking independence, while Assumption \ref{assumption:no-superstability_2}(\hyperref[CP]{C4}) isolates some combinatorial conditions determining the failure of superstability. The additional Assumptions \ref{assumptions:no_prime}(\hyperref[F=C]{C5})-(\hyperref[delta-rank]{C7}) are needed for our additional results: we use them to establish that $T^+$ does not have a prime model and it is not model complete. We crucially notice that, while the assumptions from \ref{context}, and \ref{main_th}(\hyperref[completeness_axiom]{C1})-(\hyperref[the_K_homogeneous_lemma]{C2}), \ref{ass:hf_closure}(\hyperref[the_hf_axiom]{C3}) hold in all the examples that we consider in Section \ref{sec:application}, the assumptions from \ref{assumption:no-superstability_2}(\hyperref[CP]{C4}) and \ref{assumptions:no_prime}(\hyperref[F=C]{C5})-(\hyperref[delta-rank]{C7}) are not always valid, and they thus provide us with a {\em dividing line} for our applications. On the one hand, we have the incidence geometries from our Main Theorem, which are all not superstable and with no prime model; on the other hand, there are structures such as the free pseudoplane which, as well-known, is $\omega$-stable and prime. Our abstract framework covers both cases.

The structure of the paper reflects our axiomatic treatment of the subject. After recalling some preliminary definitions and notational conventions in Section \ref{section:preliminaries}, in Section~\ref{sec_general} we present our general framework and we prove all our main results about $T^+$ from the corresponding set of assumptions. We provide  a general proof of completeness and stability of $T^+$, we characterise forking independence,  and we show that $T^+$ is not superstable and does not have a prime model. In the subsequent Section \ref{sec:application} we first consider the simpler case of $n$-open graphs (introduced in \cite{baldwin_generic}), and then we deal separately with each case of incidence structures from (1)-(5) of our Main Theorem above, verifying each of the axiomatic conditions isolated in Section~\ref{sec_general}. We believe that our abstract approach confirms Funk and Strambach's fundamental verdict that ``the real nature of free extensions lies beyond geometry''.

\section{Preliminaries and notations}\label{section:preliminaries}

We introduce in this section some general notions that we use to define the framework at the heart of this article. We also fix some notation and terminology that we will follow throughout the paper. Firstly, we introduce the general context of the paper and some basic notational convention.

\begin{context}
	In this article we shall always work in a finite multi-sorted relational language $L$. We denote the substructure relation between $L$-structures as $A \leq B$. We let $T$ be an $L$-theory and $T_\forall$ its universal fragment. We let \mbox{$(\mathcal{K}, \oleq)$} be a hereditary class of finite models of $T_\forall$ and $\oleq$ be a strong substructure relation---meaning that $A \oleq B$ implies $A \leq B$ and $A \oleq B \in \mathcal{K}$ implies $A \in \mathcal{K}$.
\end{context}

\begin{notation}\label{atomic_types}\label{notational_conventions}
	Let $\kappa$ be a cardinal. We write $A\subseteq_\kappa B$ if $A\subseteq B$ and $|A|<\kappa$. Thus $A\subseteq_\omega B$ means that $A$ is a finite subset of $B$. We write $A^\kappa$ for the set of all sequences of elements from $A$ of length $\kappa$, and we let $A^{<\kappa} = \bigcup_{\alpha<\kappa} A^\alpha$.  Let $M$ be an $L$-structure, $\kappa$ a cardinal, $\bar{a}\in M^{<\kappa}$ and $A\subseteq M$; we write $\mathrm{tp}_M(\bar{a}/A)$ for the type of $\bar{a}$ in $M$ with parameters from $A$ and  $\mathrm{tp}^{\mrm{qf}}_M(\bar{a}/A)$ for the quantifier-free type of $\bar{a}$ in $M$ with parameters from $A$. Given a model $M$ and a subset $A\subseteq M$ we write $\mrm{acl}_M(A)$ for the algebraic closure of $A$ in $M$ and  $\mrm{acl}^{\mrm{qf}}_M(A)$ for the algebraic closure of $A$ in $M$ computed in the language with only quantifier-free formulas. We often omit the subscript $M$ when it is clear from the context. Finally, if $A\subseteq B,C$ and there is an isomorphism $f:B\to C$ fixing $A$, then we often just write $B\cong_A C$, and we write $f:B\cong_A C$ when we want to specify the underlying map.
\end{notation}

As we spelled out in the introduction, our approach in this paper is mainly graph-theoretical. We thus recall some standard notions from graph theory, which we will often employ in this work. In this paper by a graph we mean a structure of the form $(G, E)$ with $E$ a binary, irreflexive and symmetric relation on $G$.

\begin{definition}\label{def:graphs}
	Let $(G,E)$ be a graph, then we define the following:
	\begin{enumerate}[(1)]
		\item the \emph{valency} of an element $a\in G$ is the size of the set $\{b\in G : aEb \}$ of neighbours of $a$;
		\item  the \emph{distance} $d(a,b) = d(a, b/G)$ between two elements $a,b\in G$ is the length $n$ of the shortest path $a=c_0Ec_1E\dots E c_{n-1}Ec_n=b$ in $G$, and it is $\infty$ if there is no such path;
		\item the \emph{girth} of a graph is the length of its shortest cycle;
		\item the \emph{diameter} of a graph is the maximal distance between any two elements;
		\item the graph $G$ is \emph{bipartite} if there are $A,B\subseteq G$ such that $A\cap B=\emptyset$, $A\cup B= G$, and every edge has one vertex in $A$ and one in $B$.
	\end{enumerate}
\end{definition}

We introduce some less standard notions, mostly from \cite{funk2}, which are motivated by the geometric nature of the theories considered in this work. The distinction between incidence and parallelism symbols is essentially from \cite[pp.~743,~749]{funk2}.

\begin{definition}\label{local_equivalence_relations} \;
	\begin{enumerate}[(1)]
		\item Let $R\in L\cup \{= \}$ be a relation symbol of arity $n\geq 2$. We say that $R$ is a \emph{local equivalence relation in the theory $T$} if for any $L$-structure $A\models T$ and tuple $a_1,\dots,a_{n-2}\in A$  we have that $R(x,y,a_1,\dots,a_{n-2})$ induces an equivalence relation on $A$ (notice that in the case $n = 2$ the quantifier ``for every tuple $a_1,\dots,a_{n-2}\in A$'' is vacuous). If $R\in L\cup \{= \}$ is not a local equivalence relation, then we say that it is an \emph{incidence relation in the theory $T$}. We omit the reference to $T$ when it is clear from the context. 
		\item We denote  by $L_{\mrm{p}}$ the subset of $L$ consisting of all local equivalence relations in $T$, and by $L_{\mrm{i}}$ the the subset of $L$ consisting of  all incidence relations in $T$. 		
		\item  Let $M$ be an $L$-structure, $\bar{a}\in M^{<\kappa}$ and $B\subseteq M$.  The \emph{parallelism type} $\ptype_M(\bar{a}/B)$  of $\bar{a}$ in  $M$ with parameters in $B$ is the quantifier-free type of $\bar{a}\in M^{<\kappa}$ with parameters in $B$ computed in the restricted signature $L_{\mrm{p}}$. The \emph{incidence type} $\itype_M(\bar{a}/B)$  of $\bar{a}$ in $M$ with parameters in $B$ is the quantifier-free type of $\bar{a}\in M^{<\kappa}$ with parameters in $B$ computed in the restricted signature  $L_{\mrm{i}}$. We often omit the subscript $M$ when it is clear from the context.
	\end{enumerate}
\end{definition}

\begin{remark}
	For simplicity, notice that we always assumed that in a local equivalence relation $R$ the first two variables represent the arguments of the equivalence relation, and the rest are the parameters. Notice also that the equality $=$ is a (local) equivalence relation in all theories $T$. We also have that $\mathrm{tp}_M^{\mrm{qf}}(\bar{a}/B)$ is uniquely determined by  $\itype_M(\bar{a}/B) \cup \ptype_M(\bar{a}/B)$, and that quantifier-free formulas with equalities are contained in $\ptype_M(\bar{a}/B)$.  We stress that the sentence stating that a relation $R\in L$ is a local equivalence relation is universal, and so $T$ and $T_\forall$ agree on the distinction between $L_{\mrm{i}}$ and $L_{\mrm{p}}$.  We will often use this fact implicitly.
\end{remark}

We will use the notions of parallelism type $\ptype(\bar{a}/B)$ and incidence type $\itype(\bar{a}/B)$ especially in the definition of the Gaifman closure \ref{def:interior_closure}. We adapt to our current setting the usual notion of Gaifman graph (cf.~\cite[p.~26]{ebbinghaus}).

\begin{definition}\label{gaifman_graph} Let  $M\models T_\forall$ and  let $L_{\mrm{i}}$,  $L_{\mrm{p}}$ be the sublanguages of $L$ consisting of incidence relations, and of local equivalence relations, respectively.  
	\begin{enumerate}[(1)]
		\item The \emph{incidence Gaifman graph} $G_{L_{\mrm{i}}}(M)$ of $M$ is the graph whose set of vertices is $M$ and which has an edge $E(a,b)$ between $a,b\in M$  whenever $M\models R(\sigma(a,b, \bar{c}))$ for some incidence relation $R \in L_{\mrm{i}}$, for some tuple $\bar{c}\in M^{<\omega}$ of the suitable arity, and for some permutation $\sigma$.
		\item The \emph{parallelism Gaifman graph} $G_{L_{\mrm{p}}}(M)$ of $M$ is the graph whose set of vertices is $M$ and which has an edge $E(a,b)$ between $a,b\in M$  whenever $M\models R(a,b, \bar{c})$ for some parallelism relation $R \in L_{\mrm{p}}$ and for some tuple $\bar{c}\in M^{<\omega}$.
		\item The \emph{(full) Gaifman graph} $G(M)\coloneqq G_{L}(M)$ of $M$ is the graph whose set of vertices is $M$ and which has an edge $E(a,b)$ between $a,b\in M$  whenever there is an edge between $a$ and $b$ in either  $G_{L_{\mrm{i}}}(M)$ or  $G_{L_{\mrm{p}}}(M)$.
		\item For $L'\in \{L_{\mrm{i}},L_{\mrm{p}}, L\}$ The \emph{distance} $d_{L'}(a,b/M)$ between two elements $a,b\in M$ is their distance in the Gaifman graph $G_{L'}(M)$.  We write $d(a,b/M)$ for their distance in $G(M)$,  $d_{\mrm{i}}(a,b/M)$ for their distance in $G_{\mrm{i}}(M)$ and $d_{\mrm{p}}(a,b/M)$ for their distance in $G_{\mrm{p}}(M)$. We simply write $d(a,b)$, $d_{\mrm{i}}(a,b)$ and $d_{\mrm{p}}(a,b)$ when the structure $M$ is clear from the context.
		\item For $L'\in \{L_{\mrm{i}},L_{\mrm{p}}, L\}$, we extend the notion of distance to arbitrary finite subsets $A,B\subseteq M$ by letting
		\[  d_{L'}(A,B)\coloneqq   \mrm{min}_{a\in A} \Bigl( \mrm{min}_{b\in B} \bigl(d_{L'}(a,b)  \bigr)    \Bigr),    \]
		and we let the distance $d_{L'}(\bar{a},\bar{b})$ between two finite tuples $\bar{a},\bar{b}\in M^{<\omega}$  be the same as their distance as subsets.
	\end{enumerate}
\end{definition}

We use the previous definitions to define the notion of Gaifman closure. This was introduced in \cite[Def. 2]{funk2} under the name of (restricted) interior construction.

\begin{definition}[Gaifman Closure]\label{def:interior_closure}
	Let $ A\subseteq M\models T_\forall$,  the \emph{Gaifman closure $\mrm{gcl}_M(A)$ of $A$ in $M$} is defined as $\mrm{gcl}_M(A)=\mrm{gcl}^{\mrm{i}}_M(A)\cup \mrm{gcl}^{\mrm{p}}_M(A)$, where:
	\begin{enumerate}[(1)]
		\item  $\mrm{gcl}^{\mrm{i}}_M(A) = \{b\in M : d_{\mrm{i}}(A,b)\leq 1    \}$;
		\item $\mrm{gcl}^{\mrm{p}}_M(A)$ is any subset of $M$ satisfying the two following constraints:
		\begin{enumerate}[(a)]
			\item for any $b\in M$ with $d_{\mrm{p}}(A,b)\leq 1$, there is $b'\in \mrm{gcl}^{\mrm{p}}_M(A)$ such that
			\[ M\models R(b,a,\bar{c}) \; \Longleftrightarrow  M\models R(b',a,\bar{c}) \]
			for all $R\in L_{\mrm{p}}$, $a\in A$ and $\bar{c}\in M^{<\omega}$;
			\item for all $b,b'\in \mrm{gcl}^{\mrm{p}}_M(A)$ there are some $R\in L_{\mrm{p}}$, $a\in A$, and $\bar{c}\in M^{<\omega}$ s.t.
			\[ M\models R(b,a,\bar{c}) \; \text{ and } \;  M\models \neg R(b',a,\bar{c}). \]
		\end{enumerate}
		In other words, $\mrm{gcl}^{\mrm{p}}_M(A)$ is any subset of  $ \{b\in M : d_{\mrm{p}}(A,b)\leq 1 \} $ containing exactly one element for every distinct parallelism type with parameters in $M$.
	\end{enumerate}
\end{definition}

\begin{remark}\label{non-uniqueness:interior-closure}
	Clearly, we have that  $A\subseteq \mrm{gcl}^{\mrm{i}}_M(A)$. Moreover,  since the equality symbol ``='' belongs to $L_{\mrm{p}}$, we also have that $A\subseteq \mrm{gcl}^{\mrm{p}}_M(A)$, since every $a\in A$ is the unique element satisfying the formula $x=a$. We stress that by the definition of $\mrm{gcl}^{\mrm{p}}_M(A)$ the Gaifman closure of a set is \emph{not unique}. However, we always work in a context $(\mathcal{K}, \oleq)$ where the choice of one Gaifman closure over another does not really make a difference. This is captured by Condition \ref{context}(\ref{condition:K}) below. In particular, statements like $\mrm{gcl}_A(B)=C$ are true only \emph{modulo} a specific choice for $\mrm{gcl}_A(B)$, and are otherwise true only up to isomorphism.
\end{remark}

We conclude this section by introducing the notion of \emph{free amalgam} in $(\mathcal{K},\oleq)$, which generalises the usual notion of free amalgamation of graphs for structures with also local equivalence relations.

\begin{definition}[Free amalgam]\label{general_free_amalgam}
	Let $A,B,C\models T_\forall$ with $A\subseteq B$, $A\subseteq C$ and $A=B\cap C$, the \emph{free amalgam} of $B$ and $C$ over $A$ is the structure $B\otimes_A C$ such that:
	\begin{enumerate}[(1)]
		\item the domain of $B\otimes_A C$ is $B\cup C$;
		\item every incidence symbols $R\in L_{\mrm{i}}$ is interpreted  by letting $R^{B\otimes_A C}= R^B\cup R^C$;
		\item every parallelism symbols $P\in L_{\mrm{p}}$ is interpreted by letting $ B\otimes_A C\models P(b,c,\bar{d})$, for $b,c\in B\cup C$ and $\bar{d}\in (B\cup C)^{<\omega}$ if one of the following conditions hold:
		\begin{enumerate}[(i)]
			\item $B\models P(b,c,\bar{d})$ (so in particular $b,c\in B$ and $\bar{d}\in B^{<\omega}$);
			\item $C\models P(b,c,\bar{d})$ (so in particular $b,c\in C$ and $\bar{d}\in C^{<\omega}$);
			\item there is some $a\in A$ such that $ B\models P(b,a,\bar{d}) $  and $ C\models P(a,c,\bar{d})$ (so in particular $\bar{d}\in A^{<\omega}$).
		\end{enumerate}
	\end{enumerate}
\end{definition}

\begin{remark}\label{remark_pushout}
	By  Definition \ref{general_free_amalgam} the amalgam $B\otimes_A C$ is always a well-defined $L$-structures but it is not necessarily a models in $\mathcal{K}$. However, the key property of the free amalgam is that, for $A,B,C\models T_\forall$ with $A\subseteq B$, $A\subseteq C$ and $A=B\cap C$, $B\otimes_{A} C$ satisfies $\mrm{gcl}_{B\otimes_A C}(B\setminus AC)=\mrm{gcl}_{B}(B\setminus A)$ and $\mrm{gcl}_{B\otimes_A C}(C\setminus AB)=\mrm{gcl}_{C}(C\setminus A)$. It will then follow from Condition (\ref{condition:K}) below that $B\oleq B\otimes_A C$ holds if and only if $A\oleq C$ and that $C\oleq B\otimes_A C$ holds if and only if $A\oleq B$.
\end{remark}

\section{The general framework}\label{sec_general}

In this paper we investigate the first-order theory of open incidence structures of rank $2$, such as open projective planes, open Steiner systems, and similar classes of geometric objects. To cover all these examples we  proceed in an axiomatic fashion, and we start in Section \ref{subsec:1} by introducing a class of finitary conditions (Context \ref{context}) that carves out a special subclass of Hrushovski constructions that covers all the geometric theories which we are interested in. In Section \ref{subsec:3} we introduce the theory $T^+$, which is the main object of investigation of the article, and prove that that generic $M_\star$ of the class $(\mathcal{K},\oleq)$ is always a model of $T^+$. In Section \ref{subsec:4}  we state the crucial assumptions \ref{main_th}(\hyperref[completeness_axiom]{C1})-(\hyperref[the_K_homogeneous_lemma]{C2}), which we later verify  for all our concrete examples, and we prove the first main result of the paper, i.e., that the theory $T^+$ is complete. In Section \ref{subsec:2} we introduce the main technical device of our article, namely that of $\mrm{HF}$-orders, and the key assumption \ref{ass:hf_closure}(\hyperref[the_hf_axiom]{C3}) on the existence of a $\mrm{HF}$-closure. In Section \ref{subsec:5} we derive from the former conditions the stability of the theory $T^+$, and we additionally characterise the relation of forking independence in its models. Finally, in Sections \ref{subsec:no_superstability} and \ref{subsec:no_prime} we identify the additional assumptions \ref{assumption:no-superstability_2}(\hyperref[CP]{C4}) and \ref{assumptions:no_prime}(\hyperref[F=C]{C5})-(\hyperref[delta-rank]{C7}), which entail respectively that $T^+$ is not superstable, and that it does not have a prime model, it is not model complete and does not have quantifier elimination.

\subsection{Finitary conditions}\label{subsec:1}

As we laid out in the previous section, we always work in a finite multi-sorted relational language $L$. We also fix a hereditary class  $\mathcal{K}$ of finite models of $T_\forall$ and a relation $\oleq$ between elements of $\mathcal{K}$. We refer to extensions $A \oleq B$ as \emph{strong extensions} (or also \emph{open extensions}) and we provide the following definitions.

\begin{definition}\label{def_extensions} For $A \subseteq B\in \mathcal{K}$, we define the following notions:
	\begin{enumerate}[(1)]
		\item  we say that $B$ is \emph{open over $A$} if $A\oleq B$, and that $B$ is \emph{open} if $\emptyset\oleq B$;
		\item  we say that $B$ is \emph{closed over $A$} if $A\noleq B$, and that $B$ is \emph{closed} if $\emptyset\noleq B$;
		\item we say that $A\leq B$ is a \emph{minimal strong extension} (also \emph{minimal open extension}) if $A\oleq B$ and there is no $A \subsetneq C \subsetneq B$ such that $A \oleq C \oleq B$;
		\item we say that $B$ is \emph{confined over $A$} if $A\noleq B$ and there is no $A \subsetneq C \subsetneq B$ such that $A \noleq C$, i.e., $A\leq B$ is a minimal closed extensions of $A$;
		\item we say that $A\leq B$ is a \emph{$\mathcal{K}$-algebraic extension} if there is some $n < \omega$ such that, if $A \leq  C\in \mathcal{K}$ then $C$ contains at most $n$ many disjoint copies of $B$ over $A$;
		\item we say that $(A,B)$ is a \emph{one-element extension} if $|B\setminus A|=1$;
		\item\label{def_trivial} we say that $(A,Ab)$ is a \emph{trivial extension} if $d_{Ab}(A,b)=\infty$, i.e., $b$ has no edge with elements from $A$ in the full Gaifman graph $G(Ab)$.
	\end{enumerate}
\end{definition}

If $f:A\to B$ is an embedding between structures in $\mathcal{K}$ satisfying $f(A)\oleq B$ then we say that $f$ is a \emph{strong embedding} (also \emph{open embedding}).  We say that $(\mathcal{K}, \oleq)$ has the \emph{amalgamation property} if for all $A,B, C\in \mathcal{K}$ with $A\oleq B$ and $A\oleq C$, there is a structure $D\in \mathcal{K}$ and two strong embeddings $f:B\to D$ and $g:C\to D$ such that $f\restriction A=g\restriction A=\mrm{id}_A$. In this work, we shall work under some more specific form of amalgamation, which is a weaker variant of the \emph{sharp amalgamation property} defined in \cite[Def.~2.31]{baldwin_generic}. We first introduce the notion of $n$-strong substructure, again from  \cite[Def.~2.26]{baldwin_generic}.

\begin{definition}\label{def:sharp_amalgamation}
For  $A,B\in \mathcal{K}$ we say that $(A,B)$ is a \emph{$n$-strong extension} and write $A\leq_n B$ if for every $C\subseteq B$ with $|C\setminus A|\leq n$ we have $A\oleq C$.
\end{definition} 

\noindent Since in this article we mostly deal with classes $(\mathcal{K},\oleq)$ which admits both non-algebraic and also algebraic strong extensions $A\oleq B$, we introduce the following version of the amalgamation property. By Remark~\ref{remark_pushout} it is immediate to see that, in the setting from \ref{context}, the following algebraic amalgamation property entails that $(\mathcal{K}, \oleq)$ has the amalgamation property.

\begin{definition}\label{algebraic_number}
	Let $A\oleq  B \in \mathcal{K}$ be a minimal strong extension. If $A\oleq B$ is $\mathcal{K}$-algebraic (cf.~\ref{def_extensions}), then the \emph{algebraic degree} of $(A,B)$ is the greatest number $n<\omega$ such that every $C\in \mathcal{K}$ contains at most $n$ many disjoint copies of $B$ over $A$. If $A\oleq B$ is not $\mathcal{K}$-algebraic, then the \emph{algebraic degree} of $(A,B)$ is $\infty$.
\end{definition}

\begin{definition}\label{condition:amalgam}
	We say that  $(\mathcal{K}, \oleq)$ has the \emph{algebraic amalgamation property} if it satisfies the following condition. Let $A,B,C\in \mathcal{K}$ with  $A=B\cap C$, $A\oleq B$ a minimal strong extension and $A\leq_{|B\setminus A|} C$, then the following hold.
		\begin{enumerate}[(1)]
		\item If $A\oleq B$ has algebraic degree $n\in [1,\infty]$ and $C$ contains $<n$ disjoint copies of $B$ over $A$, then $B\otimes_A C\in \mathcal{K}$.
		\item If $A\oleq B$ is $\mathcal{K}$-algebraic, $C$ contains a copy of $B$ over $A$ and $A\oleq C$ then there is a strong embedding $f:B\to C$.
		\end{enumerate}
\end{definition} 

We next identify the following axiomatic setting, which abstracts the fundamental properties of (partial) open incidence structures. In particular, in this article we shall always assume that $(\mathcal{K}, \oleq)$ satisfies the following finitary conditions. These extend the axiomatic approach for Hrushovski constructions laid out in \cite{baldwin_generic}. 

\begin{context}\label{context} We let  $(\mathcal{K}, \oleq)$ be a hereditary class $\mathcal{K}$ of finite models of $T_\forall$ such that $\oleq$ is a relation with the following properties:
	\begin{enumerate}[(A)]
		\item\label{condition:A} if $A \in \mathcal{K}$, then $A \oleq A$;
		\item\label{condition:B} if $A \oleq B$, then $A$ is a substructure of $B$ (written $A\leq B$);
		\item\label{condition:C} if $A, B, C \in \mathcal{K}$ and $A \oleq B \oleq C$, then $A \oleq C$;
		\item\label{condition:D} $\emptyset \in \mathcal{K}$ and $\emptyset \oleq A$, for all $A \in \mathcal{K}$;
		\item\label{condition:E} if $A, B, C \in \mathcal{K}$, $A \oleq C$, and $B$ is a substructure of $C$, then $A \cap B \oleq B$;
		\item\label{condition:F}\label{preservation_iso} if $A \leq B\in \mathcal{K}$, $B' \in \mathcal{K}$, $A \oleq B$ and $f: B \cong_A B'$, then $A \oleq B'$;
		\item\label{condition:G} $(\mathcal{K}, \oleq)$ has the algebraic amalgamation property (cf.~\ref{def:sharp_amalgamation});
		\item\label{condition:H} if $A \leq B \in \mathcal{K}$ and $B$ is confined over $A$, then $(A, B)$ is $\mathcal{K}$-algebraic (cf.~\ref{def_extensions});
		\item\label{condition:I} there is a  non-empty  finite set $\mathbf{X}_\mathcal{K}\subseteq \omega$ whose elements are different from 0 and such that, if $A \oleq B \in \mathcal{K}$ and $A\neq B$, then there are $n\in \mathbf{X}_\mathcal{K}$ and $(b_1,\dots,b_n)\in (B\setminus A)^n$ such that $B\setminus\{b_1,\dots,b_n\}\oleq B$;
		\item\label{condition:J} there is $\mathbf{n}_\mathcal{K} < \omega$ such that, if  $A \oleq A b\in \mathcal{K}$ is a one-element extension, then $|\mrm{gcl}_{Ab}(b)\setminus \{b\}|\leq \mathbf{n}_\mathcal{K}$ (recall the definition of $\mrm{gcl}$ from \ref{def:interior_closure});
		\item\label{condition:K} for all $A\leq B\in \mathcal{K}$, if  $A\cap \mrm{gcl}_{B}(B\setminus A)\oleq \mrm{gcl}_{B}(B\setminus A)$ then $A \oleq B$;
		\item\label{condition:L} for all $A\in \mathcal{K}$, if  $(A,Ab)$ is trivial (in the sense of \ref{def_extensions}(\ref{def_trivial})), then $A\oleq Ab\in \mathcal{K}$.
	\end{enumerate}
\end{context}

\begin{remark} \label{explanation:finitary_conditions}
	We stress that the conditions from \ref{context} can be split in two groups.  Conditions \ref{context}(\ref{condition:A})-(\ref{condition:F}) are standard and hold also in Hrushovski constructions arising from a \emph{predimension} $\delta$ (cf.~\cite{baldwin_generic}).  As already remarked, Condition~\ref{context}(\ref{condition:G}) is a strengthening of amalgamation that builds on \cite{baldwin_generic}, and that holds in all incidence geometries that we consider later in Section~\ref{sec:application}. Condition \ref{context}(\ref{condition:H}) says that confined configurations are always algebraic, a fact which usually holds also in the setting of Hrushovski constructions arising from a predimension considered in \cite{baldwin_generic}, cf.~ \cite[Corollary 3.20]{baldwin_generic}). As we show later in \ref{acl_open_rk}, this has the important consequence that every extension of an algebraically closed set (in a model of $T^+$) is strong.  On the contrary, Conditions \ref{context}(\ref{condition:I})-(\ref{condition:K}) are quite specific to our current combinatorial setting.  Condition \ref{context}(\ref{condition:I}) means that every strong extension $A\oleq B$ in $\mathcal{K}$ can be build \textit{via} a series of strong extensions which are bounded in size. Conditions \ref{context}(\ref{condition:J})-(\ref{condition:K}) capture the essential properties of the Gaifman closure in the class of geometries we are investigating. Condition \ref{context}(\ref{condition:J}) essentially says that the Gaifman closure of any strong extension is bounded:  if $A\oleq B$, then clearly $\mrm{gcl}_{B}(B\setminus A)= \bigcup_{b\in B\setminus A}\mrm{gcl}_{B}(b)$, and thus by (\ref{condition:J}) we have that $|\mrm{gcl}_{B}(B\setminus A)|\leq (\mathbf{n}_\mathcal{K} \cdot |B\setminus A|) + |B\setminus A|$. Together, Conditions \ref{context}(\ref{condition:I}) and \ref{context}(\ref{condition:J}) have the important consequence that the notion of  $\mrm{HF}$-order becomes first-order expressible (cf.~\ref{def_HF_order} and \ref{hf_first_order}). Finally, Condition \ref{context}(\ref{condition:K}) essentially says that, in order to determine if an extension $(A,B)$ is strong, it is enough to look at the subextension induced by the Gaifman closure of $B\setminus A$. Together with Condition \ref{context}(\ref{condition:J}), this has the important consequence that whether an extension $A\oleq B$ is strong depends on a subset of $B$ of size $\leq (\mathbf{n}_\mathcal{K} \cdot |B\setminus A|) + |B\setminus A| $. Finally, Condition (\ref{condition:L}) makes sure that trivial extensions of any sort are strong.
\end{remark}

\begin{example}\label{example:projective_planes}
	To provide some intuition about the abstract setting from \ref{context}, we explain how the case of open projective planes from \cite{paolini&hyttinen} fits into this framework. Recall that a \emph{partial projective plane} is a set of lines and points such that any two points are incident with at most one line, and any two lines are incident with at most one point. A partial projective plane $P$ is \emph{open} if every finite subset $A\subseteq P$ contains either a point incident with at most two lines, or a line incident with at most two points. We let $\mathcal{K}$ be the class of finite, partial, open projective planes, and we write $A\oleq B$ if every non-empty $C\subseteq B\setminus A$ contains at least one element incident to at most two elements from $AC$. Then, Condition \ref{context}(\ref{condition:H}) boils down to the fact that minimal extensions $B\supsetneq A$ where each element $b\in B\setminus A$ has at least three incidences are algebraic over $A$, which can be easily verified. Condition \ref{context}(\ref{condition:I})  holds immediately for $\mathbf{X}_\mathcal{K}=\{1\}$, since every finite open partial projective plane can be obtained via a sequence of one-element extensions, by adding points incident with at most two lines and lines incident to at most two points (this was shown already by Siebenmann in \cite[Lem.~1]{sieben}). Similarly, Condition \ref{context}(\ref{condition:J}) is true by letting $\mathbf{n}_\mathcal{K}=2$, since whenever $A\oleq Ab$ we have that $b$ is incident to at most two elements from $A$, i.e., $|\mrm{gcl}_{Ab}(b)\setminus \{b\}|\leq 2$.  Condition \ref{context}(\ref{condition:K}) holds because whether $A\oleq B$ depends exclusively on the number of incidences of elements in $B\setminus A$, which is clearly preserved when restricting to $ \mrm{gcl}_{B}(B\setminus A)$. Finally, Condition \ref{context}(\ref{condition:L}) holds because we can always extend a plane $A\in \mathcal{K}$ with either a point or a line which bears no incidence with elements in $A$.
\end{example}

\begin{remark}\label{remark:Tforall}
	Recall that all structures in $\mathcal{K}$ are models of the universal fragment $T_\forall$ of the theory $T$ that we fixed at the beginning. Since  $\mathcal{K}$ is hereditary we will henceforth always assume without loss of generality that $T_\forall$ is exactly $\mrm{Th}_\forall(\mathcal{K})$, namely the universal theory of the class $(\mathcal{K},\oleq)$.
\end{remark}

We extend the relation $\oleq$ from finite structures in $\mathcal{K}$ to arbitrary models of $T_\forall$ as follows. We notice that the following definition of $A \oleq B$ for infinite structures is equivalent to the extension of the definition of $\oleq$ from $\mathcal{K}$ to $T_\forall$ from \cite[Def.~2.17]{baldwin_generic}.

\begin{definition}\label{infinite_strong extensions}
	Let $A \subseteq B \models T_\forall$, then we let $A \oleq B$ if and only if $ A\cap B_0 \oleq B_0 $ for all finite $ B_0 \subseteq B$. 
\end{definition}

\noindent Notice that all definitions from this section and the previous one extend naturally also to the setting with arbitrary structures $A \subseteq B \models T_\forall$. Also, if $B$ is finite, then Condition (\ref{condition:E}) makes sure that  $A \oleq B$ holds if and only if $A \oleq B_0$ for all $B_0\subseteq B$, thus  the previous definition agrees with the original relation $\oleq$ over the finite models of $T_\forall$ (cf.~also \cite[Def.~2.17]{baldwin_generic}). It is then straightforward to verify that arbitrary models of $T_\forall$ satisfy the conditions from \ref{context}. Also, the free amalgam ${B\otimes_A C}$ is well-defined also for infinite models of $T_\forall$, and it is straightforward to verify that under the assumptions from \ref{context} these also satisfy the algebraic amalgamation property. Finally, we remark that, exactly as observed in \cite[Lem.~2.8]{baldwin_generic} the property of being a strong extension is elementary.

\begin{remark}\label{the_type_remark} If $A$ is finite and $A \subseteq B \models T_\forall$, then it follows as in \cite[Lem~2.8]{baldwin_generic} that $A \oleq B$ is equivalent to saying that $B \models p(\bar{a})$, where $p(\bar{x})$ is the following type:
	\begin{align*}
	p(\bar{x})\coloneqq\{ \; \forall\bar{y}(\neg\bigwedge \delta_{(A, C)}(\bar{x},\bar{y})) : \; A \noleq C, \; C\in\mathcal{K}  \},
	\end{align*}
	where $\delta_{(A, C)}(\bar{x}, \bar{y}) = \bigwedge \mathrm{tp}^{\mrm{qf}}_C(\bar{a}, \bar{c})$ for $\bar{a}, \bar{c}$ enumerations of $A$ and $C\setminus A$, respectively. Notice that such formulas always exist because $L$ is finite and, moreover, if $A,B$ are of bounded size then the condition $A\oleq B$ can be already defined by a finite number of formulas, by modifying the definition of  $p(\bar{x})$ so to consider only those extensions $A\noleq C$ with $C$ of size $\leq |B|$. By Definition \ref{infinite_strong extensions} the same can be done also for infinite strong extensions $A\oleq B$. Let $\bar{a}=(a_i)_{i<\kappa}$ be an enumeration of $A$, then $A \oleq B$ is equivalent to $B \models p(\bar{a})$, where $p(\bar{x})$ is the following type:
	\begin{align*}
	p(\bar{x})\coloneqq\{ \; \forall\bar{y}(\neg\bigwedge \delta_{(A_0, C_0)}(\bar{x},\bar{y})) : \; A_0\subseteq_\omega A, \; A_0 \noleq C_0, \; C_0\in\mathcal{K}  \},
	\end{align*}
	where $\bar{x}=(x_i)_{i<\kappa}$, $\delta_{(A_0, C_0)}(\sigma(\bar{x}), \bar{y}) = \bigwedge \mathrm{tp}^{\mrm{qf}}_{C_0}(\sigma(\bar{a}), \bar{c})$, $\sigma(\bar{a})$ is the subenumeration of $A_0$ induced by $\bar{a}$, $\sigma(\bar{x})$ the corresponding subenumeration of $\bar{x}=(x_i)_{i<\kappa}$, and $ \bar{c}$ is an enumeration of $C_0\setminus A_0$.
\end{remark}

From the previous list of assumptions \ref{context}, it already follows that $(\mathcal{K},\oleq)$ admits a generic model $M_\star$. To this end, we first introduce the following important definition of intrinsic closure (cf.~\cite[Not.~2.22]{baldwin_generic}). Essentially, this is by construction the smallest extension of a subset $A$ of $B$ to a set $A'\oleq B$.

\begin{definition}\label{baldwin:intrinsic_closure}
	Let $A\subseteq B\models T_\forall$, the \emph{intrinsic closure} $\mrm{icl}_B(A)$ is the set $\mrm{icl}_B(A)\coloneqq\bigcup_{n\in\omega}\mrm{icl}_B^n(A)$ defined by letting $\mrm{icl}_B^0(A)=A$ and where $\mrm{icl}_B^{n+1}(A)$ is obtained from $\mrm{icl}_B^{n}(A)$ by adjoining all confined pairs $(C,D)$ such that $C\subseteq_\omega \mrm{icl}_B^{n}(A)$ and $D\subseteq_\omega  B$.
\end{definition}

We next recall some additional definitions from \cite{baldwin_generic}. We stress that, while the concept of $(\mathcal{K}, \oleq)$-saturation is standard in the literature, the notion of ${(\mathcal{K}, \oleq,\kappa)}$-saturation is quite specific of our setting. Implicitly, it also occurs in the context of Hrushovski constructions with irrational $\alpha$, as in \cite{shelah} and \cite{laskowski} (cf.~Remark \ref{remark:completeness_strategy}).

\begin{definition}\label{rich_def} A structure $M$ of $T_\forall$ is \emph{$(\mathcal{K}, \oleq, \kappa)$-saturated} if, for every $A \oleq B \models T_\forall$ with $|B|<\kappa$, whenever there is $f: A \rightarrow M$ such that $f(A) \oleq M$ then there is $\hat{f}: B \rightarrow M$ extending $f$ such that $\hat{f}(B) \oleq M$. A structure $M$  of $T_\forall$ is \emph{$(\mathcal{K}, \oleq, \kappa)$-homogeneous} if, for every isomorphism $f:A\cong B$ with $|A|=|B|<\kappa$ and $A,B\oleq M$, there is an automorphism $\hat{f}$ of $M$ extending $f$. We say that $M$ is \emph{$(\mathcal{K}, \oleq)$-saturated} if it is $(\mathcal{K}, \oleq,\aleph_0)$-saturated and that it is \emph{$(\mathcal{K}, \oleq)$-homogeneous} if it is $(\mathcal{K}, \oleq,\aleph_0)$-homogeneous. We say that $M\models T_\forall$ has \emph{finite closures} if for every finite $A\subseteq M$ there is a finite $B\oleq M$ with $A\subseteq B$, i.e., if $|\mrm{icl}_M(A)|<\aleph_0$. We say that a model $M\models T_\forall$ is \emph{$(\mathcal{K},\oleq)$-generic} if it is  $(\mathcal{K}, \oleq)$-saturated, $(\mathcal{K}, \oleq)$-homogeneous, and has finite closures.
\end{definition}

\noindent Whenever $(\mathcal{K},\oleq)$ is a Hrushovski class satisfying both the amalgamation property and the Conditions \ref{context}(\ref{condition:A})-(\ref{condition:F}), it follows from standard results from the literature that it has a $(\mathcal{K},\oleq)$-generic model (see  e.g. \cite[Thm.~2.12]{baldwin_generic}).

\begin{fact}\label{fact:existence_generic}
	Let $(\mathcal{K},\oleq)$ be a class satisfying the assumptions from Context~\ref{context} above, then $(\mathcal{K},\oleq)$ has a $(\mathcal{K},\oleq)$-generic model $M_\star$.
\end{fact}

\subsection{The theory $T^+$}\label{subsec:3}

We come to the main object of investigation of the present article, i.e., the theory $T^+\coloneqq T^+_\mathcal{K}$ associated to $(\mathcal{K},\oleq)$. Given a class $(\mathcal{K},\oleq)$ satisfying the conditions from \ref{context}, we define the theory $T^+$ as follows. Recall from \ref{remark:Tforall} that $T_\forall=\mrm{Th}_\forall(\mathcal{K})$.

\begin{definition}\label{the_theory} Let $T^+\coloneqq T^+_\mathcal{K}$ be such that $M \models T^+$ if the following happens.
	\begin{enumerate}[(1)]
		\item\label{the_theory:1} $M \models T_\forall$.
		\item\label{first_alg_ax}\label{the_theory:2} Let $A \subseteq_\omega M$ and let $A \oleq B\in \mathcal{K}$ be a minimal strong extension which is $\mathcal{K}$-algebraic. If $A\leq_{|B\setminus A|}M$ then $M$ contains a copy of $B$ over $A$.
		\item\label{the_theory:3} Let $A \subseteq_\omega M$ and let $A \oleq B\in \mathcal{K}$ be a minimal strong extension which is not $\mathcal{K}$-algebraic. If $A\leq_{|B\setminus A|}M$ then $M$ contains infinitely many disjoint copies of $B$ over $A$.
	\end{enumerate}
\end{definition}

\begin{remark}
	We notice that $T^+$ is indeed first-order. Since $M\models T^+$  entails $M \models T_\forall$, it follows that the definition of $\oleq$ for arbitrary structures from \ref{infinite_strong extensions} applies also to models of $T^+$ and their subsets. Also, it follows from Remark~\ref{the_type_remark} and the fact that $L$ is finite that, if $A, B\subseteq M$ are finite, then it is possible to express that $A\oleq B$ by a single formula. Therefore clause (1) is expressible by a set of first-order sentences in $L$. To see that also clauses (2) and (3) are elementary it suffices to notice that the relation $A\leq_{|B\setminus A|}M$ is also elementary.
\end{remark}

\begin{example}
	Let $(\mathcal{K},\oleq)$ be the class of finite open projective planes with the relation $\oleq$ defined  in \ref{example:projective_planes}. Then we have that $M\models T^+$ if the following hold:
	\begin{enumerate}[(1)]
		\item $M$ is open, i.e., every finite subset of $M$ contains either a point incident with at most two lines, or a line incident with at most two points;
		\item every two points in $M$ are incident to a unique common line, and every two lines in $M$ are incident to a unique common point;
		\item every point is incident to infinitely many lines, and every line is incident to infinitely many points.
	\end{enumerate}
	Strictly speaking, the three clauses above may appear different from those in \ref{the_theory}. However, clause \ref{the_theory}(\ref{the_theory:1}) easily follows from the axiomatisation above, while the requirement that $A\leq_{|B\setminus A|}M$  from \ref{the_theory}(\ref{the_theory:2})-(\ref{the_theory:3}) is always verified in the context of partial projective planes, thus making the above axiomatisation simpler. As a matter of fact, in the examples that we consider in this paper the constraint that $A\leq_{|B\setminus A|}M$ always holds automatically, with the exception of generalised $n$-gons (for which we expand on this issue in Section \ref{section:ngons}). 
\end{example}

Under the finitary conditions from \ref{context} we can prove that the theory $T^+$ from \ref{the_theory} is the most natural first-order theory associated to the class ${(\mathcal{K},\oleq)}$. In particular, $T^+$ is always satisfied by the generic $M_\star$, and is thus always consistent.

\begin{proposition}\label{proposition:generic_models_T}
	Let $(\mathcal{K},\oleq)$ be a class satisfying the assumptions from Context~\ref{context}, then its $(\mathcal{K},\oleq)$-generic $M_\star$ is a model of $T^+$.
\end{proposition}
\begin{proof}
	The fact that $M_\star$ is open follows immediately from the definition of the generic from \ref{rich_def}. To show that $M_\star\models T^+$ it suffices to show that $M_\star$ satisfies \ref{the_theory}(2) and \ref{the_theory}(3). 
	
	\smallskip 
	\noindent Consider \ref{the_theory}(2). Let $A\subseteq_\omega M_\star$ and consider a minimal strong extension $A \oleq B\in\mathcal{K}$ which is $\mathcal{K}$-algebraic. Suppose that $A\leq_n M$ for $n=|B\setminus A|$. Since $M_\star$ has finite closures, there is a finite $C\oleq M_\star$ such that $A\subseteq C$. Moreover, since $A\leq_n M$, it also follows that $A\leq_n C$. By the  algebraic amalgamation property it follows that either there is an embedding $f:B\to C$ or $B\otimes_A C\in \mathcal{K}$. If there is an embedding $f:B\to C$ it follows that $C\subseteq M$ contains a copy of $B$ over $A$, which proves our claim. Otherwise, suppose that  $B\otimes_A C\in \mathcal{K}$. Then since by Remark~\ref{remark_pushout} we know that $C\oleq B\otimes_A C$, it follows from the fact that $M_\star$ is $(\mathcal{K}, \oleq)$-saturated that there is a strong embedding $g: B\otimes_A C \to M$ over $C$. Again, this shows that $M$ contains a copy of $B$ over $A$, which verifies \ref{the_theory}(2).
	
	\smallskip 
	\noindent Consider \ref{the_theory}(3). Let $A\subseteq_\omega M_\star$ and consider a minimal strong extension $A \oleq B\in \mathcal{K}$  which is not $\mathcal{K}$-algebraic. Suppose that $A\leq_n M$ for $n=|B\setminus A|$, then since $M_\star$ has finite closures, there is a finite $C\oleq M_\star$ such that $A\subseteq C$. For every $k<\omega$ we let $B_k$ be the free amalgam of $k$ many copies of $B$ over $A$, which by the algebraic amalgamation property belongs to $\mathcal{K}$. Then, it follows again from the algebraic amalgamation property that $B_k\otimes_A C\in \mathcal{K}$ and, reasoning exactly as in the previous case, we obtain that $B_k\otimes_A C$ strongly embeds in $M$ over $C$. Since this holds for all $k<\omega$ it follows that $M$ contains infinitely many copies of $B$ over $A$. This verifies \ref{the_theory}(3) and shows that $M_\star\models T^+$.
\end{proof}

\begin{remark}\label{remark:finite_closure_first_order}
	We stress that Proposition~\ref{proposition:generic_models_T} only shows that $M_\star\models T^+$, and \emph{not} that $T^+=\mrm{Th}(M_\star)$. Suppose however (following \cite[Def.~2.10]{baldwin_generic}) that \emph{all models of $T_\forall$} have the finite closure property, i.e., if $A\subseteq_\omega N\models T_\forall$ then there is a finite $B\supseteq A$ such that $B\oleq N$. If this is the case then by Remark \ref{the_type_remark} the fact that $A\oleq N$ becomes first-order expressible across models of $T_\forall$. Let then $T_\star$ be the theory that says that, if $A\oleq N$ and $A\oleq C\in \mathcal{K}$ then $N$ contains an isomorphic copy of $C$ over $A$. By a back and forth argument it is possible to show that $T_\star$ is the complete first-order theory of $M_\star$. However, this argument fails when the finite closure property does not hold. The key goal of the next section is to show that, under Assumption \ref{main_th}(\hyperref[completeness_axiom]{C1}), $T^+$ is the complete first-order theory of $M_\star$.
\end{remark}

\subsection{Completeness}\label{subsec:4}
The main goal of this section is to show that, under the conditions from \ref{context} and the following assumptions in \ref{main_th}, the theory $T^+$ is complete. In the cases of the incidence geometries considered in \cite{funk2} (and that we address later in Sections \ref{sec_steiner}-\ref{last_section}), it will be straightforward to verify that non-degenerate free completions of open configurations are models of $T^+$. In particular, this entails that, whenever $A$ and $B$ are two structures in $\mathcal{K}$ with infinite free completions $F(A)$ and $F(B)$ (defined in \ref{general_free_amalgam}), then  $F(A)$ and $F(B)$  are elementarily equivalent.

We proceed as follows. We next isolate the abstract Assumptions \ref{main_th}(\hyperref[completeness_axiom]{C1})-(\hyperref[the_K_homogeneous_lemma]{C2}) from which, together with the assumptions from Context~\ref{context}, it is possible to prove that $T^+$ is complete. In the following Section~\ref{subsec:2} we specify the additional Assumption \ref{ass:hf_closure}(\hyperref[the_hf_axiom]{C3}) connected to the existence of $\mrm{HF}$-closures, and some more technical assumptions (the conditions \ref{technical_assumptions}(\hyperref[trivial_condition]{D1})-(\hyperref[extension]{D3})) which always guarantee that Assumptions \ref{main_th}(\hyperref[completeness_axiom]{C1})-(\hyperref[the_K_homogeneous_lemma]{C2}) hold. In our concrete applications from Sections \ref{sec_steiner}-\ref{last_section} we will always verify Assumption \ref{ass:hf_closure}(\hyperref[the_hf_axiom]{C3}) and \ref{technical_assumptions}(\hyperref[trivial_condition]{D1})-(\hyperref[extension]{D3}). However, we decided to isolate \ref{main_th}(\hyperref[completeness_axiom]{C1})-(\hyperref[the_K_homogeneous_lemma]{C2})  to stress that \ref{main_th}(\hyperref[completeness_axiom]{C1}) makes for the key ingredient in the completeness proof, while the conditions \ref{technical_assumptions}(\hyperref[trivial_condition]{D1})-(\hyperref[extension]{D3}) are rather the technical tools that we use to establish it. We also emphasise that the latter Assumption \ref{main_th}(\hyperref[the_K_homogeneous_lemma]{C2}) is not necessary to show completeness, but it plays an important role in the proof of stability and in the study of forking independence from Section~\ref{subsec:5}. We refer the reader to Definition~\ref{rich_def} for the notions of $(\mathcal{K}, \oleq,\kappa)$-saturation and $(\mathcal{K}, \oleq,\kappa)$-homogeneity.

\begin{assumption}\label{main_th} Let $(\mathcal{K}, \oleq)$ and $T^+$ be respectively as in \ref{context} and \ref{the_theory}. We assume that $T^+$ satisfies the following conditions:
	\begin{enumerate}[(C1)]
		\item\label{completeness_axiom} for all $\kappa\geq \aleph_1$ every $\kappa$-saturated model $M\models T^+$ is $(\mathcal{K}, \oleq,\kappa)$-saturated;
		\item\label{the_K_homogeneous_lemma}  for all $\kappa\geq \aleph_1$ every $\kappa$-saturated model $M\models T^+$ is $(\mathcal{K}, \oleq,\kappa)$-homogeneous.	
	\end{enumerate}
\end{assumption}

\begin{remark}\label{remark:completeness_strategy}
	We explain the reason behind the requirement that $\kappa$-saturated model $M\models T^+$ are $(\mathcal{K}, \oleq,\kappa)$-saturated for all $\kappa\geq \aleph_1$. As we recalled in \ref{remark:finite_closure_first_order}, one says that a model $M\models T^+$ has the \emph{finite closure property} if every finite subset $A\subseteq M$ is included in a finite set $A'\supseteq A$ such that $A'\oleq M$. In the context of Hrushovski constructions this is  usually the key element to prove completeness: if one can show that the $\aleph_0$-saturated models of a certain theory $T$ are also $(\mathcal{K}, \oleq)$-saturated, then it follows by back and forth between strong finite subsets of two $\aleph_0$-saturated models that $T$ is exactly the theory of the generic $M_\star$ of $(\mathcal{K}, \oleq)$ (cf. also Remark~\ref{remark:finite_closure_first_order}). However, this strategy crucially requires that we work in a setting where \emph{every} model satisfies the finite closure property, and not only the generic $M_\star$. Crucially, this is \emph{not} the case for the geometries from \cite{funk2}. In fact, the examples that we provide later to show that open Steiner systems, $n$-gons, $k$-nets, affine planes and Benz planes are not superstable also witness that there are models $M \models T^+$ and finite configurations $C\subseteq M$ such that for every finite $C \subseteq D \subseteq_\omega M$ we have $D\noleq M$. Still, under the presence of Condition \ref{context}(\ref{condition:H}) we have that every subset $A$ of a model $M\models T^+$ is contained in a set $D\oleq M$ of size $\leq |A|+\aleph_0$.  Our completeness proof will thus reproduce the back and forth argument that one can find in many Hrushovski constructions, but it crucially requires models of $T^+$ that are $(\mathcal{K}, \oleq,\aleph_1)$-saturated, and not simply $(\mathcal{K}, \oleq,\aleph_0)$-saturated. Interestingly, our case resembles the one from \cite{shelah}, \cite{laskowski} and \cite[Sec.~6]{baldwin_generic}, i.e., the non-superstable case of Hrushovski predimension constructions with ``irrational $\alpha$''.  In particular, although Laskowski's completeness proof \cite[Cor.~5.7]{laskowski} is syntactical, he also mentions that ``more model-theoretic proofs of these results are possible by passing to sufficiently saturated elementary extensions'' \cite[p.~168]{laskowski}. Essentially all that our proofs are doing is to follow this alternative model-theoretic route.
\end{remark}

Recall from \ref{notational_conventions} that we denote by $\mrm{acl}_B^{\mrm{qf}}(A)$ the algebraic closure of $A$ in some model $B$ obtained by considering only quantifier-free formulas. We prove the following lemma, which follows essentially from Condition \ref{context}(\ref{condition:H}). From this, together with the former  assumptions, we then prove the completeness of $T^+$ in \ref{prop_completeness}.

\begin{lemma}\label{acl_open_rk}
	Let $B\models T_\forall$ and  $A\subseteq B$ such that $A = \mrm{acl}^{\mrm{qf}}_B(A)$ then $A\oleq B$.
\end{lemma}
\begin{proof}
	Suppose there is a  finite  $B_0\subseteq B$ such that $A\cap B_0\not \oleq B_0$ but $B_0\setminus A\nsubseteq \mrm{acl}^{\mrm{qf}}_B(A)$. Without loss of generality we can also assume that $A\noleq B_0$ is a confined extension. Then, since $B\models T_\forall$, it follows by compactness that for every $n<\omega$ we can find some $C_n \in \mathcal{K}$ such that $C_n$ contains at least $n$ many disjoint copies of $B_0$ over $A\cap B_0$. This contradicts   \ref{context}(\ref{condition:H}).
\end{proof}

\begin{theorem}\label{prop_completeness} \label{el_sub_lemma}  Suppose  $T^+$ satisfies \ref{main_th}(\hyperref[completeness_axiom]{C1}). Then the theory $T^+$ is complete and decidable. Moreover, if $A, B \models T^+$ then the following are equivalent:
	\begin{enumerate} [(1)]
		\item $A = \mrm{acl}_B(A)$;
		\item $A \oleq B$;
		\item $A \preccurlyeq B$.
	\end{enumerate}	
\end{theorem}
\begin{proof}
	We first prove that $T^+$ is complete. Let $M, N \models T^+$, then by \ref{main_th}(\hyperref[completeness_axiom]{C1}) we can also assume without loss of generality that $M$ and $N$ are $(\mathcal{K}, \oleq,\aleph_1)$-saturated. We claim that the following set $\mathcal{I}$ is a back and forth system between $M$ and $N$:
	$$\mathcal{I} = \{f: A \cong B : A \oleq M, B \oleq N \text{ and } |A|, |B| \leq \aleph_0\}.$$
	Clearly this suffices, as then by \cite[Ex.~1.3.5]{tent_book} we have that $M$ and $N$ are elementarily equivalent. Let $f: A \cong B$, where $A\oleq M$, $B\oleq N$, $|A|=|B|\leq \aleph_0$. Let $a\in M\setminus A$ and notice that by Lemma \ref{acl_open_rk} we have that $ \mrm{acl}_M(Aa)\oleq M$. Moreover, we have that $A\oleq \mrm{acl}_M(Aa)$ and that $ \mrm{acl}_M(Aa)$ is countable. Since $N$ is $(\mathcal{K}, \oleq,\aleph_1)$-saturated it follows that $f:A\to B$ lifts to a strong embedding $\hat{f}: \mrm{acl}_M(Aa)\to N $, showing that $\hat{f}\in \mathcal{I}$. The decidability of $T^+$ then follows immediately from completeness and the fact that the axiomatisation from \ref{the_theory} is recursive.
	
	\medskip
	\noindent We next show that (1)-(3) are equivalent. This proof generalises \cite[Thm. 2.28]{tent} to our abstract setting. Direction  $ (1) \Rightarrow (2)$ follows from Lemma \ref{acl_open_rk}, and direction $(3)\Rightarrow (1)$ is obvious.  We consider the remaining direction  $(2)\Rightarrow (3)$. In addition to the assumption that $A \oleq B$, by \ref{main_th}(\hyperref[completeness_axiom]{C1}) (and Remark \ref{the_type_remark}) we can also assume without loss of generality that $A$ and $B$ are $(\mathcal{K}, \oleq,\aleph_1)$-saturated. Now, notice that if $M\oleq A$ then by transitivity $M\oleq B$ and, if $N\oleq B$ and $N\subseteq A$ then it follows from \ref{context}(\ref{condition:E})  that $N\oleq A$. Using this one shows that $A \preccurlyeq B$, as in fact for every finite set $D \subseteq A$ we can consider the following set of isomorphisms $\mathcal{I}$: 	
	\[\{f: M \cong N : D\subseteq M \oleq A, \; D\subseteq N \oleq B, \; |M|, |N| \leq \aleph_0, \; f\restriction D=\mathrm{id}_D\}.\]	
	In particular, by reasoning as in the previous completeness proof, one can show that $\mathcal{I}$ is a back and forth system, thus completing the proof.
\end{proof}

\subsection{$\mrm{HF}$-orderings}\label{subsec:2}
We introduce in this section the technique of $\mrm{HF}$-orderings and the related Assumption \ref{ass:hf_closure}(\hyperref[the_hf_axiom]{C3}), which expresses the existence of a $\mrm{HF}$-closure (cf.~Definition~\ref{def_closure_operator}). Moreover, we also introduce the technical assumptions \ref{technical_assumptions}(\hyperref[trivial_condition]{D1})-(\hyperref[extension]{D3}), which we shall use in concrete cases to verify that a theory satisfies \ref{main_th}(\hyperref[completeness_axiom]{C1})-(\hyperref[the_K_homogeneous_lemma]{C2}). 

Essentially, $\mrm{HF}$-orderings are an important tool from the literature on incidence geometries which allows one to study any extension $A\oleq B$ by decomposing it in smaller ``atomic'' extensions. More precisely, the notion of $\mrm{HF}$-order was introduced by Siebenmann in \cite{sieben} in the context of projective planes and later developed further in various references such as \cite{dem, ditor}. By way of example, one can associate a linear order to any finite open projective plane, by requiring that each element has at most two incidences with its set of predecessors. In this context, however, $\mrm{HF}$-orders were always assumed to be  well-founded, thus limiting their scope of application in the context of first-order logic. In fact, as observed in \cite[Rem.~3.7]{paolini&hyttinen}, if $A=(A,<)$ is an infinite structure with a $\mrm{HF}$-order (or, for that matter, with any linear order), then its ultrapower $A^{\mathfrak{U}}$ will have infinite descending chains. Therefore, in order to apply the technique of  $\mrm{HF}$-orders in the setting of model theory, one also has to make sense of non-well-founded ones. This approach was taken by Hyttinen and Paolini in \cite[Def.~3.4]{paolini&hyttinen}. Here we introduce a further generalisation of their definition, as we define $\mrm{HF}$-orderings abstractly for any class $\mathcal{K}$ satisfying the conditions from \ref{context}. Most importantly, we stress the crucial role of conditions \ref{context}(\ref{condition:I})--(\ref{condition:J}), which are essential to make the notion of $\mrm{HF}$-order first-order expressible (cf.~Remark \ref{hf_first_order} below). We start by fixing some notation and then proceed to define $\mrm{HF}$-orders.

\begin{definition}\label{linear_ordering_convention}
	Let $(X,<)$ be a poset and let $Y\subseteq X$. We write $Y^\downarrow$ for the downset generated by $Y$, i.e., $Y^\downarrow = \{b\in X: \exists a\in Y \text{ s.t. } b\leq a    \}$. We say that $Y\subseteq X$ is \emph{convex} if $a,c\in Y$ and $a<b<c$ entail $b\in Y$. Moreover if $(X,<)$ is a linear order and $Y,Z\subseteq X$, we write $Y<Z$ if  $y<z$ for every $y\in Y$ and $z\in Z$.
\end{definition}

\begin{definition}[$\mrm{HF}$-order]\label{def_HF_order} Let $A \subseteq B \models T_\forall$.  a $\mrm{HF}$-order of $B$ over $A$ is a pair $(<, P_<)$, where:
	\begin{enumerate}[(H1)]
		\item\label{HF1} $<$ is a linear order of $B\setminus A$, and $P_<$ is a partition of $B\setminus A$ into convex sets of size $\leq \mrm{max}(\mathbf{X}_\mathcal{K})$ (where $\mathbf{X}_\mathcal{K}$ is given by Condition \ref{context}(\ref{condition:I}));
		\item\label{HF2} for every $b \in B\setminus A$  and for every finite $C_0 \subseteq b^\downarrow\setminus P_<(b)$ we have that
		\[AC_0 \oleq AC_0P_<(b),\]
		where $P_<(b)$ is the unique set in $P_<$ such that $b\in P_<(b)$.
	\end{enumerate}
	For ease of notation, we generally simply say that $<$ is a $\mrm{HF}$-order of $B$ over $A$, and we write $P$ instead of $P_<$ when the ordering $<$ is clear from the context. If there is such an order we also write $A \hleq B$. When $A = \emptyset$ we say that $<$ is a $\mrm{HF}$-order of $B$ instead of saying that $<$ is a $\mrm{HF}$-order of $B$ over~$\emptyset$.  
\end{definition}

\begin{notation}\label{hat_notation}
	We notice that, if $<$ is a $\mrm{HF}$-order of $B$ over $A$, then every piece $X\in P$ is essentially a finite ordered subchain of $B\setminus A$. For this reason, we usually think of these pieces as finite tuples with the order induced by $<$. We write $\widehat{c}$ for the tuple $(c_1,\dots, c_n)$  such that $P(c)=\{  c_1,\dots, c_n\}$ and $c_1<\dots<c_n$. Differently, for a set $X\subseteq B\setminus A$ we simply let $\widehat{X}=\bigcup_{x\in X}P(x)$. Thus for an element $c\in B\setminus A$ we have that $\widehat{c}$ is a tuple, while for a set $C\subseteq  B\setminus A$ we have that $\widehat{C}$ is a set.
\end{notation}

\begin{example}
	We spell out what the previous definition means in the case of projective planes  (cf.~\ref{example:projective_planes}). Since in this case $\mathbf{X}_\mathcal{K}=\{1\}$, Condition \ref{def_HF_order}(\hyperref[HF1]{H1}) simply says that $<$ is a linear ordering over $B\setminus A$, while Condition \ref{def_HF_order}(\hyperref[HF2]{H2}) says that each element  $b\in B\setminus A$ is incident with at most two elements $c_0,c_1\in A\cup b^\downarrow$. This is exactly the notion of $\mrm{HF}$-order for projective planes defined in \cite[Def.~3.4]{paolini&hyttinen}.
\end{example}

\begin{remark}\label{hf_first_order}	
	We observe that  the property of  being a $\mrm{HF}$-order is first-order expressible, by a (possibly infinite) set of formulas, in the language $L\cup \{ <, P\}$  where $<$ is a binary relation and $P$ is a relation of arity $\mrm{max}(\mathbf{X}_\mathcal{K})$ (notice that the existence of $\mrm{max}(\mathbf{X}_\mathcal{K})$ is always guaranteed by Condition \ref{context}(\ref{condition:I})). The fact that $<$ is a linear order is clearly expressible by a sentence and, similarly, we can express that $P$ is a partition of $B\setminus A$ in convex sets of size $\leq \mrm{max}(\mathbf{X}_\mathcal{K})$ (notice that we also need to use some technical device such as repetition of the last occurring element to encode pieces of the partition which have length shorter than $\mrm{max}(\mathbf{X}_\mathcal{K})$). It follows that Condition \ref{def_HF_order}(\hyperref[HF1]{H1}) can be expressed by a first-order formula. Similarly, since by Remark \ref{the_type_remark} we can capture the strong submodel relation $\oleq$ in first-order logic, we can also express Condition \ref{def_HF_order}(\hyperref[HF2]{H2}) by a set of formulas in  $L\cup \{ <, P\}$.
\end{remark}

The following Lemma is essentially a generalisation of \cite[Lem.~1]{sieben} to the abstract setting of a class $(\mathcal{K},\oleq)$ satisfying the conditions from Context~\ref{context}.

\begin{lemma}\label{constructing_finite_pieces} Let $A \oleq C\in \mathcal{K}$, then there is a $\mrm{HF}$-order of $C$ over $A$.
\end{lemma}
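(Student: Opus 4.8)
The plan is to induct on $|C \setminus A|$, at each stage peeling off a bounded ``atomic'' piece from the top of $C$ using Condition~\ref{context}(\ref{condition:I}), and then appealing to the inductive hypothesis on a smaller strong extension. Concretely, suppose $A \oleq C \in \mathcal{K}$ with $C \setminus A$ nonempty. By Condition~\ref{context}(\ref{condition:I}) there are $n \in X_\mathcal{K}$ and a tuple $\bar{b} = (b_1,\dots,b_n) \in (C \setminus A)^n$ such that $C' := C \setminus \{b_1,\dots,b_n\} \oleq C$. Note $A \subseteq C'$ (since the $b_i$ lie outside $A$), and by Condition~\ref{context}(\ref{condition:E}) applied to $A \oleq C$ with substructure $C' \subseteq C$ we get $A = A \cap C' \oleq C'$; so $A \oleq C' \in \mathcal{K}$ is a strictly smaller strong extension. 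By the inductive hypothesis there is an $\mrm{HF}$-order $(<', P'_{<'})$ of $C'$ over $A$.

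Now extend this to $C$ by declaring $\{b_1,\dots,b_n\}$ to be a single new piece placed entirely above all of $C' \setminus A$: set $<$ to agree with $<'$ on $C' \setminus A$, put $C' \setminus A < \{b_1,\dots,b_n\}$, order the $b_i$ among themselves arbitrarily, and let $P_<$ be $P'_{<'}$ together with the one new convex block $\{b_1,\dots,b_n\}$. Since $n \in X_\mathcal{K}$ we have $n \leq \mrm{max}(X_\mathcal{K})$, so (H1) holds: $<$ is a linear order and $P_<$ partitions $C \setminus A$ into convex sets of the required size. For (H2) there are two cases. If $b \in C' \setminus A$, then $P_<(b) = P'_{<'}(b)$ and $b^{\downarrow} \setminus P_<(b) \subseteq C' \setminus A$, so the required instances $AC_0 \oleq AC_0 P_<(b)$ are exactly those guaranteed by the $\mrm{HF}$-order of $C'$ over $A$. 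If $b \in \{b_1,\dots,b_n\}$, then $P_<(b) = \{b_1,\dots,b_n\}$ and $b^{\downarrow} \setminus P_<(b) = C' \setminus A$, so for any finite $C_0 \subseteq C' \setminus A$ we must show $AC_0 \oleq AC_0 \cup \{b_1,\dots,b_n\} = AC_0 \cup (C \setminus C')$. We have $A C_0 \subseteq C$ and $C = C' \cup \{b_1,\dots,b_n\}$, so $AC_0 \cup \{b_1,\dots,b_n\} = (AC_0) \cup (C \setminus C')$ is a substructure of $C$ containing $C \setminus C'$; applying Condition~\ref{context}(\ref{condition:E}) to $C' \oleq C$ with this substructure gives $(AC_0 \cup \{b_1,\dots,b_n\}) \cap C' \oleq AC_0 \cup \{b_1,\dots,b_n\}$, i.e.\ $AC_0 \oleq AC_0 \cup \{b_1,\dots,b_n\}$ since $AC_0 \subseteq C'$. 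This is exactly (H2) for $b$.

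For the base case $C = A$, the empty order works vacuously. This completes the induction and hence the proof.

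The main obstacle, and the only place where one must be a little careful, is the verification of (H2) for the freshly added piece: one needs precisely the ``restriction'' Condition~\ref{context}(\ref{condition:E}) (transferring strength $C' \oleq C$ down to intersections) to see that $AC_0 \oleq AC_0 \cup (C \setminus C')$ for every intermediate set $C_0$, rather than just for $C_0 = C' \setminus A$; it is worth checking that one does not instead need the full strength of $C' \oleq C$ combined with $A \oleq C'$ via transitivity (\ref{context}(\ref{condition:C})), which would only give $A \oleq C$ and not the finer statement required by (H2). A secondary point is making sure the induction is set up on $|C \setminus A|$ (finite since $C \in \mathcal{K}$), and that $X_\mathcal{K}$ being a \emph{set} of allowed sizes — not a single number — causes no trouble, which it does not since we only ever use $n \leq \mrm{max}(X_\mathcal{K})$.
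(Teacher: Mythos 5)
Your proposal is correct and follows essentially the same strategy as the paper: repeatedly peel off a tuple guaranteed by Condition~\ref{context}(\ref{condition:I}) from the top, use Condition~\ref{context}(\ref{condition:E}) to see that $A$ remains strong in what is left, and stack the removed pieces in reverse order. Your explicit verification of (H2) for the freshly added piece via Condition~\ref{context}(\ref{condition:E}) (rather than mere transitivity) supplies a detail the paper leaves implicit, and it is the right argument.
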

\begin{proof}  Since $A\oleq C$, by Condition \ref{context}(\ref{condition:I}) there is some tuple $\bar{c}_{1}=(c^1_{1},\dots,c^{k_1}_{1})\in (C\setminus A)^{k_1}$ with $k_1\in \mathbf{X}_\mathcal{K}$ and such that $C\setminus \{c^1_{1},\dots,c^{k_1}_{1}\}\oleq C$. By Condition \ref{context}(\ref{condition:E}) it follows that $A\oleq C\setminus\{ c^1_{1},\dots,c^{k_1}_{1}  \}$, thus there is some $\bar{c}_{2}=(c^1_{2},\dots,c^{k_2}_{2})\in (C\setminus Ac^1_{1},\dots,c^{k_1}_{1} )^{k_2}$ with $k_2\in \mathbf{X}_\mathcal{K}$ and such that 
	\[  C\setminus \{c^1_{1},\dots,c^{k_1}_{1}c^1_{2},\dots,c^{k_2}_{2}\}  \oleq C\setminus \{c^1_{1},\dots,c^{k_1}_{1}\}.\]
	Since $C$ is finite, after finitely many steps, we find some $n<\omega$ and tuples $\bar{c}_{1},\dots,\bar{c}_{n}$ such that $C=A\cup \{c^i_j : 1\leq i\leq k_{j}, \;    1\leq j\leq n \}$. We then let $P(c^i_j)=\{c^i_j : 1\leq i\leq k_{j}\}$ and we define
	\begin{align*}
		c^i_j < c^\ell_m \; \Longleftrightarrow \; (m<j) \text{ or } (j=m \text{ and } i<\ell).
	\end{align*}
	Then $(<, P)$ is a $\mrm{HF}$-order of $C$ over $A$.
\end{proof}

If $<$ is a $\mrm{HF}$-order of a structure $B$ and $C\subseteq B$, it is often useful to restrict the $\mrm{HF}$-order  of $B$ to  the subset $C$ (we do this for example in the following proof of \ref{existence_HF_orders}). To this end, we introduce the following definition.

\begin{definition}\label{restriction:hf}
	Let  $A\subseteq B$ and suppose that $<$ is a $\mrm{HF}$-order of $B$ over $A$. Let $C\subseteq B$, then the \emph{restriction}  of $<$ to $C$  is the pair $(<',P_{<'})$ defined by letting $<'\coloneq <\restriction C$  and $P_{<'}(c)=P_{<}(c)\cap C$ for all $c\in C$. With a slight abuse of notation we often write  simply $< \restriction C$ for the restriction of the $\mrm{HF}$-order $(<,P_<)$ to  $C$.
\end{definition}

\noindent We notice that, if $<$ is a $\mrm{HF}$-order $<$ of $B$ over $A$ and $C\subseteq B$, then the restriction $<\restriction C$ is always a $\mrm{HF}$-order of $C$ over $A\cap C$. This follows immediately from Definition \ref{def_HF_order} and Condition \ref{context}(\ref{condition:E}). The next proposition is an abstract version of \cite[Prop.~3.14]{paolini&hyttinen}. The proof is essentially the same.

\begin{proposition}\label{existence_HF_orders} Let $A \oleq B \models T_\forall$, then there is a $\mrm{HF}$-order $<$ of $B$ over $A$.
\end{proposition}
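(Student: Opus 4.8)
The plan is to reduce the infinite case to the finite case handled in Lemma \ref{constructing_finite_pieces} by a compactness/union-of-chains argument, exactly in the spirit of \cite[Prop. 3.14]{paolini&hyttinen}. First I would fix an enumeration $(b_i)_{i<\lambda}$ of $B\setminus A$ and build, by transfinite recursion, an increasing chain of finite subsets $\emptyset = C_0 \subseteq C_1 \subseteq \cdots \subseteq C_\alpha \subseteq \cdots$ of $B\setminus A$, together with $\mrm{HF}$-orders of $AC_\alpha$ over $A$ that are coherent (each extends the previous one as an ordered set, and the partitions are compatible). At stage $\alpha+1$ I would ensure $b_\alpha \in C_{\alpha+1}$; at limit stages I take unions. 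The key point making this work is that for each finite $C_\alpha$ one has $A \oleq AC_\alpha$ — this is where Condition \ref{context}(\ref{condition:E}) is used, since $A \oleq B$ and $AC_\alpha$ is a substructure of $B$ gives $A = A \cap AC_\alpha \oleq AC_\alpha$, provided $AC_\alpha\in\mathcal{K}$, which holds by heredity — so Lemma \ref{constructing_finite_pieces} supplies an $\mrm{HF}$-order of $AC_\alpha$ over $A$.

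The main obstacle is coherence: an $\mrm{HF}$-order produced by Lemma \ref{constructing_finite_pieces} for $AC_{\alpha+1}$ need not restrict to the one already chosen on $AC_\alpha$, both because the linear order may disagree and because the pieces of the partition may be split differently. To fix this I would instead run the recursion more carefully: having already an $\mrm{HF}$-order $(<_\alpha, P_\alpha)$ of $AC_\alpha$ over $A$, I want to extend it to one of $AC_{\alpha+1}$ over $A$ by placing the new elements of $C_{\alpha+1}\setminus C_\alpha$ \emph{above} all of $C_\alpha$ in the order. For this I need: $AC_\alpha \oleq AC_{\alpha+1}$, which again follows from Condition \ref{context}(\ref{condition:E}) applied inside $B$ (since $A \oleq B$, hence $AC_\alpha \oleq B$ as well — using that the class of strong extensions is closed appropriately, or directly from Definition \ref{infinite_strong extensions} and Condition \ref{context}(\ref{condition:E})); then I apply Lemma \ref{constructing_finite_pieces} to the extension $AC_\alpha \oleq AC_{\alpha+1}$ to get an $\mrm{HF}$-order of $AC_{\alpha+1}$ \emph{over} $AC_\alpha$, and concatenate it on top of $(<_\alpha, P_\alpha)$. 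One checks that concatenation of an $\mrm{HF}$-order of $AC_\alpha$ over $A$ with an $\mrm{HF}$-order of $AC_{\alpha+1}$ over $AC_\alpha$ yields an $\mrm{HF}$-order of $AC_{\alpha+1}$ over $A$: Condition (\hyperref[HF1]{H1}) is immediate, and for (\hyperref[HF2]{H2}), given $b\in C_{\alpha+1}\setminus C_\alpha$ and finite $C_0 \subseteq b^\downarrow \setminus P(b)$, we split $C_0 = (C_0\cap C_\alpha) \cup (C_0 \setminus C_\alpha)$ and use that $A C_\alpha \oleq A C_0 P(b)$ follows from the over-$AC_\alpha$ $\mrm{HF}$-order property together with $A\oleq AC_\alpha$ and transitivity \ref{context}(\ref{condition:C}); for $b$ already in $C_\alpha$ the requirement is inherited from $(<_\alpha,P_\alpha)$ since adding larger elements does not change $b^\downarrow$.

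Finally, at limit stages $\delta$ I set $<_\delta = \bigcup_{\alpha<\delta} <_\alpha$ and $P_\delta = \bigcup_{\alpha<\delta} P_\alpha$; since each piece of each $P_\alpha$ is a finite convex block and the blocks were added on top at successor stages, $P_\delta$ is still a partition into convex blocks of size $\leq \mrm{max}(X_\mathcal{K})$, so (\hyperref[HF1]{H1}) holds, and (\hyperref[HF2]{H2}) holds at $\delta$ because for any $b$ and any \emph{finite} $C_0\subseteq b^\downarrow\setminus P_\delta(b)$, both $b$ and $C_0$ already appear in some $AC_\alpha$ with $\alpha<\delta$, where the condition was verified, and $AC_\alpha\oleq AC_\delta$ by Definition \ref{infinite_strong extensions}. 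Taking $(<,P) = \bigcup_{\alpha<\lambda}(<_\alpha,P_\alpha)$ gives the desired $\mrm{HF}$-order of $B$ over $A$, since by construction every element of $B\setminus A$ lies in some $C_\alpha$. $\hfill\qed$
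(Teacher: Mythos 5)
There is a genuine gap, and it sits exactly at the step ``since $A \oleq B$, hence $AC_\alpha \oleq B$ as well''. Condition \ref{context}(\ref{condition:E}) does give $A \oleq AC_\alpha$ (intersect $A$ with the substructure $AC_\alpha$ of $B$), but it says nothing about $AC_\alpha$ being strong in $B$ or in $AC_{\alpha+1}$: in general $A \oleq B$ and $C \subseteq_\omega B\setminus A$ do \emph{not} imply $AC \oleq B$, nor even that some finite $C'\supseteq C$ satisfies $AC'\oleq B$. This is precisely the failure of the finite closure property stressed in Remark \ref{remark:completeness_strategy}: in the intended applications there are open models $B\models T^+$ (so $\emptyset\oleq B$) and finite $C\subseteq B$ with $D\noleq B$ for \emph{every} finite $D\supseteq C$. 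Against such a $B$ (take $A=\emptyset$ and enumerate so that $b_0\in C$) your recursion cannot be carried out: you would need finite stages $C_1\oleq C_2\oleq\cdots$ with $\bigcup_\alpha C_\alpha=B$ and $C\subseteq C_n$ for some $n<\omega$, and transitivity together with Definition \ref{infinite_strong extensions} then forces $C_n\oleq B$, a contradiction.

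Moreover the obstruction is not merely in your justification but in the ``new elements go on top'' strategy itself: any $\mrm{HF}$-order of $B$ over $A$ in which a finite union of pieces $C_1$ forms an initial segment of $B\setminus A$ restricts to an $\mrm{HF}$-order of $B$ over $AC_1$ (for $b>C_1$ and finite $C_0\subseteq b^\downarrow\setminus P(b)$ apply (H2) to $C_1\cup C_0$), so by Corollary \ref{equivalence_HFo.ordering} it witnesses $AC_1\oleq B$. Hence in the relevant examples the $\mrm{HF}$-order you are trying to build is necessarily non-well-founded and admits no finite initial segments; making sense of such orders is the very reason the paper departs from Siebenmann's well-founded version. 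The actual proof accordingly avoids chains altogether: it fixes, for \emph{every} finite substructure $C$ of $B$, the finitely many $\mrm{HF}$-orders of $C$ supplied by Lemma \ref{constructing_finite_pieces} together with Condition \ref{context}(\ref{condition:E}), and uses an ultrafilter on the family of finite substructures (concentrating on supersets of each $C$) to select one order $(<^*_C,P^*_C)$ per $C$ so that the selections cohere under restriction; the union of these is the desired order. Coherence is bought by the ultrafilter, not by an increasing construction, and no initial segment of the resulting order need be strong in $B$.
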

\begin{proof} Let $X$ be the set of all finite substructures of $B$, by \ref{constructing_finite_pieces} every $C\in X$ has a $\mrm{HF}$-order $<_C$ over $\emptyset$. Let  $\mathfrak{U}$ be an ultrafilter on $X$ such that for all $C \in X$ we have $X_C = \{ D \in X : C \subseteq D \} \in \mathfrak{U}$. For every $C\in X$ we let $<^1_C, ..., <^{n(C)}_C$ be an injective enumeration of all the $\mrm{HF}$-orderings of $C$ over $\emptyset$. Then, since $\mathfrak{U}$ is an ultrafilter, it follows that for all $C\in X$ there is a unique $\mrm{HF}$-ordering $(<^*_C, P_C^*)$ such that:
	\[Y_C \coloneqq \{ D \in X : C \subseteq D, \; <_D \restriction C = <^*_C \text{ and } P_{<_D}\restriction C= P_C^* \} \in \mathfrak{U},\]
	where $P_{<_D}\restriction C= \{X\cap C : X\in P_{<_D} \}$. Then for all $c\in B$ there is a structure $C\in X$ such that $P_{<^*_C}(c)=P_{<_D}(c)$ for all $C\subseteq D\in X$. We let $<_*=\bigcup_{C\in X}<^*_C$ and we define a partition $P_*$ by letting, for every $c\in B$, $P_*(c)=  P_{<^*_C}(c)$,	where $C$ is chosen to satisfy $P_{<^*_C}(c)=P_{<_D}(c)$ for all $C\subseteq D\in X$. It is then easy to verify that $(<_*, P_*)$ is a $\mrm{HF}$-order of  $B$.
\end{proof}

\noindent  From Proposition~\ref{existence_HF_orders} we obtain the following corollary, which allows us to translate the language of open configuration into the language of $\mrm{HF}$-orders, and \emph{vice versa}.

\begin{corollary}\label{equivalence_HFo.ordering} Let $A \subseteq B \models T_\forall$. The following are equivalent:
	\begin{enumerate}[(1)]
		\item $A \oleq B$;
		\item $A \hleq B$.
	\end{enumerate}
\end{corollary}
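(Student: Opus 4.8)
The statement to prove is Corollary~\ref{equivalence_HFo.ordering}: for $A \subseteq B \models T_\forall$, we have $A \oleq B$ if and only if $A \hleq B$.

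\medskip

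\noindent\textbf{Proof proposal.} The plan is to prove the two implications separately, and both directions essentially repackage results already established in the excerpt.

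For the implication $(1) \Rightarrow (2)$, I would simply invoke Proposition~\ref{existence_HF_orders}: if $A \oleq B \models T_\forall$, then that proposition directly produces an $\mrm{HF}$-order $<$ of $B$ over $A$, which is precisely what $A \hleq B$ means by Definition~\ref{def_HF_order}. So this direction is immediate and requires no further work.

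For the converse $(2) \Rightarrow (1)$, suppose $(<, P_<)$ is an $\mrm{HF}$-order of $B$ over $A$; I want to conclude $A \oleq B$. By Definition~\ref{infinite_strong extensions} it suffices to show $A \cap B_0 \oleq B_0$ for every finite $B_0 \subseteq B$, and since strong extensions are hereditary in the sense of Condition~\ref{context}(\ref{condition:E}), it is in fact enough to show $A \oleq A C$ for every finite $C \subseteq B \setminus A$ (then use \ref{context}(\ref{condition:E}) to cut down to $B_0$, together with the observation that $A\cap B_0 = A$ when $B_0 \supseteq A$ suitably, or more carefully: for arbitrary finite $B_0$, write $A \cap B_0 \oleq (A\cap B_0)\cup (B_0\setminus A)$ as a restriction of $A \oleq A(B_0\setminus A)$ via \ref{context}(\ref{condition:E}), since $A\cap B_0 = A \cap (A(B_0\setminus A))$). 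So the heart of the matter is: given a finite $C \subseteq B\setminus A$, show $A \oleq AC$. Here I would use the convex partition $P_<$ and induct along it. Enlarging $C$ to $\widehat{C} = \bigcup_{c\in C} P_<(c)$ (using Notation~\ref{hat_notation}) we may assume $C$ is a union of pieces, and since $C$ is finite it is a union of finitely many pieces $X_1 < X_2 < \cdots < X_k$ listed in the order induced by $<$. I claim $A \oleq A X_1 \oleq A X_1 X_2 \oleq \cdots \oleq A X_1 \cdots X_k = AC$, and then $A \oleq AC$ follows by transitivity, Condition~\ref{context}(\ref{condition:C}) (noting all these sets are finite, hence in $\mathcal{K}$, since they are substructures of $B \models T_\forall$ and $\mathcal{K}$ is hereditary among finite models of $T_\forall$ — one should check $\mathcal{K}$ contains all finite substructures of models of $T_\forall$; this is where heredity of $(\mathcal{K},\oleq)$ plus $T_\forall = \mrm{Th}_\forall(\mathcal{K})$ is used). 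To see the $j$-th link $AX_1\cdots X_{j-1} \oleq AX_1\cdots X_j$: pick the $<$-least element $b$ of $X_j$ — actually any $b \in X_j$ works since $P_<(b) = X_j$ — and apply Condition~\ref{def_HF_order}(\hyperref[HF2]{H2}) with $C_0 = X_1 \cup \cdots \cup X_{j-1}$, which is a finite subset of $b^\downarrow \setminus P_<(b)$ because the pieces below $X_j$ in the convex partition are exactly $X_1, \ldots, X_{j-1}$ and each lies entirely below $X_j$. Condition (\hyperref[HF2]{H2}) then gives $AC_0 \oleq AC_0 P_<(b) = AX_1\cdots X_j$, i.e. exactly the link I wanted.

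The main obstacle I anticipate is not any single hard step but rather the bookkeeping around convexity: one must verify that for a finite union-of-pieces set $C$, the pieces it comprises can be linearly ordered $X_1 < \cdots < X_k$ so that $X_1 \cup \cdots \cup X_{j-1}$ is precisely $\widehat{C} \cap b^\downarrow \setminus P_<(b)$ for $b \in X_j$ — this uses that $P_<$ partitions $B\setminus A$ into \emph{convex} sets, so that distinct pieces are $<$-comparable as blocks (if $x \in X_i$, $y \in X_j$ with $x < y$ then $X_i < X_j$, else convexity of $X_i$ or $X_j$ is violated). A secondary subtlety is the reduction from "$A \oleq AC$ for all finite $C\subseteq B\setminus A$" to "$A\cap B_0 \oleq B_0$ for all finite $B_0 \subseteq B$", handled cleanly by Condition~\ref{context}(\ref{condition:E}) as indicated above. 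Neither of these is deep, so the corollary follows quickly from Proposition~\ref{existence_HF_orders} and Definition~\ref{def_HF_order}.
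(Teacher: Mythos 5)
Your proposal is correct and follows the same route as the paper: direction $(1)\Rightarrow(2)$ is exactly Proposition~\ref{existence_HF_orders}, and for $(2)\Rightarrow(1)$ the paper likewise unwinds Definition~\ref{infinite_strong extensions} and inducts over finite subsets of $B\setminus A$ using \ref{def_HF_order}(\hyperref[HF2]{H2}) — your version merely organizes that induction along the convex pieces of $P_<$ and spells out the bookkeeping the paper leaves implicit. No gaps.
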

\begin{proof}
	The direction from (1) to (2) is Proposition \ref{existence_HF_orders}. The direction from (2) to (1) follows from Definition \ref{infinite_strong extensions} and induction on the size of $C\subseteq_\omega B\setminus A$.
\end{proof}

If $<$ is a $\mrm{HF}$-order of $B$ and $C$ an arbitrary subset of $B$, it is \emph{not} necessarily the case that $<\restriction (B\setminus C)$ induces a $\mrm{HF}$-order of $B$ {\em over $C$}. In fact, as $C$ is an arbitrary subset of $B$, we could actually even have that $C\noleq B$. We generalise \cite[Def. 3.9]{paolini&hyttinen} and we define the notion of \emph{$\mrm{HF}$-closure of a set $C$ relative to a specific $\mrm{HF}$-order $<$}, which is an extension $\mrm{cl}_<(C)\supseteq C$ of size $\leq |C|+\aleph_0$ such that $\mrm{cl}_<(C)\oleq  B$  always holds.  Notice also that, in the following definition and afterwards in the paper, we abide to the model-theoretic convention of writing $AB$ instead of $A\cup B$, so in particular $A\mrm{cl}_<(C)$ refers to the union $A\cup\mrm{cl}_<(C)$ and should not be confused with the algebraic closure (whose notation we fixed in \ref{notational_conventions}).

\begin{definition}[$\mrm{HF}$-closure]\label{def_closure_operator} Let $A\subseteq B \models T_\forall$, suppose $<$ is a $\mrm{HF}$-order of $B$ over $A$, and recall the  notation from \ref{hat_notation}.  We say that an operator $\mrm{cl}_<\coloneqq \bigcup_{n<\omega }\mrm{cl}_<^{n}$ is a \emph{$\mrm{HF}$-closure operator with respect to $B$ and $<$} if, for all $C\subseteq B\setminus A$:
	\begin{enumerate}[(1)]
		\item $C\subseteq\mrm{cl}^n_<(C)\subseteq \mrm{cl}^{n+1}_<(C)$  for all $n<\omega$;
		\item 	$|\mrm{cl}^n_<(C)| < |C|+\aleph_0$ for all $n<\omega$;
	\end{enumerate}
	and, moreover, $\mrm{cl}_<$ satisfies the following key property for all $C,D\subseteq B\setminus A$:
	\begin{enumerate}[(3)]
		\item  $A\mrm{cl}_<(CD)=A\mrm{cl}_<(C)\otimes_{A\mrm{cl}_<(C)\cap A\mrm{cl}_<(D)} A\mrm{cl}_<(D)\oleq B$.
	\end{enumerate}
\end{definition}

In the definition above, the $\mrm{HF}$-closure operator $\mrm{cl}_<(C)$ is defined abstractly, although in essentially all the concrete examples from Section~\ref{sec:application} it will be defined by means of the Gaifman closure operator. The existence of a $\mrm{HF}$-closure in all models of $T_\forall$ is our next requirement in \ref{ass:hf_closure}(\hyperref[the_hf_axiom]{C3}). Later in Section~\ref{sec:application} we shall directly verify Assumption \ref{ass:hf_closure}(\hyperref[the_hf_axiom]{C3}) for all the examples of structures that we consider.

\begin{assumption}\label{ass:hf_closure}
	Let $(\mathcal{K}, \oleq)$ and $T^+$ be respectively as in \ref{context} and \ref{the_theory}. We assume that $T^+$ satisfies the following condition:
	\begin{enumerate}[(C3)]
	\item\label{the_hf_axiom} every model $A\subseteq B\models T_\forall$ with an associated $\mrm{HF}$-order $<$ of $B$ over $A$ has a $\mrm{HF}$-closure operator $\mrm{cl}_<\coloneqq\bigcup_{n<\omega}\mrm{cl}^n_<$ (cf.~\ref{def_closure_operator}).
\end{enumerate}
\end{assumption}

Additionally, \emph{and only in this section}, we assume the following technical assumptions  \ref{technical_assumptions}(\hyperref[trivial_condition]{D1})-(\hyperref[extension]{D3}). Essentially, we use these to prove that for all $\kappa\geq \aleph_1$ every $\kappa$-saturated models is both  $(\mathcal{K}, \oleq,\kappa)$-saturated and  $(\mathcal{K}, \oleq,\kappa)$-homogeneous, namely to verify \ref{main_th}(\hyperref[completeness_axiom]{C1})-(\hyperref[the_K_homogeneous_lemma]{C2}). This explains how we will proceed in Section \ref{sec:application} later, where we show that both \ref{ass:hf_closure}(\hyperref[the_hf_axiom]{C3}) and  \ref{technical_assumptions}(\hyperref[trivial_condition]{D1})-(\hyperref[extension]{D3}) are satisfied in all our intended applications. We also notice that the assumption that $M$ is $\aleph_1$-saturated in \ref{technical_assumptions}(\hyperref[minimality_condition]{D2}) is used only in the case of $n$-open graphs from Section \ref{sec:n-open-grpahs}, and it is not needed to verify \ref{technical_assumptions}(\hyperref[minimality_condition]{D2}) in the other concrete examples.

\begin{technical}\label{technical_assumptions} Suppose $T^+$ satisfies \ref{ass:hf_closure}(\hyperref[the_hf_axiom]{C3}) and so the $\mrm{HF}$-closure operator $\mrm{cl}_<\coloneqq\bigcup_{n<\omega}\mrm{cl}^n_<$ is well-defined.  In this section we assume that $T^+$ satisfies the following technical conditions:
	\begin{enumerate}[(D1)]
		\item\label{trivial_condition} if $M \models T^+$, $A \subseteq M$ is finite, $<$ is a $\mrm{HF}$-order of $M$ and $(A,Ab)$ is a trivial one-element extension, then for all $ n<\omega$ there is some $b'\in M$ such that:
		\begin{enumerate}
			\item $Ab \cong_A Ab'$;
			\item\label{trivial_condition_item_c} $Ab' \oleq A\mrm{cl}_<^n(b')$;
		\end{enumerate}
		\item\label{minimality_condition}\label{algebraic_condition} if $M \models T^+$ is $\aleph_1$-saturated, $A \oleq M$ is countable, and for every trivial one-element extension $A\oleq Ab$ there is $b' \in M$ such that $Ab \cong_A Ab'\oleq M$, then for every minimal extension $A\oleq A\bar{c}$ there is $\bar{d} \in M^{<\omega}$ such that $A\bar{c} \cong_A A\bar{d}\oleq M$;
		\item\label{extension} if $M\models T^+$ is $\aleph_1$-saturated, $A_0,B_0\oleq M$ and $f:A_0\cong B_0$ is an isomorphism, then there are $A_1,B_1\preccurlyeq M$ of size $\leq|A_0|+\aleph_0$ such that $A_0\oleq A_1$, $B_0\oleq B_1$ and there is an isomorphism $\hat{f}:A_1\cong B_1$ with $\hat{f}\restriction A_0=f$. 
	\end{enumerate}
\end{technical}

We thus conclude this section by showing that \emph{modulo} \ref{ass:hf_closure}(\hyperref[the_hf_axiom]{C3}), the technical assumptions \ref{technical_assumptions}(\hyperref[trivial_condition]{D1})-(\hyperref[extension]{D3}) entail both \ref{main_th}(\hyperref[completeness_axiom]{C1}) and \ref{main_th}(\hyperref[the_K_homogeneous_lemma]{C2}).

\begin{proposition}\label{the_K_saturated_lemma} 
	Suppose $T^+$ satisfies both \ref{ass:hf_closure}(\hyperref[the_hf_axiom]{C3}) and the technical assumptions \ref{technical_assumptions}(\hyperref[trivial_condition]{D1})-(\hyperref[minimality_condition]{D2}). Then  $T^+$ also satisfies \ref{main_th}(\hyperref[completeness_axiom]{C1}), i.e., for all $\kappa\geq \aleph_1$ every $\kappa$-saturated model $M\models T^+$ is $(\mathcal{K}, \oleq,\kappa)$-saturated.
\end{proposition}
\begin{proof}
	Suppose $M\models T^+$ is $\kappa$-saturated for some $\kappa\geq \aleph_1$, we claim that $M$ is also $(\mathcal{K}, \oleq,\kappa)$-saturated. We let $<$ be a $\mrm{HF}$-ordering of $M$ and $\mrm{cl}_<$ the associated $\mrm{HF}$-closure operator.
	
	\smallskip 
	\noindent Firstly, we prove the claim for extensions of the form $A\oleq B$ where $|A|<\kappa$ and $|B\setminus A|<\aleph_0$. By induction, it is clearly sufficient to consider only the minimal strong extensions $A\oleq A\bar{b}$. Moreover, by Condition (\hyperref[minimality_condition]{D2}), it suffices to consider only the trivial one-element extensions (cf.~\ref{def_extensions}(\ref{def_trivial})).
	
	\smallskip 
	\noindent  Let $A \oleq M$, $|A|<\kappa$ and suppose that $A\oleq Ab$ is a trivial extension. Let $A'\coloneqq \mrm{cl}_{<}(A)$, then by \ref{ass:hf_closure}(\hyperref[the_hf_axiom]{C3}) we have that $A'\oleq M$ and $|A'|<\kappa$. Consider the type $p(x/A')$ that says, for every $n < \omega$ and for every finite subset $D\subseteq A'$, that $Dx \oleq D\mrm{cl}^n_{<}(x)$ and that $Dx$ is a trivial extension of $D$. Notice that this is first-order expressible by reasoning as in Remark \ref{the_type_remark}. Then, by Condition (\hyperref[trivial_condition]{D1}), it follows that $p(x/A')$ is satisfiable. We let $b_* \in M$ be a realisation of this type. Clearly $Ab_*\cong_A Ab$. We show that also $Ab_* \oleq M$ holds.  First of all, notice that since $Ab_* \oleq M$, it follows by \ref{ass:hf_closure}(\hyperref[the_hf_axiom]{C3}) and \ref{def_closure_operator} that $\mrm{cl}_{<}(Ab_*) \oleq M$, and also $A'\mrm{cl}_{<}(b_*) \oleq M$.   Moreover, since $b_*$ satisfies $Db_* \oleq D\mrm{cl}^n_{<}(b_*)$ for every $D\subseteq_\omega A'$, it follows that $A'b_* \oleq A'\mrm{cl}_{<}(b_*)$.  By transitivity we conclude that  $A'b_* \oleq M$. Now, since $b_*$ is trivial over $A'$ it follows that $\mrm{gcl}_{A'b_*}(A'b_*\setminus Ab_*)=\mrm{gcl}_{A'}(A'\setminus A)$. Since we also have that $A\oleq A'$, it follows by Condition \ref{context}(\ref{condition:K}) that $Ab_*\oleq A'b_*$. By transitivity we conclude that  $Ab_*\oleq M$.
	
	\smallskip 
	\noindent We are now ready to prove the full statement of the Lemma. Consider an arbitrary extension $A\oleq B$ with $A\oleq M$ and both $A$ and $B\setminus A$ of size $<\kappa$. Fix an enumeration $(b_i)_{i<\kappa}$ of $B\setminus A$ and consider the infinitary type $p(\bar{x},A)$, where $\bar{x}=(x_i)_{i<\kappa}$, and such that:
	\begin{enumerate}[(i)]
		\item for all $\alpha<\omega$,  $p\restriction\{(x_i)_{i<\alpha} \}=\type^{\mrm{qf}}((b_i)_{i<\alpha}/A)$;
		\item for all finite $A_0\subseteq_\omega A$  and $I\subseteq_\omega \kappa$, $p(\bar{x},A)$ says that $A_0\cup \{x_i : i\in I\}\oleq M$.
	\end{enumerate}
	
	\noindent Now, condition (i) can be expressed simply by adding formulas to $p(\bar{x}/A)$ that say that the isomorphic type of $(x_i)_{i<\alpha}$ over $A$ is exactly  $\type^{\mrm{qf}}((b_i)_{i<\alpha}/A)$. Condition (ii) is more subtle, but one can see that it is indeed a first-order condition in the light of Remark \ref{the_type_remark}. In particular, Clause (ii) is equivalent to letting:
	\begin{align*}
		\forall \bar{y}\; ( \neg \delta_{(A_0b_{i_0}\dots b_{i_{n-1}}, C)}(\bar{a},x_{i_0}\dots x_{i_{n-1}},\bar{y} )     )\in p(\bar{x},A),
	\end{align*}	
	for all $i_0<\dots<i_{n-1}< \kappa$ , $A_0\subseteq_\omega A$, $A_0b_{i_0}\dots b_{i_{n-1}}\noleq C$ with $C\in \mathcal{K}$, and where $\bar{a}$ is an enumeration of $A_0$ and the  formula $\delta_{(A_0b_{i_0}\dots b_{i_{n-1}}, C)}(x_{i_0}\dots x_{i_{n-1}},\bar{y} )$ is defined as in Remark \ref{the_type_remark}.
	
	\smallskip
	\noindent Now, since $A\oleq B$, it follows in particular that, for all $I\subseteq_\omega$, $A\oleq A\cup \{b_i : i\in I\}$ (recall \ref{infinite_strong extensions}).  Since we have already proven the Lemma for the case of finite extensions it follows that there are $c_i\in M$ for all $i\in I$ such that $ \type^{\mrm{qf}}((b_i)_{i\in I}/A)=\type^{\mrm{qf}}((c_i)_{i\in I}/A) $ and $A\cup \{c_i : i\in I\}\oleq M$. Thus, the type $p(\bar{x},A)$ is consistent. Since $M$ is $\kappa$-saturated it then follows that $p(\bar{x},A)$ is realised in $M$. Therefore, we can find a sequence $(d_i)_{i<\kappa}$ in  $M$ such that $ \type^{\mrm{qf}}((d_i)_{i<\kappa}/A)=\type^{\mrm{qf}}((b_i)_{i<\kappa}/A) $ and, additionally, $A\cup \{d_i : i\in I\}\oleq M$ for all $I\subseteq_\omega \kappa$. By \ref{infinite_strong extensions} this entails that $A\cup \{d_i: i<\kappa \}\oleq M$, which completes our proof.
\end{proof}

\begin{proposition}\label{lemma:homogeneity}
	Suppose $T^+$ satisfies both \ref{ass:hf_closure}(\hyperref[the_hf_axiom]{C3}) and the technical assumption \ref{technical_assumptions}(\hyperref[extension]{D3}). Then  $T^+$ also satisfies \ref{main_th}(\hyperref[the_K_homogeneous_lemma]{C2}), i.e., for all $\kappa\geq \aleph_1$ every $\kappa$-saturated model is $(\mathcal{K}, \oleq,\kappa)$-homogeneous.
\end{proposition}
\begin{proof}
	Suppose $M$ is $\kappa$-saturated and let $A_0,B_0\oleq M$ be of size strictly less than $\kappa$ such that $f_0:A\cong B$ is an isomorphism. By Assumption \ref{technical_assumptions}(\hyperref[extension]{D3}) it follows that we can find two models $A_1\preccurlyeq M$ and $B_1\preccurlyeq M$ of size $<\kappa$ with $A_0\oleq A_1$, $B_0\oleq B_1$ and an isomorphism $f_1:A_1\to B_1$ satisfying $f_1\restriction A=f_0$. Since $M$ is $\kappa$-saturated, it follows that $f_1$ extends to an automorphism $f_2$ of $M$ such that $f_2\restriction A_1=f_1$, and thus in particular $f_2\restriction A_0=f_0$. This shows that $M$ is $(\mathcal{K}, \oleq,\kappa)$-homogeneous.
\end{proof}

\subsection{Stability and Forking independence}\label{subsec:5}

In this section we prove that the theory $T^+$ is always stable, we study  the relation of forking independence $\ind$ in models of $T^+$ and we show that $\ind$ is equivalent to a relation $\ind^{\otimes}$, defined in purely combinatorial terms. This also exhibits a connection between the theories studied in this paper and the framework of \emph{theories with free amalgamation} studied in \cite{Mutchnik,Conant1,Conant2}.  Moreover, we also relate $\ind$ and $\ind^{\otimes}$ to the \emph{free algebraic completions} in the theory $T$.  First, we provide a direct proof of the stability of $T^+$ from Assumptions \ref{main_th}(\hyperref[completeness_axiom]{C1})-(\hyperref[the_K_homogeneous_lemma]{C2}) and \ref{ass:hf_closure}(\hyperref[the_hf_axiom]{C3}). The following proof generalises the proof of stability for open projective planes from \cite{paolini&hyttinen} to the present abstract setting.

\begin{notation}\label{monster:notation}
	We denote by $\mathfrak{M}$ the monster model of the theory $T^+$ in the language expanded with a $\mrm{HF}$-order. Thus in particular $\mathfrak{M}$ satisfies the saturation property from \ref{main_th}(\hyperref[completeness_axiom]{C1})-(\hyperref[the_K_homogeneous_lemma]{C2}). When dealing with the monster model $\mathfrak{M}$ we usually omit the indices and write  $\mathrm{tp}(a/B)$ instead of $\mathrm{tp}_\mathfrak{M}(a/B)$ to refer to the type of an element, and we write  $ \mrm{acl}(B)$ instead of $ \mrm{acl}_\mathfrak{M}(B) $ to refer to the algebraic closure of a set. We do the same for quantifier-free types and quantifier-free algebraic closures, and similarly we write just $ \mrm{icl}_\mathfrak{M}(B) $ for the intrinsic closure of $B$ computed in $\mathfrak{M}$.
\end{notation}

\begin{theorem}\label{stability} 
	Suppose $T^+$ satisfies \ref{main_th}(\hyperref[completeness_axiom]{C1})-(\hyperref[the_K_homogeneous_lemma]{C2}) and \ref{ass:hf_closure}(\hyperref[the_hf_axiom]{C3}). Then $T^+$ is stable.
\end{theorem}
\begin{proof} 
	Let $\kappa$ be such that $\kappa^{\aleph_0} = \kappa$, $A \models T^+$ with $|A| = \kappa$ and $A \preccurlyeq \mathfrak{M}$. By Assumption \ref{main_th}(\hyperref[the_hf_axiom]{C2}) it follows that $A \oleq  \mathfrak{M}$ and thus by \ref{existence_HF_orders} we can find a $\mrm{HF}$-ordering $<'$ of $\mathfrak{M}$ over $A$. Additionally, it also follows from \ref{existence_HF_orders} that we can extend $<'$ to a full $\mrm{HF}$-order $<$ of $\mathfrak{M}$ with $A$ as an initial segment. By \ref{ass:hf_closure}(\hyperref[the_hf_axiom]{C3}) we also assume without loss of generality that $A=\mrm{cl}_<(A)$.
	
	\smallskip
	\noindent Let $a,b \in \mathfrak{M}\setminus A$, let $A_0 = \mrm{cl}_{<}(a)$ and $B_0 = \mrm{cl}_<(b)$. By Assumption \ref{ass:hf_closure}(\hyperref[the_hf_axiom]{C3}) we have that $|A_0|\leq \aleph_0$, $|B_0|\leq \aleph_0$, and also  that $AA_0\oleq \mathfrak{M}$, $AB_0\oleq \mathfrak{M}$.  Now, suppose w.l.o.g. that $A_0 \cap A = B_0 \cap A$ and that there is an isomorphism $f: A_0 \cong B_0$ such that $f \restriction A_0 \cap A = \mrm{id}_{A_0 \cap A}$ and $f(a) = b$. Let $h:AA_0\to AB_0$ be such that $f\subseteq h$ and $h\restriction A=\mrm{id}_A$, then $h$ is an isomorphism. By Proposition~\ref{lemma:homogeneity} we have that $\mathfrak{M}$ is $(\mathcal{K}, \oleq,\kappa)$-homogeneous, whence  $h$ extends to an $A$-automorphism of $\mathfrak{M}$. We conclude that, modulo $\kappa$ many elements, the type of an element $a\in \mathfrak{M}$ over a set $A$ of size $\kappa$ is determined by its type over $\mrm{cl}_{<}(a)$. Since by Definition~\ref{def_closure_operator} we have $|\mrm{cl}_{<}(a)|\leq \aleph_0$ and $\kappa^{\aleph_0} = \kappa$, this completes the proof of the former statement. 
\end{proof}

Up to this point, we have proved that the theory $T^+$ is both complete and stable, provided that the assumptions in \ref{main_th}(\hyperref[completeness_axiom]{C1})-(\hyperref[the_K_homogeneous_lemma]{C2}) and \ref{ass:hf_closure}(\hyperref[the_hf_axiom]{C3}) are both satisfied. Since $T^+$ is stable, it follows that the relation  $\ind$ of forking independence is well-behaved, in the sense that it satisfies the key properties from \cite[Thm.~8.55]{tent_book}. 

\begin{remark}\label{stationarity}
	In the following, we always use the symbol $\ind$  to denote the relation of \textit{forking independence}, and we refer the reader to \cite[\S7.1]{tent_book} for its definition.  Let $\bar{a}\in \mathfrak{M}^{<\omega}$ and $B,C\subseteq \mathfrak{M}$, we write $\bar{a} \ind_B C$ if the type $\type(\bar{a}/BC)$ \emph{does not fork} over $B$, and we write $\bar{a} \nind_B C$ if the type $\type(\bar{a}/BC)$ \emph{does fork} over $B$.
\end{remark}

Next, we prove the following important lemma, which establishes that if $A\oleq A\bar{b}\oleq M\models T^+$, then the type of $\bar{b}$ over $A$ is uniquely determined by its quantifier-free part. We notice that, in most of our applications, we have by Corollary~\ref{model_completeness_QE} that $T^+$ does not have a prime model, it is not model complete and it does not eliminate quantifiers. However, exactly as in \cite[Thm.~1.6]{paolini&hyttinen}, one can show that the following proposition entails that in $T^+$ every formula is equivalent to a Boolean combination of existential formulas.

\begin{proposition}\label{lemma:quantifier-free-type}
	Let $A\oleq M\models T^+$ and $\bar{b},\bar{c}\in M^{<\omega}$. If $\atype(\bar{b}/A)=\atype(\bar{c}/A)$ and $ A\bar{b}\oleq \mathfrak{M}$, $A\bar{c}\oleq \mathfrak{M}$, then $\type(\bar{b}/A)=\type(\bar{c}/A)$.
\end{proposition}
\begin{proof}
	Let $|M|=\kappa$ and assume without loss of generality that $M\preccurlyeq \mathfrak{M}$. Then by Theorem~\ref{prop_completeness} we have that $M\oleq \mathfrak{M}$, whence we obtain $A\oleq A\bar{b}\oleq \mathfrak{M}$ and $A\oleq A\bar{c}\oleq \mathfrak{M}$. Let $\bar{b}=(b_1,\dots,b_n)$ and $\bar{c}=(c_1,\dots,c_n)$, let  $f:A\cup \{b_1,\dots,b_n\}\to A\cup \{c_1,\dots,c_n\}$ be a partial isomorphism witnessing that $\atype(\bar{b}/A)=\atype(\bar{c}/A)$, in particular $f\restriction A=\mrm{id}_A$ and we let $f(b_i)=c_i$ for all $1\leq i\leq n$. Since $\mathfrak{M}$ is $(\mathcal{K}, \oleq,\kappa^+)$-homogeneous by  \ref{main_th}(\hyperref[the_K_homogeneous_lemma]{C2}), it follows that $f$ can be extended to an $A$-automorphism $\pi$ of $\mathfrak{M}$, witnessing that  $\type(\bar{b}/A)=\type(\bar{c}/A)$.
\end{proof}

We now introduce the following relation $\ind^{\otimes}$ of \emph{free independence}, which we define purely in terms of the free amalgam from Definition~\ref{general_free_amalgam}.

\begin{definition}\label{def:free_independence}
	Let $A, B, C\subseteq \mathfrak{M}$ (and recall from \ref{monster:notation} that $\mathfrak{M}$ is the monster model of $T^+$), we define the relation   $\ind^{\otimes}$ by letting
	\begin{align*}
		B\ind^{\otimes}_A C \; \Longleftrightarrow \; \mrm{icl}(ABC)= \mrm{icl}(AB)\otimes_{\mrm{icl}(A)}\mrm{icl}(AC).
	\end{align*}
\end{definition}

We recall that we always assume that the monster model $\mathfrak{M}$ is sufficiently saturated and homogeneous in the language $L$, whence by \ref{main_th}(\hyperref[completeness_axiom]{C1})-(\hyperref[the_K_homogeneous_lemma]{C2}) $\mathfrak{M}$ is also  $(\mathcal{K}, \oleq,\kappa)$-saturated and  $(\mathcal{K}, \oleq,\kappa)$-homogeneous for cardinals $\kappa$ which are large enough. The strategy that we employ in the following arguments is essentially the same as in \cite{tent}, although we defined the relation $\ind^{\otimes}$ differently. In particular, we use the fact that in stable theories forking independence is the only independence relation satisfying the key properties from Fact \ref{properties.of.forking} below (cf.~\cite[Thm. 8.5.10]{tent_book}).

\begin{fact}\label{properties.of.forking}
	Let $T$ be a stable theory, then the relation of forking independence $\ind$ is the only ternary relation on subsets of $\mathfrak{M}$ which satisfies the properties below:
	\begin{enumerate}[(a)]
		\item \emph{Invariance}:  let $A,B,B',C\subseteq \mathfrak{M}$, if $B\ind_A C$ and $\type(B/C)=\type(B'/C)$, then $B'\ind_A C$;
		\item \emph{Local Character}:  there is a cardinal $\kappa$ such that for all finite $B\subseteq \mathfrak{M}$ and $C\subseteq \mathfrak{M}$, there is a subset $A\subseteq \mathfrak{M}$ of size $\leq \kappa$ such that $B\ind_A {C}$;
		\item \emph{Weak Boundedness}: for all $A\subseteq \mathfrak{M}$ there is a cardinal $\mu$ such that, for all $B,B'\subseteq_\omega \mathfrak{M}$ and $A\subseteq C\subseteq \mathfrak{M}$, there are at most $\mu$ many types $\type(B'/C)$ such that $B'\ind_A C$ and $\type(B'/A)=\type(B/A)$;
		\item \emph{Existence}:  for all finite $B\subseteq \mathfrak{M}$ and $A\subseteq C\subseteq \mathfrak{M}$, there is some $B'\subseteq \mathfrak{M}$ such that $B'\ind_A C$ and $\type(B/A)=\type(B'/A)$;
		\item \emph{Transitivity}: let $B\subseteq_\omega \mathfrak{M}$ and $A\subseteq C\subseteq D\subseteq \mathfrak{M}$, if $B\ind_A C$ and $B\ind_C D$, then $B\ind_A D$;
		\item \emph{Weak monotonicity}: let $A,B,C,D\subseteq \mathfrak{M}$, if $B\ind_A C$ and $D\subseteq C$, then $B\ind_A D$.
	\end{enumerate}	
\end{fact}

\noindent By the fact above, to show that $\ind=\ind^\otimes$ it suffices to verify that the relation $\ind^\otimes$ satisfies all the properties of forking independence.

\begin{theorem}\label{characterisation_forking}
	Suppose $T^+$ satisfies \ref{main_th}(\hyperref[completeness_axiom]{C1})-(\hyperref[the_K_homogeneous_lemma]{C2}) and \ref{ass:hf_closure}(\hyperref[the_hf_axiom]{C3}). Then the relation $\ind^\otimes$ satisfies all the properties of forking independence from Fact \ref{properties.of.forking}. In particular, for any $A,B,C\subseteq \mathfrak{M}$, we have that $B \ind_A C$ is equivalent to $B \ind^{\otimes}_A C$.
\end{theorem}
\begin{proof}
	We verify that $\ind^\otimes$ satisfies all the properties from Fact \ref{properties.of.forking}.	
	\medskip 
	\noindent \underline{Invariance}: let $A,B,B',C\subseteq \mathfrak{M}$, if $B\ind^\otimes_A C$ and $\type(B/C)=\type(B'/C)$, then $B'\ind^\otimes_A C$.
	
	\smallskip
	\noindent  We assume without loss of generality that $A\subseteq B\cap C$ and $A=\mrm{icl}(A)$, $B=\mrm{icl}(B)$ and $C=\mrm{icl}(C)$. Let $\pi$ be a $C$-automorphism of $\mathfrak{M}$ witnessing that $\type(B/C)=\type(B'/C)$ then $B'C=\pi(BC)=\pi(B\otimes_A C)=B'\otimes_A C$. Moreover, since $\Pi$ is an automorphism of $\mathfrak{M}$ and $B\otimes_{A} C=\mrm{icl}(BC)\oleq \mathfrak{M}$, we also have that $B'\otimes_{A} C=\mrm{icl}(B'C)\oleq \mathfrak{M}$. This shows that $B\ind^\otimes_A C$ if and only if $B'\ind_A^\otimes C$.

	\medskip 
	\noindent \underline{Weak Monotonicity}: let $A,B,C,D\subseteq \mathfrak{M}$, if $B\ind^\otimes_A C$ and $D\subseteq C$, then $B\ind^\otimes_A D$.
	
	\smallskip
	\noindent Suppose without loss of generality that $A\subseteq B\cap C$ and $A=\mrm{icl}(A)$, $B=\mrm{icl}(B)$ and $C=\mrm{icl}(C)$. If $B\ind^\otimes_A C$ then we have that $BC=B\otimes_A C\oleq \mathfrak{M}$. Then clearly for $D\subseteq C$ we have that $\mrm{icl}(D)\subseteq \mrm{icl}(C)$ (by Definition~\ref{baldwin:intrinsic_closure}), and so $B\cup \mrm{icl}(D)=B\otimes_A \mrm{icl}(D)$. Moreover, since we have that $\mrm{icl}(D)\oleq C\oleq \mathfrak{M}$, it follows also from Conditions \ref{context}(\ref{condition:A})-(\ref{condition:K}), that $B\otimes_A \mrm{icl}(D)\oleq B\otimes_A C \oleq \mathfrak{M}$, proving our claim.

	\medskip 
	\noindent \underline{Weak Boundedness}:  for all $A\subseteq \mathfrak{M}$ there is a cardinal $\mu$ such that, for all $B,B'\subseteq_\omega \mathfrak{M}$ and $A\subseteq C\subseteq \mathfrak{M}$, there are at most $\mu$ many types $\type(B'/C)$ such that $B'\ind_A C$ and $\type(B'/A)=\type(B/A)$;
	
	\smallskip
	\noindent Suppose without loss of generality that $A\subseteq B_0$ $A\subseteq B_1$, $B_0\cap C=B_1\cap C=A$ and $A=\mrm{icl}(A)$, $B_0=\mrm{icl}(B_0)$, $B_1=\mrm{icl}(B_1)$,  $C=\mrm{icl}(C)$. We assume that $B_0\ind^\otimes_A C$, $B_1\ind^\otimes_A C$ and  $\type(B_0/A)=\type(B_1/A)$, so in particular there is an isomorphism $f:B_0\to B_1$ such that $f\restriction A=\mrm{id}_A$. Also, by the definition of $\ind^\otimes$ it follows that $\mrm{icl}(B_0C)=B_0\otimes_{A} C\oleq \mathfrak{M}$ and $\mrm{icl}(B_1C)=B_1\otimes_{A} C\oleq \mathfrak{M}$. Then $f$ together with the identity map $\mrm{id}_C$ determines an isomorphism $g:B_0\otimes_A C\to B_1\otimes_A C$ such that $g\restriction B_0=f$ and  $g\restriction C=\mrm{id}_C$. It follows that $\atype(B_0/C)=\atype(B_1/C)$, and so by Proposition \ref{lemma:quantifier-free-type} we have   $\type(B_0/C)=\type(B_1/C)$. Since for any $\type(D/A)$ there at most $2^{|A|+\aleph_0}$-completions of  $\type(D/A)$ to $\type(\mrm{icl}(D)/A)$, this proves our claim.

	\medskip 
	\noindent \underline{Existence}: for all finite $B\subseteq \mathfrak{M}$ and $A\subseteq C\subseteq \mathfrak{M}$, there is some $B'\subseteq \mathfrak{M}$ such that $B'\ind^\otimes_A C$ and $\type(B/A)=\type(B'/A)$.
	
	\smallskip
	\noindent Let $B\subseteq \mathfrak{M}$ and $A\subseteq C\subseteq \mathfrak{M}$, by the definition of $\ind^\otimes$ we can assume w.l.o.g. that $A\subseteq B$, $A=\mrm{icl}(A)$, $B=\mrm{icl}(B)$, $C=\mrm{icl}(C)$ and also that the extension $A\oleq B$ is minimal. If $A\oleq B$ is $\mathcal{K}$-algebraic, then by the algebraic amalgamation property we have that either there is a strong embedding $f:B\to C$ over $A$ or $B\otimes_{A} C\in \mathcal{K}$. In case there is a strong embedding $f:B\to C$, then we have that $\mrm{icl}(f(B)C)=\mrm{icl}(C)$ and thus obviously $f(B)\ind^\otimes_A C$. Moreover, since $A\oleq B\oleq \mathfrak{M}$, $A\oleq f(B)\oleq \mathfrak{M}$ and $B\cong_A f(B)$, it follows from Proposition~\ref{lemma:quantifier-free-type} that $\type(B/A)=\type(f(B)/A)$. Similarly, if $B\otimes_{A} C\in \mathcal{K}$ then since $C\oleq B\otimes_{A} C$ there is a strong embedding $f:B\otimes_{A} C\to \mathfrak{M}$ such that $f\restriction C=\mrm{id}_C$. Letting $B'=f(B)$ it follows that $B'\ind_A^\otimes C\oleq \mathfrak{M}$. As in the previous case, since $A\oleq B\oleq \mathfrak{M}$, $A\oleq B'\oleq \mathfrak{M}$ and $B\cong_A B'$, it follows from Proposition~\ref{lemma:quantifier-free-type} that $\type(B/A)=\type(B'/A)$. Finally, in the case when $A\oleq B$ is not $\mathcal{K}$-algebraic, we have by the algebraic amalgamation property that $B\otimes_{A} C\in \mathcal{K}$, and so we can reason exactly as in the previous case.
	
	\medskip 
	\noindent \underline{Transitivity}:  let $B\subseteq_\omega \mathfrak{M}$ and $A\subseteq C\subseteq D\subseteq \mathfrak{M}$, if $B\ind^\otimes_A C$ and $B\ind^\otimes_C D$, then $B\ind^\otimes_A D$.
	
	\smallskip
	\noindent Suppose $B\ind^\otimes_A C$, $B\ind^\otimes_C D$ and assume without loss of generality that $A=\mrm{icl}(A)$, $B=\mrm{icl}(B)$, $C=\mrm{icl}(C)$ and $D=\mrm{icl}(D)$. Since $A\subseteq C\subseteq D\subseteq \mathfrak{M}$, we have that $\mrm{icl}(ABD)= \mrm{icl}(BD) = \mrm{icl}(BCD)$, and therefore:
	\begin{align*}
		\mrm{icl}(ABD) &= \mrm{icl}(BC)\otimes_{C} \mrm{icl}(CD) \\
		&= (\mrm{icl}(AB)\otimes_{A}  \mrm{icl}(AC)) \otimes_{C} \mrm{icl}(CD) \\
		&= \mrm{icl}(AB)\otimes_{A} \mrm{icl}(AD).
	\end{align*}
	Additionally, since $\mrm{icl}(BC)\otimes_{C} \mrm{icl}(CD)\oleq \mathfrak{M}$, it follows from the previous equalities also that $\mrm{icl}(AB)\otimes_{A} \mrm{icl}(AD)\oleq \mathfrak{M}$, whence $B\ind^\otimes_A D$.	

	\medskip 
	\noindent \underline{Local Character}:   there is a cardinal $\kappa$ such that for all finite $B\subseteq \mathfrak{M}$ and $C\subseteq \mathfrak{M}$, there is a subset $A\subseteq \mathfrak{M}$ of size $\leq \kappa$ such that $B\ind^\otimes_A {C}$.
	
	\smallskip
	\noindent  We show that local character holds with respect to $\kappa=\aleph_0$. Let $B\subseteq\mathfrak{M}$ be finite and consider an arbitrary subset $C\subseteq \mathfrak{M}$. By the definition of $\ind^\otimes$ we assume without loss of generality that $C=\mrm{icl}(C)$. Then, since $C\oleq \mathfrak{M}$, it follows from Corollary \ref{equivalence_HFo.ordering}  that there is a $\mrm{HF}$-order $<'$ of $\mathfrak{M}$ over $C$. Again by \ref{equivalence_HFo.ordering}, we can extend $<'$ to a $\mrm{HF}$-order $<$ of $\mathfrak{M}$ with $C$ as its initial segment. Consider now the $\mrm{HF}$-closures $B'=\mrm{cl}_{<}(B)$, $C'=\mrm{cl}_{<}(C)$ and notice that by \ref{ass:hf_closure}(\hyperref[the_hf_axiom]{C3}) and Definition~\ref{def_closure_operator} we have that $B'\oleq \mathfrak{M}$ and $|B'|\leq \aleph_0$. Let $A=B'\cap C'$, then it follows directly from Definition~\ref{def_closure_operator} that $\mrm{cl}_{<}(BC)=B'\otimes_A C'\oleq \mathfrak{M}$, which means that $B'\ind^\otimes_A C'$. Then, since the relation $\ind^\otimes$ is clearly symmetric by definition, it follows from the weak monotonicity of $\ind^\otimes$ that $B\ind^\otimes_A C$, which completes our proof.
\end{proof}

In most of our concrete examples we shall deal with theories of incidence structures which have an associated notion of \emph{free object}, such as the free projective planes studied in \cite{paolini&hyttinen} or the free generalised $n$-gons studied in \cite{tent}. We unify these cases and provide an additional characterisation of the forking relation, which applies to these situations, and that parallels the description provided in \cite{paolini&hyttinen,tent}. In particular, using the notion of free amalgam from Definition~\ref{general_free_amalgam} and Definition~\ref{algebraic_number},  we can define the notion of \emph{free algebraic completion} of some $A\models T_\forall$. This was essentially introduced  by Funk and Strambach in \cite[\S 1.4]{funk2}, where they provide a uniform model-theoretic definition of the free completions from the incidence geometry literature. We notice that we take minimal algebraic strong extensions as the building blocks of free completions, while for Funk and Strambach this role is played by the so-called \emph{bricks} (cf.~\cite[Def. 3]{funk2}).

\begin{definition}\label{free_algebraic_completion}
	Let $A\models T_\forall$, we define its \emph{free algebraic completion $F(A)$} as follows. Let $A_0=A$ and suppose $A_n$ is given. We let $(A_n,A_nB_i)_{i<m}$ enumerate all minimal strong $\mathcal{K}$-algebraic extensions of $A_n$, up to isomorphism. For every $i<m$, we let $n_i$ be the degree of $(A_n,A_nB_i)$ (recall \ref{algebraic_number}), and we let $(A_nB^j_i)_{1\leq j\leq n_i}$ be isomorphic copies of $A_nB_i$ such that $A_nB^j_i\cap A_nB^k_i=A$ for $j\neq k$. Then we define, for all $i<m$:
	\[ A^i_{n}\coloneqq A_nB^1_i\otimes_{A_n}A_nB^2_i\otimes_{A_n}\dots \otimes_{A_n} A_nB^{n_i}_i \]
	and we let
	\[  A_{n+1}\coloneq  A^0_{n}\otimes_{A_n} A^1_{n} \otimes_{A_n} \cdots \otimes_{A_n}A^{m-1}_n.  \]
	Finally, we let $F(A)=\bigcup_{n<\omega}A_n$ be the free (algebraic) completion of $A$.
\end{definition}

The following Proposition \ref{key_properties:free_completion} is essentially a corollary of the previous assumptions. It captures the key properties of the free completion $F(A)$. We then employ \ref{key_properties:free_completion} in \ref{characterisation_acl} to provide a characterisation of the algebraic closure in models of $T^+$. 

\begin{proposition}\label{key_properties:free_completion}
	Let $A\models T_\forall$, then $A\oleq F(A)\models T_\forall$, $|F(A)|=|A|+\aleph_0$, and:
	\begin{enumerate}[(a)]
		\item every isomorphism $h:A\cong B$ extends to an isomorphism $\hat{h}:F(A)\cong F(B)$;
		\item for every $M\models T^+$ and $h:A\to M$ with $h(A)\oleq M$ there is an embedding $\hat{h}: F(A)\to M$ such that $\hat{h}\restriction A = h$ and $\hat{h}(F(A))\oleq M$.
	\end{enumerate}
\end{proposition}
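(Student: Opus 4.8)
The plan is to unwind Definition \ref{free_algebraic_completion} level by level, at each stage invoking the two halves of Assumption \ref{assumptions_forking}, and then to pass to the union.

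First I would establish the local statements: each $A_{n+1}$ is built from $A_n$ by finitely many free amalgams over $A_n$ (or over intermediate structures $A^i_n$) along minimal algebraic strong extensions, each taken with the correct multiplicity $n_i$ equal to the degree of $(A_n, A_n B_i)$. By Assumption \ref{assumptions_forking}(\hyperref[condition:amalgam]{C4}) (applied in the algebraic case, noting that when forming $A_n B^1_i \otimes_{A_n} \cdots \otimes_{A_n} A_n B^{n_i}_i$ each successive amalgam adds fewer than $n_i$ copies before the last one, so the hypothesis ``$C$ contains $<k$ copies'' is met at each step), every such amalgam is again a model of $T_\forall$ and the factors are strong in it. Iterating \ref{context}(\ref{condition:C}) (transitivity of $\oleq$) along the finitely many amalgamation steps inside a single level gives $A_n \oleq A_{n+1}$ and $A_{n+1} \models T_\forall$. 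Transitivity again gives $A_0 = A \oleq A_n$ for all $n$, and since $T_\forall$ is universal and $F(A) = \bigcup_n A_n$ is an increasing union, $F(A) \models T_\forall$; moreover $A \oleq F(A)$ follows from Definition \ref{infinite_strong extensions} because any finite $B_0 \subseteq F(A)$ lies in some $A_n$ and $A \cap B_0 \oleq B_0$ holds there by \ref{context}(\ref{condition:E}). The cardinality bound $|F(A)| = |A| + \aleph_0$ is immediate: at each level only finitely many (up to iso) minimal algebraic extensions over a structure of size $|A_n|$ are added, each of bounded size and bounded multiplicity, so $|A_{n+1}| = |A_n| + |A_n| + \aleph_0$, and the union of $\omega$-many such is of size $|A| + \aleph_0$.

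For (a), I would run the recursion synchronously on $A$ and $B$: given $h: A \cong B$, the enumeration of minimal algebraic strong extensions of $A_n$ is carried by $h$ (composed with the already-built iso $A_n \cong B_n$) to that of $B_n$, the degrees agree since degree is an isomorphism invariant (Definition \ref{algebraic_number}), and the free amalgam is a functorial construction in the obvious sense (Definition \ref{general_free_amalgam}), so the iso extends over each amalgamation step and hence to $A_{n+1} \cong B_{n+1}$; taking the union of the chain of isos yields $\hat h: F(A) \cong F(B)$. For (b), given $M \models T^+$ and $h: A \to M$ with $h(A) \oleq M$, I would build $\hat h$ by recursion, maintaining at stage $n$ an embedding $h_n: A_n \to M$ with $h_n(A_n) \oleq M$ and $h_n \restriction A = h$. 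At the successor step, for each minimal algebraic extension $A_n \oleq A_n B_i$ of degree $n_i$ used in the construction: since $h_n(A_n) \oleq M$ and $M \models T^+$, clause (3) or (2) of Definition \ref{the_theory} together with \ref{context}(\ref{condition:H}) guarantees that $M$ contains exactly enough (in fact at least $n_i$, and by the degree being maximal, exactly $n_i$ up to the relevant equivalence) copies of $B_i$ over $h_n(A_n)$; these copies realize the free amalgam $A_n B^1_i \otimes \cdots \otimes A_n B^{n_i}_i$ inside $M$ because distinct algebraic copies of $B_i$ over $h_n(A_n)$ intersect in $h_n(A_n)$ and carry no extra incidences or parallelisms between them (that is precisely what Definition \ref{general_free_amalgam} records), so we get an embedding of $A^i_n$ into $M$ extending $h_n$; and by Assumption \ref{assumptions_forking}(\hyperref[general_algebraic_lemma]{C5}) the image of each $A_n B^j_i$ is strong in $M$, whence by \ref{context}(\ref{condition:C}) the image of $A_{n+1}$ is strong in $M$. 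Amalgamating these (finitely many, over the common $h_n(A_n)$) gives $h_{n+1}$. Finally $\hat h = \bigcup_n h_n$ is an embedding $F(A) \to M$, and $\hat h(F(A)) \oleq M$ by Definition \ref{infinite_strong extensions} since every finite subset lands in some $h_n(A_n) \oleq M$.

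I expect the main obstacle to be the bookkeeping in (b) around \emph{multiplicities and the identification of copies}: one must check that the $n_i$ many copies of $B_i$ over $h_n(A_n)$ that Definition \ref{the_theory} forces into $M$ actually assemble into the free amalgam rather than into some structure with spurious incidences or merged parallelism classes — this is where the hypothesis $A = B \cap C$ in Definition \ref{general_free_amalgam} and the maximality clause in Definition \ref{algebraic_number} have to be deployed carefully, and where the split between the algebraic and non-algebraic provisos of (\hyperref[condition:amalgam]{C4}) matters. A secondary subtlety is verifying that applying (\hyperref[condition:amalgam]{C4}) repeatedly within one level is legitimate: the assumption is stated for amalgamating a single minimal extension at a time, so one genuinely does need the ``$<k$ copies'' clause at each of the first $n_i - 1$ steps and the fact — itself a consequence of the degree being the \emph{maximum} — that the full $n_i$-fold amalgam is still in $\mathcal{K}$ and hence in $T_\forall$. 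Everything else is a routine induction plus the transitivity axiom \ref{context}(\ref{condition:C}).
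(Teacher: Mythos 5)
Your proposal is correct and follows essentially the same route as the paper's proof: induction along the levels $A_n$, with (\hyperref[condition:amalgam]{C4}) giving $A_n\models T_\forall$ and $A_n\oleq A_n^0\oleq\cdots\oleq A_{n+1}$, transitivity of $\oleq$, the axioms of $T^+$ to realise each minimal algebraic extension over $h_n(A_n)$ in $M$, and (\hyperref[general_algebraic_lemma]{C5}) to keep the image strong. The only difference is that you spell out the bookkeeping (multiplicities, assembling the $n_i$ copies in $M$ into the free amalgam) that the paper's proof leaves implicit; this is a faithful elaboration rather than a different argument.
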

\begin{proof}
	The fact that  $|F(A)|=|A|+\aleph_0$ follows immediately by construction. Similarly $F(A)\models T_\forall$ since for every $n<\omega$ the algebraic amalgamation property entails that $A_n\models T_\forall$. Also, for every $n<\omega$ in the construction of $F(A)$, it follows from \ref{remark_pushout} that $A_n\oleq A_{n+1}$, whence we obtain that $A\oleq F(A)$.
	
	\smallskip
	\noindent  Consider (a). By the construction of $F(A)$, it is immediate to extend an isomorphism $h:A\cong B$  to an isomorphism $\hat{h}:F(A)\cong F(B)$.
	
	\smallskip
	\noindent Consider (b). Let $h:A\to M\models T^+$ with $h(A)\oleq M$. Suppose inductively that we have found an embedding $h_n\supseteq h$ such that $h_n: A_n\to M$ and $h_n(A_n)\oleq M$. Now, since $h_n(A_n)\oleq M$ we clearly also have that $h_n(A_n)\leq_n M$ for all $n<\omega$. It follows from the definition of $T^+$ (cf.~\ref{the_theory}) that every minimal strong extension of $h_n(A_n)$ is realised in $M$. Thus we can extend  $h_n$ to an embedding $h_{n+1}:A_{n+1}\to M$. Moreover, since $A_{n+1}$ is obtained from $A_n$ by adjoining only $\mathcal{K}$-algebraic strong extensions, it follows from \ref{condition:amalgam} that $h_{n+1}(A_{n+1})\oleq M$. Finally, let $\hat{h}=\bigcup_{n<\omega} h_n$, then clearly $\hat{h}: F(A)\to M$ is an embedding and $\hat{h}(F(A))\oleq M$.
\end{proof}

\begin{remark}
	Suppose $A\oleq M\models T^+$, from now on, with some slight abuse of notation, we identify $F(A)$ with its isomorphic image in $M$ (which exists by Proposition \ref{key_properties:free_completion}) and we simply write $F(A)\oleq M$. We also notice that, if $A\oleq B\otimes_A C$, then by Condition \ref{context}(\ref{condition:K}) it follows that $A\oleq B$ is algebraic if and only if $C\oleq B\otimes_A C$ is algebraic. By Definition \ref{free_algebraic_completion}, this in particular means that, similarly to what happens in \ref{key_properties:free_completion}, $A\oleq C$ entails $F(A)\oleq F(C)$.
\end{remark}

\begin{proposition}\label{characterisation_acl}
	Let $A\subseteq B\models T^+$, then $\mrm{acl}_B(A)=F(\mrm{icl}_B(A))$.
\end{proposition}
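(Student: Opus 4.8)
The plan is to show the two inclusions $\mrm{acl}_B(A) \supseteq F(\mrm{icl}_B(A))$ and $\mrm{acl}_B(A) \subseteq F(\mrm{icl}_B(A))$ separately, using Lemma~\ref{acl_open_rk} and Lemma~\ref{lemma:quantifier-free-type} as the main leverage. First I would record that it suffices to prove the statement for finite $A$: both sides commute with unions over finite subsets (for the left side this is the usual finite-character of algebraic closure, for the right side it follows from the local nature of $\mrm{icl}$ in \ref{baldwin:intrinsic_closure} together with the fact that $F(-)$ is built from finitely-generated algebraic pieces, so $F(\mrm{icl}_B(A))=\bigcup\{F(\mrm{icl}_B(A_0)) : A_0\subseteq_\omega A\}$ once one checks compatibility of the free completions). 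So assume $A$ is finite.

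For the inclusion $F(\mrm{icl}_B(A)) \subseteq \mrm{acl}_B(A)$: by definition $\mrm{icl}_B(A)$ is a finite union of minimal closed (hence algebraic, by Condition~\ref{context}(\ref{condition:H})) extensions of $A$, so $\mrm{icl}_B(A)\subseteq \mrm{acl}_B(A)$ already — each element lies in a configuration realized only boundedly often over $A$, and since $B\models T^+$ the number of realizations in $B$ is finite. Now one must see that applying $F(-)$ stays inside $\mrm{acl}_B$. Write $A' = \mrm{acl}^{\mrm{qf}}_B(\mrm{icl}_B(A))$; by Lemma~\ref{acl_open_rk} we have $A'\oleq B$. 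The building blocks of $F(\mrm{icl}_B(A))$ are minimal \emph{strong} \emph{algebraic} extensions, and by Assumption~\ref{assumptions_forking}(\hyperref[general_algebraic_lemma]{C5}) each such extension of a strong subset remains strong in $B$; by Definition~\ref{algebraic_number} it is moreover realized exactly $\deg$-many times in any member of $\mathcal{K}$, hence in $B$ (using that $B\models T^+$ the generic contains all of them, and no more). So inductively each stage $A_n$ of the construction $F(\mrm{icl}_B(A))=\bigcup_n A_n$ is contained in $\mrm{acl}_B(A)$ and the limit is too, since algebraic closure is closed under unions of chains of finite algebraic extensions.

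For the reverse inclusion $\mrm{acl}_B(A)\subseteq F(\mrm{icl}_B(A))$: set $D = F(\mrm{icl}_B(A))$; by Remark~\ref{remark:free_completion_otimes} and Proposition~\ref{key_properties:free_completion} we have $D\oleq B$ (since $\mrm{icl}_B(A)$'s closure is strong in $B$ and $F$ of a strong set is strong — $A\oleq\mrm{acl}^{\mrm{qf}}_B(\mrm{icl}_B(A))$ strong by \ref{acl_open_rk}, then \ref{assumptions_forking}(\hyperref[general_algebraic_lemma]{C5}) passes strongness through the completion). Let $b\in B\setminus D$; I want to produce infinitely many conjugates of $b$ over $A$. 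Since $D\oleq B$ and $b\notin D$, the extension $D\oleq Db$ is a genuine strong extension of a strong set with no algebraicity — indeed if $(A,Ab')$ (or rather the induced minimal sub-extension inside $\mrm{gcl}$) were algebraic, it would be captured by $\mrm{icl}_B(A)$ and hence $b$ would already be in $D$. So $D\oleq Db$ is non-algebraic; by Condition~\ref{context}(\ref{condition:H}) there are then arbitrarily many copies of the relevant configuration over $D$ in $B$, and by Lemma~\ref{lemma:quantifier-free-type} (applied over the base $D$, which is a strong substructure and contains $A$) these copies realize the same full type over $A$ as $b$. Hence $b\notin\mrm{acl}_B(A)$, which gives $\mrm{acl}_B(A)\subseteq D$.

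\textbf{Main obstacle.} The delicate point is the reduction to finite $A$ and, relatedly, verifying that $F(\mrm{icl}_B(A))$ is genuinely a strong substructure of $B$ and that $\mrm{icl}_B(A)$ correctly absorbs \emph{all} algebraic phenomena over $A$ — i.e., that an element sitting in some algebraic minimal configuration over $A$ is already inside $\mrm{icl}_B(A)$, even when that configuration is entangled with the Gaifman closure in a non-obvious way. This is where Conditions~\ref{context}(\ref{condition:H}), (\ref{condition:K}) and Assumption~\ref{assumptions_forking}(\hyperref[general_algebraic_lemma]{C5}) all get used in combination, and one must be careful that ``minimal algebraic strong extension'' (the building block of $F$) matches ``minimal closed extension'' (the building block of $\mrm{icl}$) up to the free-amalgamation step. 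I would isolate this compatibility as a preliminary claim before running the two inclusions above.
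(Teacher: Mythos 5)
There is a genuine gap in your argument for the inclusion $\mrm{acl}_B(A)\subseteq F(\mrm{icl}_B(A))$. Writing $D=F(\mrm{icl}_B(A))$ and taking $b\in B\setminus D$, you want infinitely many realizations of $\type(b/A)$, and you propose to get them from the fact that $D\oleq Db$ is a minimal non-algebraic extension. But the source of such copies is axiom (3) of Definition~\ref{the_theory} (not Condition~\ref{context}(\ref{condition:H}), which only says that non-strong extensions are algebraic), and that axiom produces infinitely many copies of $b$ over a \emph{finite} base $D_0\subseteq D$. To transport these to realizations of $\type(b/A)$ via Lemma~\ref{lemma:quantifier-free-type} you need them to have the same quantifier-free type over all of the (infinite) strong set $D$ — note that $A$ itself need not be $\oleq B$, so the lemma cannot be applied over $A$ directly. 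Nothing prevents each of the infinitely many copies from acquiring a distinct extra incidence with a distinct element of $D\setminus D_0$ (e.g.\ in a projective plane, each new point on the line $\ell\in D_0$ may lie on a different further line of $D$), in which case no copy besides $b$ realizes $\atype(b/D)$, and your argument stalls. A related slip: an algebraic one-element strong extension of $D$ is absorbed by the free completion $F$, not by $\mrm{icl}_B(A)$ — $\mrm{icl}$ only collects the \emph{closed} (non-strong) configurations — so the sentence ``it would be captured by $\mrm{icl}_B(A)$'' conflates the two closure operators, although the conclusion $b\in D$ is the right one.

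The paper avoids all of this by proving the reverse inclusion at the quantifier-free level: since $\mrm{icl}_B(A)\oleq B$ and hence $D\oleq B$ by Proposition~\ref{key_properties:free_completion}, Lemma~\ref{lemma:quantifier-free-type} reduces the problem to showing $\mrm{acl}^{\mrm{qf}}_B(D)=D$. A quantifier-free algebraic element is witnessed by a finite configuration over a finite stage $A_n$ of the construction of $D$, which is minimal algebraic and strong over $A_n$, hence already added at stage $A_{n+1}$. This keeps the entire argument at the level of finite configurations and never requires counting conjugates over an infinite parameter set. Your preliminary reduction to finite $A$ is also both unnecessary for this route and not free of charge: the claimed identity $F(\mrm{icl}_B(A))=\bigcup\{F(\mrm{icl}_B(A_0)):A_0\subseteq_\omega A\}$ would itself need a compatibility argument. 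Your first inclusion, by contrast, is essentially the paper's and is fine.
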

\begin{proof} 
	By \ref{context}(\ref{condition:H}) we immediately have that $\mrm{icl}_B(A)\subseteq \mrm{acl}_B(A)$. Moreover, by Definition \ref{free_algebraic_completion} of the free completion $F(\mrm{icl}_B(A))$ and \ref{key_properties:free_completion} it follows that  $F(\mrm{icl}_B(A))\subseteq \mrm{acl}_B(A)$. Consider the converse direction. By Definition \ref{baldwin:intrinsic_closure} it follows that $\mrm{icl}_B(A)\oleq B$, so then by Proposition \ref{key_properties:free_completion} we obtain $F(\mrm{icl}_B(A))\oleq B$. Now, since we have that $\mrm{acl}^{\mrm{qf}}_B(F(\mrm{icl}_B(A)))\oleq \mathfrak{M}$, by Proposition \ref{lemma:quantifier-free-type}  it is sufficient to show that $\mrm{acl}^{\mrm{qf}}_B(F(\mrm{icl}_B(A)))= F(\mrm{icl}_B(A))$.  Consider some $c\in \mrm{acl}^{\mrm{qf}}_B(A)=\mrm{acl}^{\mrm{qf}}_B(F(\mrm{icl}_B(A)))$ and suppose towards contradiction that $c\notin F(\mrm{icl}_B(A))$. Then, there is a step $A_n$ from the construction of the free completion $F(\mrm{icl}_B(A))$ (cf. \ref{general_free_amalgam}) such that $c$, together possibly with some other elements, is algebraic over $A_n$. Let $C$ be a minimal extension of $A_n$ which contains $c$ and is algebraic over $A$. Since  $F(\mrm{icl}_B(A))\oleq B$ we also have that $A_n\oleq C$. It follows from Definition \ref{general_free_amalgam} that $C\subseteq A_{n+1}\subseteq F(\mrm{icl}_B(A))$, and so $c\in F(\mrm{icl}_B(A))$, contrary to our assumption.
\end{proof}

We next introduce the notion of canonical amalgam and we show, generalising the prior work from \cite{paolini&hyttinen} and \cite{tent}, that $\ind^\otimes$ can be rephrased in terms of canonical amalgamation.

\begin{definition}\label{def_canonical_amalgam} Let $A, B, C\models T_\forall$ be  such that $A\oleq B$, $A\oleq C$, $B \cap C = A$ and suppose that $A$ has no $\mathcal{K}$-algebraic strong extension in neither $B$ nor $C$. The {\em canonical amalgam} $B \oplus_A C$ of  $B$ and $C$ over $A$ is the structure
	\[ B \oplus_A C\coloneqq F(B \otimes_A C),   \]
	i.e., it is the free completion of the free amalgam $B \otimes_A C$.
\end{definition}

\noindent We conclude this section with the following additional characterisation of the forking relation $\ind$. This generalises the description of forking provided in \cite{paolini&hyttinen,tent}.

\begin{theorem}\label{characterisation_forking_3}
	Suppose $T^+$ satisfies \ref{main_th}(\hyperref[completeness_axiom]{C1})-(\hyperref[the_K_homogeneous_lemma]{C2}) and \ref{ass:hf_closure}(\hyperref[the_hf_axiom]{C3}). Then for all $A,B,C\subseteq \mathfrak{M}$ we have that $B \ind_A C$ is equivalent to the following condition:
\[\mrm{acl}(ABC)= \mrm{acl}(AB) \oplus_{\mrm{acl}(A)} \mrm{acl}(AC).\]
\end{theorem}
\begin{proof}
	Immediate from Theorem \ref{characterisation_forking}, Proposition~\ref{key_properties:free_completion} and Proposition \ref{characterisation_acl}.
\end{proof}

\subsection{Failure of superstability}\label{subsec:no_superstability}

It was shown in \cite{paolini&hyttinen} that the theory of infinite open projective planes is stable but not superstable. Later, this result was generalised in \cite{tent} to the case of all open generalised $n$-gons. We prove in this section that \emph{most, but not all} of the examples of theories $T^+$ that we study in this paper are not superstable. More precisely, we identify the combinatorial condition  \ref{assumption:no-superstability_2}(\hyperref[CP]{C4})  and we show that, when satisfied, it entails the non-superstability of $T^+$. Interestingly, this conditions behaves as an important dividing line among our class of intended applications. While all the geometries considered in Section \ref{sec:application} satisfy the assumptions from \ref{main_th}(\hyperref[completeness_axiom]{C1})-(\hyperref[the_K_homogeneous_lemma]{C2}) and \ref{ass:hf_closure}(\hyperref[the_hf_axiom]{C3}), not all of them satisfy Condition \ref{assumption:no-superstability_2}(\hyperref[CP]{C4}) as well. The crucial counterexamples are the theories $T^+_0$ and $T^+_1$ from Section \ref{sec:n-open-grpahs}. While $T^+_0$ is simply the theory of an infinite set, $T^+_1$ is the theory of the free pseudoplane, which is well-known to be $\omega$-stable (cf.~\cite{tent2}). This shows that our framework applies to both superstable and non-superstable classes of models.

\begin{assumption}\label{assumption:no-superstability_2}
	Let $(\mathcal{K}, \oleq)$ and $T^+$ be respectively as in \ref{context} and \ref{the_theory}. In this section we assume that $T^+$ satisfies the following condition:
	\begin{enumerate}[(C4)]
		\item\label{CP}  there is a finite configuration $B\subseteq_\omega \mathfrak{M}$ and $A\subseteq \mrm{icl}(B)$ with $|\mrm{icl}(B)|=\aleph_0$, $A=\{a_i : i<\omega \}$ and $A_n = \{a_i : i\leq n\}\oleq \mrm{icl}(B)$ for all $n<\omega$.
	\end{enumerate}
\end{assumption}

\begin{remark}\label{remark:construction_principle}
	Under Assumptions \ref{main_th}(\hyperref[completeness_axiom]{C1})-(\hyperref[the_K_homogeneous_lemma]{C2}) and \ref{ass:hf_closure}(\hyperref[the_hf_axiom]{C3}) what Assumption \ref{assumption:no-superstability_2}(\hyperref[CP]{C4}) essentially says is that there is some $B\oleq \mathfrak{M}$ whose associated $\mrm{HF}$-order is \emph{not well-founded}. This observation provides an important bridge between  Assumption \ref{assumption:no-superstability_2}(\hyperref[CP]{C4}) and the so-called \emph{Construction Principle (CP)}, originally introduced by Eklof and Mekler in their work on $\mathfrak{L}_{\infty,\kappa}$-algebras (cf.~\cite{EM}). Generalising their approach, in \cite{HPQ} we introduced an abstract version of the Construction Principle $\mrm{CP}(\mathbf{K},\ast)$ for classes with free amalgamation. If one defines the relation $A\leq_{\ast} B$ to hold whenever there is a well-founded $\mrm{HF}$-order of $B$ over $A$ (which is essentially what we did in \cite{HPQ} for Steiner systems and generalised $n$-gons), then the Construction Principle for $T^+$ says exactly that there are a chain of models $(A_i)_{i<\omega}$   of $T^+$ and $B\models T^+$ such that $A_i \leq_{\ast} B $ for all $i<\omega$ but $\bigcup_{i<\omega}A_i\not \leq_{\ast} B$. In \cite{HPQ} we provided concrete constructions and showed that $\mrm{CP}(\mathbf{K},\ast)$ holds both in the context of Steiner systems and generalised $n$-gons. Together with the results from \cite{HPQ2} it is then easy to show that the theories of open $(k,n)$-Steiner systems and open generalised $n$-gons are not superstable. Here we proceed differently and we show that, if $T^+$ satisfies \ref{assumption:no-superstability_2}(\hyperref[CP]{C4}), then it is not superstable. We point out that the requirements in \ref{assumption:no-superstability_2}(\hyperref[CP]{C4}) are weaker than those in $\mrm{CP}(\mathbf{K},\ast)$, since in \ref{assumption:no-superstability_2}(\hyperref[CP]{C4}) we do not require any of the configurations $A_i$ or $B$ to be a model of $T^+$, nor we ask that $B$ admits a wellfounded $\mrm{HF}$-order over the configurations $A_i$.
\end{remark}

We recall that a stable theory $T$ is superstable if, for all finite $B\subseteq \mathfrak{M}$ and $C\subseteq \mathfrak{M}$, there is a finite subset $A\subseteq_\omega C$ such that $B\ind_{A}C$ (cf.~\cite[Def.~8.6.3]{tent_book}). Thus, superstable theories are exactly those stable theories where the property of local character from Fact \ref{properties.of.forking} holds already with respect to finite subsets. From this fact and the previous characterisation of forking independence from Theorem \ref{characterisation_forking} it is possible to show that \ref{assumption:no-superstability_2}(\hyperref[CP]{C4}) entails the non-superstability of $T^+$.

\begin{theorem}\label{failure:superstability}
	Suppose $T^+$ satisfies \ref{main_th}(\hyperref[completeness_axiom]{C1})-(\hyperref[the_K_homogeneous_lemma]{C2}), \ref{ass:hf_closure}(\hyperref[the_hf_axiom]{C3}) and \ref{assumption:no-superstability_2}(\hyperref[CP]{C4}). Then $T^+$ is not superstable.
\end{theorem}
\begin{proof}
	Let $B\subseteq_\omega \mathfrak{M}$, $A=\{a_i:i<\omega\}\subseteq \mrm{icl}(B)$ and $A_n=\{a_i:i<\leq n\}$ be as in Assumption \ref{assumption:no-superstability_2}(\hyperref[CP]{C4}). For all ${n<\omega}$ we have by Theorem \ref{characterisation_forking} that $B\ind_{A_n} A$ if and only if $\mrm{icl}(B)=\mrm{icl}(B)\otimes_{\mrm{icl}(A_n)}\mrm{icl}(A)$. Since $A_n\oleq B\oleq \mathfrak{M}$ it follows that $\mrm{icl}(A_n)=A_n$ and so $\mrm{icl}(B)\cap \mrm{icl}(A)=\mrm{icl}(A)\neq \mrm{icl}(A_n)$. This shows that, for all $n<\omega$, we have $\mrm{icl}(B)\neq \mrm{icl}(B)\otimes_{\mrm{icl}(A_n)}\mrm{icl}(A)$ and thus $B\nind_{A_n} A$. It follows that $T^+$ is not superstable.
\end{proof}

In order to simplify our treatment of non-superstability in the concrete setting of the theories from Section~\ref{sec:application}, we introduce a set of technical assumptions in \ref{assumption:no-superstability}(\hyperref[technical:D4]{D4}) and we show it entails the abstract Assumption \ref{assumption:no-superstability_2}(\hyperref[CP]{C4}) above. We shall then prove in all cases from Section~\ref{sec:application} that they verify \ref{assumption:no-superstability}(\hyperref[technical:D4]{D4}). To this end, we first introduce the following technical notion of \emph{$k$-iterate} of a finite configuration.

\begin{definition}\label{construction:no_superstability}
	Let $C=\{ c_0,\dots, c_n\}\in \mathcal{K}$ be a configuration where $c_0,c_n$ have the same sort, and let $1\leq k < \omega$. We define its \emph{$k$-iterate} $I_k(C)$ with respect to the enumeration $C=\{ c_0,\dots, c_n\}$ as the structure $D$ defined as follows:
	\begin{enumerate}[(1)]
		\item the domain of $D$ is $\{d^i_j : i< k, \; j\leq n \}$, where $d^{i}_j$ and $d^\ell_j$ have the same sort for all $i<\ell< k$ and $j\leq n$, and with the constraint that $d^i_n=d^{i+1}_0$ for all $i<k$;
		\item for every incidence symbol $R\in L_{\mrm{i}}$ we let $D\models R(d^{i}_{j_0}, d^{i}_{j_1},\dots, d^{i}_{j_n})$ if and only if $C\models R(c_{j_0}, c_{j_1}, \dots, c^{i}_{j_1})$, for all $j_0,j_1,\dots, j_n\leq n$ and $i<k$;
		\item for every (local) equivalence relation $P\in L_{\mrm{p}}$ we let $D\models P(d^{i}_{j_0}, d^{i}_{j_1}, d^{i}_{j_2},\dots, d^{i}_{j_\ell})$ if $C\models P(c_{j_0}, c_{j_1},c_{j_2},\dots, c_{j_\ell})$, for all $j_0,j_1,\dots, j_\ell \leq n$ and $i<k$, and we then transitively close the relation $P(x,y,\bar{d})$ for all $\bar{d}\in D^{<\omega}$;
		\item no other relation holds in $D$ than those above.
	\end{enumerate}
\end{definition}

\begin{technical}\label{assumption:no-superstability}
	Suppose $T^+$ satisfies \ref{main_th}(\hyperref[completeness_axiom]{C1})-(\hyperref[the_K_homogeneous_lemma]{C2}), \ref{ass:hf_closure}(\hyperref[the_hf_axiom]{C3}) and \ref{assumption:no-superstability_2}(\hyperref[CP]{C4}), in this section we also assume that $T^+$ satisfies the following condition:
	\begin{enumerate}[(D4)]\label{technical:D4}
		\item there is a configuration $C=\{c_0,c_1,\dots,c_n \}\in \mathcal{K}$ such that:
		\begin{enumerate}[(a)]
			\item $c_0c_1\oleq C$ and there is no relation between $c_0$ and $c_1$;
			\item  $c_n\noleq C$ is confined (cf.~\ref{def_extensions}) and $c_n$ has the same sort of $c_0$;
			\item for every $k<\omega$, the $k$-iterate of $C$ from Definition \ref{construction:no_superstability} is in $\mathcal{K}$.
		\end{enumerate}
	\end{enumerate}
\end{technical}

We next prove that, when $T^+$ satisfies the former technical condition, then it also satisfies \ref{assumption:no-superstability_2}(\hyperref[CP]{C4}). The key point of the following proof is the construction of an element $d^0_n$ whose intrinsic closure is infinite. We crucially use the $k$-iterate of the configuration $C$ from  \ref{assumption:no-superstability}(\hyperref[technical:D4]{D4}).

\begin{proposition}\label{prop:technical_superstability}
	Suppose $T^+$ satisfies \ref{main_th}(\hyperref[completeness_axiom]{C1})-(\hyperref[the_K_homogeneous_lemma]{C2}), \ref{ass:hf_closure}(\hyperref[the_hf_axiom]{C3}) and also the technical assumption  \ref{assumption:no-superstability}(\hyperref[technical:D4]{D4}). Then  $T^+$ also satisfies \ref{assumption:no-superstability_2}(\hyperref[CP]{C4}).
\end{proposition}
\begin{proof}
	Let $C=\{c_0,c_1,\dots,c_n \}\in \mathcal{K}$ be as in \ref{assumption:no-superstability}. Let $A=\{a_i :i<\omega\}\oleq \mathfrak{M}$ be a countable set containing infinitely many elements of the same sort as $c_1$, with no relation between them. Also, we assume without loss of generality that $A_n=\{a_i :i\leq n\}\oleq A$ for all $n<\omega$.  By the saturation of $\mathfrak{M}$ we can find a copy $C_0$ of $C$ such that $A\oleq AC_0\oleq \mathfrak{M}$ and $a_0$ plays the role of $c_0$ in $C$. Thus letting $C_0= \{c^0_0,c^0_1,\dots,c^0_n \}$ we have $a_0=c_0^0$. Now, suppose inductively that we have found configurations $C_0,\dots, C_k$ such that they are all isomorphic to the configuration $C$ from \ref{assumption:no-superstability} and
	\[ A\oleq AC_0\oleq \dots \oleq AC_0\dots C_{k-1} \oleq AC_0\dots C_{k-1}C_k\oleq \mathfrak{M}. \]
	Then for every $\ell\leq k$ let $C_\ell=\{c^\ell_0,c^\ell_1,\dots,c^\ell_n \}$ be the enumeration inherited from the isomorphism with $C$. Additionally, we assume inductively that $c^{\ell+1}_0=c^{\ell}_n$ for all $\ell< k$ and $c^\ell_1=a_\ell$ for all $\ell\leq k$. Then, by \ref{assumption:no-superstability}(c), we can iterate this procedure and obtain (by \ref{assumption:no-superstability}(a) and \ref{context}(\ref{condition:K})) that $AC_0\dots C_{k-1}C_k\oleq AC_0\dots C_kC_{k+1}$, where $C_{k+1}$ is a configuration isomorphic to $C$ from \ref{assumption:no-superstability}, but where $c^{k+1}_0=c^k_n$ and $c^{k+1}_1=a_{k+1}$. By the saturation of $\mathfrak{M}$ we then have (modulo isomorphism) that $AC_0\dots C_kC_{k+1}\oleq \mathfrak{M}$. Thus we can continue this construction for all $k<\omega$.
	
	\smallskip
	\noindent Now, using the saturation of $\mathfrak{M}$, we can find a sequence of configurations $(D_i)_{i<\omega}$ in $\mathfrak{M}$ such that, for every $i<\omega$, $D_i=\{d^i_0,d^i_1,\dots,d^i_n \}$ is isomorphic to $C$, $d^i_n=d^{i-1}_0$ and $d^i_1=a_i\in A$. Moreover, for all $i<j<\omega$ we have:
	\[A\oleq A\cup \bigcup_{i< j}D_j \oleq A\cup \bigcup_{i\leq j}D_j\oleq   \mathfrak{M}. \] 
	Intuitively, while the chain of configurations $(C_i)_{i<\omega}$ is \emph{ascending}, the chain of configurations $(D_i)_{i<\omega}$ is \emph{descending}. We claim that $A$ together with $\mrm{icl}(d^0_n)$ satisfy the conditions from \ref{assumption:no-superstability_2}(\hyperref[CP]{C4}).
	
	\smallskip
	\noindent First, we claim that every configuration $D_i$ is contained in $\mrm{icl}(d^0_n)$ and so that $A\subseteq \mrm{icl}(d^0_n)$. By construction $d^0_n\noleq D_0$ is confined and thus it follows from Definition~\ref{baldwin:intrinsic_closure} that $D_0\subseteq \mrm{icl}(d^0_n)$. Suppose inductively that we have showed $D_i\subseteq \mrm{icl}(d^0_n)$ for all $i\leq k$. Then since $d^{k+1}_{n}=d^k_0$ and $d_n^{k+1}\noleq D_{k+1}$ is confined, it follows that $D_{k+1}\subseteq \mrm{icl}(d^k_0)\subseteq \mrm{icl}(d^0_n)$. Since we assumed that $A=\{d^i_1 :i<\omega \}$, it follows that $A\subseteq \mrm{icl}(d^0_n)$ and in particular $|\mrm{icl}(d^0_n)|=\aleph_0$. Therefore, since for all $n<\omega$ we have that $A_n\oleq A\oleq \mathfrak{M}$ and $A\subseteq\mrm{icl}(d^0_n) $, it follows that $A_n\oleq \mrm{icl}(d^0_n)$. Since the former conclusions hold for all $n<\omega$, this verifies that Condition \ref{assumption:no-superstability_2}(\hyperref[CP]{C4}) holds, and thus completes our proof.
\end{proof} 

\subsection{Non-existence of prime models}\label{subsec:no_prime}

We conclude the general part of the article with some negative results about $T^+$. We show that, in the specific case where the algebraic free completion $F(A)$ is already a model of $T^+$, then the theory $T^+$ does not have a prime model, it is not model complete, and thus does not admit quantifier elimination. This is the case in all the incidence geometries considered in \cite{funk2}, but crucially it is false in the case of the free pseudoplane, namely the theory $T^+_1$ from Section \ref{sec:n-open-grpahs}. We identify in this section the additional set of Assumptions \ref{assumptions:no_prime}(\hyperref[F=C]{C5})-(\hyperref[delta-rank]{C7}), from which we establish that $T^+$ does not have a prime model and it is not model complete. We first define an abstract notion of predimension for the classes $(\mathcal{K},\oleq)$.

\begin{definition}\label{delta_strong} \label{open.vs.delta}
	Let $\delta:\mathcal{K}\to \mathbb{N}$ and let, for  $A\subseteq B\models T_\forall $, $A \leq_\delta B$ if and only if $\delta(A\cap B_0)\leq \delta(B_0)$  for every finite $ B_0 \subseteq B$. We say that $\delta$ is a \emph{predimension function for $(\mathcal{K},\oleq)$} if it satisfies the following properties:	
	\begin{enumerate}[(a)]
		\item $A\cong B$ implies $\delta(A)=\delta(B)$;
		\item $A\oleq B$ implies $A \leq_\delta B$;
		\item if $A\oleq B$ is minimal, then $\delta(A)= \delta(B)$ if and only if $B$ is algebraic over $A$;
		\item if $A \leq_\delta B $ and $C \subseteq B$, then $A\cap C \leq_\delta C$. 
	\end{enumerate}
\end{definition}

In \cite[Def.~11]{funk2} Funk and Strambach provide an explicit definition of a predimension function for all the main examples of incidence geometries they consider, with the exception of generalised $n$-gons, whose predimension is defined in \cite{tent}, and $(k,n)$-Steiner systems, that we address in Section \ref{sec_steiner}. We next spell out the additional Assumptions \ref{assumptions:no_prime}(\hyperref[F=C]{C5})-(\hyperref[delta-rank]{C7}). 

\begin{assumption}\label{assumptions:no_prime}\label{delta_lemma_1}
	Let $(\mathcal{K}, \oleq)$ and $T^+$ be respectively as in \ref{context} and \ref{the_theory}. In this section we assume that $T^+$ satisfies the following conditions:
	\begin{enumerate}[(C5)]
		\item[(C5)]\label{F=C} there is $\mathbf{m}_\mathcal{K}<\omega$ such that, if $A$ contains at least $\mathbf{m}_\mathcal{K}$ many elements of every sort then $F(A)\models T^+$;		
		\item[(C6)]\label{hopf} if $F(A)\models T^+$ and $A,B\in \mathcal{K}$ then there is a non-surjective embedding of $F(B)$ into $F(A)$;
		\item[(C7)] \label{delta-rank} there exists a predimension function $\delta:\mathcal{K}\to \mathbb{N}$ which satisfies the properties from Definition \ref{delta_strong}.
	\end{enumerate}		
\end{assumption}

\begin{example}
	In the case of projective planes, the predimension function  $\delta(A)$ is defined as follows. Let $I_A$ be the number of incidences in $A$, i.e., the number of different unordered pairs $\{ a,b\} \subseteq A$ such that $a\;\vert \; b$. Letting $\delta(A)=|A|-2\cdot I_A$ then the conditions from \ref{open.vs.delta} can be verified by a routine argument. One can see that this is exactly the notion of predimension from \cite{funk2} and also \cite[\S 10.4]{tent_book}. We stress that, for projective planes, the notion of predimension and its key properties were already studied by Hall in his seminal paper, see for instance \cite[Theorem 4.10]{hall_proj}
\end{example}

Following the terminology of our concrete examples, we shall here call \emph{non-degenerate} those configurations $A\in \mathcal{K}$ for which $F(A)$ is a model of $T^+$. In all our concrete cases, this will be equivalent to say that $F(A)$ contains infinitely many elements of every sort. From Assumptions \ref{assumptions:no_prime}(\hyperref[F=C]{C5})-(\hyperref[delta-rank]{C7}) the proof that $T^+$ does not have a prime model and it is not model complete follows as in \cite{paolini&hyttinen,tent}, but in our abstract axiomatic setting.

\begin{lemma}\label{the_surjective_lemma} Let $A = F(A_0)$, for $A_0 \in \mathcal{K}$, then every elementary embedding of $A$ into $A$ is surjective.
\end{lemma}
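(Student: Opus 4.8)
The plan is to show that the image $h(A)$, which is an elementary substructure of $A=F(A_0)$, is exactly the algebraic closure of a copy of $A_0$, hence contains $A_0$ itself, and then to conclude using that $A_0$ algebraically generates all of $F(A_0)$. In detail, let $h\colon A\to A$ be an elementary embedding, so that $h(A)\preccurlyeq A$; by Theorem~\ref{el_sub_lemma} this gives $h(A)\oleq A$ and $h(A)=\mrm{acl}_A(h(A))$. Since $A=F(A_0)\models T^+$, Proposition~\ref{key_properties:free_completion} gives $A_0\oleq A$, and Proposition~\ref{characterisation_acl} then gives $\mrm{acl}_A(A_0)=F(\mrm{icl}_A(A_0))=F(A_0)=A$ (using $\mrm{icl}_A(A_0)=A_0$, which holds since $A_0\oleq A$, cf. Definition~\ref{baldwin:intrinsic_closure}). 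Writing $A_0':=h(A_0)$, we obtain
\[ h(A)=h\bigl(\mrm{acl}_A(A_0)\bigr)=\mrm{acl}_{h(A)}(A_0')=\mrm{acl}_A(A_0'), \]
the second equality because $h$ is an isomorphism onto $h(A)$, the third because $h(A)\preccurlyeq A$. Also $A_0'\cong A_0$, and $A_0'\oleq h(A)\oleq A$, so $A_0'\oleq A$ by Condition~\ref{context}(\ref{condition:C}). Thus it suffices to show $A_0\subseteq\mrm{acl}_A(A_0')$, since this forces $A=\mrm{acl}_A(A_0)\subseteq\mrm{acl}_A(A_0')=h(A)$, i.e.\ $h$ is onto.

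To prove $A_0\subseteq\mrm{acl}_A(A_0')$, I would produce a finite $E$ with $A_0\cup A_0'\subseteq E\subseteq A$ that is algebraic over $A_0'$; then $E\subseteq\mrm{acl}_A(A_0')$ and in particular $A_0\subseteq\mrm{acl}_A(A_0')$. Such a finite $E$ exists because, by Definition~\ref{free_algebraic_completion}, $A=F(A_0)$ is the directed union of the finite structures obtained from $A_0$ by finitely many minimal strong algebraic extensions and free amalgams thereof. Now $A_0\oleq E$ and $A_0'\oleq E$, both by Condition~\ref{context}(\ref{condition:E}) applied with $E\subseteq A$. I first check $\delta(E)=\delta(A_0)$ for the predimension $\delta$ of Assumption~\ref{assumptions:no_prime}(\ref{delta-rank}): refine $A_0\oleq E$ to a chain of minimal strong extensions $A_0=D_0\oleq\cdots\oleq D_l=E$ (possible since $E$ is finite); for each $i$ we have $D_{i+1}\subseteq E\subseteq A=\mrm{acl}_A(A_0)\subseteq\mrm{acl}_A(D_i)$, so every element of $D_{i+1}$ is algebraic over $D_i$ in $A$, whence $D_i\oleq D_{i+1}$ is an algebraic extension (otherwise, being minimal and $A\models T^+$, it would have infinitely many copies over $D_i$ in $A$ by Definition~\ref{the_theory}(\ref{the_theory:3})); by property~(c) of Definition~\ref{delta_strong} this gives $\delta(D_i)=\delta(D_{i+1})$, so $\delta(E)=\delta(A_0)$. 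Since $A_0'\cong A_0$ we then have $\delta(A_0')=\delta(A_0)=\delta(E)$; now refine $A_0'\oleq E$ to a chain of minimal strong extensions $A_0'=E_0\oleq\cdots\oleq E_k=E$. Because $E_i\oleq E_{i+1}$ implies $E_i\leq_\delta E_{i+1}$, the numbers $\delta(E_0)\le\cdots\le\delta(E_k)$ are nondecreasing with $\delta(E_0)=\delta(E_k)$, hence all equal; by property~(c) of Definition~\ref{delta_strong} each $E_{i+1}$ is algebraic over $E_i$, so $E$ is algebraic over $A_0'$, and therefore $A_0\subseteq E\subseteq\mrm{acl}_A(A_0')$, as required.

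The step I expect to be the main obstacle is keeping the two notions of ``algebraic'' in sync — the combinatorial notion of Definition~\ref{def_extensions} (bounded number of copies in $\mathcal{K}$) and the model-theoretic $\mrm{acl}$ — and hence justifying $\delta(E)=\delta(A_0)$; the bridge is that a minimal strong extension inside a model of $T^+$ is combinatorially algebraic as soon as it is contained in the $\mrm{acl}$ of its base (by the contrapositive of Definition~\ref{the_theory}(\ref{the_theory:3})), which then lets property~(c) of the predimension transfer the information to $\delta$. A secondary point requiring care is the behaviour of $\mrm{icl}$ and $\mrm{acl}$ on $\oleq$-closed sets (that $\mrm{icl}_A(A_0)=A_0$ when $A_0\oleq A$, and that the free completion of an $\oleq$-closed set, realised inside $A$, coincides with its algebraic closure), for which Proposition~\ref{characterisation_acl} and Lemma~\ref{acl_open_rk} are the relevant tools. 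Finally, note that elementarity of $h$ is used only through $h(A)\preccurlyeq A$; the same argument in fact shows that every embedding $h\colon A\to A$ with $h(A)\oleq A$ is surjective.
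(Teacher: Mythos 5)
Your proof is correct and follows essentially the same route as the paper: both arguments hinge on the facts that $A_0'=h(A_0)\oleq A$ is an isomorphic copy of $A_0$, that $\delta$ is constant along the stages of $F(A_0)$, and that a chain of minimal strong extensions from $A_0'$ up to a finite stage containing $A_0\cup A_0'$ must therefore consist only of algebraic extensions (by property (c) of the predimension), forcing that stage into $\mrm{acl}_A(A_0')\subseteq h(A)$. The only differences are presentational — you phrase the conclusion via $\mrm{acl}(A_0)=A$ and $h(A)=\mrm{acl}(A_0')$ and re-derive $\delta(E)=\delta(A_0)$ through the axioms of $T^+$ rather than reading it off the construction of $F(A_0)$ directly.
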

\begin{proof} Let $\alpha: A \rightarrow A$ be an elementary embedding and let $A = \bigcup_{i < \omega} A_i$ be a witness that $A = F(A_0)$. Obviously we have that $A_0 \oleq A$. Let $A'_0 = \alpha(A_0)$. Now, as $\alpha$ is elementary and $A_0 \oleq A$, we have that $A'_0 \oleq A$ (by \ref{the_type_remark}). Furthermore, there is $j < \omega$ such that for every $i\geq j$ we have that $A'_0 \subseteq A_i$. Now, fix one such $j < \omega$ and let $i \geq j$, then, as $A'_0 \oleq A$, we have that $A'_0 \oleq A_i$. In particular, we have that there is a $\mrm{HF}$-order $<$ of $A_i$ over $A'_0$. Let $a_0 < \cdots < a_{m-1}$ be the elements in $A_i \setminus A'_0$ with respect to the order $<$. Suppose that one of the minimal extensions $A'_0a_0, ..., a_{\ell -t} \oleq A'_0a_1, ..., a_{\ell}$ is not algebraic, and let $\ell \in [0, m-1]$ be minimal with respect to this property, then (by \ref{assumptions:no_prime}(\hyperref[delta-rank]{C7})) $\delta(A'_0) = \delta(A'_0a_1, ..., a_{\ell-t})$ but $\delta(A'_0a_1, ..., a_{\ell}) > \delta(A'_0)$, on the other hand $\delta(A_i) = \delta(A_0) = \delta(A'_0)$, but this contradicts the fact that $<$ is a $\mrm{HF}$-order $<$ of $A_i$ over $A'_0$. It follows that for every $i \geq j$ we have that $A_i \subseteq  \mrm{acl}_A(A'_0) \subseteq \alpha(F(A))$.
\end{proof}

\begin{lemma}\label{el_and_delta} Assume that $A_0 \in \mathcal{K}$ and $B \models T^+$. If $B$ embeds elementarily in  $F(A_0)$, then there is a finite $B_0 \subseteq B$ such that $B \cong F(B_0)$ and $\delta(B_0) \leq \delta(A_0)$. 
\end{lemma}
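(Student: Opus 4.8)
The plan is to extract from the hypothesis that $B$ embeds elementarily into $F(A_0)$ a finite "core" $B_0 \subseteq B$ that generates $B$ as a free completion. The first step is to fix a witness $F(A_0) = \bigcup_{i<\omega} A_i$ for the free completion, and identify $B$ with its elementary image inside $F(A_0)$, so that $B \preccurlyeq F(A_0)$; in particular $B \oleq F(A_0)$ by Theorem~\ref{el_sub_lemma}. Since $A_0$ is finite and $B$ is obtained as a union of free completion stages, I would like to say that $B_0 := B \cap A_0$, or more precisely $B_0 := \mrm{icl}_B(B \cap A_0)$ or the intrinsic closure of a suitable finite set, does the job. The natural candidate is to take $B_0$ to be (the intrinsic closure in $B$ of) $B \cap A_0$: by Proposition~\ref{characterisation_acl} we have $\mrm{acl}_B(A_0 \cap B) = F(\mrm{icl}_B(A_0 \cap B))$, and one expects that $B$ is exactly this algebraic closure.

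**The two halves.** For the inclusion $B \subseteq F(B_0)$: since $B \preccurlyeq F(A_0)$, every element $b \in B$ lies in $F(A_0) = \bigcup_i A_i$, hence in some stage $A_i$; I would argue, using an $\mrm{HF}$-order of $A_i$ over $A_0$ together with the predimension $\delta$ from \ref{assumptions:no_prime}(\hyperref[delta-rank]{C9}) and the fact that $\delta(A_i) = \delta(A_0)$ (as in the proof of Lemma~\ref{the_surjective_lemma}), that every $A_i \subseteq \mrm{acl}_{F(A_0)}(A_0)$, so in fact $F(A_0) = \mrm{acl}_{F(A_0)}(A_0)$. Intersecting with $B$ and using that $B \preccurlyeq F(A_0)$ gives $B = \mrm{acl}_B(A_0 \cap B) = F(\mrm{icl}_B(A_0 \cap B)) = F(B_0)$, which simultaneously establishes $B \cong F(B_0)$. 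For the inequality $\delta(B_0) \leq \delta(A_0)$: here I would use that $B_0 \subseteq \mrm{icl}_B(A_0 \cap B) \oleq B \oleq F(A_0)$, that $B_0$ sits inside some stage $A_i$ with $A_0 \cap B \oleq B_0$ via the $\mrm{HF}$-order argument, and then play off $\delta$-properties (b) and (d) of Definition~\ref{delta_strong}: from $A_0 \leq_\delta A_i$ (since $A_0 \oleq A_i$) and $B_0 \subseteq A_i$ we get $A_0 \cap B_0 \leq_\delta B_0$, and since $A_0 \cap B_0 = A_0 \cap B$ one gets control; combined with $\delta(A_i) = \delta(A_0)$, the fact that $B_0$ is reached from $A_0 \cap B$ only through algebraic minimal extensions (by construction of $F$ applied to an intrinsically-closed set) forces $\delta(B_0) = \delta(\mrm{icl}_B(A_0\cap B)) \leq \delta(A_0)$.

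**The main obstacle.** The delicate point is to make rigorous that $B_0$ is \emph{finite} and that $B$ is exactly $F(B_0)$ rather than merely contained in it — i.e.\ that the intrinsic closure $\mrm{icl}_B(A_0 \cap B)$ is finite and that no genuinely new (non-algebraic) extension "escapes" $B$ on the way up. The predimension argument controls algebraicity but one must check that all the free-completion stages $A_i$ above $A_0$ are built by algebraic extensions \emph{once we are inside} $F(A_0)$ — which is exactly what the $\delta(A_i) = \delta(A_0)$ computation gives, but packaging this cleanly with the $\mrm{HF}$-order decomposition and Condition~\ref{context}(\ref{condition:I}) requires care. A secondary subtlety is bounding $\delta(B_0)$: one must ensure the intrinsic closure operation does not increase $\delta$, which again reduces to property~(c) of Definition~\ref{delta_strong} (minimal algebraic extensions preserve $\delta$) applied inductively along the finite chain witnessing $A_0 \cap B \oleq \mrm{icl}_B(A_0 \cap B)$. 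I expect the bulk of the write-up to be this predimension bookkeeping, with the structural claim $B \cong F(B_0)$ following formally from Proposition~\ref{characterisation_acl} and Proposition~\ref{key_properties:free_completion} once finiteness is secured.
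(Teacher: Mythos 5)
There is a genuine gap, and it lies in the very first structural choice: you take $B_0$ to be $B \cap A_0$ (or $\mrm{icl}_B(B\cap A_0)$), i.e.\ you anchor the finite core at stage $0$ of the completion $F(A_0)=\bigcup_i A_i$. The inference you use to justify this --- ``$F(A_0)=\mrm{acl}_{F(A_0)}(A_0)$, so intersecting with $B$ gives $B=\mrm{acl}_B(A_0\cap B)$'' --- is a non-sequitur. An element $b\in B$ is algebraic over $A_0$ in $F(A_0)$ via a chain of minimal algebraic extensions whose intermediate elements (and whose parameters in $A_0$ itself) need not lie in $B$; algebraicity over $A_0$ does not transfer to algebraicity over the possibly much smaller set $A_0\cap B$. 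Concretely, $B$ can be a proper elementary substructure of $F(A_0)$ generated by elements created at a late stage of the completion (e.g.\ in the free projective plane, a free subplane sitting inside $F(A_0)$ whose generators are points and lines far from $A_0$), in which case $\mrm{acl}_B(A_0\cap B)\subsetneq B$ and your candidate $B_0$ simply fails to generate $B$. The same misstep infects your $\delta$-estimate: property (d) of Definition~\ref{delta_strong} applied to $A_0\leq_\delta A_i$ and $B_0\subseteq A_i$ yields $\delta(A_0\cap B_0)\leq\delta(B_0)$, which bounds $\delta(B_0)$ from \emph{below}, not from above.

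The paper's proof takes the point you are missing as its central mechanism: it sets $B_i:=A_i\cap B$ for \emph{every} $i$, observes that $B\oleq F(A_0)$ (via Theorem~\ref{el_sub_lemma}) gives $B_i\oleq A_i$ and hence $\delta(B_i)\leq\delta(A_i)=\delta(A_0)$, while $B_i\oleq B_j$ for $i<j$ makes the sequence $\delta(B_i)$ non-decreasing; being bounded, it stabilises at some finite stage $k$. Property (c) of Definition~\ref{delta_strong} then forces every $B_j$ with $j>k$ to be algebraic over $B_{j-1}$, so $B=\mrm{acl}_B(B_k)=F(B_k)$ by Proposition~\ref{characterisation_acl}, and $\delta(B_k)\leq\delta(A_k)=\delta(A_0)$ falls out of $B_k\leq_\delta A_k$. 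So the finite core must be extracted at a sufficiently late stage $A_k$, chosen by this stabilisation argument --- not at stage $0$. Your instinct that the predimension bookkeeping is the heart of the proof is right, but the bookkeeping has to run over the whole filtration $(A_i\cap B)_{i<\omega}$ rather than being used to pull everything back to $A_0\cap B$.
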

\begin{proof} Let $<$ be the natural $\mrm{HF}$-order of $F(A_0)$ witnessing that $A := F(A_0)= \bigcup_{i < \omega} A_i$ and such that $\delta(A_i) = \delta(A_0)$ for all $i < \omega$. In particular $<$ is a well-order. Suppose that $B$ embeds elementarily in  $A$ and assume w.l.o.g. that this elementary embedding is simply the inclusion $B \subseteq A$. For each $i<\omega$ we let $B_i=A_i\cap B$. Since $B\preccurlyeq  A$,  it follows from \ref{el_sub_lemma} that $B\oleq A$. From this we deduce that $B_i\oleq A_i$ for all $i<\omega$ and so by \ref{assumptions:no_prime}(\hyperref[delta-rank]{C7}) we have $B_i\leq_\delta A_i$ for all $i<\omega$.  Since $\delta(A_0)$ is finite and $\delta(A_0)=\delta(A_i)$ for all $i<\omega$, there is an index $k<\omega$ such that $\delta(B_k)\geq\delta(B_j)$ for all $j> k$. Moreover, since we have that $A_k\oleq A_j$ (for $k< j$), we obtain also $B_k\oleq B_j$ and thus we conclude by \ref{open.vs.delta}(d) that $\delta(B_k)=\delta(B_j)$ for all $j> k$. Again by \ref{open.vs.delta}(c), it follows that  for all $j>k$ the elements in $B_{j}$ are algebraic over $B_{j-1}$. It follows that $B=\mrm{acl}_B(B_k)$ and so, since $B_k\oleq B$, we conclude by \ref{characterisation_acl} that $F(B_k)\cong \mrm{acl}_B(B_k)$. Since $\delta(B_k) \leq \delta(A_k)=\delta(A_0)$, this completes our proof.	
\end{proof}

\begin{theorem} \label{model_completeness_QE}
	Suppose $T^+$ satisfies \ref{main_th}(\hyperref[completeness_axiom]{C1})-(\hyperref[the_K_homogeneous_lemma]{C2}), \ref{ass:hf_closure}(\hyperref[the_hf_axiom]{C3}) and \ref{assumptions:no_prime}(\hyperref[F=C]{C5})-(\hyperref[delta-rank]{C7}). Then the following hold:
	\begin{enumerate}[(1)]
		\item $T^+$ does not have a prime model;
		\item $T^+$ is not model complete;
		\item $T^+$ does not eliminate quantifiers.
	\end{enumerate}
\end{theorem}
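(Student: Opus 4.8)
The plan is to leverage Lemmas~\ref{the_surjective_lemma} and~\ref{el_and_delta}, which together with Assumption~\ref{assumptions:no_prime}(\hyperref[F=C]{C7})--(\hyperref[hopf]{C8}) say that the free completions $F(A_0)$ behave like ``self-embeddable but not Hopfian'' models. First I would prove (2), non-model-completeness, since it is the technical heart. Pick $A_0 \in \mathcal{K}$ containing at least $m_{\mathcal{K}}$ elements of each sort, so that $F(A_0) \models T^+$ by (\hyperref[F=C]{C7}). By Remark~\ref{remark:hopf} (a consequence of (\hyperref[hopf]{C8})) the structure $F(A_0)$ is not Hopfian: there is a proper substructure $B \subsetneq F(A_0)$ with $B \cong F(A_0)$; writing $B = F(B_0)$ for an appropriate finite $B_0$, we may assume $B_0 \subseteq A_0$ and $\delta(B_0) = \delta(A_0)$ is not forced. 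The key point is that if $T^+$ were model complete, the inclusion $B = F(B_0) \subseteq F(A_0)$ would be elementary, hence by Lemma~\ref{the_surjective_lemma} surjective, contradicting $B \subsetneq F(A_0)$. So I need to produce, via (\hyperref[hopf]{C8}), a copy of $F(A_0)$ sitting non-surjectively inside $F(A_0)$; concretely, take any $B \in \mathcal{K}$ strictly larger than $A_0$ (e.g.\ $A_0$ plus a trivial extension, using \ref{context}(\ref{condition:L})), note $\delta(B) > \delta(A_0)$ is not needed — rather the relevant point is that $F(A_0)$ embeds into $F(B)$ and $F(B)$ embeds into $F(A_0)$ by (\hyperref[hopf]{C8}), while $\delta$-counting via Lemma~\ref{el_and_delta} will obstruct both embeddings being elementary. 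This mismatch between ``embeds'' and ``embeds elementarily'' is exactly the failure of model completeness.

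Next, for (1), the non-existence of a prime model: suppose $P \models T^+$ were prime, so $P$ embeds elementarily into every model of $T^+$, in particular into $F(A_0)$ for every admissible $A_0$. By Lemma~\ref{el_and_delta}, there is a finite $P_0 \subseteq P$ with $P \cong F(P_0)$ and $\delta(P_0) \leq \delta(A_0)$; since this holds for \emph{all} admissible $A_0$, and there are $A_0 \in \mathcal{K}$ with arbitrarily small $\delta$ (one can shrink $\delta$ by, say, adding incidences, or more carefully: among all $A_0$ with at least $m_{\mathcal{K}}$ elements of each sort the value $\delta(A_0)$ is bounded below but the point is we just need \emph{one} $A_0$ with $\delta(A_0) < \delta(P_0)$, which exists because we can choose $P_0$ first and then pick a suitable $A_0$ — in fact we should argue that $\delta(P_0)$ is bounded, but then choose $A_0$ violating that bound is impossible; the correct move is the reverse). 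The cleaner argument: fix any admissible $A_0$ and let $P \cong F(P_0)$ with $P_0 \subseteq A_0$, $\delta(P_0) \leq \delta(A_0)$. Now since $P$ is prime it also embeds elementarily into $F(B_0)$ for a \emph{smaller} $B_0$ obtained from $A_0$ via (\hyperref[hopf]{C8}); iterating produces an infinite strictly $\delta$-decreasing sequence of values in $\mathbb{N}$, which is absurd. So no prime model exists.

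Finally (3) is immediate: a theory with quantifier elimination is model complete, so (2) implies (3). I would state this in one line, citing the standard fact.

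The main obstacle I anticipate is bookkeeping around $\delta$ and the direction of the embeddings: I must be careful that (\hyperref[hopf]{C8}) gives embeddings $F(B) \hookrightarrow F(A)$ for \emph{arbitrary} $B \in \mathcal{K}$ whenever $F(A) \models T^+$, and that Lemma~\ref{el_and_delta} applies only to \emph{elementary} embeddings, so the contradiction must be engineered by assuming model completeness (resp.\ primality) to upgrade a plain embedding to an elementary one, then invoke Lemma~\ref{the_surjective_lemma} or the $\delta$-monotonicity to derive the absurdity. The subtle part is choosing $A_0$ and $B_0$ with $\delta(B_0) < \delta(A_0)$ so that the surjectivity forced by Lemma~\ref{the_surjective_lemma} (once the embedding is elementary) collides with the proper inclusion coming from the strict $\delta$-inequality — concretely, a proper elementary self-embedding of $F(A_0)$ exists if model completeness holds, because $F(A_0) \cong F(B_0) \subsetneq F(A_0)$ with the inclusion elementary, contradicting Lemma~\ref{the_surjective_lemma}. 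I would present this as the crux and keep the $\delta$-arithmetic terse.
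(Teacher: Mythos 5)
Your parts (2) and (3) are essentially sound. For (2) you route through Remark~\ref{remark:hopf} (non-Hopfian-ness) plus Lemma~\ref{the_surjective_lemma}: a proper copy $B\subsetneq F(A_0)$ with $B\cong F(A_0)$ would, under model completeness, give a proper elementary self-embedding of $F(A_0)$, which Lemma~\ref{the_surjective_lemma} forbids. The paper instead exhibits the non-elementary embedding directly: it takes $A\oleq B$ non-degenerate with $\delta(A)<\delta(B)$, gets $h:F(B)\to F(A)$ from (\hyperref[hopf]{C8}), and uses Lemma~\ref{el_and_delta} to see $h$ cannot be elementary. Both arguments are correct and ultimately lean on the $\delta$-function; yours is a legitimate variant. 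Part (3) from (2) is standard, as you say.

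Part (1), however, has a genuine gap. Your proposed contradiction is an ``infinite strictly $\delta$-decreasing sequence of values in $\mathbb{N}$,'' to be produced by repeatedly embedding the prime model $P\cong F(P_0)$ elementarily into free completions $F(B_0)$ with ever smaller $\delta(B_0)$. But Lemma~\ref{el_and_delta} only gives $\delta(P_0)\leq\delta(A_0)$ for each target; running this over all non-degenerate $A_0$ shows that $\delta(P_0)$ attains the \emph{minimum} of $\delta$ over non-degenerate configurations, and that minimum exists since $\delta$ takes values in $\mathbb{N}$. There is nothing below it to descend to, and (\hyperref[hopf]{C8}) does not manufacture configurations of strictly smaller predimension --- it only provides embeddings $F(B)\hookrightarrow F(A)$, not new small-$\delta$ objects. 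So no descending chain materializes and no contradiction is reached; you half-notice this in the middle of the paragraph but do not repair it. The paper's argument for (1) is genuinely different: having established $P=F(B_0)$ with $\delta(B_0)$ minimal, it uses the non-Hopfian property to build a strictly increasing chain $A_0\subsetneq A_1\subsetneq\cdots$ of copies of $P$, observes that the union $A=\bigcup_i A_i$ models $T^+$ because $T^+$ is $\forall\exists$-axiomatised, and then uses primality to obtain an elementary embedding $f:P\to A$ whose image is generated by a finite set, hence lands inside some $A_{i+1}$; the chain of implications $C\preccurlyeq A\Rightarrow C\oleq A\Rightarrow C\oleq A_{i+1}\Rightarrow C\preccurlyeq A_{i+1}$ (via Theorem~\ref{el_sub_lemma}) then yields a non-surjective elementary self-embedding of $P$, contradicting Lemma~\ref{the_surjective_lemma}. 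That chain-union construction is the missing idea in your proposal for (1).
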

\begin{proof} 
	We first prove that $T^+$ does not have a prime model. Suppose towards contradiction that $B$ is a prime model of $T^+$. Then, by  Lemma \ref{el_and_delta}, it follows that $B = F(B_0)$ with $B_0\in \mathcal{K}$ and such that $\delta(B_0)$ is minimal among the non-degenerate configurations in $\mathcal{K}$. It then follows from  \ref{assumptions:no_prime}(\hyperref[hopf]{C6}) that we can choose copies $(A_i : i < \omega)$ of $B$ such that $A_i \subsetneq A_{i+1}$ and let $A = \bigcup_{i < \omega} A_i$.  Since the theory $T^+$ is a $\forall\exists$-theory (cf.~\ref{the_theory}), it follows that $A\models T^+$. Now, since by assumption $B$ is a prime model of $T^+$, there is an elementary embedding $f:B\to A$. Let  $C := f(B)$ and $C_0=f(B_0)$, then since $B_0$ is finite there is an index $i<\omega$ such that $B_0\subseteq A_i$. It then follows, since $A_i\models T^+$ and $C_0$ generates $C$, that $C\subseteq A_i$. Then, by \ref{el_sub_lemma} and the fact that $A_{i+1}\models T^+$, we have:
	$$\begin{array}{rcl}
		C \preccurlyeq A & \Longrightarrow & C \oleq A \\
		& \Longrightarrow & C \oleq A_{i + 1} \\
		& \Longrightarrow & C \preccurlyeq A_{i+1}.
	\end{array}$$
	But then $f: B \rightarrow A_{i+1}$ is non-surjective elementary embedding of $B$ into (an isomorphic copy of) itself, which contradicts Lemma \ref{the_surjective_lemma}.
	
	\smallskip
	\noindent We next prove that $T^+$ is not model complete and it does not eliminate quantifiers. Let $A\oleq B\in \mathcal{K}$ be such that $A$ and $B$ are non-degenerate (i.e., $F(A)\models T^+$ and $F(B)\models T^+$), and suppose $\delta(A)<\delta(B)$ (these exist because trivial extensions are always strong and increase the value of $\delta$). By \ref{assumptions:no_prime}(\hyperref[hopf]{C6}) there is an embedding $h:F(B)\to F(A) $. Notice that, if $F(B)\cong F(D)$, then by \ref{open.vs.delta}(c) we have that $\delta(B)=\delta(D)$. Therefore, by Lemma \ref{el_and_delta}, it follows that $h$ is not elementary, for otherwise we would have $\delta(B)\leq \delta(A)$. This shows that $T^+$ is not model complete and, as a consequence, it does not eliminate quantifies.
\end{proof}

\section{Applications}\label{sec:application}

In Section \ref{sec_general} we have introduced the class $(\mathcal{K},\oleq)$ and studied the properties of the theory $T^+$ under the assumptions from \ref{main_th}(\hyperref[completeness_axiom]{C1})-(\hyperref[the_K_homogeneous_lemma]{C2}), \ref{ass:hf_closure}(\hyperref[the_hf_axiom]{C3}), \ref{assumption:no-superstability_2}(\hyperref[CP]{C4}) and \ref{assumptions:no_prime}(\hyperref[F=C]{C5})-(\hyperref[delta-rank]{C7}). We now turn to concrete examples of geometries, and we apply our abstract machinery to several cases of open incidence structures. We point out that such examples of incidence geometries can be divided into two families.

On the one hand, we study in Section \ref{sec:n-open-grpahs} the $n$-open graphs introduced in \cite{baldwin_preprint}, of which the free pseudoplane is a major example. We verify in these cases Assumptions \ref{main_th}(\hyperref[completeness_axiom]{C1})-(\hyperref[the_K_homogeneous_lemma]{C2}) and \ref{ass:hf_closure}(\hyperref[the_hf_axiom]{C3}), and we derive the completeness and stability of the associated theory $T^+$ in this context. We stress that, crucially, in this case the algebraic completions $F(A)$ trivialise, as they are not in general models of the theory $T^+$.  Interestingly, for $n\geq 2$ the model-theoretic and the combinatorial notions of complexity diverge in this class of examples. While the fact that $F(A)$ is not necessarily a model of $T^+$ (even when $A$ has infinitely many elements of every sort) seems to indicate that the class in question is rather tame, it is still possible to exhibit configurations satisfying Assumption \ref{assumption:no-superstability_2}(\hyperref[CP]{C4}) and thus verify that $T^+_n$ is not superstable, where $T_n^+$ is the theory determined as in \ref{the_theory} by the class of $n$-open graphs. We stress that all these results concerning $n$-open graphs were already shown in \cite{baldwin_preprint}, but with different techniques.

On the other hand, in Sections \ref{sec_steiner}--\ref{last_section} we study all the examples of open incidence geometries from \cite{funk2}. From the model-theoretic point of view these cases behave quite similarly: they all verify the assumptions from \ref{main_th}(\hyperref[completeness_axiom]{C1})-(\hyperref[the_K_homogeneous_lemma]{C2}), \ref{ass:hf_closure}(\hyperref[the_hf_axiom]{C3}), \ref{assumption:no-superstability_2}(\hyperref[CP]{C4}) and \ref{assumptions:no_prime}(\hyperref[F=C]{C5})-(\hyperref[delta-rank]{C7}). In particular, in these cases we have that the free algebraic completion $F(A)$ is a model of $T^+$, provided that $A$ has enough elements of every sort. It thus follows from our results that the free completions of non-degenerate partial models in $\mathcal{K}$ are all elementarily equivalent. On the other hand, this family of examples also showcases several differences from the combinatorial point of view. Two issues are worth noting. Firstly, generalised $n$-gons make for the only case where minimal strong extensions are not necessarily one-element extensions -- we shall expand later on this issue in Section \ref{section:ngons}. Secondly, affine planes and projective M\"obius planes exhibit cases of incidence geometries with an additional relation of parallelism (or tangency) between elements of the same sort. As far as we know, the present work is the first model-theoretic study of incidence geometries which also involves such additional notions. However, here as in many other places, our treatment of the subject is largely indebted to the work of Funk and Strambach in \cite{funk2}, as they were the first to identify the technical notion of Gaifman closure (cf.~\ref{def:interior_closure}) that we use to deal with these difficulties. Finally, we point out again that the case of generalised $n$-gons was already considered in the literature (by \cite{paolini&hyttinen} for the case $n=3$ and by \cite{tent} for the case $n\geq 3$). The completeness result for $T^+$ in the case of $(2, 3)$-Steiner systems was recently showed by Alaimo in his thesis \cite{alaimo} and, independently, by Barbina and Casanovas in  \cite{barbina:2}, where they prove also the stability of the theory of open non-degenerate $(2,3)$-Steiner systems. All the other results from Sections \ref{sec_steiner}--\ref{last_section} are novel.

\subsection{$n$-open graphs}\label{sec:n-open-grpahs}

We consider the case of $n$-open graphs introduced by Baldwin, Freitag and Mutchnik in \cite{baldwin_preprint}.  In this section we let $L=\{ \vert \}$ be the language of graphs. For every $n<\omega$ we introduce a class of graphs $(\mathcal{K}_n,\specialonleq)$ and we show that it satisfies the conditions from \ref{context}, \ref{main_th}(\hyperref[completeness_axiom]{C1})-(\hyperref[the_K_homogeneous_lemma]{C2}) and \ref{ass:hf_closure}(\hyperref[the_hf_axiom]{C3}). As a consequence, it follows from our results in Section \ref{sec_general} that the theory $T^+_n$ defined from $(\mathcal{K}_n,\specialonleq)$ as in \ref{the_theory} is complete, stable and with $\ind=\ind^{\otimes}$. Moreover, for $n\geq 2$, we also prove that $T^+_n$ satisfies Assumption \ref{assumption:no-superstability_2}(\hyperref[CP]{C4}), and thus that $T^+_n$ is not superstable. We sum up these results in the following Theorem \ref{corollary:ngraphs}. We stress that all the results from this section are already known from the literature, but we believe they provide an interesting application of our general setting. In particular, our proof of the non-superstability of $T^+_n$ for $n\geq 2$ is significantly different from the one from \cite{baldwin_preprint}, and it highlights the combinatorial nature of the theories $T^+_n$.

\begin{theorem}\label{corollary:ngraphs}
	For every $n<\omega$, the theory $T^+_n$ is complete, stable, and with $\ind=\ind^{\otimes}$. Moreover, if $n\geq 2$, then $T^+_n$ is not superstable.
\end{theorem}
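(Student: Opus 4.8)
The strategy is to realise the class $(\mathcal{K}_n, \onleq)$ of $n$-open graphs as an instance of the abstract framework of Section \ref{sec_general}, so that completeness, stability and $\ind = \ind^{\otimes}$ follow immediately from Theorems \ref{prop_completeness}, \ref{stability} and \ref{characterisation_forking}, and non-superstability (for $n \ge 2$) follows from Theorem \ref{failure:superstability}. Concretely, I would first recall from \cite{baldwin_preprint} the definition of $n$-open graph and of the relation $\onleq$ (a finite graph is $n$-open if every finite subconfiguration contains a vertex of valency $\le n$ inside it; $A \onleq B$ if every nonempty $C \subseteq B\setminus A$ contains a vertex with $\le n$ neighbours in $AC$), and then verify the conditions of \ref{context} one by one. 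Conditions \ref{context}(\ref{condition:A})--(\ref{condition:G}) are the usual Hrushovski-class axioms and are routine (hereditariness, transitivity, the amalgamation/JEP via free amalgamation of graphs). For \ref{context}(\ref{condition:H}): if $A \not\onleq B$ then some $C \subseteq B\setminus A$ has every vertex of valency $\ge n+1$ in $AC$, and a counting argument bounds the number of copies of such a $B$ over $A$, giving algebraicity. Condition \ref{context}(\ref{condition:I}) holds with $X_{\mathcal{K}} = \{1\}$: an $n$-open graph can be built by successively removing a vertex of minimal valency. Condition \ref{context}(\ref{condition:J}) holds with $n_{\mathcal{K}} = n$, since $A \onleq Ab$ forces $b$ to have $\le n$ neighbours in $A$ (and here $L_{\mrm p} = \{=\}$ only, so Gaifman closure is just the $\le 1$-ball). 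Conditions \ref{context}(\ref{condition:K}) and \ref{context}(\ref{condition:L}) are then easy, since strongness depends only on valencies and an isolated vertex is always a strong extension.

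Next I would verify \ref{main_th}. For (\hyperref[the_auto_axiom]{C2}), the completion $C(A)$ is \emph{not} the free algebraic completion $F(A)$ (there are no algebraic extensions here, so $F(A) = A$ and $F$ trivialises, cf. Remark \ref{remark:F(A)vsC(A)}); instead $C(A)$ is obtained from $A$ by freely adding, for each vertex, infinitely many new pendant vertices and iterating — this is prime over $A$ among models of $T^+_n$ and is unique up to $A$-isomorphism by a back-and-forth, exactly as in the pseudoplane case, so $m_{\mathcal{K}}$ can even be taken to be $0$ or $1$. For (\hyperref[completeness_axiom]{C1}) I would instead verify the ancillary conditions (\hyperref[trivial_condition]{D1}) and (\hyperref[minimality_condition]{D2}) of Lemma \ref{the_K_saturated_lemma}: (\hyperref[trivial_condition]{D1}) says that, given an $\mrm{HF}$-order of $M\models T^+_n$ and a trivial one-element extension $Ab$, one can find an isomorphic copy $b'$ with $Ab' \onleq A\mrm{cl}^n_<(b')$ — this uses that $M \models T^+_n$ realises all minimal strong extensions and that isolated vertices are strong; (\hyperref[minimality_condition]{D2}) reduces arbitrary minimal strong extensions to trivial ones, and here is the one place where $\aleph_1$-saturation of $M$ is genuinely used (as remarked in the text). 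Finally \ref{assumptions_forking}(\hyperref[condition:amalgam]{C4})--(\hyperref[general_algebraic_lemma]{C5}) are essentially vacuous on the algebraic side (no algebraic extensions) and straightforward on the non-algebraic side, since free amalgamation of $n$-open graphs over a strong base is again $n$-open. With all of \ref{context}, \ref{main_th}, \ref{assumptions_forking} checked, Theorems \ref{prop_completeness}, \ref{stability}, \ref{characterisation_forking} give completeness, stability, and $\ind = \ind^{\otimes}$ for every $n < \omega$.

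For the non-superstability claim when $n \ge 2$, I would exhibit a configuration $C$ witnessing Assumption \ref{assumption:no-superstability} and invoke Theorem \ref{failure:superstability}. Take $C = \{c_0, c_1, \dots, c_n\}$ to be, say, a path $c_0 - c_2 - c_3 - \cdots - c_{n-1} - c_n$ together with $c_1$ attached so that $c_n$ ends up with valency $n+1$ in $C$ while $c_0 c_1$ (with no edge between them) is strong in $C$; one checks directly that $c_0 c_1 \onleq C$, that $c_n \not\onleq C$ with $c_n$ of the same sort as $c_0$ (trivial, single sort), and that every $k$-iterate $I_k(C)$ glued along $c_n^i = c_0^{i+1}$ remains $n$-open (the glue vertices keep valency $\le n+1$ locally, but each finite subconfiguration still has a low-valency vertex). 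The arithmetic of valencies needed for (a)--(c) is exactly where $n \ge 2$ is used — for $n = 1$ one gets the pseudoplane, every vertex is strong, and no such $C$ exists, as noted in the remark after \ref{assumption:no-superstability}. The main obstacle in the whole argument is not any single deep step but getting the valency bookkeeping in \ref{context}(\ref{condition:H}), in Lemma \ref{the_K_saturated_lemma}'s condition (\hyperref[minimality_condition]{D2}), and in the iterated configuration $I_k(C)$ simultaneously correct; once the definitions are pinned down these are all finite combinatorial checks, and the model-theoretic content is entirely outsourced to Section \ref{sec_general}.
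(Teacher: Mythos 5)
Your overall architecture is the same as the paper's: verify \ref{context}, reduce \ref{main_th}(\hyperref[completeness_axiom]{C1}) to (\hyperref[trivial_condition]{D1})--(\hyperref[minimality_condition]{D2}), note that \ref{assumptions_forking} is essentially vacuous because no strong extension in $\mathcal{K}_n$ is algebraic, and then quote Theorems \ref{prop_completeness}, \ref{stability}, \ref{characterisation_forking} and \ref{failure:superstability}. However, two of your concrete constructions are wrong. First, the completion $C(A)$: adding, for each vertex, infinitely many \emph{pendant} vertices does not produce a model of $T^+_n$ when $n\geq 2$. Axiom \ref{the_theory}(\ref{the_theory:3}) requires every minimal non-algebraic strong extension to be realised infinitely often, and for $n\geq 2$ these include one-element extensions $A\onleq Ac$ where $c$ is adjacent to $2,\dots,n$ prescribed vertices of $A$; a structure built by attaching only valency-one vertices never realises a new common neighbour of two old vertices, so it is neither a model of $T^+_n$ nor prime. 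The correct construction (Definition \ref{free_pseudoplane_def}) adds, for every $n$-element subset of the current stage, infinitely many new vertices adjacent to exactly that subset; consequently $m_{\mathcal{K}}$ must be taken to be $n$ (the construction needs $|A|\geq n$ to get off the ground), not $0$ or $1$.

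Second, the non-superstability configuration. Assumption \ref{assumption:no-superstability}(b) asks that the \emph{last} vertex $c_m$ of $C$ satisfy $c_m\nonleq C$, which in the graph setting means there is a nonempty $D\subseteq C\setminus\{c_m\}$ in which every vertex has valency $\geq n+1$ inside $\{c_m\}\cup D$. A path (even with one extra vertex attached) has maximum valency $\leq 3$, and more importantly for $n\geq 2$ every subconfiguration of such a graph still contains a vertex of valency $\leq 2\leq n$, so no vertex fails to be strong and (b) cannot hold. Moreover, with only $n+1$ vertices in total, (a) and (b) are jointly unsatisfiable: the witnessing set $D$ would have to be all of $C$ minus the last vertex, forcing each of its $n$ elements to be adjacent to every other vertex of $C$, which contradicts the requirement that $c_0$ and $c_1$ carry no edge. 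One needs a configuration on at least $n+3$ vertices which is nearly complete --- essentially a clique on $\{c_0,\dots,c_{n+1}\}$ with the edge $c_0c_1$ deleted, plus a final vertex attached to enough of them --- which is what the paper exhibits. As written, your path-based example would make the non-superstability argument fail, and the phrase ``$c_n$ ends up with valency $n+1$ in $C$'' is impossible in a graph with only $n+1$ vertices. Both gaps are repairable, but the repairs are exactly the combinatorial content of this section, so they cannot be waved through.
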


We start by recalling the definition of $n$-open graphs from  \cite{baldwin_preprint}. If $A$ is a graph then the we denote by $\vartheta_A(a)$ the  valency (cf.~\ref{def:graphs}) of an element $a\in A$. We notice that each class $(\mathcal{K}_n,\specialonleq)$ defined in \ref{pseudoplane:def.hyperfree}  determines a theory $T^+_n$ as in \ref{the_theory}. It is straightforward to verify that our theories $T^+_n$ coincide with the theories $T_n$ defined  in \cite{baldwin_preprint}. In particular, we remark that $T^+_0$ is the theory of an infinite set, while $T^+_1$ is the theory of the free pseudoplane.

\begin{definition}\label{pseudoplane:def.hyperfree}
	Let $A\subseteq B$ be graphs, then for every $ n<\omega$ we let $A\specialonleq B$ if  every non-empty $C \subseteq B \setminus A$ contains an element $c\in C$ with $\vartheta_{AC}(c)\leq n$. We then adopt the notions of \emph{open}, \emph{closed}, \emph{confined}, etc.~specialising Definition~\ref{def_extensions}. For every $ n<\omega$ we  let $\mathcal{K}_n$ be the class of all finite $n$-open graphs, i.e., those graphs $A$ such that $\emptyset\specialonleq A$, and we let $T_\forall^n= \mrm{Th}_\forall(\mathcal{K}_n)$.  We say \emph{$n$-open} instead of \emph{open} when we want to stress the parameter $n<\omega$ used to define the class $(\mathcal{K}_n,\specialonleq)$. When the parameter $n<\omega$ is clear from the context, we write simply $\onleq$ instead of $\specialonleq$.
\end{definition}

From now on, we fix some parameter $n<\omega$ and consider the associated class $(\mathcal{K}_n,\onleq)$ of $n$-open graphs. First, we start by verifying all the finitary conditions from Context \ref{context}. Conditions \ref{context}(\ref{condition:A})--(\ref{condition:F}) follow immediately from the definition of the relation $\onleq$. Condition \ref{context}(\ref{condition:I})  follows by letting $\mathbf{X}_{\mathcal{K}_n}=\{1\}$ and Condition \ref{context}(\ref{condition:J}) by letting $\mathbf{n}_{\mathcal{K}_n}=n$. Conditions \ref{context}(\ref{condition:K}) and \ref{context}(\ref{condition:L}) also follow directly from Definition \ref{pseudoplane:def.hyperfree}. It remains to verify \ref{context}(\ref{condition:G}) and \ref{context}(\ref{condition:H}).

\begin{proposition}[\ref{context}(\ref{condition:G})]\label{pseudoplane:conditionG}
	The class $(\mathcal{K}_n,\onleq)$ has the algebraic amalgamation property.
\end{proposition}
\begin{proof}
	By Definition~\ref{pseudoplane:def.hyperfree} the class $(\mathcal{K}_n,\onleq)$ has no algebraic extensions, whence it suffices to show that $\mathcal{K}_n$ is closed under free amalgamation in the following sense. We show that if $A,B,C\in \mathcal{K}_n$ with $A\onleq B$, $A\leq C$ and $B\cap C=A$ then $B\otimes_A C\in \mathcal{K}_n$. By the definition of the free amalgam $D\coloneqq B\otimes_A C$ (recall \ref{general_free_amalgam}), we have that $\vartheta_D(b)=\vartheta_B(b)$ for every $b\in B$ and $\vartheta_D(c)=\vartheta_C(c)$ for every $c\in C$. Therefore, since $A,B,C\in \mathcal{K}_n$, it follows from Definition~\ref{pseudoplane:def.hyperfree} that every subset of $B\otimes_A C$ contains an element with valency $\leq n$, and so $B\otimes_A C\in \mathcal{K}_n$. This shows that $(\mathcal{K}_n,\onleq)$ satisfies the conditions from Definition~\ref{def:sharp_amalgamation}.
\end{proof}

We next verify Condition \ref{context}(\ref{condition:H}). Recall that $\mrm{gcl}_M(A)$ is the Gaifman closure from \ref{def:interior_closure}, and thus in the setting of graphs we have that $\mrm{gcl}_M(A)=\mrm{gcl}^{\mrm{i}}_M(A)$, as there is no local equivalence relation in $L$.

\begin{proposition}[\ref{context}(\ref{condition:H})]\label{pseudoplane:conditionH}
	If $A\subseteq B \in \mathcal{K}_n$ and $A \nonleq B$ is confined, then  every $C \in \mathcal{K}_n$ contains at most $n$ many disjoint copies of $B$ over $A$.
\end{proposition}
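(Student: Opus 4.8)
The plan is to convert the hypothesis $A\nonleq B$ into a single local witness, and then to argue inside an arbitrary $C\in\mathcal{K}_{n}$ with $A\subseteq C$ that $n+1$ copies of $B$ over $A$ force a non-empty subset of $C$ with no vertex of valency $\leq n$, contradicting $\emptyset\onleq C$. By Definition~\ref{pseudoplane:def.hyperfree}(2), $A\nonleq B$ means precisely that there is a non-empty $D\subseteq B\setminus A$ with $\vartheta_{AD}(d)\geq n+1$ for every $d\in D$; fix such a $D$ and let $A^{*}=\{a\in A:\ a\in N_{B}(d)\text{ for some }d\in D\}$ be the set of \emph{anchors} of $D$ in $A$. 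Every neighbour of a vertex $d\in D$ inside $AD$ lies in $A^{*}\cup D$, and since $B\in\mathcal{K}_{n}$ (so $\emptyset\onleq B$) we must have $A^{*}\neq\emptyset$: otherwise $D$ itself would be a non-empty subset of $B$ all of whose vertices have valency $\geq n+1$.

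Suppose now, for contradiction, that $C\in\mathcal{K}_{n}$, $A\subseteq C$, and $B_{1},\dots,B_{n+1}$ are copies of $B$ over $A$ in $C$ which are pairwise disjoint over $A$, i.e. $B_{i}\cap B_{j}=A$ for $i\neq j$ (this is the content of ``$C$ contains more than $n$ copies of $B$ over $A$'' in Definition~\ref{def_extensions}(3)). Fix $A$-isomorphisms $\phi_{j}\colon B\to B_{j}$ and put $E_{j}=\phi_{j}(D)\subseteq B_{j}\setminus A$. Because the sets $B_{j}\setminus A$ are pairwise disjoint, so are the $E_{j}$; moreover each $\phi_{j}$ restricts to an $A$-isomorphism $A\cup D\to A\cup E_{j}$, so every $e\in E_{j}$ has $\vartheta_{AE_{j}}(e)\geq n+1$, with all its neighbours in $AE_{j}$ contained in $A^{*}\cup E_{j}$. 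Set $F:=A^{*}\cup E_{1}\cup\dots\cup E_{n+1}\subseteq C$, which is non-empty since $E_{1}\neq\emptyset$.

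It remains to see that every vertex of $F$ has valency $\geq n+1$ inside $F$. For $e\in E_{j}$ this is immediate, as its $\geq n+1$ neighbours in $AE_{j}$ all lie in $A^{*}\cup E_{j}\subseteq F$. For an anchor $a\in A^{*}$, choose $d_{0}\in D$ with $ad_{0}$ an edge of $B$; then $a=\phi_{j}(a)$ is joined in $C$ to $\phi_{j}(d_{0})\in E_{j}$ for every $j\in\{1,\dots,n+1\}$, and since the $E_{j}$ are disjoint these are $n+1$ distinct neighbours of $a$ in $F$, so $\vartheta_{F}(a)\geq n+1$. Hence $F$ is a non-empty subset of $C$ containing no vertex of valency $\leq n$, contradicting $\emptyset\onleq C$; therefore $C$ contains at most $n$ copies of $B$ over $A$, which is exactly Condition~\ref{context}(\ref{condition:H}). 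The one point that requires real care is the behaviour of the anchors: a priori a vertex of $A^{*}$ could have small valency in $F$, and it is precisely the pairwise disjointness of the copies that supplies each anchor with one fresh neighbour per copy; everything else is routine bookkeeping with the isomorphisms $\phi_{j}$ and with Definition~\ref{pseudoplane:def.hyperfree}.
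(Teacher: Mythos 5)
Your proof is correct and follows essentially the same route as the paper's: both take the closed witness $D\subseteq B\setminus A$, adjoin the set of $A$-vertices adjacent to it (your $A^{*}$ is exactly the $A$-part of the paper's Gaifman closure $\mrm{gcl}_C(B_i\setminus A)$), and show that the union over $n+1$ disjoint-over-$A$ copies has minimum valency $\geq n+1$, with the same two-case count (copy vertices keep their neighbours inside $A^{*}\cup E_j$; each anchor picks up one fresh neighbour per copy). Your explicit justification that the anchors' $n+1$ neighbours are distinct is a welcome extra precision over the paper's more telegraphic version, but the argument is the same.
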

\begin{proof}
	Suppose $A \nonleq B$ is confined, then every element in $B\setminus A$ has valency at least $n+1$ in $B$. Suppose $C\in \mathcal{K}_n$ contains at least $n+1$ disjoint copies $(B_i)_{0\leq i\leq n}$ of $B$ over $A$, then we consider the set $D\coloneqq \bigcup_{0\leq i\leq n}\mrm{gcl}_C(B_i\setminus A)$. Now, if $b\in B_i\setminus A$ then by assumption it is incident to at least $n+1$ many elements from $\mrm{gcl}_C(B_i\setminus A)$. Moreover, by construction and the fact that $\mrm{gcl}_C(B_i\setminus A)=\mrm{gcl}^{\mrm{i}}_C(B_i\setminus A)$, we also have that every element $a\in\mrm{gcl}_C(B_i\setminus A)\cap A$ is incident to at least $n+1$ many elements from $D$, as it is incident to at least one element for each $B_i$. It follows that  every $d\in D$ has valency $\vartheta_D(d)\geq n+1$, contradicting  $C\in \mathcal{K}_n$.
\end{proof}	

It thus follows from Propositions \ref{pseudoplane:conditionG}-\ref{pseudoplane:conditionH} and the previous observations, that $(\mathcal{K}_n, \onleq)$ satisfies the conditions from \ref{context}. We next prove that it also verifies the assumptions from \ref{main_th}(\hyperref[completeness_axiom]{C1})-(\hyperref[the_K_homogeneous_lemma]{C2}) and \ref{ass:hf_closure}(\hyperref[the_hf_axiom]{C3}). We first establish \ref{ass:hf_closure}(\hyperref[the_hf_axiom]{C3}) and then verify \ref{technical_assumptions}(\hyperref[trivial_condition]{D1})-(\hyperref[minimality_condition]{D2}), from which \ref{main_th}(\hyperref[completeness_axiom]{C1}) follows by Proposition \ref{the_K_saturated_lemma}. We notice that, since minimal extensions in $n$-open graphs are always one-element extensions, and their language contains no symbol for local equivalence relations, the closure operator $\mrm{cl}_<$ can be defined purely in terms of the incidence Gaifman closure. We verify in Proposition \ref{psedoplane:HF_orders} that this is in fact a $\mrm{HF}$-closure operator.

\begin{definition}\label{pseudoplane:closure_operator} Let $A\subseteq B \models T_\forall^n$ and suppose $<$ is a $\mrm{HF}$-order of $B$ over $A$.  For all $C\subseteq B\setminus A$ we define $\mrm{cl}_<(C)\coloneqq \bigcup_{n<\omega }\mrm{cl}_<^{n}(C)$ inductively by letting $\mrm{cl}_<^0(C) \coloneqq C$ and $\mrm{cl}_<^{n+1}(C) \coloneqq \mrm{gcl}^{\mrm{i}}_{A\mrm{cl}_<^{n}(C)^\downarrow}(\mrm{cl}_<^{n}(C))$ for all $n<\omega$.
\end{definition}

\begin{proposition}[{\ref{ass:hf_closure}(\hyperref[the_hf_axiom]{C3})}]\label{psedoplane:HF_orders}
	Every model $A\subseteq B\models T^n_\forall$ with an associated $\mrm{HF}$-order $<$ of $B$ over $A$ has a $\mrm{HF}$-closure operator $\mrm{cl}_<\coloneqq\bigcup_{n<\omega}\mrm{cl}^n_<$.
\end{proposition}
\begin{proof}
	Let $<$ be a $\mrm{HF}$-order of $B$ over $A$ and let $\mrm{cl}_<(C)\coloneqq \bigcup_{n<\omega }\mrm{cl}_<^{n}(C)$ be the operator defined in \ref{pseudoplane:closure_operator}. Clearly  $\mrm{cl}_<(C)\coloneqq \bigcup_{n<\omega }\mrm{cl}_<^{n}(C)$ satisfies conditions (1)-(2) from Definition~\ref{def_closure_operator}, since every element occurring in a $\mrm{HF}$-order is incident to at most $n$ many elements before it in the ordering. It thus suffices to verify that 
	\[A\mrm{cl}_<(CD)=A\mrm{cl}_<(C)\otimes_{A\mrm{cl}_<(C)\cap A\mrm{cl}_<(D)} A\mrm{cl}_<(D)\oleq B\]
	holds for all $C\subseteq B\setminus A$.

	\medskip
	\noindent First, notice that if there is an edge between some $c\in \mrm{cl}_<(C)$ and $d\in \mrm{cl}_<(D)$, then since $<$ is a linear order it follows from Definition~\ref{pseudoplane:closure_operator} that either $c\in \mrm{cl}_<(D)$ or $d\in \mrm{cl}_<(C)$, which entails that $A\mrm{cl}_<(CD)=A\mrm{cl}_<(C)\otimes_{A\mrm{cl}_<(C)\cap A\mrm{cl}_<(D)} A\mrm{cl}_<(D)$. 
	
	\medskip
	\noindent It remains to show that $A\mrm{cl}_<(CD)\oleq B$. By Corollary \ref{equivalence_HFo.ordering} it suffices to prove that $<\restriction B\setminus(A\mrm{cl}_<(CD)) $ is a $\mrm{HF}$-order of $B$ over $A\mrm{cl}_<(CD)$. Condition \ref{def_HF_order}(\hyperref[HF1]{H1}) is immediate to verify. We show that  Condition \ref{def_HF_order}(\hyperref[HF2]{H2}) holds as well. 
	
	\smallskip
	\noindent Let $b\in B\setminus A\mrm{cl}_<(CD)$ and suppose towards contradiction that $A\mrm{cl}_<(CD)E\noleq A\mrm{cl}_<(CD)Eb$ for some finite $E\subseteq b^\downarrow \setminus \{b\}$. By Definition \ref{infinite_strong extensions}, there is a finite subset $C_0\subseteq \mrm{cl}_<(CD)$ such that $AC_0E\noleq AC_0Eb$. Additionally, we can also assume that $C_0$ is minimal with this property, i.e., for all $C_1\subsetneq C_0$ it holds that $AC_1E\oleq AC_1Eb$. Recall the notion of distance from \ref{gaifman_graph}, we then distinguish the following cases.
	
	\smallskip 
	\noindent \underline{Case 1}. $c<b$ for all $c\in C_0$.
	\newline Then $AC_0E\noleq AC_0Eb$ contradicts the fact that $<$ is an $\mrm{HF}$-order of $B$ over $A$.
	
	\smallskip 
	\noindent \underline{Case 2}. $b<c$ for some $c\in C_0$ and  $d(c,b/A\mrm{cl}_<(CD)b^\downarrow)>1$.
	\newline Let $C_1=C_0\setminus \{c \}$. Since there is no relation between $c $ and $b $, it follows that $ b$ has the same Gaifman closure in $AC_0E$ and $AC_1E$, thus by  Condition \ref{context}(\ref{condition:K}) it follows that $AC_1E\noleq AC_1Eb$, contradicting the minimality of $C_0$.
	
	\smallskip 
	\noindent \underline{Case 3}. $b<c$ for some $c\in C_0$ and  $d(c,b/A\mrm{cl}_<(CD)b^\downarrow)= 1$.
	\newline Then $b\in \mrm{gcl}_{A\mrm{cl}_<(CD)}(c)$ and so, by the definition of $\mrm{HF}$-closure, it follows that $ b\in \mrm{cl}_<(CD)$, contradicting our choice of $b$.
	
	\smallskip 
	\noindent This shows that $<\restriction B\setminus(A\mrm{cl}_<(CD)) $ satisfies also Condition \ref{def_HF_order}(\hyperref[HF2]{H2}) and thus it is a $\mrm{HF}$-order of $B$ over $A\mrm{cl}_<(CD)$, completing our proof.
\end{proof}

The previous proposition makes us sure that there is always a well-defined $\mrm{HF}$-closure operator $\mrm{cl}_<\coloneqq\bigcup_{n<\omega}\mrm{cl}^n_<$ in models of $T^n_\forall$. We employ this to verify Assumptions \ref{technical_assumptions}(\hyperref[trivial_condition]{D1})-(\hyperref[minimality_condition]{D2}), from which \ref{main_th}(\hyperref[completeness_axiom]{C1}) also follows.

\begin{proposition}[{\ref{technical_assumptions}(\hyperref[trivial_condition]{D1})}]\label{pseudoplane:trivial_condition1}
	If $M \models T^+_n$, $A \subseteq M$ is finite, $<$ is a $\mrm{HF}$-order of $M$ and $A \onleq Ac \in \mathcal{K}_n$ is a trivial extension, then for all $\ell < \omega$ there exists $ c'_\ell \in M$ s.t.:
	\begin{enumerate}[(a)]
		\item $Ac \cong_A Ac'$;
		\item $Ac' \onleq A\mrm{cl}_<^\ell(c')$.
	\end{enumerate}
\end{proposition}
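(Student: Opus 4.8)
The statement asks: given $M \models T^+_n$, a finite $A \subseteq M$, a fixed $\mathrm{HF}$-order $<$ of $M$, and a trivial one-element extension $A \onleq Ac \in \mathcal{K}_n$ (so $c$ has no edges to $A$), produce for each $\ell < \omega$ an element $c' \in M$ with $Ac \cong_A Ac'$ and $Ac' \onleq A\mathrm{cl}^\ell_<(c')$. Since the extension is trivial, $c$ has valency $0$ over $A$, so $Ac \cong_A Ac'$ just says $c'$ has no edge to $A$; the real content is the second clause, which must hold \emph{relative to the specific} $\mathrm{HF}$-order $<$ already fixed on $M$.

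First I would unwind what $\mathrm{cl}^\ell_<(c')$ is in the graph setting: since $L = \{\,\vert\,\}$ has no parallelism symbols, $\mathrm{gcl} = \mathrm{gcl}^{\mathrm i}$ is just the one-step neighbourhood, so $\mathrm{cl}^1_<(c') = \{c'\} \cup \{\text{neighbours of } c' \text{ below } c'\}$, and iterating, $\mathrm{cl}^\ell_<(c')$ is obtained by going $\ell$ steps down the $<$-order along edges. Thus $Ac' \onleq A\mathrm{cl}^\ell_<(c')$ fails exactly when inside $A\mathrm{cl}^\ell_<(c')$ there is a nonempty $D \subseteq \mathrm{cl}^\ell_<(c') \setminus (Ac') = \mathrm{cl}^\ell_<(c') \setminus \{c'\}$ every element of which has valency $\geq n+1$ in $A c' D$. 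The key observation is that such a "bad" configuration $D$ would already be a closed configuration witnessing $A' \nonleq A'\mathrm{cl}^\ell_<(c')$ for $A' = A \cap (\cdots)$, which contradicts the fact that $<$ is an $\mathrm{HF}$-order of $M$ \emph{if} $c'$ sits low enough — because an $\mathrm{HF}$-order forces every element's downward neighbourhood to be strong. So the whole game is: find $c'$ realizing $\mathrm{tp}^{\mathrm{qf}}(c/A)$ (valency $0$ over $A$) such that $c'$ has "small enough $<$-support", i.e. such that the finitely many elements of $\mathrm{cl}^\ell_<(c')$ all lie below $c'$ except for $c'$ itself and, crucially, this downward-generated piece is strong.

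The cleanest route: use the $\aleph_1$-saturation that comes for free in the expanded language $L \cup \{<, P_<\}$ (as in the proof of Lemma~\ref{the_K_saturated_lemma}), or more elementarily, invoke Condition~\ref{the_theory}(\ref{the_theory:3}): the trivial extension $A \onleq Ac$ is, by \ref{context}(\ref{condition:L}), strong, and it is non-algebraic (infinitely many disjoint vertices with no edge to $A$ can always be added while staying in $\mathcal{K}_n$ — this is essentially \ref{pseudoplane:conditionG}), so $M$ contains infinitely many copies of $Ac$ over $A$. Consider the $\mathrm{HF}$-order $<$ on $M$ with its partition (here $X_{\mathcal{K}_n} = \{1\}$ so pieces are singletons): by Condition~\ref{context}(\ref{condition:J}) with $n_{\mathcal{K}_n} = n$, for any vertex $v$ we have $|\mathrm{gcl}_{Av}(v) \setminus \{v\}| \le n$, and iterating $\ell$ times, $|\mathrm{cl}^\ell_<(v)|$ is bounded by a fixed number $N = N(n,\ell)$ (at most roughly $(n+1)^\ell \cdot |A| + \cdots$; the exact bound is immaterial). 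Now among the infinitely many copies $c_1, c_2, \ldots$ of $c$ over $A$, I would argue that one of them, say $c'$, can be chosen with $Ac' \onleq A\mathrm{cl}^\ell_<(c')$: if \emph{every} copy failed this, each would carry a bad configuration $D_i \subseteq \mathrm{cl}^\ell_<(c_i)$ of size $\le N$; but then — and this is the crux — such a $D_i$, being a set in which every element has valency $\ge n+1$ inside $Ac_iD_i \subseteq$ a set closed downward under $<$, directly violates the $\mathrm{HF}$-order property \ref{def_HF_order}(\hyperref[HF2]{H2}) of $<$ on $M$ (after shrinking $A$ to the relevant finite parameter set via \ref{context}(\ref{condition:E}) and using that $\mathrm{cl}^\ell_<(c_i) \setminus \{c_i\} \subseteq c_i^\downarrow$ by definition of $\mathrm{cl}_<$). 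Hence \emph{no} copy is bad, and in fact \emph{every} copy works; pick any one as $c'$.

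\textbf{The main obstacle.} The delicate point is the reduction of "$Ac' \onleq A\mathrm{cl}^\ell_<(c')$ fails" to a genuine violation of the $\mathrm{HF}$-order property. One has to be careful that the witnessing bad set $D$ lies entirely in $(c')^\downarrow$: elements of $\mathrm{cl}^\ell_<(c')$ can in principle include elements of $A$ (and $A$ is a parameter set, not below $c'$ in general), so the argument must be run with $A$ replaced by the actual finite parameter subset $A_0 = A \cap \mathrm{cl}^\ell_<(c')$ and must verify $A_0 \onleq A_0(\text{rest})$ using that $<$ is an $\mathrm{HF}$-order together with Condition~\ref{context}(\ref{condition:K}) (whether $A_0 c' D \onleq$-fails depends only on the Gaifman closure, which is contained in $\mathrm{cl}^\ell_<$ for $\ell$ large — here one needs $\ell$ at least the Gaifman diameter of the minimal closed configuration, but since $n_{\mathcal{K}_n}$ bounds things this is fine and uniform). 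I expect the write-up to spend most of its effort making this localization precise; everything else (valency bookkeeping, the saturation/infinitely-many-copies step) is routine given \ref{context}(\ref{condition:J})--(\ref{condition:L}) and \ref{the_theory}(\ref{the_theory:3}).
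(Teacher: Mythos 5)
Your reduction of clause (b) to the $\mrm{HF}$-order property (H2) does not work, and this is not a matter of bookkeeping that a more careful localization would fix: the claim that \emph{every} (or even \emph{some arbitrary}) copy $c'$ of $c$ over $A$ satisfies $Ac' \onleq A\mrm{cl}^\ell_<(c')$ is false. Condition (H2) bounds only the number of $<$-\emph{down}-neighbours of each vertex by $n$; it says nothing about up-neighbours. But the openness of $A\mrm{cl}^\ell_<(c')$ over $Ac'$ requires bounding the \emph{total} valency of elements of $\mrm{cl}^\ell_<(c')\setminus\{c'\}$ over $Ac'$, and elements of $A$ lying \emph{above} an element $e\in\mrm{cl}^\ell_<(c')$ in the order contribute to $\vartheta_{Ac'E}(e)$ without being seen by (H2). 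Concretely, for $n=2$ take $M$ containing $d_1<d_2<a_1<a_2<c'$ with edges $a_id_j$ for $i,j\in\{1,2\}$ and $c'd_1$, $c'd_2$: this is a legitimate initial segment of an $\mrm{HF}$-order (each vertex has at most $2$ down-neighbours), $c'$ is a trivial extension of $A=\{a_1,a_2\}$, yet with $E=\{d_1,d_2\}\subseteq\mrm{cl}^1_<(c')$ both $d_1$ and $d_2$ have valency $3$ in $Ac'E$, so $Ac'\nonleq A\mrm{cl}^1_<(c')$. Such configurations occur in models of $T^+_2$, so the "every copy works" conclusion, and with it the pigeonhole over the infinitely many copies, collapses.

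The point you flag as "the main obstacle" is therefore the actual content of the statement, and it cannot be discharged by shrinking $A$ to $A\cap\mrm{cl}^\ell_<(c')$: the problem is precisely the edges from $\mrm{cl}^\ell_<(c')$ back into $A$, and no choice of parameter set removes them. What the paper does instead is \emph{construct} a good copy recursively in $\ell$: one first finds $n$ elements $d_1,\dots,d_n$, each trivial over $A$ together with the $\ell$-closures of the previous ones and each satisfying the inductive hypothesis at level $\ell$, and then takes $c'$ to be a common neighbour of $d_1,\dots,d_n$ lying above all of them in the order (such a $c'$ exists by axiom \ref{the_theory}(\ref{the_theory:3}) plus the fact that only finitely many realizations can lie below any given point). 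Since $c'$ then already has $n$ down-neighbours, (H2) forces $d_1,\dots,d_n$ to be its \emph{only} down-neighbours, so $\mrm{cl}^{\ell+1}_<(c')=\{c'\}\cup\bigcup_i\mrm{cl}^\ell_<(d_i)$ decomposes into non-interacting pieces that are trivial over $A$, and strongness follows by splicing the inductive $\mrm{HF}$-orders. The triviality of the $d_i$ over $A$ is exactly what rules out the counterexample above, and it is obtained by construction, not by selection among arbitrary copies.
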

\begin{proof} By induction on $\ell< \omega$ we prove that for every $A \onleq M$ and $A \onleq Ac \in \mathcal{K}_n$ we can find $c'_\ell$ as in (a)-(b). The case for $n=0$ is easy: it is enough to pick an element $c'_0>A$ which has no incidence with elements from $A$. Clearly, we can find such an element in any model $M\models T^+_n$ with a $\mrm{HF}$-order $<$.
	\newline Thus suppose the inductive hypothesis holds with respect to $\ell$,  we define $c'_{\ell+1}$ so that (a)-(b) are satisfied for it. By assumption, and proceeding recursively,  we can find $n$ many elements $d_1,\dots,d_n$ such that, for each $0\leq i<n$, we have that the element $d_{i+1}$ has no incidence with elements from $A\mrm{cl}_<^\ell(\{d_1,\dots,d_i  \})$ and 
	\[ A\mrm{cl}_<^\ell(\{d_1,\dots,d_i  \})d_{i+1}\onleq A\mrm{cl}_<^\ell(\{d_1,\dots,d_i,d_{i+1}  \}).  \]
	Now, notice that the extension $d_1\dots d_n\onleq d_1\dots d_n b$ with $b$ incident to $d_1,\dots,d_n$ is not algebraic. Since $M\models T^+_n$ there are infinitely many elements in $M$ isomorphic to $b$ over $d_1,\dots,d_n$. In particular, since $<$ is a $\mrm{HF}$-order, there is some $c'_{\ell+1}\in M $ such that $d_i<c'_{\ell+1}$ for all $1\leq i\leq n $. It follows that $c'_{\ell+1}$ is incident only to $d_1,\dots,d_n$ from $(c'_{\ell+1})^\downarrow$. Thus clearly we have that  $Ac \cong_A Ac'_{\ell+1}$. 
	\newline We next claim that $Ac'_{\ell+1} \onleq A\mrm{cl}_<^{\ell+1}(c'_{\ell+1})$. To see this, notice that  we can define a $\mrm{HF}$-order $<'$ by first letting $Ac'_{\ell+1}<'d_{1}<'\dots<' d_n$. By construction, $c'_{\ell+1}$ is incident only to $d_1,\dots,d_n$ in $\mrm{cl}_<^{\ell+1}(c'_{\ell+1})$, and, for $1\leq i<j\leq n$, no element in $\mrm{cl}_<^{\ell}(d_{i})$ is incident to elements from $\mrm{cl}_<^{\ell}(d_{j})$. It follows that we can extend the order $<'$ and obtain $Ac'_{\ell+1}\hleq Ac'_{\ell+1} \mrm{cl}_<^\ell(\{d_1,\dots,d_i,d_{i+1}  \})$. Since by construction we have that $ \mrm{cl}_<^{\ell+1}(c'_{\ell+1})=\mrm{cl}_<^\ell(\{d_1,\dots,d_n  \})\cup \{c'_{\ell+1} \}$ we obtain that $Ac'_{\ell+1} \onleq A\mrm{cl}_<^{\ell+1}(c'_{\ell+1})$. This concludes our proof.
\end{proof}

\begin{lemma}\label{pseudoplane:algebraic_lemma}
	Let $A\onleq C\models T^n_\forall$ and $c\in C$ be such that  $\vartheta_{Ac}(c)=n$, then $Ac\onleq C$. 
\end{lemma}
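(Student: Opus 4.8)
The plan is to reduce the statement to a purely finitary combinatorial assertion and then dispatch the latter by a short case split. Recall from Definition~\ref{infinite_strong extensions} that $Ac \onleq C$ means $(Ac)\cap C_1 \onleq C_1$ for every finite $C_1 \subseteq C$, and note that every finite substructure of $C$ lies in $\mathcal{K}_n$ (being a finite $n$-open graph is a universal property). We may assume $c \notin A$, since otherwise $Ac = A$ and the statement is the hypothesis. Fix a finite $C_1 \subseteq C$ and enlarge it to the finite set $C_0 := C_1 \cup \{c\} \cup N$, where $N$ is the set of the exactly $n$ neighbours of $c$ lying in $A$; put $A_0 := A \cap C_0$. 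Then $A_0$ is finite, $c \notin A_0$, $N \subseteq A_0 \subseteq A$ (so $\vartheta_{A_0 c}(c) = n$), and $A_0 \onleq C_0$ because $A \onleq C$. It therefore suffices to prove: \emph{if $A_0, C_0 \in \mathcal{K}_n$ with $A_0 \onleq C_0$, $c \in C_0 \setminus A_0$ and $\vartheta_{A_0 c}(c) = n$, then $A_0 c \onleq C_0$.} Granting this, $(Ac)\cap C_0 = A_0 c \onleq C_0$, and since $C_1 \subseteq C_0$ is a substructure, Condition~\ref{context}(\ref{condition:E}) gives $(Ac)\cap C_1 = ((Ac)\cap C_0)\cap C_1 \onleq C_1$, as needed.

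Next I would prove the finitary claim. Let $D \subseteq C_0 \setminus A_0 c$ be nonempty; I must exhibit $e \in D$ with $\vartheta_{A_0 c D}(e) \leq n$. Applying $A_0 \onleq C_0$ to the nonempty subset $D \cup \{c\} \subseteq C_0 \setminus A_0$, we obtain $e \in D \cup \{c\}$ with $\vartheta_{A_0(D\cup\{c\})}(e) \leq n$. Since $A_0(D\cup\{c\})$ and $A_0 c D$ are the same induced subgraph of $C_0$, if $e \in D$ we are done. If instead $e = c$, write $k \geq 0$ for the number of neighbours of $c$ in $D$; since the $n$ neighbours of $c$ in $A$ all lie in $A_0$, we have $\vartheta_{A_0 c D}(c) = n + k$, and the inequality $\vartheta_{A_0 c D}(c) \leq n$ forces $k = 0$, i.e.\ $c$ has no neighbour in $D$. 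Then applying $A_0 \onleq C_0$ to $D$ itself yields some $e \in D$ with $\vartheta_{A_0 D}(e) \leq n$, and since $c$ is not adjacent to $e$ we get $\vartheta_{A_0 c D}(e) = \vartheta_{A_0 D}(e) \leq n$. This proves the claim, hence the lemma.

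The step I would treat most carefully is not the combinatorics but the passage to the finite. In models of $T^+_n$ (already for $n=1$, the pseudoplane) vertices may have infinite valency, so the convenient reading of $\onleq$ as ``every nonempty subset contains a vertex of valency $\leq n$'' is valid only for finite structures; for infinite $C$ one must work through Definition~\ref{infinite_strong extensions}. This is precisely why one first absorbs the $n$ neighbours of $c$ in $A$ into the finite piece $C_0$: only then does the valency of $c$ \emph{inside $C_0$} agree with its valency in all of $A$, which is exactly what makes the dichotomy ``$e=c$ or $e\neq c$'' close. Once this bookkeeping is in place, the argument is immediate and there is no further obstacle.
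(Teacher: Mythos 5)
Your proof is correct and uses essentially the same argument as the paper: apply $A\onleq C$ to a finite piece containing $c$, and either the resulting low-valency element is not $c$ (done), or it is $c$, in which case $\vartheta_{Ac}(c)=n$ forces $c$ to have no neighbours outside $A$ in that piece, so one deletes $c$ and applies $A\onleq C$ again. Your treatment is somewhat more careful than the paper's about unwinding Definition~\ref{infinite_strong extensions} when $A$ is infinite (absorbing the $n$ neighbours of $c$ into the finite window before running the dichotomy), but the core combinatorial step is identical.
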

\begin{proof}
	Consider any finite subset $D\subseteq C$ such that $Ac\subsetneq D$, we need to show that $D\setminus Ac$ contains some element with valency $\leq n$.  Now, since $A\onleq C$ and $D\subseteq C$, there is an element $d\in D\setminus A$ with $\vartheta_{AD}(d)\leq n$. If $c\neq d$ then we have $d\in D\setminus Ac$ and $\vartheta_{AD}(d)\leq n$, and we are done. Thus suppose that $c$ is the unique element in $D\setminus A$  with valency $\leq n$. Since $\vartheta_{Ac}(c)=n$, it follows that $\vartheta_{AD}(c)=n$, and  in particular it follows that $c$ is not incident to any element from $D\setminus A$. Then let $D'=D\setminus\{c\}$ and consider an element $d'\in D'\setminus A$ with $\vartheta_{AD'}(d')\leq n$ -- this exists since $A\onleq C$ and $Ac\subsetneq D$. Then $c$ and $d'$ are not incident, whence $\vartheta_{AD}(d')\leq n$. This shows that every finite subset $D\subseteq C$ such that $Ac\subsetneq D$ contains an element with valency $\leq n$, and thus proves that $Ac\onleq C$. 
\end{proof}

\begin{proposition}[{\ref{technical_assumptions}(\hyperref[minimality_condition]{D2})}]\label{pseudoplane:minimality_condition}
	Suppose $M \models T_n^+$ is $\aleph_1$-saturated and $A\onleq M$ is countable. If for every trivial one-element extension $A\onleq Ab$ there is $b' \in M$ such that $Ab \cong_A Ab'\onleq M$, then for every minimal extension $A\onleq Ac\models T_\forall$ there is an element $c' \in M$ such that $Ac \cong_A Ac'\onleq M$.
\end{proposition}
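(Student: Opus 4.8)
The plan is to split the minimal extension $A\onleq Ac$ into the trivial and the non-trivial case (recall that in $(\mathcal{K}_n,\onleq)$ we have $X_{\mathcal{K}_n}=\{1\}$, so every minimal strong extension is a one-element extension). If $(A,Ac)$ is trivial there is nothing to prove, since this is exactly the hypothesis. So assume $A\onleq Ac$ is non-trivial; as $A\onleq Ac$, the vertex $c$ has valency $k$ over $A$ with $1\le k\le n$, say $c$ is joined precisely to $a_1,\dots,a_k\in A$.

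First I would dispose of the case $k=n$. The extension $a_1\cdots a_n\onleq a_1\cdots a_n c$ is a minimal non-algebraic extension in $\mathcal{K}_n$, so by Definition \ref{the_theory}(3) it is realised in $M$ by some $c'$; since $A\onleq M$, applying Definition \ref{infinite_strong extensions} to the finite set $\{a_1,\dots,a_n,a,c'\}$ for each $a\in A\setminus\{a_1,\dots,a_n\}$ shows that $c'$ has at most $n$ neighbours in $A$, hence exactly $\{a_1,\dots,a_n\}$, so that $Ac\cong_A Ac'$. Since $\vartheta_{Ac'}(c')=n$, Lemma \ref{pseudoplane:algebraic_lemma} (with $A\onleq M$) gives $Ac'\onleq M$, and we are done.

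For $k<n$ I would use $\aleph_1$-saturation. Consider the partial type $p(x)$ over the countable set $A$ consisting of the formulas $x\sim a_i$ for $1\le i\le k$, the formulas $\neg(x\sim a)$ for $a\in A\setminus\{a_1,\dots,a_k\}$, and the formulas of Remark \ref{the_type_remark} expressing $Ax\onleq M$. Any realisation $c'$ of $p(x)$ automatically satisfies $Ac\cong_A Ac'\onleq M$: the first two families pin down $\atype(c'/A)$ (so in particular the adjacencies of $c'$ inside $M\setminus A$, which do not affect the isomorphism $Ac'\cong_A Ac$, are irrelevant), and the third is precisely $Ac'\onleq M$. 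So it suffices to prove that $p(x)$ is finitely satisfiable in $M$. Given a finite fragment, with parameters in a finite $A_0\subseteq A$ containing $a_1,\dots,a_k$, I would build the required witness by iterating Definition \ref{the_theory}(3) over $A_0$ to realise in $M$ the strong non-algebraic extension $A_0\onleq A_0 c' d_1\cdots d_{n-k}$ in which $c'$ is joined to $a_1,\dots,a_k,d_1,\dots,d_{n-k}$ and each $d_j$ is joined only to $c'$; this decomposes into minimal non-algebraic one-element steps, and the fact that $c'$ has valency $n$ and each $d_j$ valency $1$ over its predecessors makes the extension strong. The isomorphism over $A_0$ guarantees that $c'$ is joined to exactly $a_1,\dots,a_k$ inside $A_0$, so the edge-formulas of the fragment hold; and for the finitely many remaining formulas, each forbidding a fixed closed configuration $C$ over $A_1c'$ with $A_1\subseteq A_0$, one uses that $A\onleq M$ already forbids any closed configuration over a finite subset of $A$ alone, so that a closed configuration over $A_1c'$ would have to run through the padded, valency-$n$ vertex $c'$; here the Gaifman-closure machinery of Section \ref{subsec:2} (Proposition \ref{the_restricted_Gaifman_req} together with Lemma \ref{pseudoplane:algebraic_lemma}) is used to rule this out, after discarding the finitely many copies of the gadget for which it might fail.

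The main obstacle is exactly this last step: Definition \ref{the_theory}(3) only controls the copies over the \emph{finite} set $A_0$, whereas a priori the vertex $c'$ and the helpers $d_j$ may attach badly to the infinite remainder of $M$. The real work is to show that, once $c'$ has been padded out to valency $n$, no new closed configuration over parameters from $A_0$ can pass through it, so that the finitely many strongness requirements of the fragment are met and $\aleph_1$-saturation then produces the desired $c'\in M$. (This is the analogue, one step shorter, of the ladder construction used for \ref{the_K_saturated_lemma}(\hyperref[trivial_condition]{D1}) in Proposition \ref{pseudoplane:trivial_condition1}.)
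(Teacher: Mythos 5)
Your trivial case and your $k=n$ case are fine, but there is a genuine gap in the case $1\le k<n$, and it is exactly at the point you flag as ``the main obstacle''. You include the scheme expressing $Ax\onleq M$ in the type $p(x)$ and then must verify that each finite fragment of it is satisfied by your gadget; but the gadget $A_0c'd_1\cdots d_{n-k}$ is produced by iterating Definition \ref{the_theory}(\ref{the_theory:3}) over the \emph{finite} set $A_0$, which only controls $\atype(c'd_1\cdots d_{n-k}/A_0)$. Nothing prevents $c'$ or the $d_j$ from carrying arbitrarily many further incidences into $M\setminus A_0$ (in particular into $A\setminus A_0$), so the claim that ``once $c'$ has been padded out to valency $n$, no new closed configuration over parameters from $A_0$ can pass through it'' is unsupported: the padding to valency $n$ only holds inside the gadget, not in $M$. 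Likewise, ``discarding the finitely many copies of the gadget for which it might fail'' is asserted without argument; it is not clear that only finitely many copies fail a given strongness formula, and establishing this would itself require a nontrivial counting argument via \ref{context}(\ref{condition:H}). So as written the finite satisfiability of $p(x)$ is not proved.

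The missing idea is to use the \emph{hypothesis} of the proposition (which you only invoke for the trivial case of the conclusion, where it is vacuous) to build the padding \emph{once and for all, strongly over the whole countable set $A$}: by the hypothesis one finds $b_{k+1},\dots,b_n\in M\setminus A$, each trivial over the preceding ones, with $Ab_{k+1}\cdots b_n\onleq M$. Then the type over $Ab_{k+1}\cdots b_n$ need only say that $x$ is incident to $a_1,\dots,a_k,b_{k+1},\dots,b_n$ and $x\neq a$ for $a\in A$ --- no strongness formulas at all. This is finitely satisfiable by Definition \ref{the_theory}(\ref{the_theory:3}) applied to the finite set $\{a_1,\dots,a_k,b_{k+1},\dots,b_n\}$, so $\aleph_1$-saturation yields $c'$. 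Strongness then comes for free: any further incidence of $c'$ into $Ab_{k+1}\cdots b_n$ would make $\{c'\}$ a closed configuration over it, contradicting $Ab_{k+1}\cdots b_n\onleq M$; hence $\vartheta_{Ab_{k+1}\cdots b_nc'}(c')=n$, Lemma \ref{pseudoplane:algebraic_lemma} gives $Ab_{k+1}\cdots b_nc'\onleq M$, and \ref{context}(\ref{condition:K}) (the $b_j$ being trivial over $Ac'$ except for their single edge to $c'$) gives $Ac'\onleq M$. Your $k=n$ argument is the degenerate instance of this where no padding is needed.
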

\begin{proof}
	Let $M\models T_n^+$ be $\aleph_1$-saturated  and $A\onleq M$ be countable. Suppose $A\onleq Ac\in \mathcal{K}_n$ is a minimal strong extension with $c$  incident to elements $a_1,\dots,a_\ell$ from $A$, for $1\leq \ell\leq n$. By assumption, we can find elements $b_{\ell+1},\dots, b_{n}$ in $M\setminus A$ such that each $b_{\ell+k+1}$ is trivial over $Ab_{\ell+1}\dots b_{\ell+k}$ and $Ab_{\ell+1}\dots b_{n}\onleq M$. Then, since $M\models T^+_n$, it follows that there are infinitely many elements which are incident to $a_1,\dots,a_\ell,b_{\ell+1},\dots,b_{n}$. In particular, it follows that the type $p(x,Ab_{\ell+1}\dots b_{n})$ which says that $x$ is incident to $a_1,\dots,a_\ell,b_{\ell+1},\dots,b_{n}$ and $x\neq a$ for all $a\in A$ is consistent. It follows from the fact that $M$ is $\aleph_1$-saturated that there is some $c'\in M\setminus A$ incident to $a_1,\dots,a_\ell,b_{\ell+1},\dots,b_{n}$. Then, since $Ab_{\ell+1}\dots b_{n}\onleq M$, we obtain that $c'$ is incident only to these elements from $Ab_{\ell+1}\dots b_{n}$. It follows in particular that $Ac\cong_A Ac'$. Moreover, since $\vartheta_{Ab_{\ell+1},\dots,b_{n}}(c')=n$, we obtain from Lemma \ref{pseudoplane:algebraic_lemma} that $Ab_{\ell+1}\dots b_{n} c'\onleq M$. Since $b_{\ell+1},\dots,b_{n}$ are trivial extensions of $A$, we conclude from \ref{context}(\ref{condition:K})  that $Ac' \onleq  Ab_{\ell+1}\dots b_{n} c'\onleq M$.
\end{proof}

It follows from our previous results that $T^+_n$ satisfies Assumptions \ref{main_th}(\hyperref[completeness_axiom]{C1}) and \ref{ass:hf_closure}(\hyperref[the_hf_axiom]{C3}). To establish also \ref{main_th}(\hyperref[the_K_homogeneous_lemma]{C2}) we use Proposition~\ref{lemma:homogeneity} and verify the technical condition \ref{technical_assumptions}(\hyperref[extension]{D3}). To this end, we introduce a \emph{free completion process} for $n$-open graphs, which was already considered in \cite{baldwin_preprint}. As noticed by the authors, this free construction can be considered as an adaptation of Hall's construction of a free projective plane in the context of $n$-open graphs. However, we stress that, while in Hall's case the free completion is algebraic over the starting configuration, this is not the case in the setting of $n$-open graphs. We reflect this in our notation and we use $C(A)$ for the abstract notion  of completion used in this context, while we reserve $F(A)$ for the notion of algebraic completion defined in \ref{free_algebraic_completion}. The fact that $C(A)$ behaves like a free algebraic completion essentially follows from Lemma~\ref{pseudoplane:algebraic_lemma}, which in particular shows that one could define $C(A)$ similarly as we did in \ref{free_algebraic_completion} but considering at each step of the construction minimal extensions $A\oleq Ab$ where $b$ has valency exactly $n$. Since our interest in this paper is mostly in geometries with a free \emph{algebraic} completion we do not pursue this idea here, and we simply verify Condition \ref{technical_assumptions}(\hyperref[extension]{D3})

\begin{definition}	\label{free_pseudoplane_def}
	Let $A\models T^n_\forall $. We let $A_0\coloneqq A$ and, for every $i<\omega$, we let $A_{i+1}$ be obtained from $A_i$ by adding, for every distinct $c_1,\dots,c_n\in  A_i$,  infinitely many elements which are incident to $c_1,\dots,c_n$ and to no other element from $A_i$. We then define $C(A)\coloneqq \bigcup_{i<\omega}A_i$ and we say that $C(A)$ is the \emph{free completion} of $A$.
\end{definition}

\begin{proposition}[{\ref{technical_assumptions}(\hyperref[extension]{D3})}]\label{free_theory_pseudoplane}
	Let $A\models T^n_\forall$ and suppose that $A$ contains at least $n$ many elements. Then $C(A)\models T_n^+$, $A\onleq C(A)$, $|C(A)|=|A|+\aleph_0$ and:
	\begin{enumerate}[(a)]
			\item $C(A)$ is unique up to $A$-isomorphisms, i.e., every isomorphism $h:A\cong B$ extends to an isomorphism $\hat{h}:F(A)\cong F(B)$;
			\item $C(A)$ is $\oleq$-prime over $A$, i.e., for every $M\models T^+$ and $h:A\to M$ with $h(A)\oleq M$ there is $\hat{h}: C(A)\to M$ s.t. $\hat{h}\restriction A = h$ and $\hat{h}(C(A))\oleq M$.
		\end{enumerate}
	In particular, it follows that if $A\cong B\onleq M\models T_n^+$  and $M$ is $\aleph_1$-saturated,  then $C(A)\cong C(B)\preccurlyeq M$.
\end{proposition}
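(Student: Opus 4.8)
The plan is to go through the clauses in the order: $C(A)\models T^n_\forall$ together with $A\onleq C(A)$, then the cardinality, then membership in $T^+_n$, then uniqueness (a), and finally primeness (b); throughout I would extend constructions along the stages $A_0\subseteq A_1\subseteq\cdots$ of Definition~\ref{free_pseudoplane_def}. First I would prove $A\onleq C(A)$ in the sense of \ref{infinite_strong extensions} by a ``latest stage'' argument: given a nonempty finite $C\subseteq C(A)\setminus A$, pick $b\in C$ added at the largest stage $m$ that occurs among members of $C$; when $b$ was added it was made incident to exactly $n$ distinct elements of $A_{m-1}$, and no member of $C$ is added at a stage $>m$, so $b$ is incident in $A\cup C$ only to (some of) those $n$ elements, whence $\vartheta_{AC}(b)\leq n$. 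Since $\emptyset\onleq A$ (as $A\models T^n_\forall$), transitivity of $\onleq$ (Condition~\ref{context}(\ref{condition:C})) then gives $\emptyset\onleq C(A)$, i.e.\ clause~\ref{the_theory}(\ref{the_theory:1}). The equality $|C(A)|=|A|+\aleph_0$ is a direct count: each stage adjoins $\aleph_0$ fresh vertices over each $n$-subset, so $|A_{i+1}|=|A_i|+\aleph_0$ for every $i$.

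Next I would verify $C(A)\models T^+_n$. Recalling $X_{\mathcal{K}_n}=\{1\}$, every minimal strong extension in $\mathcal{K}_n$ is a one-element extension $A'\onleq A'b$ with $\vartheta_{A'b}(b)\leq n$; adjoining further vertices, each incident to exactly the neighbours of $b$, preserves $n$-openness, so every such extension is non-algebraic and clause~\ref{the_theory}(\ref{the_theory:2}) holds vacuously. For clause~\ref{the_theory}(\ref{the_theory:3}): given a finite $A'\subseteq C(A)$ and such an extension with $b$ incident to $c_1,\dots,c_\ell\in A'$ ($\ell\leq n$), fix $j\geq 1$ with $A'\subseteq A_j$ (so $A_j$ is infinite) and distinct $d_1,\dots,d_{n-\ell}\in A_j\setminus A'$; the $\aleph_0$ fresh vertices added at stage $j+1$ over the $n$-set $\{c_1,\dots,c_\ell,d_1,\dots,d_{n-\ell}\}$ are then pairwise distinct copies of $b$ over $A'$. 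For the uniqueness clause (a), a given isomorphism $A\cong B$ extends stage by stage, matching the $\aleph_0$ fresh vertices over an $n$-subset bijectively with the $\aleph_0$ fresh vertices over its image; this is coherent precisely because fresh vertices are adjacent only to their $n$ parents, and the union of the stagewise isomorphisms is the required $A$-isomorphism $C(A)\cong C(B)$.

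Finally, for the primeness clause (b) I would build $\hat h\supseteq h$ along the stages (processing one fresh vertex at a time), maintaining the invariant that the image $N$ of the part built so far satisfies $N\onleq M$. Extending past a single fresh vertex $b$ over $\{c_1,\dots,c_n\}$ amounts to finding $b'\in M\setminus N$ incident to exactly $\{\hat h(c_1),\dots,\hat h(c_n)\}$ within $N$. Here the invariant does the key work: if some $b'\in M\setminus N$ were incident to all the $\hat h(c_k)$ and also to an element $e\in N$ outside that $n$-set, then $\{b',\hat h(c_1),\dots,\hat h(c_n),e\}$ would be a finite subset of $M$ witnessing $N\not\onleq M$; hence \emph{every} vertex of $M\setminus N$ incident to all the $\hat h(c_k)$ is automatically incident in $N$ to exactly those. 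Clause~\ref{the_theory}(\ref{the_theory:3}), applied to the finite non-algebraic extension $\{\hat h(c_1),\dots,\hat h(c_n)\}\onleq\{\hat h(c_1),\dots,\hat h(c_n)\}b$, supplies realisations, and Lemma~\ref{pseudoplane:algebraic_lemma} keeps the enlarged image strong because $b'$ has valency exactly $n$ over $N$. The main obstacle is exactly this last step: clause~\ref{the_theory}(\ref{the_theory:3}) only guarantees $\aleph_0$ realisations over a \emph{finite} parameter set, so when $A$ (hence $N$) is infinite one must argue that enough of these realisations are genuinely new (outside $N$) and that the stagewise demands can be met — I would address this by a careful stage-respecting enumeration of the fresh vertices, so that at each step only finitely many of the relevant copies have yet been placed, and by combining this with the ``no spurious incidence'' observation above; the rest is the same bookkeeping already carried out for $F(A)$ in Proposition~\ref{key_properties:free_completion}.
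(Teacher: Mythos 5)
Your proposal follows the paper's proof almost step for step: $A\onleq C(A)$ via the stagewise ordering, the vacuity of clause \ref{the_theory}(\ref{the_theory:2}) because $\mathcal{K}_n$ has no algebraic strong extensions, the padding of an $\ell$-set to an $n$-set for clause \ref{the_theory}(\ref{the_theory:3}), the stagewise extension of isomorphisms for (a), and the inductive construction of $\hat h$ in (b) using the invariant $N\onleq M$ to rule out spurious incidences and Lemma \ref{pseudoplane:algebraic_lemma} to keep the image strong. The one point where you go beyond the paper is that you explicitly flag the delicate step in (b) --- producing realisations \emph{outside} the possibly infinite image $N$ --- and this is exactly the right place to worry: the paper's own proof simply asserts that ``since $M\models T^+_n$ there is $c'\in M\setminus h_i(A_i)$ incident to $h_i(d_1),\dots,h_i(d_n)$'' with no justification.

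However, your proposed remedy does not close this gap. A stage-respecting enumeration controls the copies that \emph{you} have placed, but the obstruction can already live inside $h(A)$: clause \ref{the_theory}(\ref{the_theory:3}) only guarantees infinitely many copies over each \emph{finite} parameter set, and nothing prevents all of them from lying in $h(A)$ when $A$ is infinite. Concretely, for $n=2$ let $A=\{s_1,s_2\}\cup\{a_j : j<\omega\}$ with each $a_j$ adjacent exactly to $s_1,s_2$, and build $M\supseteq A$ by iteratively adjoining, over every $2$-subset of the current structure \emph{except} $\{s_1,s_2\}$, infinitely many fresh vertices adjacent to exactly that pair. One checks that $A\onleq M$ and $M\models T^+_2$ (the $a_j$ themselves witness clause (3) for every finite $D\supseteq\{s_1,s_2\}$, since all but finitely many of them have no further neighbour in $D$), yet no vertex of $M\setminus A$ is adjacent to both $s_1$ and $s_2$, so the stage-$1$ vertices of $C(A)$ over $\{s_1,s_2\}$ have nowhere to go and no embedding $\hat h$ extending $\mathrm{id}_A$ exists. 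Thus clause (b) as literally quantified over all $M\models T^+_n$ fails, and neither your bookkeeping nor the paper's one-line assertion repairs it; what saves the intended applications is that (b) is only ever invoked with $M=\mathfrak{M}$ the saturated monster model (as in Theorem \ref{stability}, via Remark \ref{saturation_remark}), where the type of a fresh vertex over $N$ --- adjacent to exactly $\{h(d_1),\dots,h(d_n)\}$ within $N$ and distinct from everything already used --- is finitely satisfiable by clause (3) and hence realised. To make your argument correct you should either restrict (b) to sufficiently saturated $M$ or impose a hypothesis on $A$ excluding configurations like the one above.
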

\begin{proof}
	Since it is immediate to verify that the construction of $C(A)$ induces a $\mrm{HF}$-order of $C(A)$ over $A$, it follows that $A\onleq C(A)$. Similarly, it follows from Definition \ref{free_pseudoplane_def} that  $|C(A)|=|A|+\aleph_0$ and also that any isomorphism $h:A\cong B$ extends to an isomorphism $\hat{h}:C(A)\cong C(B)$.
		
	\smallskip
	\noindent We show that, if $A\models T^n_\forall$ contains at least $n$ many elements, then $C(A)\models T_n^+$. Notice that, if $|A|\geq n$ then $C(A)$ is infinite. We verify that $C(A)$ satisfies the axioms from \ref{the_theory}. Since in $\mathcal{K}_n$ there are no algebraic extensions, it suffices to verify \ref{the_theory}(1) and \ref{the_theory}(3). By $A\onleq C(A)$ it is immediate to verify that $C(A)\models T^n_\forall$. Consider then some $D\subseteq_\omega C(A)$ and a minimal extension $D\onleq Dc\in \mathcal{K}_n$. Then $c$ is incident exactly to some elements $d_1,\dots, d_\ell\in D$ for some $\ell\leq n$. Pick some new elements $d_{\ell+1},\dots,d_{n}\in C(A)\setminus D$ and let $A_i$ be the stage  with the smallest index in the construction of $C(A)$ satisfying $\{d_1,\dots,d_\ell,d_{\ell+1},\dots,d_{n} \}\subseteq  A_i$. By construction $A_{i+1}$ contains infinitely many elements $(c_i)_{i<\omega}$ incident exactly to $d_1,\dots,d_\ell,d_{\ell+1},\dots,d_{n}$ in $D\cup \{ d_{\ell+1},\dots,d_{n}  \}$. For every $i<\omega$ we have that $Dc\cong_D Dc_i\subseteq C(A)$, verifying \ref{the_theory}(3) and thus showing that $C(A)\models T^+_n$.
	
	\smallskip
	\noindent Now, suppose that $M\models T^+$ and that $h:A\to M$ is an embedding with $h(A)\onleq M$. We suppose inductively that there is $h_i\supseteq h$ with $h_i:A_i\to M$ and  $h_i(A_i)\onleq M$. Let $c\in A_{i+1}\setminus A_i$, then $c$ is incident to exactly $n$ many elements $d_1,\dots, d_n \in A_i$. Since $M\models T^+_n$ there is an element $c'\in M\setminus h_i(A_i)$ which is incident to $h_i(d_1),\dots, h_i(d_n)$. Since $h_i(A_i)\onleq M$ it follows that $c'$ is not incident to any other element from $h_i(A_i)$. Let $h_{i+1}\coloneqq h_i\cup \{(c,c')  : c\in A_{i+1}\setminus A_i  \}$ where the element $c'$ is chosen as above. It follows immediately from the choice of such elements that $h_{i+1}:A_{i+1}\to M$ is an embedding. Moreover, since every element in $h_{i+1}(A_{i+1})\setminus h_{i+1}(A_{i})$  has valency $n$ in $h_{i+1}(A_{i+1})$, it follows from Lemma \ref{pseudoplane:algebraic_lemma} that $h(A_{i+1})\onleq M$. Finally, letting $\hat{h}=\bigcup_{i<\omega}h_i$, we obtain an embedding $\hat{h}:C(A)\to M$ satisfying $\hat{h}\restriction A=h$ and $\hat{h}(C(A))\onleq M$.
	
	\smallskip
	\noindent Finally, the fact that $A\cong B\onleq M\models T_n^+$ with $M$ $\aleph_1$-saturated  entails $C(A)\cong  C(B)\onleq M$ follows immediately from items (a) and (b). Since we also showed that $C(A),C(B)\models T_n^+$, it then follows from Theorem~\ref{prop_completeness}  that  $C(A)\cong  C(B)\preccurlyeq M$, verifying \ref{technical_assumptions}(\hyperref[extension]{D3}) before  (notice in particular that \ref{prop_completeness} holds as we already established \ref{main_th}(\hyperref[completeness_axiom]{C1}).
\end{proof}

With the previous result, we verified all assumptions from \ref{main_th}(\hyperref[completeness_axiom]{C1})-(\hyperref[the_K_homogeneous_lemma]{C2}) and \ref{ass:hf_closure}(\hyperref[the_hf_axiom]{C3}), thus establishing that $T^+_n$ is complete, stable, and with $\ind=\ind^{\otimes}$. Since every strong extension $A\onleq B$ is not algebraic it follows that the conditions from \ref{assumptions:no_prime} do not directly apply to this class of examples. It thus remains to consider \ref{assumption:no-superstability_2}(\hyperref[CP]{C4}), and to determine the stability spectrum of the theory $T^+_n$. Since $T^+_0$ and $T^+_1$ are respectively the theory of the infinite set and of the pseudoplane, they are both $\aleph_0$-stable and thus do not satisfy \ref{assumption:no-superstability_2}(\hyperref[CP]{C4}). We show however that for all $n\geq 2$ the theory $T^+_n$ satisfies \ref{assumption:no-superstability}(\hyperref[technical:D4]{D4}), and is thus not superstable by Proposition \ref{prop:technical_superstability} and Theorem~\ref{failure:superstability}. This completes the proof of Theorem~\ref{corollary:ngraphs} above.

\begin{proposition}[{\ref{assumption:no-superstability}(\hyperref[technical:D4]{D4})}]
	Let $n\geq 2$, then there is a configuration $C\in \mathcal{K}_n$ satisfying the conditions from \ref{assumption:no-superstability}.
\end{proposition}
\begin{proof}
	Fix some $n\geq 2$, then we construct a configuration  $C\in \mathcal{K}_n$ which satisfies the conditions from \ref{assumption:no-superstability}. Consider the configuration $C=\{c_0,c_1\}\cup\{ c_i : 2\leq i\leq n+2\} $ defined as follows: (1) for all $2\leq i\leq n$, $c_i$ is incident to $c_j$ for all $j<i$; (2) $c_{n+1}$ is incident to $c_i$ for $i<n$; (3) $c_{n+2}$ is incident to $c_0$, $c_1$, $c_n$ and $c_{n+1}$.
	
	\smallskip
	\noindent Clearly $C$ is a graph. Moreover, it is straightforward to verify that the order obtained by letting $c_i<'c_j$ if $i<j$ is a $\mrm{HF}$-order, thus we have that $C\in \mathcal{K}_n$. We verify conditions (a)-(c) from \ref{assumption:no-superstability}.
	
	\smallskip
	\noindent Firstly, we clearly have from the fact that $<'$ is a $\mrm{HF}$-order that $c_0,c_1\onleq C$. By definition we have that there is no edge between $c_0$ and $c_1$. Thus \ref{assumption:no-superstability}(a) holds.
	
	\smallskip
	\noindent Now, notice that the elements $c_0,c_1$ are incident to $c_{2},c_{3},\dots, c_{n+1},c_{n+2}$, and for every $2\leq i\leq n+1$, the element $c_{i}$ is incident to all elements $c_j$ with $0\leq j\leq n+1$ and $j\neq i$.  Hence every element in $C\setminus \{ c_{n+2} \}$ has valency $n+1$ in $C\setminus \{ c_{n+2} \}$, and so  $c_{n+2}\nonleq C$ is a confined extension. Also $L$ has only one sort, thus all elements in $C$ have the same sort.  This shows that \ref{assumption:no-superstability}(b) also holds.
	
	\smallskip
	\noindent Finally, one can continue this construction by taking $c_{n+2+1},\dots, c_{2n+4}$ such that the configuration $C'=\{c_{n+2},c_{n+2+1},\dots, c_{2n+4}  \}$ is isomorphic to $C$. Then the resulting configuration $C\cup C'$ is still a model in $\mathcal{K}_n$. Since this construction can be continued to obtain a sequence $(c_i)_{i<\omega}$, it follows that also \ref{assumption:no-superstability}(c) is verified.
\end{proof}

\subsection{$(k, n)$-Steiner systems}\label{sec_steiner}

In this section we start to study the incidence geometries considered in \cite{funk2}. We first consider $(k, n)$-Steiner systems, which are an example of incidence geometry consisting of points and blocks. Given the recent interest in the model theory of Steiner systems (as witnessed by the references \cite{baldwin_steiner:1,baldwin_steiner:2,baldwin_steiner:3,barbina:1,barbina:2, horsley}), we provide details for most proofs, while in subsequent sections we will omit the details of those proofs that follow as in the case of Steiner systems. We thus proceed as follows. We introduce in Section \ref{Steiner:definitions} the class $\mathcal{K}$ of finite, open, $(k, n)$-Steiner systems and we define a relation $A\oleq B$ in this context. Then, in Section \ref{steiner:finitary_conditions} we verify the finitary conditions \ref{context}(\ref{condition:A})--\ref{context}(\ref{condition:L}). In Section \ref{steiner:completeness_section}, we verify the assumptions from \ref{main_th}(\hyperref[completeness_axiom]{C1})-(\hyperref[the_K_homogeneous_lemma]{C2}) and \ref{ass:hf_closure}(\hyperref[the_hf_axiom]{C3}), and additionally we show that if $A$ is a non-degenerate structure in $\mathcal{K}$ its free completion $F(A)$ is a model of $T^+$. Finally, in Section \ref{steiner:superstability_prime} we show that \ref{assumption:no-superstability_2}(\hyperref[CP]{C4}) and \ref{assumptions:no_prime}(\hyperref[F=C]{C5})-(\hyperref[delta-rank]{C7}), also hold. The following theorem is a consequence of the results obtained in this section together with those from Section~\ref{sec_general}.

\begin{theorem}\label{corollary:steiner}
	Let $(\mathcal{K},\oleq)$ be the class of finite, open, partial $(k, n)$-Steiner systems and let $T^+$ be defined as in \ref{the_theory}. Then $T^+$ is complete, stable, and with $\ind=\ind^{\otimes}$. Moreover, $T^+$ is not superstable, it does not have a prime model, it is not model complete and it does not eliminate quantifiers.
\end{theorem}

\subsubsection{Definition of $(\mathcal{K},\oleq)$}\label{Steiner:definitions}

We recall in this section the definition of $(k, n)$-Steiner systems from \cite[p.~748]{funk2} and we introduce the class $\mathcal{K} $ of partial, finite, open, $(k, n)$-Steiner systems.

\begin{definition}
	Let $L$ be the two-sorted language where the symbols $p_0,p_1,\dots$ denote \emph{points}, the symbols $b_0,b_1,\dots$ denote \emph{blocks} and the symbol $p\, \vert \, b$ means that the point $p$ is incident with the block $b$. For any $2\leq k<n$ the theory of \emph{$(k, n)$-Steiner systems} consists of the following axioms:
	\begin{enumerate}[(1)]
		\item given $k$ many distinct points $p_0,\dots,p_{k-1}$ there is a unique block $b$ such that $p_i\, \vert \, b$ for all $i<k$;
		\item for every block $b$ there are exactly $n$ many points $p_0,\dots,p_{n-1}$ such that $p_i\, \vert \, b$ for all $i<n$. 
	\end{enumerate}
	The universal fragment of the theory of $(k, n)$-Steiner systems is axiomatised exactly by the formulas saying that each block is incident to at most $n$ many points and that, whenever both $\bigwedge_{i<k} p_i\, \vert\,b$ and $\bigwedge_{i<k} p_i\, \vert\,b'$ hold, then $b=b'$. We refer to models of this universal theory as \emph{partial  $(k, n)$-Steiner systems}. 
\end{definition}

\begin{definition}\label{Steiner:def_hyperfree}
	For finite partial $(k, n)$-Steiner systems $A\subseteq B$, we say that an element $a\in A$ is \emph{hyperfree} in $B$ if it satisfies one of the following conditions:
	\begin{enumerate}[(i)]
		\item $a$ is a point incident with at most one block from $B$;
		\item $a$ is a block incident with at most $k$ many points from $B$.
	\end{enumerate}
	We then write $A \oleq B$ if every non-empty $C \subseteq B \setminus A$ contains an element which is hyperfree in $AC$. We then adopt the notions of \emph{open}, \emph{closed}, \emph{confined}, etc. specialising Definition~\ref{def_extensions}. We let $\mathcal{K}$ be the class of finite, open, partial $(k, n)$-Steiner systems and we let $T_\forall$ be the universal theory of $\mathcal{K}$. The previous  definitions immediately extend to infinite models of $T_\forall$ as in \ref{infinite_strong extensions}. The notion of hyperfree element in a partial $(k, n)$-Steiner system is exactly as in \cite[p.~753]{funk2}.
\end{definition} 

We remark what  the algebraic strong extensions in the context of $(k, n)$-Steiner systems exactly are. We then prove an important lemma, which essentially establishes the part \ref{condition:amalgam}(2) of the algebraic amalgamation property for $(\mathcal{K},\oleq)$. 

\begin{remark}\label{Steiner:algebraic_extension}
	In the case of $(k, n)$-Steiner systems, a minimal strong extensions $A\oleq Ac$ is algebraic if $c$ is a point incident with some block from $A$, or if $c$ is a block incident with exactly $k$ many points from $A$.
\end{remark}

\begin{lemma}\label{Steiner:algebraic_lemma}
	Let $A\oleq C\models T_\forall$ and let $c\in C$ be such that $A\oleq Ac$ is an algebraic strong extension. Then $Ac\oleq C$. 
\end{lemma}
\begin{proof}
	Consider any finite subset $D\subseteq C$ such that $Ac\subsetneq D$, we need to show that $D\setminus Ac$ contains some element hyperfree in $D$. Now, since $A\oleq C$ and $D\subseteq C$, there is an element $d\in D\setminus A$ hyperfree in $D$. If $c\neq d$ then $d$ also witnesses that $D\setminus Ac$ contains an element hyperfree in $D$, and we are done. Thus suppose that $c$ is the unique element in $D\setminus A$  hyperfree in $D$. 
	\newline  Notice that, since the strong extension $A\oleq Ac$ is algebraic, $c$ is either a point incident to one block in $A$ or a block incident to $k$ many points in $A$. In both cases, $c$ is not incident to any element from $D\setminus A$. Then let $D'=D\setminus\{c\}$ and consider an element $d'\in D'\setminus A$ hyperfree in $D'$ -- this exists since $A\oleq C$ and $Ac\subsetneq D$. Since $c$ is incident only to elements from $A$, it follows that $c$ and $d'$ are not incident, and therefore  $d'$ is also hyperfree  in $D$. This shows that every finite subset $D\subseteq C$ with $Ac\subsetneq D$ is such that $D\setminus Ac$ contains an element hyperfree in $D$.
\end{proof}

\subsubsection{Conditions \ref{context}}\label{steiner:finitary_conditions}

We now start considering the conditions from \ref{context}. Conditions (\ref{condition:A}), (\ref{condition:B}), (\ref{condition:D}), (\ref{condition:E}),  (\ref{condition:F}), (\ref{condition:K}) and (\ref{condition:L}) follow immediately from the definition of the relation $\oleq$ for finite, open, partial $(k, n)$-Steiner systems. By Definition \ref{Steiner:def_hyperfree} it is clear that Condition  \ref{context}(\ref{condition:I}) holds with $\mathbf{X}_\mathcal{K}=\{1\}$, i.e., the minimal extensions are exactly the one-element extensions. It remains to verify conditions  (\ref{condition:C}), (\ref{condition:G}), (\ref{condition:H}) and (\ref{condition:J}), which we consider separately in the following propositions. 

\begin{proposition}[\ref{context}(\ref{condition:C})] If $A, B, C \in \mathcal{K}$ and $A \oleq B \oleq C$, then $A \oleq C$.
\end{proposition}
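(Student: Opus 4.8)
The statement to prove is transitivity of $\oleq$ for finite open partial $(k,n)$-Steiner systems: if $A\oleq B\oleq C$, then $A\oleq C$. The plan is to argue directly from Definition~\ref{Steiner:def_hyperfree}(2), i.e., from the combinatorial characterization via hyperfree elements. Suppose for contradiction that $A\not\oleq C$. Then there is a nonempty $D\subseteq C\setminus A$ containing no element hyperfree in $AD$; fix such a $D$, and among all such counterexamples choose one of minimal size. The first step is to use $B\oleq C$ to locate, in any nonempty subset of $C\setminus B$, an element hyperfree in the ambient set; the second step is to use $A\oleq B$ to handle the part of $D$ lying inside $B$.

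More precisely, split $D$ as $D=D_B\cup D_C$ where $D_B=D\cap B$ and $D_C=D\setminus B$. First I would dispose of the case $D_C\neq\emptyset$: since $B\oleq C$ and $D_C\subseteq C\setminus B$, there is $d\in D_C$ hyperfree in $B D_C$. One then checks that $d$ remains hyperfree in $AD$: the incidences of $d$ with elements of $AD$ are a subset of its incidences with elements of $BD_C$ together with incidences coming from $D_B\subseteq B$, but all of those already lie in $BD_C$ since $D_B\subseteq B$; hence $d$ is hyperfree in $AD$, contradicting the choice of $D$. (Here I use that ``hyperfree'' is a bound on the number of incident elements of the opposite sort within the ambient configuration, and that $A\subseteq B$, so $AD=AD_B D_C\subseteq BD_C$.) Actually, since $A\subseteq B$, we simply have $AD\subseteq B\cup D_C=BD_C$, so hyperfreeness of $d$ in $BD_C$ immediately gives hyperfreeness in $AD$.

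It remains to treat the case $D_C=\emptyset$, i.e., $D\subseteq B\setminus A$ (note $D\subseteq C\setminus A$ and $D\subseteq B$ forces $D\subseteq B\setminus A$). In this case, since $A\oleq B$ and $D$ is a nonempty subset of $B\setminus A$, there is an element $d\in D$ hyperfree in $AD$ — but this is exactly what we assumed $D$ lacks, a contradiction. Combining the two cases, no such $D$ exists, so $A\oleq C$. The main subtlety, and the step I would be most careful about, is verifying in the first case that hyperfreeness is genuinely preserved when passing from the ambient set $BD_C$ to the smaller ambient set $AD$; this is where one must use that ``hyperfree in $X$'' only counts incidences with elements of $X$, so shrinking $X$ can only decrease the relevant count and hence preserve hyperfreeness. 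Since $AD\subseteq BD_C$ this is immediate, and in fact the minimality choice of $D$ is not even needed — the argument is a direct unwinding of the definition.
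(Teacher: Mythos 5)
Your proof is correct and is essentially the paper's own argument: the same split of $D$ according to whether it meets $C\setminus B$, using $B\oleq C$ on $D\setminus B$ together with the containment $AD\subseteq B(D\setminus B)$ and monotonicity of hyperfreeness under shrinking the ambient set, and falling back on $A\oleq B$ when $D\subseteq B\setminus A$. Your observation that minimality of $D$ is unnecessary is also right — the paper's proof is likewise a direct unwinding of the definition.
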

\begin{proof} We have to show that for every non-empty $D \subseteq C \setminus A$ there is $d \in D$ which is hyperfree in $AD$.
	\newline \underline{Case 1}. $D \cap (C \setminus B) \neq \emptyset$.
	\newline Let $D' := D \cap (C \setminus B)$. As $\emptyset \neq D' \subseteq C \setminus B$ and $B \oleq C$ there is $d \in D'$ which is hyperfree in $BD'$, but then $d \in D \setminus A$ and $d$ is hyperfree in $AD'$ (as it is hyperfree in $BD'$ and $AD' \subseteq BD'$).
	\newline \underline{Case 2}. $D \subseteq B \setminus A$.
	\newline This case is clear as $A \oleq B$.
\end{proof}

\noindent The following proof adapts \cite[Prop.~2.21]{tent} and \cite[Lem. 8.12]{paolini&hyttinen} to the context of partial $(k, n)$-Steiner systems. Recall that by Remark~\ref{remark_pushout}, and the fact that we later establish Condition \ref{context}(\ref{condition:K}), the following proposition also entails that  $(\mathcal{K},\oleq)$ has the amalgamation property.

\begin{proposition}[{\ref{context}}(\ref{condition:G})]\label{Steiner:conditionG}
	The class $(\mathcal{K},\oleq)$ has the algebraic amalgamation property.
\end{proposition}
\begin{proof}
	Consider $A,B,C\in \mathcal{K}$ with $A\oleq B$ minimal, $A\leq_{|B\setminus A|} C$ and $B\cap C=A$. We distinguish the following cases.
	
	\smallskip
	\noindent \underline{Case 1}. $B=A\cup \{ b \}$ where $b$ is a block.
	\newline Since $A\oleq Ab$ then $b$ is incident with at most  $k$ many points from $A$. 
	\newline\underline{Case 1.1}. $b$ is incident with exactly $k$ many points $p_0,\dots, p_{k-1}$ from $A$ and $C$ contains a block $b'$ incident with $p_0,\dots, p_{k-1}$.
	\newline Since $A\leq_1 C$ there is no other point in $A$ incident to $b'$ and so the map $f:B\to C$ such that $f\restriction A= \mrm{id}_A$ and $f(b)=b'$ is an embedding. Moreover, if $A\oleq C$, then since $f(B)=Ab'$ and $b'$ is algebraic over $A$, it follows by \ref{Steiner:algebraic_lemma} that $f(B)\oleq C$.
	\newline\underline{Case 1.2}. $b$ is incident with exactly $k$ points $p_0,\dots, p_{k-1}$ and $C$ does not contain a block $b'$ incident with $p_0,\dots, p_{k-1}$.
	\newline\noindent In this case the free amalgam $B\otimes_A C$ is a partial $(k, n)$-Steiner system and so $B\otimes_A C\in \mathcal{K}$.	
	\newline\underline{Case 1.3}. $b$ is incident with strictly less than $k$ points from $A$.
	\newline As in the previous subcase, it follows immediately that $B\otimes_A C\in \mathcal{K}$.
	
	\smallskip
	\noindent\underline{Case 2}. $B=A\cup \{ p \}$ where $p$ is a point. 
	\newline  Since $A\oleq Ap$ then $p$ is incident with at most a single block in $A$.
	\newline\underline{Case 2.1}. $p$ is not incident with any block.
	\newline It follows immediately that $B\otimes_A C\in \mathcal{K}$.
	\newline\underline{Case 2.2}. $p$ is incident with a block $b\in A$ and $C\setminus A$ contains less than $n$ points incident to $b$.
	\newline It follows immediately that $B\otimes_A C\in \mathcal{K}$.
	\newline\underline{Case 2.3}. $p$ is incident to $b$ and $C\setminus A$ contains $n$ points $p'_0,\dots,p'_{n-1}$ incident to $b$.
	\newline Since $A\oleq A\cup \{p'_0,\dots,p'_{n-1} \}$, then each point $p'_{i}$ is incident only with $b$. Then, for any $i<n$, the map $f:B\to C$ defined by letting $f\restriction A=\mrm{id}_A$ and $f(p)=p'_i$ is an embedding. If $A\oleq C$, then since $A\oleq Ap_i'$ is algebraic, it follows from \ref{Steiner:algebraic_lemma} that $f(B)\oleq C$. 
	
	\smallskip
	\noindent Finally, by Remark~\ref{Steiner:algebraic_extension} the previous argument establishes the algebraic amalgamation property.
\end{proof}

The next proposition shows that confined extensions of finite, open partial partial $(k, n)$-Steiner systems are algebraic, i.e., we verify \ref{context}(\ref{condition:H}).

\begin{proposition}[{\ref{context}(\ref{condition:H})}]\label{Steiner:conditionH}
	If $A\subseteq B \in \mathcal{K}$ and $A \noleq B$ is confined, then there is $m < \omega$ such that every $C \in \mathcal{K}$ contains at most $m$ disjoint copies of $B$ over $A$.
\end{proposition}
\begin{proof}
	Suppose $A \not\oleq B$ is confined, then  every element in $B$ is not hyperfree in $AB$, meaning that  every point $p\in B$ is incident with at least two blocks from $AB$ and every block $b\in B$ is incident with at least $k+1$ points from $AB'$. We distinguish the following cases.
	
	\medskip
	\noindent \underline{Case 1}. $B\setminus A$ contains a point  $p_0$ which is incident with some block $b\in A$. 
	\newline  Then, if $C\in \mathcal{K}$ contains at least $n+1$ copies of $B$ over $A$ (where $n$ is as in the definition of $(k, n)$-Steiner systems), in particular it contains $n+1$ points $p_0,\dots,p_{n}$ which are all incident with $b\in A$. But this is impossible since $b$ can be incident with at most $n$ points (as $C$ is a partial $(k, n)$-Steiner system).
	
	\medskip
	\noindent \underline{Case 2}. $B\setminus A$ contains no point incident to blocks in $A$.	
	\newline Then, if also no block in $B\setminus A$ is incident to points in $A$, it follows that every point in $B\setminus A$ is incident with at least two blocks from $B\setminus A$ and every block in $B\setminus A$ is incident with at least $k+1$ points from $B\setminus A$. But then $B\setminus A$ contains no element hyperfree in $B\setminus A$, contradicting $B\in \mathcal{K}$. Thus  $B\setminus A$  contains a block  $b$ incident to some point $p\in A$.	
	\newline Then, let $p_0,\dots, p_{m-1}$ be  \emph{all} the points in $A$ which are incident to \emph{some} block in $B\setminus A$.  We let $B_0\coloneqq B$ and let $B_1$ be a disjoint copy of $B_0$ over $A$. We claim that $D=(B_0\setminus A)\cup (B_1\setminus A)\cup\{ p_0,\dots, p_{m-1}  \}$ does not contain any element hyperfree in $D$. By Definition \ref{Steiner:def_hyperfree} this shows that any $C$ containing at least two copies of $B$ over $A$ is not open, proving our claim.  We need to consider the following elements.
	
	\smallskip
	\noindent\underline{Case (a)}. Consider the points $p\in B_i\setminus A$, for $i\in \{0,1\}$.
	\newline Since by assumption we have that no element in $B_i\setminus A$ is hyperfree in $B_i$, we have that $p$ is incident to at least two blocks from $B_i$. By assumption we have that $B_i$ does not contain points incident to blocks in $A$, thus $p$ is incident to at least two blocks from $(B_i\setminus A)\subseteq D$.
	
	\smallskip
	\noindent \underline{Case (b)}. Consider the points $p=p_i$, for $i<m$.
	\newline By choice of the elements $p_0,\dots, p_{m-1}$ we have that $p_i\, \vert\,b_0^i$ for some $b_0^i\in B_0\setminus A$. Since $B_1$ is isomorphic to $B_0$ over $A$, we can also find some block $b_1^i\in B_1\setminus A$ such that $p_i\, \vert\,b_1^i$. So $p_i$ is incident to at least two blocks from $D$.
	
	\smallskip
	\noindent \underline{Case (c)}. Consider the blocks $b\in B_i$,  for $i\in \{0,1\}$.
	\newline Since no element in $B_i\setminus A$ is hyperfree in $B_i$, we have that every block $b\in B_i$ is incident with at least $k+1$ points from $B_i$. By the choice of the points $p_0,\dots, p_{m-1}$, it follows it is incident to at least $k+1$ points from $(B_i\setminus A)\cup \{ p_0,\dots, p_{m-1} \}\subseteq D $. This concludes the proof.
\end{proof}

Finally, the following condition is straightforward to verify, but we provide an argument to show the role of the Gaifman closure, as defined in \ref{def:interior_closure}.

\begin{proposition}[{\ref{context}(\ref{condition:J})}]\label{Steiner:conditionJ}
	There is $\mathbf{n}_\mathcal{K} < \omega$ such that, if  $A \oleq A c\in \mathcal{K}$ is a one-element extension, then $|\mrm{gcl}_{Ac}(c)\setminus\{c\}|\leq \mathbf{n}_\mathcal{K}$.
\end{proposition}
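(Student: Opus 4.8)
The statement to prove is Condition \ref{context}(\ref{condition:J}) for $(k,n)$-Steiner systems: there is $n_{\mathcal K}<\omega$ such that for every one-element strong extension $A\oleq Ac\in\mathcal K$ we have $|\mrm{gcl}_{Ac}(c)\setminus\{c\}|\le n_{\mathcal K}$.

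The plan is to compute $\mrm{gcl}_{Ac}(c)$ directly from Definition \ref{def:interior_closure}, using the fact that in the language of $(k,n)$-Steiner systems there are no local equivalence relations other than equality. Indeed, the only relation symbol besides $=$ is the incidence symbol $\vert$, which is not a local equivalence relation, so $L_{\mrm p}=\emptyset$ (up to equality) and $L_{\mrm i}=\{\vert\}$. Hence $\mrm{gcl}^{\mrm p}_{Ac}(c)$ may be taken to be just $\{c\}$ (the unique element of its own parallelism type), and $\mrm{gcl}_{Ac}(c)=\mrm{gcl}^{\mrm i}_{Ac}(c)=\{b\in Ac: d_{\mrm i}(c,b)\le 1\}$, i.e. $c$ together with all elements of $Ac$ incident with $c$.

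So the task reduces to bounding the number of elements of $A$ incident with $c$. Here I would split into the two cases coming from the definition of hyperfree element (Definition \ref{Steiner:def_hyperfree}), since $A\oleq Ac$ forces $c$ to be hyperfree in $Ac$. If $c$ is a point, then by hyperfreeness $c$ is incident with at most one block from $A$, so $|\mrm{gcl}_{Ac}(c)\setminus\{c\}|\le 1$. If $c$ is a block, then by hyperfreeness $c$ is incident with at most $k$ points from $A$, so $|\mrm{gcl}_{Ac}(c)\setminus\{c\}|\le k$. Therefore the bound $n_{\mathcal K}=k$ works (recall $k\ge 2$, so $k\ge 1$ and both cases are covered).

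There is essentially no obstacle here: the only subtlety is the routine observation that the parallelism part of the Gaifman closure is trivial because there are no parallelism symbols in this signature, together with the fact that $\oleq$ is defined precisely so that the new element is hyperfree, which is exactly a bound on its incidences. I would write this up in a couple of lines, noting $n_{\mathcal K}=k$ and citing Definition \ref{Steiner:def_hyperfree} for the hyperfreeness bound and Definition \ref{def:interior_closure} for the reduction of $\mrm{gcl}$ to the incidence neighbourhood.
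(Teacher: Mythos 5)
Your proposal is correct and follows essentially the same route as the paper: reduce $\mrm{gcl}_{Ac}(c)$ to the incidence neighbourhood of $c$ (the parallelism part being trivial since the only relation besides equality is $\vert$), then read off the bound from the hyperfreeness of $c$ in $Ac$, giving $n_{\mathcal K}=k$. Your write-up is in fact slightly more careful than the paper's, which states the incidences as exactly one block or exactly $k$ points where ``at most'' is what hyperfreeness actually gives.
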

\begin{proof}
Let $A \oleq A c$, then by  Definition \ref{Steiner:def_hyperfree} we have that $c$ is either a point incident to one block from $A$, or it is a block incident to $k$ many points from $A$. In other words, we have from the definition of Gaifman closure from \ref{def:interior_closure} that $\mrm{gcl}_{Ac}(c)\setminus\{c\}=\{b\}$, for $b\in A$ a block, or $\mrm{gcl}_{Ac}(c)\setminus\{c\}=\{p_1,\dots, p_k\}$, for $p_1,\dots, p_k$ points. Letting $\mathbf{n}_\mathcal{K}=k$, the claim is verified immediately.
\end{proof}

\subsubsection{Assumptions \ref{main_th}}\label{steiner:completeness_section}

We showed in the previous section that $(\mathcal{K},\oleq)$ fits the specific subclass of Hrushovski constructions that we introduced in \ref{context}. It follows that $(\mathcal{K},\oleq)$ determines a theory $T^+$, as defined in \ref{the_theory}. We verify in this section that  $T^+$ satisfies also \ref{main_th}(\hyperref[completeness_axiom]{C1})-(\hyperref[the_K_homogeneous_lemma]{C2}) and \ref{ass:hf_closure}(\hyperref[the_hf_axiom]{C3}). By Theorems \ref{prop_completeness}, \ref{stability} and \ref{characterisation_forking} this shows that the theory $T^+$ of open $(k, n)$-Steiner systems is complete, stable and with $\ind=\ind^\otimes$. We first show that Assumption \ref{ass:hf_closure}(\hyperref[the_hf_axiom]{C3}) holds. Since the language of Steiner systems does not include any local equivalence relation, the verification of \ref{ass:hf_closure}(\hyperref[the_hf_axiom]{C3}) proceeds exactly as in the previous case of $n$-open graphs. In fact, also in this case we define the operator $\mrm{cl}_<$ purely in term of the incidence Gaifman closure.

\begin{definition}\label{steiner:closure_operator} Let $A\subseteq B \models T_\forall^n$ and suppose $<$ is a $\mrm{HF}$-order of $B$ over $A$.  For all $C\subseteq B\setminus A$ we define $\mrm{cl}_<(C)\coloneqq \bigcup_{n<\omega }\mrm{cl}_<^{n}(C)$ inductively by letting $\mrm{cl}_<^0(C) \coloneqq C$ and $\mrm{cl}_<^{n+1}(C) \coloneqq \mrm{gcl}^{\mrm{i}}_{A\mrm{cl}_<^{n}(C)^\downarrow}(\mrm{cl}_<^{n}(C))$ for all $n<\omega$.
\end{definition}

\begin{proposition}[{\ref{ass:hf_closure}(\hyperref[the_hf_axiom]{C3})}]\label{steiner:HF_orders}
	Every model $A\subseteq B\models T^n_\forall$ with an associated $\mrm{HF}$-order $<$ of $B$ over $A$ has a $\mrm{HF}$-closure operator $\mrm{cl}_<\coloneqq\bigcup_{n<\omega}\mrm{cl}^n_<$.
\end{proposition}
\begin{proof}
	This follows exactly as in Proposition~\ref{psedoplane:HF_orders}.
\end{proof}

We next consider the technical conditions \ref{technical_assumptions}(\hyperref[trivial_condition]{D1})-(\hyperref[minimality_condition]{D2}). Then, by Proposition \ref{the_K_saturated_lemma}, it will follow that $T^+$ also satisfies Assumption \ref{main_th}(\hyperref[completeness_axiom]{C1}). Notice that we state the condition slightly differently than in \ref{technical_assumptions}(\hyperref[trivial_condition]{D1}), as we add a third condition to simplify our inductive argument. It is immediate to see that the following proposition entails \ref{technical_assumptions}(\hyperref[trivial_condition]{D1}). Also, recall that by \ref{context}(\ref{condition:L}) in the context of $(k, n)$-Steiner systems trivial extensions are always strong.

\begin{proposition}[{\ref{technical_assumptions}(\hyperref[trivial_condition]{D1})}]\label{Steiner:trivial_condition}
	If $M \models T^+$, $A \subseteq M$ is finite, $<$ is a $\mrm{HF}$-order of $M$ and $A\oleq Ac$ is a trivial extension, then for all $\ell < \omega$ there is $ c' = c'_\ell \in M$ s.t.:
	\begin{enumerate}[(a)]
		\item $Ac \cong_A Ac'$;
		\item $Ac' \oleq A\mrm{cl}_<^\ell(c')$;
		\item $A < \mrm{cl}_<^{\ell+1}(c')$ (recall \ref{linear_ordering_convention}).		
	\end{enumerate}
\end{proposition}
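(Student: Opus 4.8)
The plan is to prove Proposition~\ref{Steiner:trivial_condition} by induction on $\ell < \omega$, constructing for each $\ell$ an element $c'_\ell$ that realizes the isomorphism type of $c$ over $A$ and, crucially, sits ``high enough'' in the $\mrm{HF}$-order $<$ so that its $\ell$-th Gaifman iterate $\mrm{cl}_<^\ell(c'_\ell)$ lies entirely above $A$ and is a strong extension of $Ac'_\ell$. First I would dispose of the base case $\ell = 0$ (and the trivial piece of the setup): since $(A, Ac)$ is a trivial extension, $c$ has no incidence with elements of $A$; in any $M \models T^+$ with an $\mrm{HF}$-order $<$, one picks $c'_0 > A$ with no incidence to $A$ — such a point (resp. block) exists because $M$ satisfies the clauses of \ref{the_theory}, in particular trivial one-element extensions are realized, and since $<$ is an $\mrm{HF}$-order we may choose the realization above the finite set $A$. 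Then $\mrm{cl}_<^0(c'_0) = \{c'_0\}$ and (a)--(c) are immediate.

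For the inductive step, assume $c'_\ell$ has been produced with (a)--(c). The key observation is that to build $c'_{\ell+1}$ one needs to ``make room'' below a new realization of $c$ for a full $(\ell+1)$-iterate of Gaifman closures, all of it above $A$. Concretely, recall from Remark~\ref{Steiner:algebraic_extension} that the minimal algebraic extensions here add either a point incident to one block or a block incident to exactly $k$ points. So first I would, using the inductive hypothesis and $M \models T^+$, recursively produce a finite configuration above $A$ — roughly, a set of elements $d_1, \dots, d_m$ realizing over $A$ the appropriate minimal (non-algebraic) extensions, each placed above $A$ in $<$ and such that $A\mrm{cl}^\ell_<(\{d_1,\dots,d_i\}) d_{i+1} \oleq A\mrm{cl}^\ell_<(\{d_1,\dots,d_{i+1}\})$, mirroring the argument in Proposition~\ref{pseudoplane:trivial_condition1} for $n$-open graphs. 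Then, since $M \models T^+$ and the relevant extension (adding a new realization of $c$'s type, incident only to the appropriate elements among the $d_i$'s and/or to new points/blocks, say a block needing $k$ points or a point needing one block) is realized infinitely often, and since $<$ is an $\mrm{HF}$-order, one can pick $c'_{\ell+1} \in M$ lying above all of the $d_i$, so that $c'_{\ell+1}$'s only incidences in its down-set $(c'_{\ell+1})^\downarrow$ are with the prescribed elements. This gives (a) directly: $Ac \cong_A Ac'_{\ell+1}$ since $c$ was incident to nothing in $A$ and $c'_{\ell+1}$ likewise.

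For (b) and (c): (c) is arranged by the explicit placement of $c'_{\ell+1}$ and the $d_i$ above $A$ in the order, together with the observation that $\mrm{cl}_<^{\ell+1}(c'_{\ell+1})$ is built from $c'_{\ell+1}$ and the $\mrm{cl}_<^\ell(d_i)$'s, all of which are above $A$ by the inductive hypothesis (c) applied to the $d_i$'s. For (b), i.e. $Ac'_{\ell+1} \oleq A\mrm{cl}_<^{\ell+1}(c'_{\ell+1})$, I would exhibit an $\mrm{HF}$-order of $A\mrm{cl}_<^{\ell+1}(c'_{\ell+1})$ over $Ac'_{\ell+1}$: put $c'_{\ell+1}$ first, then the $d_i$'s, then — using that for $i \neq j$ the configurations $\mrm{cl}_<^\ell(d_i)$ and $\mrm{cl}_<^\ell(d_j)$ have no incidences with each other (which one arranges in the recursive construction of the $d_i$'s) — splice together the $\mrm{HF}$-orders of each $A\mrm{cl}^\ell_<(\{d_1,\dots,d_i\}) d_{i+1}$ over $A$, invoking Corollary~\ref{equivalence_HFo.ordering} to convert between ``$\oleq$'' and ``$\hleq$''. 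Since $\mrm{cl}_<^{\ell+1}(c'_{\ell+1}) = \mrm{cl}_<^\ell(\{d_1,\dots,d_m\}) \cup \{c'_{\ell+1}\}$ by construction, this yields (b).

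The main obstacle I anticipate is the careful bookkeeping in the recursive construction of the auxiliary elements $d_1, \dots, d_m$: one must ensure simultaneously that (i) each newly added $d_{i+1}$ realizes a non-algebraic minimal extension (so that $T^+$ guarantees infinitely many copies, hence one can be placed high in $<$), (ii) the $\mrm{cl}^\ell_<(d_i)$ are pairwise incidence-disjoint and all above $A$, and (iii) $c'_{\ell+1}$'s required incidences are supplied — which in the Steiner setting means, if $c$ is a block, arranging $k$ suitable points among the available elements, and if $c$ is a point with no incidences, this is vacuous so the argument simplifies considerably. I would also need to double-check that Condition~\ref{context}(\ref{condition:K}) is being used correctly when passing from strongness of the $\mrm{cl}^\ell$-extensions to strongness of $A\mrm{cl}^{\ell+1}_<(c'_{\ell+1})$ over $Ac'_{\ell+1}$, exactly as in the $n$-open graph proof. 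Everything else is routine unwinding of definitions.
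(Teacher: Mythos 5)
Your overall skeleton --- induction on $\ell$, auxiliary elements placed high in $<$ with pairwise incidence-disjoint closures, and a splicing of $\mrm{HF}$-orders to obtain (b) --- is the same as the paper's. But there is a genuine gap at the one step that carries the Steiner-specific content: the placement of $c'_{\ell+1}$ itself. You write that the extension producing $c'_{\ell+1}$ (``a block needing $k$ points or a point needing one block'') is \emph{realized infinitely often}, so that one realization can be picked above all the $d_i$'s with no stray incidences in its down-set. In the Steiner setting both of these extensions are \emph{algebraic} (Remark \ref{Steiner:algebraic_extension}): the block through $k$ given points is \emph{unique}, and a block carries exactly $n$ points. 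So when $c$ is a block there is nothing to ``pick'' --- you get the one block $c'_{\ell+1}$ through your $k$ chosen points, and you must \emph{prove} that it lies above them. A priori it could sit below some or even all of them, which is perfectly consistent with $<$ being an $\mrm{HF}$-order (a point may have one incident block in its down-set); in that case $\mrm{cl}_<^{\ell+1}(c'_{\ell+1})$ is not the set you computed, and (b) and (c) fail.

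The paper closes this gap with a device your proposal is missing: for each $0<i<k$ it first introduces a fresh auxiliary \emph{block} $b^i_\ell$, trivial and high over everything constructed so far, and takes $d^i_\ell$ to be a point on $b^i_\ell$ with $b^i_\ell< d^i_\ell$. Since in an $\mrm{HF}$-order a point admits at most one incident block in its down-set, the slot below $d^i_\ell$ is already occupied by $b^i_\ell$, so the unique block $c'_{\ell+1}$ through $d^0_\ell,\dots,d^{k-1}_\ell$ (which differs from each $b^i_\ell$ because it is incident to $d^0_\ell$) is forced to satisfy $c'_{\ell+1}>d^i_\ell$. Dually, since a block admits at most $k$ incident points in its down-set, the points below $c'_{\ell+1}$ are then \emph{exactly} $d^0_\ell,\dots,d^{k-1}_\ell$, which pins down $\mrm{cl}_<^{\ell+1}(c'_{\ell+1})$. (In the case where $c$ is a point one similarly uses that a block has $n>k$ incident points of which at most $k$ can precede it, so some incident point lies above it --- again a finite counting argument, not ``infinitely many copies''.) Your appeal to the non-algebraic clause \ref{the_theory}(3) is the correct mechanism in the $n$-open-graph proof you cite, where the last extension is non-algebraic, but it does not transfer to Steiner systems, where the last step of the construction is algebraic; without the auxiliary blocks the argument does not go through.
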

\begin{proof} By induction on $\ell < \omega$ we prove that for every finite $A \subseteq M$ and $A \oleq Ac \in \mathcal{K}$ we can find $c'_\ell$ as in (a)-(c). If $\ell = 0$, then it is easy to see that we can find find $c'_0$ as wanted, as it is enough to pick an element of the same sort of $c$ that occurs after $A$ in the order $<$ and that has no relation to elements in $A$. So suppose the inductive hypothesis and let $c$ be such that $A \oleq Ac \in \mathcal{K}$ is a trivial extension, we want to define $c'_{\ell+1}$ so that (a)-(c) are satisfied for it. 
	
	\smallskip	
	\noindent \underline{Case 1}. $c$ is a block.
	\newline By inductive hypothesis we can find a point $d^0_\ell$ such that (a)-(c) is true for $A$. Additionally, by applying the induction hypothesis and since $<$ is a $\mrm{HF}$-order,  we can find inductively, for every $0 < i <k$, a block $b^i_\ell$ and a point $d^i_\ell$ such that $b^i_\ell<d^i_\ell$, $b^i_\ell\, \vert \,d^i_\ell$, $b^i_\ell$ is a trivial extension of $A\cup \bigcup \{\mrm{cl}_<^\ell(d^j_\ell) : 0 \leq j < i\}$ and (a)-(c) is true with respect to $A\cup \bigcup \{\mrm{cl}_<^\ell(d^j_\ell) : 0 \leq j < i\}$. Let $c'_{\ell+1}$ be the (unique) block incident to $(d^i_\ell)_{i<k}$; then for every $0 < i < k$, we have that $c'_{\ell+1} \neq b^i_\ell$ since $b^i_\ell$ is not incident to $d^0_\ell$ but $c'_{\ell+1}$ is. Since $<$ is a $\mrm{HF}$-order, it follows that for every $0 < i<k$ we have $c'_{\ell+1} > d^i_\ell > b^i_\ell$. Further, notice that by construction we clearly have that:
	\begin{align*}
	\mrm{cl}^{\ell+1}_<(c'_{\ell+1})&=\{ c'_{\ell+1} \}\cup \bigcup \{\mrm{cl}^\ell_<(d^i_{\ell}) : i<k \}; \\
	\mrm{cl}^{\ell+2}_<(c'_{\ell+1})&=\{ c'_{\ell+1} \}\cup \bigcup \{\mrm{cl}^{\ell+1}_<(d^i_{\ell}) : i<k \}.
	\end{align*}
	\noindent Thus we obtain by construction and the induction hypothesis that $Ac \cong_A Ac'_{\ell+1}$ and $A < \mrm{cl}_<^{\ell+2}(c'_{\ell+1})$ (whence also $A\oleq \mrm{cl}_<^{\ell+2}(c'_{\ell+1})$). Finally, we claim that $Ac'_{\ell+1} \oleq A\mrm{cl}^{\ell+1}_<(c'_{\ell+1}) $. We show how to $\mrm{HF}$-construct $A\mrm{cl}^{\ell+1}_<(c'_{\ell+1})$ from $Ac'_{\ell+1}$. We start with $Ac'_{\ell+1}$ and define a $\mrm{HF}$-order $<'$ by adding $d^{k-1}_\ell <' d^{k-2}_\ell <' \cdots <'d^0_\ell$ (by the choice of our elements this is a $\mrm{HF}$-order).  Notice now that $c'_{\ell+1}$ is only incident to $(d^i_\ell)_{i<k}$ in $ \mrm{cl}^{\ell+1}_<(c'_{\ell+1}) $. Also, for every $0<i<k$ we have by condition (c) that $A\cup \bigcup \{\mrm{cl}_<^\ell(d^j_\ell) : 0 \leq j < i\}<\mrm{cl}^{\ell+1}_<(b^{i}_{\ell})$. Therefore, since $b^{i}_{\ell}<d^{i}_{\ell}$, it follows that no element in $\mrm{cl}^\ell_<(d^i_{\ell})$ is incident to any element in $\mrm{cl}^\ell_<(d^j_{\ell})$ for $i \neq j < k$. So using the condition (b) from the inductive hypothesis it is easy to continue to construct the $\mrm{HF}$-order $ <' $.
	
	\smallskip 
	\noindent \underline{Case 2}. $c$ is a point.
	\newline By inductive hypothesis we can find a block $b'_\ell$ such that (a)-(c) is true for $A$. In particular $Ab'_\ell \oleq A\mrm{cl}_<^\ell(b'_\ell)$. Then there is a point $c'_{\ell+1}>b'_\ell$ such that $b'_\ell$ is incident to $c'_{\ell+1}$. Clearly we have that $\mrm{cl}_<^{\ell+1}(c'_{\ell+1})=\mrm{cl}_<^{\ell}(b'_{\ell})\cup\{c'_{\ell+1}\}$. From this fact and $Ab'_\ell \oleq A\mrm{cl}_<^\ell(b'_\ell)$, it is easy to reason as in Case 1 and construct a $\mrm{HF}$-order witnessing that  $Ac'_{\ell+1}\oleq A\mrm{cl}_<^{\ell+1}(c'_{\ell+1})$.	By construction  we immediately obtain that $Ac \cong_A Ac'_{\ell+1}$ and, since $\mrm{cl}_<^{\ell+2}(c'_{\ell+1})=\mrm{cl}_<^{\ell+1}(b'_{\ell})\cup\{c'_{\ell+1}\}$, we also have that $A<\mrm{cl}_<^{\ell+2}(c'_{\ell+1})$.
\end{proof}	

We next consider the other assumption needed to prove that $T^+$ is complete, which is essentially a corollary of Lemma \ref{Steiner:algebraic_lemma}.

\begin{proposition}[{\ref{technical_assumptions}(\hyperref[minimality_condition]{D2})}]\label{Steiner:minimality_condition}
	If $M \models T^+$ is $\aleph_1$-saturated, $A \oleq M$ is countable, and for every trivial extension $A\oleq Ab$ there is $b' \in M$ such that $Ab \cong_A Ab'\oleq M$, then for every minimal extension $A\oleq Ac$ there is $c' \in M$ such that ${Ac \cong_A Ac'\oleq M}$.
\end{proposition}
\begin{proof}
	Let $M \models T^+$ and $A \oleq M$. Suppose $A\oleq Ac$ is a minimal strong extension. We distinguish the following cases.
	
	\noindent \underline{Case 1}. $c$ is a point.
	\newline\noindent If  $A\oleq Ac$ is a trivial extension then we are immediately  done by the  assumption. Otherwise, $A\oleq Ac$ is algebraic and $c$ is incident to exactly one block $b\in A$. Let $ p_0,\dots,p_\ell $  be the points incident to $b$ in $A$ and consider the set $B=\{p_0,\dots,p_\ell, b\}$.  Since $M\models T^+$, and since clearly $B\leq_{1}M$, there is some $c'\in M\setminus B$ incident to $b$. By the choice of the set $B$, it also follows that $c'\notin A$ and, since $A\oleq M$ it follows that $c'$ is incident to no other element in $A$, yielding $Ac\cong_A Ac'$. By Lemma \ref{Steiner:algebraic_lemma}, we obtain that $Ac'\oleq M$.
	
	\noindent \underline{Case 2}. $c$ is a block.
	\newline\noindent If $c$ is a block and $A\oleq Ac$ is not trivial, then $c$ is incident to $0<\ell\leq k$ points $p_0,\dots,p_{\ell-1}$ from $A$. By assumption, we can find $k-\ell$ new points $p_\ell,\dots,p_{k-1}$ in $M\setminus A$ which have no incidence with $A$ and such that $A\cup \{p_\ell,\dots,p_{k-1}\}\oleq M$. Let $c'\in M\setminus A$ be the unique block which is incident to $p_0,\dots,p_{k-1}$ (this exists because $M\models T^+$). Clearly we have that $Ac \cong_A Ac'$ and, by Lemma \ref{Steiner:algebraic_lemma}, we obtain $A\cup \{p_\ell,\dots,p_{k-1}\}\cup \{c'\}\oleq M$. Since $p_\ell,\dots,p_{k-1}$ are incident with no element in $A$, we also obtain that $Ac'\oleq Ac'\cup \{p_\ell,\dots,p_{k-1}\}$. By transitivity we conclude that $Ac'\oleq M$.	
\end{proof}

It already follows from the previous propositions and Theorem \ref{prop_completeness} that the theory $T^+$ of open $(k, n)$-Steiner systems is complete.  We next consider Assumption \ref{main_th}(\hyperref[the_K_homogeneous_lemma]{C2}), which is needed to show that $T^+$ is stable. As in the case of $n$-open graphs, we establish this condition by verifying \ref{technical_assumptions}(\hyperref[extension]{D3}) and applying Proposition~\ref{lemma:homogeneity}. We start by recalling the free extension process for partial $(k, n)$-Steiner systems. This makes explicit how the abstract construction of a free completion from \ref{free_algebraic_completion} behaves in this specific setting.

\begin{definition}\label{Steiner:free_completion}
	Let $A$ be a partial $(k, n)$-Steiner system. We let $A_0=A$ and for every $2i<\omega$ we define $A_{2i+1}$ and $A_{2i+2}$ as follows: 	
	\begin{enumerate}[(a)]
		\item for every sequence of $k$ points $p_1,\dots,p_{k}$ from $A_{2i}$ which do not have a common incident block in $A_{2i}$, we add a new block $b\in A_{2i+1}$ which is incident only with $p_1,\dots,p_{k}$;
		\item for every block $b\in A_{2i+1}$ which is incident only to $\ell<n$ points from $A_{2i+1}$, we add $n-\ell$ points $p_\ell,\dots,p_{n-1}\in A_{2i+2}$ incident only with $b$.
	\end{enumerate}	
	Then the structure $F(A)\coloneq\bigcup_{i < \omega}A_i$ is called the \emph{free completion} of $A$ and we say that $F(A)$ is freely generated over $A$. We say that $A$ is \emph{non-degenerate} if $F(A)$ is infinite, and \emph{degenerate} otherwise. 
\end{definition}

\begin{remark}\label{steiner:non-degenerate}
Notice that a partial $(k, n)$-Steiner system $A$ is non-degenerate if and only if it contains at least two blocks or at least $k+1$ many points not incident to the same block (cf.~\cite[p.~756]{funk2}). By letting $m_{\mathcal{K}}=k+1$ we thus have that any $A\in \mathcal{K}$ containing at least $\mathbf{m}_\mathcal{K}$ many elements of each sort is non-degenerate.
\end{remark}

The next proposition provides the key properties of the free completion process for Steiner systems, and it simultaneously verifies Condition \ref{technical_assumptions}(\hyperref[extension]{D3}) and also Assumption \ref{assumptions:no_prime}(\hyperref[F=C]{C5}). It follows that the theory $T^+$ of open $(k, n)$-Steiner systems is stable and its forking independence relation coincides with the relation $\ind^\otimes$ defined in \ref{def:free_independence}.

\begin{proposition}[{\ref{technical_assumptions}(\hyperref[extension]{D3})}, {\ref{assumptions:no_prime}(\hyperref[F=C]{C5})}]\label{free_theory_steiner}
	Suppose that $A\models T_\forall$ and that $A$ is non-degenerate. Then $F(A)\models T^+$, $A\oleq F(A)$, $|F(A)|=|A|+\aleph_0$ and:
	\begin{enumerate}[(a)]
		\item $F(A)$ is unique up to $A$-isomorphisms, i.e., every isomorphism $h:A\cong B$ extends to an isomorphism $\hat{h}:F(A)\cong F(B)$;
		\item $F(A)$ is $\oleq$-prime over $A$, i.e.,  for every $M\models T^+$ and every embedding $h:A\to M$ with $h(A)\oleq M$ there is an embedding $\hat{h}: F(A)\to M$ such that $\hat{h}\restriction A = h$ and $\hat{h}(F(A))\oleq M$.
	\end{enumerate}
	In particular, it follows that if $A\cong B\onleq M\models T_n^+$ and $M$ is $\aleph_1$-saturated, then $F(A)\cong F(B)\preccurlyeq M$.
\end{proposition}
\begin{proof} 
	By \ref{key_properties:free_completion} and Theorem~\ref{prop_completeness}  it suffices to show the following: if $A$ contains at least $\mathbf{m}_\mathcal{K}=k+1$ elements of every sort, then $F(A)\models T^+$. In fact, if this is the case, then $A\cong B\onleq M\models T_n^+$ for $M$ $\aleph_1$-saturated entails immediately that  $F(A)\cong  F(B)\onleq M$ and, moreover, we can assume by \ref{main_th}(\hyperref[completeness_axiom]{C1}) that $A,B$ are non-degenerate and so $F(A), F(B)\models T^+$. Then, we also get by \ref{prop_completeness} that $F(A)\cong  F(B)\preccurlyeq M$, establishing our condition.
	
	\medskip 
	\noindent We thus show that, if $F(A)$ is infinite, then $F(A)\models T^+$. The fact that $F(A)\models T_\forall$ follows already from \ref{key_properties:free_completion}. The algebraic family of axioms from \ref{the_theory}(2) is verified immediately by the construction of $F(A)$. In fact, by Remark \ref{Steiner:algebraic_extension} the $\mathcal{K}$-algebraic extensions of partial $(k, n)$-Steiner system are exactly those considered in the inductive construction of $F(A)$. In particular, for any algebraic extension $X\oleq Xc$ with $X\subseteq_\omega F(A)$ and $X\leq_1 F(A)$ we have that $X\subseteq A_i$ for some $i<\omega$. Then, either $X\oleq Xc$ is already realised in $A_i$, or $A_i\oleq A_ic$ is also a $\mathcal{K}$-algebraic  extension and is thus realised in $A_{i+1}\subseteq F(A)$.
	
	\medskip 
	\noindent   Finally, we next consider the non-algebraic family of axioms from \ref{the_theory}(3). Consider a minimal non-algebraic extension $B\oleq Bc$ with $B\subseteq A_i$, where $A_i$ is the $i$-th step of the construction from \ref{Steiner:free_completion}. We distinguish the following cases.
	
	\smallskip 
	\noindent \underline{Case 1}. $c$ is a point.
	\newline Then $B\oleq Bc$ is a trivial extension (cf.~\ref{def_extensions}). Since $A$ is non-degenerate, for every even $j>i$ there are a block $b_j\in A_{j+1}\setminus A_j$ and a point $c_j\in A_{j+2}\setminus A_{j+1}$ incident to $b_j$. Then $c_j$ is incident \emph{only} to $b_j$ and so $Bc\cong_B Bc_j\subseteq F(A)$ for all even $j<\omega$. 
	
	\smallskip 
	\noindent  \underline{Case 2}. $c$ is a block incident to $\ell<k$ points $p_0,\dots,p_{\ell-1}$ in $B$.
	\newline Since $A$ is non-degenerate, we can find an infinite sequence  of indices  $i=j_0<j_1<j_2<\cdots$ such that for every $0<m<\omega$ we can find $(k-\ell)$ many points $p^{j_m}_\ell,\dots,p^{j_m}_{k-1}\in A_{j_m}\setminus A_{j_{m-1}}$ which are not incident to any common block in $A_{j_m}$. Consider then, for each $0<m<\omega$, the block $c_{j_m}$ added in stage $j_m+1$  of the completion process (or $j_m+2$, depending on the parity of $j_m$) which is incident exactly to $p_0,\dots,p_{\ell-1}$ and $p^{j_m}_\ell,\dots,p^{j_m}_{k-1}$. Then $Bc\cong_B Bc_{j_m}\subseteq F(A)$ for all $0<m<\omega$, proving our claim and showing that $F(A)\models T^+$.
\end{proof}

\subsubsection{Assumptions \ref{assumption:no-superstability} and \ref{assumptions:no_prime}}\label{steiner:superstability_prime}

We consider in this section Assumption \ref{assumption:no-superstability_2}(\hyperref[CP]{C4}) and Assumptions \ref{assumptions:no_prime}(\hyperref[F=C]{C5})-(\hyperref[delta-rank]{C7}). First, to show that $T^+$ is not superstable we exhibit a configuration in $\mathcal{K}$ witnessing that the technical assumption from \ref{assumption:no-superstability}(\hyperref[technical:D4]{D4}) is satisfied. The results from this section thus completes the proof of Theorem \ref{corollary:steiner}.

\begin{proposition}[{\ref{assumption:no-superstability}(\hyperref[technical:D4]{D4})}]\label{superstability_steiner}
	There is  $C\in \mathcal{K}$ satisfying the conditions from \ref{assumption:no-superstability}.
\end{proposition}
\begin{proof}
	Recall that $k$ and $n$ are fixed. We define a configuration $C$ as in the following table. For simplicity, we let $k=2$ and $n=3$, but it is then straightforward to adapt our example to the arbitrary case of $2\leq k<n$.
	\begin{table}[H]
	\begin{tabular}{|c||c|c|c|c|c|} \hline
		& $c_2$ & $c_4$ & $c_7$ & $c_8$ & $c_{11}$ \\ \hline \hline
		$c_0$  & $\times$ &   & $\times$ &   &    \\ \hline
		$c_1$  & $\times$ &   &   & $\times$ &    \\ \hline
		$c_3$  & $\times$ & $\times$ &   &   &    \\ \hline
		$c_5$  &   & $\times$ & $\times$ &   &    \\ \hline
		$c_6$  &   & $\times$ &   & $\times$ &    \\ \hline
		$c_9$  &   &   & $\times$ &   & $\times$  \\ \hline
		$c_{10}$ &   &   &   & $\times$ & $\times$  \\ \hline
		$c_{12}$ &   &   &   &   & $\times$ \\ \hline
	\end{tabular}
\end{table}
\noindent  Let $C=\{c_i :  i \leq 12\}$, where $c_0,c_1,c_3,c_5,c_6,c_9,c_{10},c_{12}$ are points, $c_2,c_4,c_7,c_8,c_{11}$ are blocks, and the incidences between them are specified by the table above. Then $C$ is a partial $(2,3)$-Steiner system and the order $c_0<c_1<\dots<c_{12}$ is a $\mrm{HF}$-order, which means that $C\in \mathcal{K}$. 	As the other requirements from  \ref{assumption:no-superstability} are easily verified, it follows that $C$ is as desired.
\end{proof}

Finally, we consider the conditions from  \ref{assumptions:no_prime}(\hyperref[F=C]{C5})-(\hyperref[delta-rank]{C7}). Assumption \ref{assumptions:no_prime}(\hyperref[F=C]{C5}) follows immediately from \ref{free_theory_steiner}, whence it remains to consider \ref{assumptions:no_prime}(\hyperref[hopf]{C6}) and \ref{assumptions:no_prime}(\hyperref[delta-rank]{C7}).

\begin{proposition}[{\ref{assumptions:no_prime}(\hyperref[hopf]{C6})}]
	If $F(A)\models T^+$ and $A,B\in \mathcal{K}$ then there is a non-surjective embedding of $F(B)$ into $F(A)$.
\end{proposition}
\begin{proof}
Let $A=\bigcup_{i<\omega}A_i$ as in \ref{Steiner:free_completion}. Suppose for simplicity that $B$ contains exclusively points. Then, for $\ell<\omega$ large enough, we can find a stage $A_\ell$ in the free completion $F(A)$ of $A$, such that $A_\ell$ contains an isomorphic copy $C$ of $B$ such that $A_\ell\setminus C$ does not contain any block incident to at least $k$ many points in $C$. Then, it is clear from Definition \ref{Steiner:free_completion} that the free completion $F(A)$ will contain an isomorphic copy $F(C)$ of $F(B)$. Finally, we notice that if $B$ also contains blocks, then this argument can be adapted by making sure that any two different blocks $b,b'\in B$ are mapped to blocks $c,c'\in C\subseteq A_\ell$ in such a way that all the points incident to $c$ and $c'$ in $A_\ell\setminus C$ have no common incidence in $A_\ell$.
\end{proof}

Finally, we introduce a suitable predimension function for the setting of open $(k, n)$-Steiner systems. To our knowledge, this had not been considered previously in the literature. 

\begin{definition}\label{definition:delta} Let $A\in \mathcal{K}$ be a finite, open, partial $(k, n)$-Steiner system, we define a \emph{predimension function} $\delta:\mathcal{K}\to \omega$ as follows. Let $P_A$, $L_A$ and $I_A$ denote points, blocks and incidences of $A$, respectively. Then we define 
		\[ \delta(A) = |P_A| + k|L_A| - |I_A|.\] 
\end{definition}

\begin{proposition}[{\ref{assumptions:no_prime}(\hyperref[delta-rank]{C7})}]
	The predimension function $\delta:\mathcal{K}\to \omega$ satisfies the properties from Definition \ref{delta_strong}, i.e., if $A,B\in\mathcal{K}$, then:
	\begin{enumerate}[(a)]
		\item $A\cong B$ implies $\delta(A)=\delta(B)$;
		\item $A\oleq B$ implies $A \leq_\delta B$;
		\item if $A\oleq B$ is minimal, then $\delta(A)= \delta(B)$ if and only if $B$ is algebraic over $A$;
		\item if $A \leq_\delta B $ and $C \subseteq B$, then $A\cap C \leq_\delta C$. 
	\end{enumerate}
\end{proposition}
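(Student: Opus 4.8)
The plan is to reduce all four properties to the behaviour of $\delta$ along one-element strong extensions, which is the only point requiring an actual computation. Property (a) is immediate, since an isomorphism of partial $(k,n)$-Steiner systems is a sort-preserving bijection that preserves the incidence relation, hence preserves $|P_A|$, $|L_A|$ and $|I_A|$ separately, and so preserves $\delta$.

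For (b), by the definition of $\oleq$ for infinite structures (\ref{infinite_strong extensions}) it is enough to treat the case $A \oleq B$ with $B$ finite and show $\delta(A)\le\delta(B)$; the general case follows since $A\oleq B$ gives $A\cap B_0\oleq B_0$ for every finite $B_0\subseteq B$. Since $X_\mathcal{K}=\{1\}$ for this class (so that, as noted after Proposition \ref{Steiner:conditionJ}, minimal strong extensions are exactly the one-element extensions), Lemma \ref{constructing_finite_pieces} yields a chain $A=C_0\oleq C_1\oleq\cdots\oleq C_m=B$ of one-element strong extensions, and it suffices to check $\delta(C)\le\delta(Cc)$ for a one-element strong extension $C\oleq Cc$. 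By Definition \ref{Steiner:def_hyperfree}, $c$ is hyperfree in $Cc$: either $c$ is a point incident with at most one block of $C$, so $|P|$ grows by $1$, $|L|$ is unchanged, and the set $I_{Cc}\setminus I_C$ has at most one element, giving $\delta(Cc)-\delta(C)=1-|I_{Cc}\setminus I_C|\ge 0$; or $c$ is a block incident with at most $k$ points of $C$, so $|L|$ grows by $1$, $|P|$ is unchanged, and $|I_{Cc}\setminus I_C|\le k$, giving $\delta(Cc)-\delta(C)=k-|I_{Cc}\setminus I_C|\ge 0$. Taking $A=\emptyset$ also shows $\delta(A)\ge 0$ for all $A\in\mathcal{K}$, confirming that $\delta$ lands in $\mathbb{N}$.

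For (c), a minimal strong extension is a one-element extension $A\oleq Ac$, and the computation above shows that $\delta(Ac)-\delta(A)$ equals $1$ minus the number of blocks of $A$ incident with $c$ (when $c$ is a point) or $k$ minus the number of points of $A$ incident with $c$ (when $c$ is a block). By Remark \ref{Steiner:algebraic_extension} the extension $A\oleq Ac$ is algebraic precisely when $c$ is a point incident with exactly one block of $A$ or a block incident with exactly $k$ points of $A$, which is exactly the case where the above difference is $0$; hence $\delta(A)=\delta(Ac)$ if and only if $Ac$ is algebraic over $A$. Finally, (d) is purely formal and uses nothing special about this $\delta$: for a finite $C_0\subseteq C$ we have $(A\cap C)\cap C_0=A\cap C_0$, and as $C_0$ is also a finite subset of $B$, the hypothesis $A\leq_\delta B$ gives $\delta(A\cap C_0)\le\delta(C_0)$, so $A\cap C\leq_\delta C$.

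The only genuine content is the one-element computation underlying (b) and (c); the main (mild) obstacle is merely to justify the reduction of an arbitrary finite strong extension to a chain of one-element strong extensions, and this is precisely what Condition \ref{context}(\ref{condition:I}) with $X_\mathcal{K}=\{1\}$ guarantees, so I expect no real difficulty.
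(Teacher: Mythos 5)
Your proof is correct, and for parts (a), (b) and (c) it follows essentially the same route as the paper: the paper simply asserts that algebraic one-element strong extensions preserve $\delta$ while non-algebraic ones strictly increase it and then inducts, which is exactly the explicit computation $\delta(Cc)-\delta(C)=1-|I_{Cc}\setminus I_C|$ (point case) resp. $k-|I_{Cc}\setminus I_C|$ (block case) that you carry out, together with the reduction to one-element extensions via $X_\mathcal{K}=\{1\}$ and Lemma \ref{constructing_finite_pieces}. The one place where you genuinely diverge is part (d): you observe that, with $\leq_\delta$ defined by the condition $\delta(A\cap B_0)\leq\delta(B_0)$ for all finite $B_0\subseteq B$, the claim is a formal consequence of $(A\cap C)\cap C_0=A\cap C_0$ and $C_0\subseteq B$, with no property of this particular $\delta$ being used. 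The paper instead proves (d) by an incidence count, expanding $\delta(A\cap C)=\delta(C)-\delta(C\setminus A)+I(A\cap C,C\setminus A)$ and extracting $\delta(C\setminus A)-I(A\cap C,C\setminus A)\geq 0$ from the hypothesis applied to $AC$. Both arguments are valid; yours is shorter and makes transparent that (d) is built into the definition of $\leq_\delta$ rather than being a submodularity property of the specific predimension, while the paper's computation has the side benefit of exhibiting the identity relating $\delta(A\cap C)$, $\delta(C)$ and the cross-incidences, which is the form in which such predimension inequalities are usually quoted. No gaps.
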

\begin{proof} 
	Clause (a) follows immediately by the definition of the predimension $\delta$. Then, by definition, it is clear that if $A\oleq Ab$ is an algebraic strong extension, then $\delta(A)=\delta(Ab)$. Differently, if $A\oleq Ab$ is a non-algebraic strong extension we have that $\delta(A)<\delta(Ab)$. Proceeding inductively, it follows that whenever $A\oleq C\in \mathcal{K}$ then $\delta(A)\leq \delta(C)$. 
	\newline It thus remain to verify condition (d). Now, if $X,Y\subseteq A$ are disjoint, we denote by $I(X,Y)$ the set of incidences between $X$ and $Y$ in $A$. Suppose $A \leq_\delta B $ and let $C \subseteq B$. By \ref{definition:delta} we have that:
	\begin{align*}
	\delta(A\cap C)	&= |P_{A\cap C}| + k|L_{A\cap C}| - |I_{A\cap C}| \\
	&= |P_{C}| - |P_{C\setminus A}| + k |L_{C}|-k |L_{C\setminus A}| - |I_{C}| +|I_{C\setminus A}| + I(C\cap A,C\setminus A)    \\
	&= \delta(C) - \delta(C\setminus A) + I(C\cap A,C\setminus A).
	\end{align*}
	Since $A \leq_\delta B$ we have $\delta(A)\leq \delta(AC)$, thus $\delta(A)\leq \delta(A)+\delta(C\setminus A)- I(C\cap A,C\setminus A)$ and in particular $\delta(C\setminus A)- I(C\cap A,C\setminus A)\geq 0$. It follows that 
	\begin{align*}
	\delta(A\cap C)=\delta(C) - \delta(C\setminus A) + I(C\cap A,C\setminus A) \leq \delta(C)
	\end{align*}
	which proves $A \cap C \leq_\delta C$.
\end{proof}

\subsection{Generalised $n$-gons} \label{section:ngons}

In this section we apply our framework to the case of generalised $n$-gons. We stress again that this has already been considered in the literature: Hyttinen and Paolini considered in \cite{paolini&hyttinen} the case of projective planes, i.e., 3-gons, while Ammer and Tent considered in \cite{tent} arbitrary generalised $n$-gons. Possibly, this is most interesting application of our framework, since generalised $n$-gons are the rank $2$ case of spherical buildings and have a natural connection to higher-rank geometries, which, as mentioned in the introduction, we intend to explore in a work in preparation. Additionally, generalised $n$-gons have a vast number of connections to the general mathematical literature (for which we refer the interested reader to \cite{maldeghem}). Since our techniques differ from those employed in \cite{tent} (although they are very much inspired by them), we provide details of the proofs, except for those that proceed as in the case of Steiner systems, or that are exactly as in \cite{tent} or \cite{paolini&hyttinen}. We stress that generalised $n$-gons are the key example where a minimal extension $A\oleq B$ is not necessarily a one-element extension, and they are the main reason why we introduced an arbitrary finite set $\mathbf{X}_\mathcal{K}$ in Condition \ref{context}(\ref{condition:I}) to encode the possible length of minimal strong extensions. The following theorem (which was proved already in \cite{tent}) holds in the setting of generalised $n$-gons. 

\begin{theorem}\label{corollary:ngons}
	Let $(\mathcal{K},\oleq)$ be the class of finite, open, partial generalised $n$-gons and let $T^+$ be defined as in \ref{the_theory}. Then $T^+$ is complete, stable, and with $\ind=\ind^{\otimes}$. Moreover, $T^+$ is not superstable, it does not have a prime model, it is not model complete and it does not eliminate quantifiers.
\end{theorem}

We start by recalling some graph-theoretic notions, and the graph-theoretic definition of the generalised $n$-gons  (cf.~\cite[Def. 2.1]{tent}). A different and more geometrical definition of $n$-gons can be found, e.g., in \cite[p.~745]{funk2}.

\begin{definition}
	Let $L$ be the two-sorted language where the symbols $p_0,p_1,\dots$ denote \emph{points} and the symbols $\ell_0,\ell_1,\dots$ denote \emph{lines}. The symbol $p\, \vert\,\ell$ (or $\ell\, \vert\,p$) means that the point $p$ is \emph{incident} with the line $\ell$.  For every $n\geq 3$, we define a \emph{generalised $n$-gon} as a bipartite graph (with points and lines) with girth $2n$ and diameter $n$. A \emph{partial $n$-gon} is a model of the universal fragment of the theory of generalised $n$-gons, i.e., it is a bipartite graph with girth at least $2n$.
\end{definition}

We notice that, by the fact that partial $n$-gons are bipartite, they do not have any odd-length cycle. Moreover, differently from \cite{tent}, we shall not assume that partial $n$-gons are always connected. We make explicit out choice of the class $\mathcal{K}$ of partial open $n$-gons in the following definition

\begin{definition}\label{ngons:def_hyperfree}
	For finite partial $n$-gons $A\subseteq B$, we say that a tuple $\bar{a}\in A^{<\omega}$ is \emph{hyperfree in $B$} if it satisfies one of the following conditions:
		\begin{enumerate}[(i)]
			\item $\bar{a}=a$ is a \emph{loose end}, i.e., it is a single element incident with at most one element in $B$;
			\item $\bar{a}$ is a \emph{clean arc}, i.e., $\bar{a}$ is a chain $a_1\, \vert \, a_2\, \vert\ \dots\, \vert\ a_{n-3}\, \vert\,a_{n-2}$ where each $a_i$ for $1\leq i\leq n-2$ is incident to exactly two elements in $B$.
		\end{enumerate}
		We then write $A \oleq B$ if every non-empty $C \subseteq B \setminus A$ contains a tuple which is hyperfree in $AC$. We then adopt the notions of \emph{open}, \emph{closed}, \emph{confined}, etc. specialising Definition~\ref{def_extensions}. We let $\mathcal{K}$ be the set of finite, open, partial $n$-gons and we let $T_\forall$ be the universal theory of $\mathcal{K}$.
\end{definition}

\begin{remark}\label{ngons:algebraic_extension} 
	We point out that our notion of hyperfree tuples follows \cite[Def. 2.10]{tent} and it differs from both \cite[p.~351]{funk1} and \cite[p.~753]{funk2}. In fact, the issue with these latter definitions is that they are introduced at the level of single elements, which creates problems with the amalgamation property. We thus stress that  generalised $n$-gons are the only concrete examples where we use the full generality of our setting, i.e., where minimal hyperfree extensions are not necessarily one-element extensions but can also consist of longer tuples.  In particular, an extension $A\oleq B\in \mathcal{K}$ is minimal if $B\setminus A=\{c\}$ is a single element incident to at most one element from $A$, or if $B\setminus A$ is a clean arc.
	
	\smallskip
	\noindent To see that clean arcs form minimal extensions, notice that if $A\oleq B$ and $B\setminus A$ is a clean arc $\bar{c}$, then given $A\subsetneq C\subseteq B$ we have that every element in $B\setminus C$ is incident to two elements from $B$, but the clean arc $\bar{c}$ is not fully included in $B\setminus C$. This shows that $C\noleq B$ and proves that clean arcs form minimal extensions.
	
	\smallskip
	\noindent Secondly, we claim that extensions $A\oleq A\bar{c}$ where $\bar{c}$ is a clean arc are exactly the only minimal algebraic extensions.  In fact, if $a$ and $b$ are the endpoints of the clean arc $\bar{c}$ then $d_{A\bar{c}}(a,b)=n-1$. Hence, if $C\in \mathcal{K}$ contains an isomorphic copy of $A\bar{c}$ over $A$, it follows that $a,b$ belong to two paths of length $n-1$, which together form a $(n-1)$-gon (i.e., a cycle of length $2n-2$). This contradicts $C\in \mathcal{K}$ and shows that $A\oleq A\bar{c}$ is algebraic. On the other hand, it is immediate to see that extensions by loose ends are not algebraic.	
\end{remark}

The next lemma is essentially Lemma \ref{Steiner:algebraic_lemma} but for partial $n$-gons. We omit the proof as it is fundamentally the same. Notice also that this establishes part \ref{condition:amalgam}(2) of the algebraic amalgamation property for $(\mathcal{K},\oleq)$.

\begin{lemma}\label{ngons:algebraic_lemma}
	Let $A\oleq C\models T_\forall$ and let $\bar{c}\in C^{<\omega}$ be such that $A\oleq A\bar{c}$ is a minimal algebraic extension. Then $A\bar{c}\oleq C$. 
\end{lemma}
\begin{proof}
	The proof follows analogously to the proof of Lemma \ref{Steiner:algebraic_lemma}.
\end{proof}

We start by verifying that the class $(\mathcal{K},\oleq)$ satisfies the conditions from \ref{context}.  As in the case of Steiner systems, conditions (\ref{condition:A}), (\ref{condition:B}), (\ref{condition:D}), (\ref{condition:E}), (\ref{condition:F}), (\ref{condition:K}) and (\ref{condition:L}) from \ref{context} follow immediately from Definition \ref{ngons:def_hyperfree}.  Then, Condition (\ref{condition:C}) follows as in the case of Steiner systems and Condition (\ref{condition:H}) follows exactly as in \cite[Lemma 2.24]{tent}. By Remark \ref{ngons:algebraic_extension} it is easy to see that Condition (\ref{condition:I}) holds by letting $\mathbf{X}_\mathcal{K}=\{1,n-2\}$ -- namely, minimal strong extensions are obtained  by adjoining either a loose end or a clean arc of length $n-2$. Condition (\ref{condition:J}) holds immediately with $\mathbf{n}_\mathcal{K}=1$. Finally, although the proof of Condition (\ref{condition:G}) is similar to \cite[Prop.~2.21]{tent}, we provide details of it to clarify that it holds also when considering non-connected partial $n$-gons, and that it actually verifies the algebraic amalgamation property for $(\mathcal{K},\oleq)$, not simply the amalgamation property.  By Remark~\ref{remark_pushout} and Condition (\ref{condition:K}), the following proposition also entails that  $(\mathcal{K},\oleq)$ has the amalgamation property.

\begin{proposition}[{\ref{context}}(\ref{condition:G})]\label{ngons:conditionG}
	The class $(\mathcal{K},\oleq)$ has the algebraic amalgamation property.
\end{proposition}
\begin{proof}
	Let $A,B,C\in \mathcal{K}$ with $A\oleq B$ a minimal strong extension, $A\leq_{|B\setminus A|} C$ and $B\cap C=A$. We distinguish the following cases.
	
	\smallskip
	\noindent \underline{Case 1}. $B\setminus A=\{c\}$ and $c$ is incident to at most one element in $A$.
	\newline\noindent Then the free amalgam $B\otimes_A C$ is an open partial $n$-gon, whence $B\otimes_A C\in \mathcal{K}$.	
	
	\smallskip
	\noindent\underline{Case 2}. $B\setminus A=\{c_1,\dots,c_{n-2}\}$ and $\bar{c}=(c_1,\dots,c_{n-2})$ is a clean arc with endpoints $a,b\in A$.
	\smallskip
	\newline\underline{Case 2.1}. There is a clean arc $\bar{d}=(d_1,\dots,d_{n-2})\in C^{n}$ with endpoints $a,b\in A$.
	\newline Then define $f:B\to C$ such that $f\restriction A= \mrm{id}_A$ and $f(c_i)=d_i$ for all $1\leq i \leq n-2$. This is obviously an embedding. If moreover we have $A\oleq C$, then since $f(B)=A\bar{d}$ and $\bar{d}$ is algebraic over $A$, it follows by \ref{Steiner:algebraic_lemma} that $f(B)\oleq C$.
	\smallskip
	\newline \underline{Case 2.2}. There is no clean arc $\bar{d}=(d_1,\dots,d_{n-2})\in C^{n-2}$ with endpoints $a,b\in A$.
	\newline Now, since $A\bar{c}$ has girth $\geq 2n$, it follows in particular that $d(a,b/A)\geq n+1$, for otherwise we would have a short cycle in $A\bar{c}$. Moreover, since $A\leq_{n-2}C$, it follows that $C$ does not contain any path between $a$ and $b$ of length $\leq n$.	We thus obtain that the shortest cycle in $B\otimes_A C$ to which $a$ and $b$ belong contains at least $2n$ many elements. Thus, the free amalgam $B\otimes_A C$ is an open partial $n$-gon in $\mathcal{K}$. 
	
	\smallskip
	\noindent Finally, it follows from Remark~\ref{Steiner:algebraic_extension} that the previous argument establishes the algebraic amalgamation property, which completes our proof.
\end{proof}

We next consider the assumptions from \ref{main_th}(\hyperref[completeness_axiom]{C1})-(\hyperref[the_K_homogeneous_lemma]{C2}) and \ref{ass:hf_closure}(\hyperref[the_hf_axiom]{C3}). We proceed as in the previous section and first verify \ref{ass:hf_closure}(\hyperref[the_hf_axiom]{C3}), i.e., we exhibit a $\mrm{HF}$-closure operator for partial open generalised $n$-gons with a $\mrm{HF}$-order. Since the language of generalised $n$-gons has no local equivalence relation, this proceed exactly as in the previous cases of $n$-open graphs and $(k,n)$-Steiner systems. The only difference is that, since $\mrm{HF}$-orders of partial $n$-gons contain both loose ends and clean arcs, we need at each step to close the incidence Gaifman closure under the  operator $\widehat{X}$ from \ref{hat_notation}. In particular, recall that if $(<,P) $ is a $\mrm{HF}$-order of $M$ and $c\in M$, then we write $\widehat{c}$ for the tuple $(c_0,\dots,c_k)$ such that $\{ c_0,\dots,c_k \}\in P$ and $c_0<\dots<c_k$.

\begin{definition}\label{ngons:closure_operator} Let $A\subseteq B \models T_\forall^n$ and suppose $<$ is a $\mrm{HF}$-order of $B$ over $A$.  For all $C\subseteq B\setminus A$ we define $\mrm{cl}_<(C)\coloneqq \bigcup_{n<\omega }\mrm{cl}_<^{n}(C)$ inductively by letting $\mrm{cl}_<^0(C) \coloneqq \widehat{C}$ and $\mrm{cl}_<^{n+1}(C) \coloneqq \widehat{\mrm{gcl}^{\mrm{i}}_{A\mrm{cl}_<^{n}(C)^\downarrow}(\mrm{cl}_<^{n}(C))}$ for all $n<\omega$.
\end{definition}

\begin{proposition}[{\ref{ass:hf_closure}(\hyperref[the_hf_axiom]{C3})}]\label{ngons:HF_orders}
	Every model $A\subseteq B\models T^n_\forall$ with an associated $\mrm{HF}$-order $<$ of $B$ over $A$ has a $\mrm{HF}$-closure operator $\mrm{cl}_<\coloneqq\bigcup_{n<\omega}\mrm{cl}^n_<$.
\end{proposition}
\begin{proof}
	This follows exactly as in Proposition~\ref{psedoplane:HF_orders}.
\end{proof}

Then, given \ref{ass:hf_closure}(\hyperref[the_hf_axiom]{C3}), recall that to establish \ref{main_th}(\hyperref[completeness_axiom]{C1}) it suffices to verify   \ref{technical_assumptions}(\hyperref[trivial_condition]{D1}) and \ref{technical_assumptions}(\hyperref[minimality_condition]{D2}), and to establish \ref{main_th}(\hyperref[the_K_homogeneous_lemma]{C2}) it suffices to verify \ref{technical_assumptions}(\hyperref[extension]{D3}). We do this by first showing condition \ref{technical_assumptions}(\hyperref[trivial_condition]{D1}). As in the setting of Steiner systems, we add a further condition (c) which is useful in the induction. 

\begin{proposition}[{\ref{technical_assumptions}(\hyperref[trivial_condition]{D1})}]\label{ngons:trivial_condition}
	If $M \models T^+$, $A \subseteq M$ is finite, $<$ is a $\mrm{HF}$-order of $M$ and $A \oleq Ac$ is a trivial extension, then for all $\ell < \omega$ there is $ c' = c'_\ell \in M$ s.t.:
	\begin{enumerate}[(a)]
		\item $Ac \cong_A Ac'$;
		\item $Ac' \oleq A\mrm{cl}_<^\ell(c')$;
		\item $d(A,c'/A\mrm{cl}_<^{\ell+1}(c'))>\ell+n$.
	\end{enumerate}
\end{proposition}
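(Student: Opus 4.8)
The plan is to prove Proposition \ref{ngons:trivial_condition} by induction on $\ell < \omega$, exactly parallel to the proof of Proposition \ref{Steiner:trivial_condition} for Steiner systems, but taking care of the two features specific to generalised $n$-gons: the trivial one-element extension $A \oleq Ac$ is always strong by \ref{context}(\ref{condition:L}), and the minimal building blocks are now not only loose ends but also clean arcs of length $n-2$ (recall \ref{ngons:algebraic_extension}). The base case $\ell = 0$ is easy: one picks any element $c'_0$ of the same sort as $c$, lying above $A$ in the $\mrm{HF}$-order $<$, with no incidence to $A$ and far away from $A$ in the Gaifman graph; such an element exists in any $M \models T^+$ with a chosen $\mrm{HF}$-order, since loose-end (trivial) extensions are realised infinitely often. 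Condition (a) is then immediate, (b) is vacuous or trivial, and (c) follows by choosing $c'_0$ sufficiently generic.

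For the inductive step, assume the statement holds for $\ell$ and let $A \oleq Ac$ be a trivial extension; I want to construct $c'_{\ell+1}$. Since $c$ is a loose end (or altogether isolated), I build it up from below by attaching a clean arc. Concretely, using the inductive hypothesis repeatedly, I find a clean arc $\bar{d} = (d_1, \dots, d_{n-2})$ together with an endpoint, where each $d_i$ is a trivial extension over $A$ together with the previously chosen pieces and their $\mrm{cl}^\ell_<$-closures, and such that the relevant (b)- and (c)-type conditions hold at stage $\ell$ for each $d_i$ over the accumulated set. Then the extension $\bar{d} \oleq \bar{d}c'$ with $c'$ incident to the free end of the arc is a \emph{non-algebraic} minimal extension (a loose end attached to the arc), so since $M \models T^+$ there are infinitely many realisations, and because $<$ is an $\mrm{HF}$-order I may pick $c'_{\ell+1} > \bar{d}$. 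By construction $c'_{\ell+1}$ is incident only to the top element of $\bar{d}$ inside $(c'_{\ell+1})^\downarrow$, and $\mrm{cl}^{\ell+1}_<(c'_{\ell+1}) = \{c'_{\ell+1}\} \cup \bigcup_i \mrm{cl}^\ell_<(d_i)$, with the analogous identity one level up. From these identities (a) is immediate, (c) follows from the stage-$\ell$ version of (c) for the $d_i$ plus the extra distance contributed by the arc of length $n-2$, and for (b) I exhibit an $\mrm{HF}$-order of $A\mrm{cl}^{\ell+1}_<(c'_{\ell+1})$ over $Ac'_{\ell+1}$: start from $Ac'_{\ell+1}$, lay down the arc in reverse order $d_{n-2} <' \cdots <' d_1$ (each new $d_i$ then has at most two incidences with its predecessors, using that distinct $\mrm{cl}^\ell_<(d_i)$'s are pairwise non-incident by the stage-$\ell$ condition (c)), and then continue building each $\mrm{cl}^\ell_<(d_i)$ using the stage-$\ell$ condition (b), invoking Condition \ref{context}(\ref{condition:K}) and Corollary \ref{equivalence_HFo.ordering} to conclude strongness.

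The main obstacle I anticipate is bookkeeping the distance and non-incidence conditions carefully enough that the $\mrm{HF}$-order constructed in step (b) genuinely satisfies \ref{def_HF_order}(\hyperref[HF2]{H2}): one must ensure that when the arc $\bar{d}$ is placed below $c'_{\ell+1}$ and above the closures $\mrm{cl}^\ell_<(d_i)$, no element acquires a third incidence with its set of predecessors, and that no two of the closures $\mrm{cl}^\ell_<(d_i)$, or the closure of $d_i$ and the element $c'_{\ell+1}$, interact. This is exactly where condition (c) — the quantitative "$d(A, c' / A\mrm{cl}^{\ell+1}_<(c')) > \ell + n$" strengthening, which has no analogue in the loose-end-only Steiner case beyond the simpler $A < \mrm{cl}^{\ell+1}_<(c')$ — does the work: the $+n$ slack absorbs the length of the clean arc and guarantees the arc can be threaded in without creating short cycles, so that the resulting configuration is still a partial $n$-gon in $\mathcal{K}$. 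Modulo this careful distance accounting, the argument is a routine adaptation of \ref{Steiner:trivial_condition} and \cite[Thm. 2.28]{tent}, and I would present only the new points (the role of clean arcs and the distance bound) in full detail, referring to the Steiner case for the rest.
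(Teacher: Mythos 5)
Your overall plan (induction on $\ell$, adapt the Steiner argument, treat clean arcs as the new ingredient, use (c) as the quantitative control) has the right shape, but two of your concrete steps do not survive contact with the definitions, and the paper's proof is organised differently at exactly those points. First, the base case is not ``easy'' and (b) is not ``vacuous or trivial'' there: by Definition \ref{def_closure_operator} and Notation \ref{hat_notation}, $\mrm{cl}^0_<(c')=\widehat{c'}$ is the whole $P_<$-piece of $c'$, and cofinitely many elements of $M$ lie in clean-arc pieces of size $n-2$, so (b) at $\ell=0$ already asks for an $\mrm{HF}$-order of such an arc over $Ac'$. Moreover one cannot pick $c'$ ``far from $A$ in the Gaifman graph'' of $M$, since $M$ has diameter $n$; the distance in (c) is computed inside the restricted structure $A\mrm{cl}^{\ell+1}_<(c')$, and controlling that set is the entire content of the base case. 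The paper handles this with a three-arc construction: two clean arcs $\bar x,\bar y$ whose endpoint-pieces avoid $A$, then the clean arc $\bar z$ between their midpoints, with $c'_0$ the midpoint of $\bar z$, so that $\mrm{cl}^1_<(c'_0)=\bar x\cup\bar y\cup\bar z$ avoids $A$ entirely.

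Second, your inductive step builds an ``arc'' $\bar d$ out of elements each of which is a \emph{trivial} extension of its predecessors; but trivial extensions carry no incidences, so $\bar d$ is not an arc, and even reading ``trivial'' as ``loose end'', the inductive hypothesis only supplies trivial (isolated) elements, not loose ends with controlled closures. More seriously, if $c'$ is a loose end attached on top of a chain of singleton $P_<$-pieces, then each application of $\mrm{cl}^{j+1}_<=\widehat{\mrm{gcl}(\mrm{cl}^{j}_<)}$ advances only one element down the chain, so your claimed identity $\mrm{cl}^{\ell+1}_<(c')=\{c'\}\cup\bigcup_i\mrm{cl}^\ell_<(d_i)$ fails: after $\ell+1$ steps the closure has not yet reached the bottom of the arc, let alone the $\mrm{cl}^\ell_<$-closures of all its elements. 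The paper avoids this by using the inductive hypothesis to produce just \emph{two} elements $d^0_\ell,d^1_\ell$ with no path of length $\leq n$ between them among their predecessors (this is what condition (c) is really for), so that the \emph{unique} clean arc $\bar z$ joining them in $M$ is forced to sit above both as a single $P_<$-piece; taking $c'_{\ell+1}=z_1$ \emph{inside} that arc gives $\mrm{cl}^0_<(c'_{\ell+1})=\bar z$ in one stroke and makes the recursion $\mrm{cl}^{j+1}_<(c'_{\ell+1})=\bar z\cup\mrm{cl}^{j}_<(d^0_\ell)\cup\mrm{cl}^{j}_<(d^1_\ell)$ come out with the right indices. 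You would need to replace your construction by this one, or else supply a genuinely new argument controlling the $P_<$-piece and the $\mrm{HF}$-closure of a loose end chosen by genericity, before the rest of your plan can go through.
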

\begin{proof}	
	We assume w.l.o.g. that $n$ is odd, for the case with $n$ even is analogous. In particular this means that clean arcs have as endpoints elements of the same sort. We proceed similarly to the proof of \ref{Steiner:trivial_condition}. By induction on $\ell < \omega$ we prove that for every $A \subseteq M$ and $A \oleq Ac \in \mathcal{K}$ we can find $c'_\ell$ satisfying conditions (a)-(c). 
	\newline \underline{Base case $\ell = 0$}. 
	\newline 
	\noindent We find an element $c'_{\ell}$ as wanted. Since $M\models T^+$ and $<$ is a $\mrm{HF}$-order, there are cofinitely many clean arcs in the ordering $<$  (for otherwise we would have points at distance $>n$).  It follows in particular that we can find a clean arc $\bar{x}=(x_1,\dots,x_{n-2})$ with endpoints $b,c\in M$ such that $\widehat{b}$ and $\widehat{c}$ do not contain any element from $A$. Similarly, let $\bar{y}=(y_1,\dots,y_{n-2})>\bar{x}$ be another such clean arc with endpoints $d,e\in M $ such that $\widehat{d}$ and $\widehat{e}$ do not contain any element from $A$. Additionally, we assume w.l.o.g. that $b,c,d,e$ all have the same sort and that $x_1<\dots<x_{n-2}<y_1<\dots<y_{n-2}$. Let $k=\frac{n-1}{2}$. Then in $y_{n-2}^\downarrow$ the elements $x_k$ and $y_k$ must have distance $\geq n+1$, since the elements $b,c,d,e$ are pairwise different and have the same sort. Consider then the clean arc $\bar{z}=(z_1,\dots, z_{n-2})$ between $x_{k}$ and $y_k$ and notice that we must have $z_i>y_{n-2}$ for all $1\leq i \leq n-2$. Then we let  $c'_0=z_{k}$ and from \ref{ngons:closure_operator} it follows that we have:
	\begin{align*}
		\mrm{cl}_<^{1}(c'_0)&=\{x_1\dots, x_{n-2}, y_1\dots, y_{n-2},z_1\dots, z_{n-2} \}\\
		\mrm{cl}_<^{0}(c'_0)& =\{z_1\dots, z_{n-2}\}.
	\end{align*}
	Then, we obtain that $d(A,c'_0/A\mrm{cl}_<^{1}(c'_0))>n$, as no element from the tuples $\bar{x}, \bar{y}, \bar{z}$  is  incident to elements from $A$. Similarly, we also have that $Ac'_0 \oleq A\mrm{cl}_<^{0}(c'_0)$, as we can define a $\mrm{HF}$-order $<'$ by letting $Ac'_0<'z_{k-1}<'\dots<z_1<'z_{k+1}<'\dots <'z_{n-2}$. Finally, we clearly have that   $Ac \cong_A Ac'_0$.
	\smallskip
	\noindent \newline \underline{Inductive step}. 
	\newline \noindent Now, suppose the inductive hypothesis holds for $\ell\geq n$ and let $c$ be such that $A \oleq Ac$ is a trivial extension, we want to define $c'_{\ell+1}$ so that (a)-(c) are satisfied for it. By inductive hypothesis we can find a point $d^0_\ell$ satisfying (a)-(c) with respect to $A$. Additionally we can find a second point $d^1_\ell$ such that $d^1_\ell$ is a trivial extension of $A\mrm{cl}_<^{\ell}(d^0_\ell)$ and (a)-(c) are true with respect to it. Now, by the choice of $d^1_\ell$ and condition (c) it follows that there is no path of length $\leq n$ between $d^0_\ell$ and $d^1_\ell$ in $\mrm{cl}_<^{\ell}(d^1_\ell)$. But then since $d^0_\ell<d^1_\ell$ and $<$ is a $\mrm{HF}$-order, it follows that the shortest path between $d^0_\ell$ and $d^1_\ell$ is a clean arc $\bar{z}=(z_1,\dots,z_{n-2})$ s.t. $d^1_\ell<z_1<\dots<z_{n-2}$.  Let $c_{\ell+1}'=z_1$, then we have:
	\begin{align*}
	\mrm{cl}^{\ell+2}_<(c'_{\ell+1})&=\{ z_1,\dots, z_{n-2} \}\cup \mrm{cl}^{\ell+1}_<(d^0_{\ell})\cup \mrm{cl}^{\ell+1}_<(d^1_{\ell}),\\
	\mrm{cl}^{\ell+1}_<(c'_{\ell+1})&= \{ z_1,\dots, z_{n-2} \}\cup \mrm{cl}^{\ell}_<(d^0_{\ell})\cup \mrm{cl}^{\ell}_<(d^1_{\ell}).
	\end{align*}
	Whence, from the induction hypothesis and the display above  we immediately obtain that $d(A, c'_{\ell+1}/A\mrm{cl}^{\ell+2}_<(c'_{\ell+1}))> (\ell+1)+n$. Similarly, it is clear from the choice of  $c_{\ell+1}'$ that we also have $Ac \cong_A Ac'_{\ell+1} $. Finally, to see that $Ac'_{\ell+1} \oleq A\mrm{cl}_<^{\ell+1}(c'_{\ell+1})$ we show that $Ac'_{\ell+1} \hleq A\mrm{cl}_<^{\ell+1}(c'_{\ell+1})$. We start with $Ac'_{\ell+1}$ and define a $\mrm{HF}$-order $<'$ by adding $d^{0}_\ell <'z_{2}<'z_{3}\dots <' z_{n-2}<' d^1_\ell$. Crucially, each new element that we add has only one incidence with the previous ones, thus $<'$ is a $\mrm{HF}$-order. Finally, notice that $c'_{\ell+1}$ is only incident  to $z_{2}$ and $d^0_\ell$ in $ \mrm{cl}^{\ell+2}_<(c'_{\ell+1}) $, and also that no element in $\mrm{cl}^{\ell}_<(d^0_{\ell})$ is incident to any element in $\mrm{cl}^{\ell}_<(d^1_{\ell})$ by the choice of $d^1_\ell$. So using the inductive hypothesis it is easy to continue to construct the $\mrm{HF}$-order $ <' $. This finally shows that also $Ac'_{\ell+1} \oleq A\mrm{cl}_<^{\ell+1}(c'_{\ell+1})$.
\end{proof}	

The proof of the technical assumption \ref{technical_assumptions}(\hyperref[minimality_condition]{D2}) is similar to the case of Steiner systems.  In fact, as in the setting of Steiner systems, the key ingredient of the next proof is Lemma \ref{ngons:algebraic_lemma}. 

\begin{proposition}[{\ref{technical_assumptions}(\hyperref[minimality_condition]{D2})}]\label{ngons:minimality_condition}
	If $M \models T^+$ is $\aleph_1$-saturated, $A \oleq M$ is countable, and for every trivial extension $A\oleq Ab$ there is $b' \in M$ s.t. $Ab \cong_A Ab'\oleq M$, then for every minimal extension $A\oleq A\bar{c}$ there is $\bar{c}' \in M$ s.t. $A\bar{c} \cong_A A\bar{c}'\oleq M$.
\end{proposition}
\begin{proof}
	Let $M \models T^+$ and $A \oleq M$. Suppose $A\oleq A\bar{c}$ is a minimal strong extension. If $\bar{c}$ is a single element with no incidence with elements in $A$ then we are immediately done by the inductive assumption, thus we distinguish the following two cases.
	
	\smallskip
	\noindent \underline{Case 1}. $\bar{c}=c$ is an element incident with one element in $A$.
	\newline\noindent 
	Suppose w.l.o.g. that $c$ is a point incident with a line $d\in A$.  By assumption, we can find an element $d'\in M$ such that $Ad'\oleq M$ and $d'$ has no incidence with $A$. Additionally, we can choose $d'$ so that it has an appropriate sort, i.e., it is a point if $n$ is even and it is a line if $n$ is odd. Then, we have that $d(d,d'/M)\geq n+1$, for otherwise $M\setminus Ad'$ contains a short chain contradicting $Ad'\oleq M$. It follows that  $\{d,d',b_1,\dots,b_{n-2}  \}$, where $\bar{b}=(b_1,\dots,b_{n-2})$ is a clean arc between $d$ and $d'$ of the appropriate length, amalgamates with every finite subset from $M$. Since $M\models T^+$ there must be a clean arc $\bar{z}\in M^{n-2}$ with endpoints $d$ and $d'$.  Let $z_1$ be the first element from $\bar{z}$. Clearly $Ac \cong_A Az_1$. By \ref{ngons:algebraic_lemma} we have that $Ad'\bar{z}\oleq M$. Moreover, we have that $Az_1\oleq Ad'\bar{z}$, since we can obtain $Ad'\bar{z}$ by a sequence of one-element extensions from $Az_1$. It follows by transitivity that $Az_1\oleq M$. 
	
	\smallskip
	\noindent \underline{Case 2}. $\bar{c}$ is a clean arc with endpoints in $A$.
	\newline\noindent Let $a,b\in A$ be the endpoints of $\bar{c}$.  Since $A\oleq A\bar{c}$ we must have $d_{A}(a,b)\geq n+1$. Crucially, $A\oleq M$, thus $d_{M}(a,b)$ cannot be strictly less than $n$. Therefore, since $M\models T^+$, it follows that there is  a clean arc $\bar{d}\in (M\setminus A)^{<\omega}$ with endpoints $a$ and $b$. Clearly $A\bar{c}\cong_A A\bar{d}$ and, by \ref{ngons:algebraic_lemma}, it follows that $A\bar{d}\oleq M$.
\end{proof}

From the previous propositions together with Proposition~\ref{the_K_saturated_lemma} and Theorem \ref{prop_completeness} it follows that $T^+$ is complete. We next exhibit how the free completion process from \ref{free_algebraic_completion} behaves in the setting of generalised $n$-gons. The following construction is slightly different from the original one from Tits in \cite{tits}, but it is equivalent to it for connected partial $n$-gons.

\begin{definition}\label{ngons:free_completion2}
	Let $A$ be a partial $n$-gon. We let $A_0=A$ and, for every $i<\omega$, we let $A_{i+1}$ be obtained by adding a clean arc $a\, \vert\,z_1\, \vert\,z_2\, \vert\,\dots\, \vert\,z_{n-2}\, \vert\,b$ between any two elements $a,b\in A_i$ such that
	\begin{enumerate}[(a)]
		\item either $d(a,b/A_i)=n+1$,
		\item or $d(a,b/A_i)=\infty$ and $a,b$ are of the appropriate sort (i.e., $a,b$ have the same sort if $n$ is odd and have different sorts if $n$ is even).
	\end{enumerate}
	The structure $F(A)\coloneq\bigcup_{i < \omega}A_i$ is called the \emph{free completion} of $A$ and we say that $F(A)$ is freely generated over $A$. We say that $A$ is \emph{non-degenerate} if $F(A)$ is infinite, and \emph{degenerate} otherwise. 
\end{definition}

\begin{remark}
	The free completion of generalised $n$-gons  was introduced by Tits in \cite{tits} and differs from the one above in that it applies only to connected partial $n$-gons and thus does not include our clause (b) (see also \cite[Def.~2.4]{tent} and \cite[p.~755]{funk2}). Without (b), one has that $A$ generates an infinite model if it is connected and, either it contains a cycle of length $2n+2$, or has diameter $\geq n+2$ (cf. \cite[p.~755]{funk2}).  Given our slightly different definition, we can replace the requirement of connectedness with one saying that $|A|\geq 3$ and that $A$ contains elements of both sorts. If this is the case, clause (b) makes sure that in $A_1$ we introduce at least one clean arc, and thus $A_2$ is a connected partial $n$-gon. In particular, it follows that if $A$ contains at least $m_\mathcal{K}=2n+2$ elements of both sorts, then  $F(A)$ is infinite.
\end{remark}

The next proposition  simultaneously verifies Condition \ref{technical_assumptions}(\hyperref[extension]{D3}) and Assumption \ref{assumptions:no_prime}(\hyperref[F=C]{C5}). It follows that the theory $T^+$ of open generalised $n$-gons is stable and with$\ind=\ind^\otimes$.

\begin{proposition}[{\ref{technical_assumptions}(\hyperref[extension]{D3})}, {\ref{assumptions:no_prime}(\hyperref[F=C]{C5})}]\label{ngons:free_theory}
	Suppose $A\models T_\forall$, and $A$ is non-degenerate, then $F(A)\models T^+$, $A\oleq F(A)$, $|F(A)|=|A|+\aleph_0$ and:
	\begin{enumerate}[(a)]
		\item $F(A)$ is unique up to $A$-isomorphisms;
		\item $F(A)$ is $\oleq$-prime over $A$.
	\end{enumerate}
	In particular, it follows that if $A\cong B\onleq M\models T_n^+$ and $M$ is $\aleph_1$-saturated, then $F(A)\cong F(B)\preccurlyeq M$.
\end{proposition}
\begin{proof}
	As in \ref{free_theory_steiner}, it suffices to verify that $F(A)\models T^+$, provided that $A$ is non-degenerate.  Let $A'\subseteq_\omega F(A)$ and suppose that $A'\oleq A'\bar{c}$ is a minimal algebraic extension, i.e., $\bar{c}=(c_1,\dots,c_{n-2})$ is a clean arc with endpoints in $A$. Suppose that $A'\leq_{n-2}F(A)$, then it follows in particular that $d(a,b/F(A))\geq n+1$. Since $A$ is non-degenerate, there is a minimal stage $i<\omega$ such that  $\{a,b\}\subseteq A_i$ and   $d(a,b/A_i)= n+1$. By the definition of free completion, there is a clean arc $\bar{d}\in A^{<\omega}_{i+1}$ such that $A'\bar{c}\cong_A A'\bar{d}$. This shows that $F(A)$ satisfies the second family of axioms from \ref{the_theory}. Then, by reasoning as in the case of Steiner systems, one can also verify that $F(A)$ satisfies axioms \ref{the_theory}(1) and \ref{the_theory}(3), thus showing $F(A)\models T^+$.
\end{proof}

\begin{remark}\label{ngons:prime_super}
	In this section we have verified the conditions from \ref{context} and the assumptions from \ref{main_th}(\hyperref[completeness_axiom]{C1})-(\hyperref[the_K_homogeneous_lemma]{C2}) and \ref{ass:hf_closure}(\hyperref[the_hf_axiom]{C3}). This entails that $T^+$ is complete, stable and with $\ind=\ind^{\otimes}$. We do not provide full details for  the fact that $T^+$ is not superstable and it does not have a prime model. A notion of predimension for the generalised $n$-gons can be found in \cite{funk2} and it was used in \cite{tent} to prove that $T^+$ does not have a prime model. Similarly, a configuration satisfying Assumption \ref{assumption:no-superstability} can be essentially found in \cite[Const.~9.1]{paolini&hyttinen} for the case of projective planes. This can be adapted to all $n\geq 3$ by replacing each step in the construction by clean arcs. Essentially, such configurations can be also extracted from the configuration described in \cite[\S5.4]{HPQ} to verify that the class of free generalised $n$-gons satisfies the Construction Principle $\mrm{CP}(\mathbf{K},\ast)$ (cf. also Remark~\ref{remark:construction_principle} above).
\end{remark}

\subsection{$k$-nets}
We consider in this section $k$-nets, which are a third example of incidence structures of rank $2$ admitting a free algebraic completion. As we make explicit later, we axiomatise $k$-nets as structures without any parallelism relation, although in other presentations one has a notion of parallelism between lines. This indicates that, in a certain sense, $k$-nets are a hybrid form of incidence structures that lies between those with parallelism relations and those without. We refer the reader to \cite[p.~20]{pasini} and \cite{funk2} for more information on $k$-nets.  We proceed as in the previous cases and we verify all the conditions and assumptions from \ref{context}, \ref{main_th}(\hyperref[completeness_axiom]{C1})-(\hyperref[the_K_homogeneous_lemma]{C2}), \ref{ass:hf_closure}(\hyperref[the_hf_axiom]{C3}),  \ref{assumption:no-superstability_2}(\hyperref[CP]{C4}) and \ref{assumptions:no_prime}(\hyperref[F=C]{C5})-(\hyperref[delta-rank]{C7}). As in the former classes of examples, we then obtains the following theorem from the results in Section \ref{sec_general}.

\begin{theorem}\label{corollary:nets}
	Let $(\mathcal{K},\oleq)$ be the class of finite, open, partial  $k$-nets and let $T^+$ be defined as in \ref{the_theory}. Then $T^+$ is complete, stable, and with $\ind=\ind^{\otimes}$. Moreover, $T^+$ is not superstable, it does not have a prime model, it is not model complete and it does not eliminate quantifiers.
\end{theorem}

We recall the axiomatisation of $k$-nets, which we take from \cite[pp.~745-746]{funk2} with only small modifications.

\begin{definition}
	For any $k\geq 3$ we fix the two-sorted language $L$ with one sort for points, one sort for lines, one incidence symbol $\, \vert\,$ between points and lines, and $k$ many predicates $(P_i)_{i<k}$ for lines. We denote points by  $p_0,p_1,\dots$ and lines by $\ell_0,\ell_1,\dots$.  The theory of \emph{$k$-nets} is axiomatised by the following axioms:
	\begin{enumerate}[(1)]
		\item every line $\ell$ belongs to some unique $P_i$, for $i<k$;
		\item for all points $p$ and predicates $P_i$, with $i<k$, there is a unique line $\ell$  such that $p\, \vert\,\ell$ and $\ell\in P_i$;
		\item for any $i<j<k$ and lines $\ell\in P_i$ and $\ell'\in P_j$, there is a unique point $p$ such that $p\, \vert\,\ell$ and $p\, \vert\,\ell'$.
	\end{enumerate}
	The universal fragment of the theory of $k$-nets is axiomatised by axiom (1) together with the uniqueness requirement from axioms (2) and (3). We call a model of the universal theory of $k$-nets a \emph{partial $k$-net}.	
\end{definition}

We often refer to the predicates $(P_i)_{i<k}$ as the \emph{parallelism type} of lines. We notice that this is a slight abuse of notation, as this notion of parallelism crucially differs from the relation of parallelism from e.g., affine planes or Laguerre planes, as it does not play a role in the definition of hyperfree elements and we do not treat it as a local equivalence relation. As a matter of fact, in other treatments of nets in the literature such predicates are internalised as added sorts, e.g., in \cite{funk2}. We decided to distinguish these predicates from the two sorts of points and lines to emphasise the fact that nets are also an instance of rank-2 geometries. We refer the interested reader also to  \cite[Ch.~1.2]{pasini} for a similar presentation.

\begin{definition}\label{nets:def_hyperfree}
	For finite partial $k$-nets $A\subseteq B$, we say that an element $a\in A$ is \emph{hyperfree} in $B$ if it satisfies one of the following conditions:
		\begin{enumerate}[(i)]
			\item $a$ is a point incident with at most two lines from $B$; 
			\item $a$ is a line incident with at most one point from $B$.
		\end{enumerate}		
		We write $A \oleq B$ if every non-empty $C \subseteq B \setminus A$ contains an element which is hyperfree in $AC$ and we adopt the notions of \emph{open}, \emph{closed}, \emph{confined}, etc. specialising Definition~\ref{def_extensions}. We let $\mathcal{K}$ be the set of open partial finite $k$-nets  and we let $T_\forall$ be the universal theory of $\mathcal{K}$.  The previous definitions extend to infinite models of $T_\forall$ as in \ref{infinite_strong extensions}. The notion of hyperfree element in a $k$-net is exactly as in \cite[p.~753]{funk2}.
\end{definition}

The following remark makes it explicit what algebraic strong extensions are in the setting of nets. The subsequent Lemma \ref{Nets:algebraic_lemma}  essentially establishes the part \ref{condition:amalgam}(2) of the algebraic amalgamation property.

\begin{remark}\label{nets:algebraci.extensions}\label{nets:algebraic.extensions}
	In the case of $k$-nets, a minimal extensions $A\oleq Ac$ is \emph{algebraic} if $c$ is a point incident to two lines in $A$ or it is a line incident to some point from $A$.
\end{remark}

\begin{lemma}\label{Nets:algebraic_lemma}
	Let $A\oleq C\models T_\forall$ and let $c\in C$ be such that $A\oleq Ac$ is an algebraic strong extension. Then $Ac\oleq C$. 
\end{lemma}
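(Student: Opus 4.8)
The plan is to mimic exactly the proof of Lemma~\ref{Steiner:algebraic_lemma} (the analogue for $(k,n)$-Steiner systems), since the only structural features used there are that an algebraic minimal extension is obtained by adjoining a single element $c$ which is incident \emph{only} to elements of $A$, together with Condition~\ref{context}(\ref{condition:E}). First I would fix an arbitrary finite $D\subseteq C$ with $Ac\subsetneq D$ and aim to produce an element of $D\setminus Ac$ that is hyperfree in $D$; by Definition~\ref{infinite_strong extensions} and the remark that $\oleq$ is witnessed on finite sets, this suffices to conclude $Ac\oleq C$.

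Since $A\oleq C$ and $D\subseteq C$, there is some $d\in D\setminus A$ hyperfree in $D$ (using Condition~\ref{context}(\ref{condition:E}) to restrict the strong extension to $D$). If $d\neq c$ we are done immediately, so assume $c$ is the \emph{unique} hyperfree element of $D\setminus A$. Here the key observation, recorded in Remark~\ref{nets:algebraic.extensions}, is that an algebraic minimal extension $A\oleq Ac$ has $c$ either a point incident to exactly two lines of $A$, or a line incident to some point of $A$ — in either case $c$ has \emph{no} incidence with any element of $D\setminus A$. Now set $D'=D\setminus\{c\}$; since $A\oleq C$ and $Ac\subsetneq D$ gives $A\subsetneq D'$, there is $d'\in D'\setminus A$ hyperfree in $D'$. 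Because $c$ is incident only to elements of $A$, adding $c$ back does not increase the number of incidences of $d'$ (in fact $c$ and $d'$ are non-incident), so $d'$ remains hyperfree in $D$, giving the desired element of $D\setminus Ac$.

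I expect no real obstacle here: the statement is a direct transcription of the Steiner-systems lemma, and the only thing to check carefully is that in \emph{both} algebraic cases (point incident to two lines, line incident to at least one point) the new element $c$ carries no incidences to $D\setminus A$ — which is immediate from the definition of minimal algebraic extension for $k$-nets. One minor point worth stating explicitly is that the hyperfreeness of $d'$ in $D$ follows because hyperfreeness (Definition~\ref{nets:def_hyperfree}) is a monotone-in-the-ambient-set condition \emph{only through the incidences of $d'$}, and those are unchanged from $D'$ to $D$ once we know $c\not\mathrel{\vert} d'$. Thus the proof is essentially two lines once the case analysis on the sort of $c$ is made; I would simply write ``The proof follows analogously to the proof of Lemma~\ref{Steiner:algebraic_lemma}'' if brevity is desired, or spell out the two sentences above otherwise.
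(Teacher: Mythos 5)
Your proposal is correct and takes exactly the route the paper does: the paper's own proof of this lemma consists of the single line that it follows as in Lemma~\ref{Steiner:algebraic_lemma}, and your two-paragraph expansion is a faithful transcription of that Steiner-systems argument to $k$-nets. One small precision: the claim that $c$ carries no incidences into $D\setminus A$ is not immediate from the definition of the algebraic extension alone, but follows from combining it with your standing case hypothesis that $c$ is hyperfree in $D$ --- a point hyperfree in $D$ is incident to at most two lines of $D$ and already has two in $A$, and a line hyperfree in $D$ is incident to at most one point of $D$ and already has one in $A$.
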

\begin{proof}
	This is proved by reasoning exactly as in \ref{Steiner:algebraic_lemma}.
\end{proof}

Consider first the conditions from \ref{context}. As in the previous cases, conditions (\ref{condition:A}), (\ref{condition:B}), (\ref{condition:D}), (\ref{condition:E}), (\ref{condition:F}), (\ref{condition:K}) and (\ref{condition:L}) follow from our definition of the relation $\oleq$ for finite, open, partial $k$-nets. Condition \ref{context}(\ref{condition:C}) follows as in the case of Steiner systems. Condition (\ref{condition:I})  follow immediately by letting $\mathbf{X}_\mathcal{K}=\{1\}$, as it is easy to verify that the the minimal extensions for $k$-nets are exactly the one-element extensions, and Condition (\ref{condition:J}) follows by letting $\mathbf{n}_\mathcal{K}=2$. Conditions (\ref{condition:G}) and (\ref{condition:H}) are non-trivial, but we omit their proof as they are essentially as in the case of Steiner systems.  We consider in details Assumptions \ref{main_th}(\hyperref[completeness_axiom]{C1})-(\hyperref[the_K_homogeneous_lemma]{C2}) and \ref{ass:hf_closure}(\hyperref[the_hf_axiom]{C3}). Since we view $k$-nets in a signature without local equivalence relations, Condition \ref{ass:hf_closure}(\hyperref[the_hf_axiom]{C3}) can be verified as in the previous cases. In particular, also in this case the operator $\mrm{cl}_<$ is defined purely in terms of the incidence Gaifman closure.

\begin{definition}\label{nets:closure_operator} Let $A\subseteq B \models T_\forall^n$ and suppose $<$ is a $\mrm{HF}$-order of $B$ over $A$.  For all $C\subseteq B\setminus A$ we define $\mrm{cl}_<(C)\coloneqq \bigcup_{n<\omega }\mrm{cl}_<^{n}(C)$ inductively by letting $\mrm{cl}_<^0(C) \coloneqq C$ and $\mrm{cl}_<^{n+1}(C) \coloneqq \mrm{gcl}^{\mrm{i}}_{A\mrm{cl}_<^{n}(C)^\downarrow}(\mrm{cl}_<^{n}(C))$ for all $n<\omega$.
\end{definition}

\begin{proposition}[{\ref{ass:hf_closure}(\hyperref[the_hf_axiom]{C3})}]\label{nets:HF_orders}
	Every model $A\subseteq B\models T^n_\forall$ with an associated $\mrm{HF}$-order $<$ of $B$ over $A$ has a $\mrm{HF}$-closure operator $\mrm{cl}_<\coloneqq\bigcup_{n<\omega}\mrm{cl}^n_<$.
\end{proposition}
\begin{proof}
	This follows exactly as in Proposition~\ref{psedoplane:HF_orders}.
\end{proof}

To verify Assumption \ref{main_th}(\hyperref[completeness_axiom]{C1}) we proceed as in the previous cases and prove  \ref{technical_assumptions}(\hyperref[trivial_condition]{D1}) and \ref{technical_assumptions}(\hyperref[minimality_condition]{D2}). We add a third condition (c) to \ref{technical_assumptions}(\hyperref[trivial_condition]{D1}) to ease our inductive argument.

\begin{proposition}[{\ref{technical_assumptions}(\hyperref[trivial_condition]{D1})}]\label{Nets:trivial_condition}
	If $M \models T^+$, $A \subseteq M$ is finite, $<$ is a $\mrm{HF}$-order of $M$ and $A \oleq Ac$ is a trivial extension, then for all $n < \omega$ there is $ c'=c'_n \in M$ s.t.:
	\begin{enumerate}[(a)]
		\item $Ac \cong_A Ac'$;
		\item $Ac' \oleq A\mrm{cl}_<^n(c')$;
		\item $A < \mrm{cl}_<^{n+1}(c')$.
	\end{enumerate}
\end{proposition}
\begin{proof} By induction on $n < \omega$ we prove that for every finite $A \subseteq M$ and $A \oleq Ac$ we can find $c'_n$ satisfying the conditions (a)-(c). If $n = 0$, then it is easy to see that we can find find $c'_1$ as wanted. So suppose the inductive hypothesis and let $c$ be such that $A \oleq Ac$ is a trivial extension, we want to define $c'_{n+1}$ so that (a)-(c) are satisfied for it. 
	
	\smallskip	
	\noindent \underline{Case 1}. $c$ is a line (of parallelism type $i<k$).
	\newline Firstly, since $k\geq 3$, we notice there are two indices $i_0,i_1$ both different from $i$. By inductive hypothesis we can find a line $d^0_n\in P_{i_0}$ such that $A < \mrm{cl}_<^{n+1}(d^0_n)$ and  $Ad^0_n \oleq A\mrm{cl}_<^n(d^0_n)$. Also, we can find a further line $d^1_n \in P_{i_1}$ such that $A\mrm{cl}_<^{n+1}(d^0_n) < \mrm{cl}_<^{n+1}(d^1_n)$ and  $A\mrm{cl}_<^{n+1}(d^0_n)d^1_n \oleq A\mrm{cl}_<^{n+1}(d^0_n)\mrm{cl}_<^n(d^1_n)$. Consider the unique point $p$ incident to both $d^0_n$ and $d^1_n$, then since $A\mrm{cl}_<^{n+1}(d^0_n) < \mrm{cl}_<^{n+1}(d^1_n)$ it follows that $p>d^1_n>d^0_n$. 
	\newline Then, since  $<$ is a $\mrm{HF}$-order and $M\models T^+$, there is a line $c'_{n+1}\in P_i$ such that $c'_{n+1}>p$ and $c'_{n+1}\, \vert\,p$. Then we have that:
	\begin{align*}
	\mrm{cl}_<^{n+1}(c'_{n+1})&=\mrm{cl}_<^{n}(d^0_n)\cup\mrm{cl}_<^{n}(d^1_n)\cup \{p, c'_{n+1}\}; \\
	\mrm{cl}_<^{n+2}(c'_{n+1})&=\mrm{cl}_<^{n+1}(d^0_n)\cup\mrm{cl}_<^{n+1}(d^1_n)\cup \{p, c'_{n+1}\}.
	\end{align*}
	It follows by construction that $A<\mrm{cl}_<^{n+2}(c_{n+1})$ and $Ac \cong_A Ac'_{n+1}$. We next define a $\mrm{HF}$-order  $<'$ witnessing $Ac'_{n+1}\oleq A\mrm{cl}_<^{n+1}(c_{n+1})$. It suffices to start from $Ac'_{n+1}$ and let $c'_{n+1}<'p<'d^1_n<'d^0_n$ (by the choice of our elements this is a $\mrm{HF}$-order). Notice then that $ c'_{n+1} $ is incident only to $p$ in $\mrm{cl}_<^{n+1}(c'_{n+1})$, $p$ is incident only to $d^0_n,d^1_n$ in $\mrm{cl}_<^{n+1}(c'_{n+1})$, and also by our choice of $d^1_n$ no element in $\mrm{cl}_<^{n}(d^1_n)$ is incident to elements in $ \mrm{cl}_<^{n}(d^0_n) $. Thus we can use the inductive hypothesis to continue the construction of a $\mrm{HF}$-order and obtain that $Ac'_{n+1}\oleq A\mrm{cl}_<^{n+1}(c'_{n+1})$.
	
	\smallskip 
	\noindent \underline{Case 2}. $c$ is a point.
	\newline We proceed analogously to the previous case. By inductive hypothesis we can find a line $d^0_n\in P_{0}$ such that $A < \mrm{cl}_<^{n+1}(d^0_n)$ and  $Ad^0_n \oleq A\mrm{cl}_<^n(d^0_n)$. Also, we can find a further line $d^1_n \in P_{1}$ such that $A\mrm{cl}_<^{n+1}(d^0_n) < \mrm{cl}_<^{n+1}(d^1_n)$ and  $A\mrm{cl}_<^{n+1}(d^0_n)d^1_n \oleq A\mrm{cl}_<^{n+1}(d^0_n)\mrm{cl}_<^n(d^1_n)$. Let  $c'_{n+1}$ be the unique point incident to both $d^0_n$ and $d^1_n$, then since $A\mrm{cl}_<^{n+1}(d^0_n) < \mrm{cl}_<^{n+1}(d^1_n)$ it follows that $c'_{n+1}>d^1_n>d^0_n$. Clearly we have that $\mrm{cl}_<^{n+1}(c'_{n+1})=\mrm{cl}_<^{n}(d^0_{n})\cup\mrm{cl}_<^{n}(d^1_{n})\cup\{c'_{n+1}\}$. From this fact and the induction hypothesis, we can  reason as in Case 1 and see that  $Ac'_{n+1}\oleq A\mrm{cl}_<^n(c_{n+1})$. The facts that $A<\mrm{cl}_<^{n+2}(c_{n+1})$ and $Ac \cong_A Ac'_{n+1}$ are immediately true by construction and the inductive hypothesis.
\end{proof}

\begin{proposition}[{\ref{technical_assumptions}(\hyperref[minimality_condition]{D2})}]\label{Nets:minimality_condition}
	If $M \models T^+$ is $\aleph_1$-saturated, $A \oleq M$ is countable, and for every trivial extension $A\oleq Ab$ there is $b' \in M$ s.t. $Ab \cong_A Ab'\oleq M$, then for every minimal extension $A\oleq Ac$ there is $c' \in M$ s.t. $Ac \cong_A Ac'\oleq M$.
\end{proposition}
\begin{proof}
	This follows by reasoning exactly as in the case of Steiner systems \ref{Steiner:minimality_condition}.
\end{proof}

By Proposition~\ref{the_K_saturated_lemma} and Theorem~\ref{prop_completeness} it already follows from the previous propositions that the theory $T^+$ of infinite, open $k$-nets is complete. To show that also \ref{main_th}(\hyperref[the_K_homogeneous_lemma]{C2}) holds, and thus establish the stability of $T^+$, we verify \ref{technical_assumptions}(\hyperref[extension]{D3}) by considering the free completion process for the setting of $k$-nets. In turn, this also verifies Assumption \ref{assumptions:no_prime}(\hyperref[F=C]{C5}). It is straightforward to verify that the construction given by \ref{general_free_amalgam} is equivalent to the following one.

\begin{definition}\label{nets:free_completions}
	Let $A$ be a partial $k$-net. We let $A_0=A$ and  for every $n<\omega$ we define $A_{2n+1}, A_{2n+2}$ as follows:
	\begin{enumerate}[(a)]
		\item for any two lines $\ell_0\in P_i$ and $\ell_1\in P_j$ in $A_n$ with $i< j<k$ and not incident to any common point, we add a new point $p$ in $A_{2n+1}$ incident only to $\ell_0,\ell_1$;
		\item for every point $p\in A_{2n+1}$ and every $i<k$ such that $p$ is not incident to any line of parallelism type $i$ in $A_{2n+1}$, we add a new line $\ell\in A_{2n+2}$ of parallelism type $i$ which is incident only to $p$.
	\end{enumerate}	
	The structure $F(A):=\bigcup_{i < \omega}A_i$ is called the \emph{free completion of $A$} and we say that $F(A)$ is freely generated over $A$. We say that $A$ is \emph{non-degenerate} if $F(A)$ contains infinitely many points and lines, and \emph{degenerate} otherwise. 
\end{definition}

\begin{remark}\label{nets:non-degenerate}
	As remarked in \cite[p.~13]{barlotti2}, the only degenerate partial $k$-nets are the empty set $\emptyset$, the configuration given by a point incident to $k$ many lines, and any set of lines of the same parallelism type. Letting $\mathbf{m}_{\mathcal{K}}=2$, it follows that any $A\models T_\forall$ containing at least $\mathbf{m}_\mathcal{K}$ many elements of each sort is non-degenerate.
\end{remark}

\begin{proposition}[{\ref{technical_assumptions}(\hyperref[extension]{D3})}, {\ref{assumptions:no_prime}(\hyperref[F=C]{C5})}]\label{free_theory_nets}\label{Nets:the_auto_axiom}
	Suppose $A\models T_\forall$, and $A$ is non-degenerate, then $F(A)\models T^+$, $A\oleq F(A)$, $|F(A)|=|A|+\aleph_0$ and:
	\begin{enumerate}[(a)]
		\item $F(A)$ is unique up to $A$-isomorphisms;
		\item $F(A)$ is $\oleq$-prime over $A$.
	\end{enumerate}
	In particular, it follows that if $A\cong B\onleq M\models T_n^+$ and $M$ is $\aleph_1$-saturated, then $F(A)\cong F(B)\preccurlyeq M$.
\end{proposition}
\begin{proof}
	The proof proceeds analogously to the case of Steiner systems.
\end{proof}

So far, we have verified all the conditions from \ref{context}, \ref{main_th}(\hyperref[completeness_axiom]{C1})-(\hyperref[the_K_homogeneous_lemma]{C2}) and \ref{ass:hf_closure}(\hyperref[the_hf_axiom]{C3}). It remains to consider the assumptions from  \ref{assumption:no-superstability_2}(\hyperref[CP]{C4}) and \ref{assumptions:no_prime}(\hyperref[F=C]{C5})-(\hyperref[delta-rank]{C7}). We point out that Assumption \ref{assumptions:no_prime}(\hyperref[F=C]{C5}) follows immediately from \ref{free_theory_nets}, that Assumption \ref{assumptions:no_prime}(\hyperref[hopf]{C6}) can be proved by a similar argument as in the case of Steiner systems, and that Assumption \ref{assumptions:no_prime}(\hyperref[delta-rank]{C7}) can be verified using the definition of a predimension $\delta(A)$ from \cite[Def.~11]{funk2}. We conclude this section by exhibiting a configuration that witnesses that the technical assumptions in \ref{assumption:no-superstability}(\hyperref[technical:D4]{D4}) hold, thus verifying also \ref{assumption:no-superstability_2}(\hyperref[CP]{C4}). In turn, this entails that $T^+$ is not superstable and concludes the proof of Theorem~\ref{corollary:nets}.

\begin{proposition}[{\ref{assumption:no-superstability}(\hyperref[technical:D4]{D4})}]
	There is a configuration $C\in \mathcal{K}$ satisfying the conditions from \ref{assumption:no-superstability}.
\end{proposition}
\begin{proof} 
We define a configuration $C$ as follow. Let $C=\{c_i : 0\leq i \leq 18\}$, where $c_0,c_1,c_8,c_9,c_{10},c_{14},c_{16},c_{18}$ are points, $c_2, c_5, c_{12}$ are lines of parallelism type $P_0$, $c_3,c_6,c_{13},c_{15}$ are lines of parallelism type $P_1$, and $c_4,c_7,c_{11},c_{17}$ are lines of parallelism type $P_2$. Notice that then, since $k\geq 3$,  this construction thus applies to all $k$-nets. Finally, the incidence relations between elements of $C$ are specified by the following table.

\begin{table}[H]
	\begin{tabular}{|c||c|c|c|c|c|c|c|c|c|c|c|}\hline
		& $c_2$ &$c_3$  &$c_4$  & $c_5$ & $c_6$ & $c_7$ & $c_{11}$ & $c_{12}$ & $ c_{13}$ & $c_{15}$ &$c_{17}$ \\ \hline\hline
		$c_0$		& $\times$& $\times$ &$\times$  &  &  &  &  &  &  &					&\\\hline
		$c_1$		&  &  &  & $\times$ &$\times$  &$\times$  &  &  &  &				&	\\\hline
		$c_8$		& $\times$ &  &  &  &$\times$  &  &$\times$  &  &  &					&\\\hline
		$c_9$		&  & $\times$ &  &  &  &$\times$  &  & $\times$ &  &					&\\\hline
		$c_{10}$	&  &  & $\times$ & $\times$ &  &  &  &  &  $\times$&				&\\\hline
		$c_{14}$	&  &  &  &  &  &  &	$\times$ & $\times$ &  & $\times$&\\\hline
		$c_{16}$	& $\times$  &  &  &  &  &  &	 &  &  &$\times$ &$\times$\\\hline
		$c_{18}$	&  &  &  &  &  &  &	 &  & $\times$ & & $\times$ \\\hline
	\end{tabular}
\end{table}
\noindent	Then $C$ is a partial $k$-net and the order $c_0<c_1<\dots<c_{18}$ is a $\mrm{HF}$-order, which means that $C\in \mathcal{K}$. As the other requirements from  \ref{assumption:no-superstability} are easily verified, it follows that $C$ is as desired.	
\end{proof}

\subsection{Affine planes}\label{section:affine_planes}

We apply in this section our general framework to the theory of affine planes, which essentially axiomatise the incidence properties of points and lines in the Euclidean plane (see e.g., \cite{buekenhout_cameron} for a general reference on the matter). Although this case is very similar to the setting of projective planes, affine planes are particularly interesting as they also involve a notion of parallelism between lines, in addition to the relation of incidence between lines (or blocks) and points. Essentially, this is the first case that we are considering where we need the full generality of Definition~\ref{def:interior_closure}, i.e., where we may have that $\mrm{gcl}_M(A)\neq \mrm{gcl}^{\mrm{i}}_M(A)$. We provide details of all proofs where the presence of the relation of parallelism makes a difference. The results from this section then establish the following theorem.

\begin{theorem}\label{corollary:affine}
	Let $(\mathcal{K},\oleq)$ be the class of finite, open, partial  affine planes and let $T^+$ be defined as in \ref{the_theory}. Then $T^+$ is complete, stable, and with $\ind=\ind^{\otimes}$. Moreover, $T^+$ is not superstable, it does not have a prime model, it is not model complete and it does not eliminate quantifiers.
\end{theorem}

We start by recalling the axiomatisation of affine planes from \cite[p.~744]{funk2} and by introducing the class $(\mathcal{K},\oleq)$ in this setting.

\begin{definition}
	Let $L$ be the two-sorted language where the symbols $p_0,p_1,\dots$ denote \emph{points} and the symbols $\ell_0,\ell_1,\dots$ denote \emph{lines}, the symbol $p\, \vert\,\ell$ means that the point $p$ is \emph{incident} with the line $\ell$, and the symbol $\ell_0\parallel \ell_1$ means the two lines $\ell_0$ and $\ell_1$ are \emph{parallel}.  The theory of \emph{affine planes} consists of the following axioms:
	\begin{enumerate}[(1)]
		\item the parallelism relation $\parallel$ is an equivalence relation;
		\item every two distinct points $p_0,p_1$ have a unique common incident line $p_0\lor p_1$;
		\item every two non-parallel lines $\ell_0,\ell_1$ have a unique common incident point $\ell_0\wedge \ell_1$;
		\item given a point $p$ and a line $\ell$, there is a unique line $\ell'$ s.t.  $p\, \vert\,\ell'$ and $\ell'\parallel \ell$.
	\end{enumerate}
	The universal fragment of the theory of affine planes is axiomatised by the axiom (1) saying that  parallelism is an equivalence relation, and by the formulas saying that the line incident to two given points, the point incident to two non-parallel lines, and the line incident to a point and parallel to a line are all unique. We refer to models of this universal theory as \emph{partial affine planes}.
\end{definition}

\begin{definition}\label{affine:def.hyperfree}
	For finite partial affine planes $A\subseteq B$, we say that an element $a\in A$ is \emph{hyperfree} in $B$ if it satisfies one of the following conditions:
		\begin{enumerate}[(i)]
			\item $a$ is a point incident with at most two lines from $B$; 
			\item $a$ is a line incident with at most one point from $B$;
			\item $a$ is a line incident with exactly two points from $B$ and  not parallel to any other line in $B$.
		\end{enumerate}
	We then write $A \oleq B$ if every non-empty $C \subseteq B \setminus A$ contains an element which is hyperfree in $AC$. We then adopt the notions of \emph{open}, \emph{closed}, \emph{confined}, etc. specialising Definition~\ref{def_extensions}. We let $\mathcal{K}$ be the set of open partial finite affine planes and we let $T_\forall$ be the universal theory of $\mathcal{K}$.  The previous definitions extend to infinite models of $T_\forall$ as in \ref{infinite_strong extensions}. The notion of hyperfree element in partial affine planes is from \cite[p.~752]{funk2}.
\end{definition}

As in the previous cases, we start by recalling what algebraic strong extensions are in the setting of affine planes, and by proving the crucial Lemma \ref{Affine:algebraic_lemma}. We provide details of its proof as this is the first case where parallelism relations play a role. This also establishes part \ref{condition:amalgam}(2) of the algebraic amalgamation property.

\begin{remark}\label{Affine:algebraic_extension}
	In partial affine planes an extension $A\oleq Ac$ is algebraic if:
	\begin{enumerate}[(i)]
		\item $c$ is a point incident with two lines from $A$;
		\item $c$ is a line incident with two points from $A$ and parallel to no other line in $A$;
		\item $c$ is a line incident with a point in $A$ and parallel to some line in $A$.
	\end{enumerate}
\end{remark}

\begin{lemma}\label{Affine:algebraic_lemma}
	Let $A\oleq C\models T_\forall$ and let $c\in C$ be such that $A\oleq Ac$ is an algebraic strong extension. Then $Ac\oleq C$. 
\end{lemma}
\begin{proof}
	Consider a finite subset $D\subseteq C$ such that $Ac\subsetneq D$, we need to show that $D\setminus Ac$ contains some hyperfree element. Now, since $A\oleq C$ and $D\subseteq C$, there is an element $d\in D\setminus A$ hyperfree in $D$.	If $c\neq d$ then $d$ also witnesses that $D\setminus Ac$ contains a hyperfree element and we are done. Thus suppose that $c$ is the only element in $D\setminus A$ to be hyperfree in $D$. Since $c$ is algebraic over $A$, it forms one of the configuration from Remark \ref{Affine:algebraic_extension}.
	
	\smallskip
	\noindent \underline{Case 1}. $c$ is a point incident with two lines from $A$.
	\newline Since $c$ is hyperfree in $D$, it is not incident to lines in $D\setminus A$. It follows that an element $d'\in D\setminus Ac$ is hyperfree in $D$ if and only if it is hyperfree in $D\setminus \{c \}$. Since by assumption $c$ is the unique element  in $D\setminus A$ to be hyperfree in $D$, it follows that  $D\setminus Ac$ contains no element hyperfree in $D$, contradicting  $A\oleq C$.
	
	\smallskip
	\noindent \underline{Case 2}. $c$ is a line incident with two points from $A$. 
	\newline Then $c$ is neither incident to points in $D\setminus A$, nor parallel to any other line from $D$. Our claim then follows exactly as in Case 1.
	
	\smallskip
	\noindent \underline{Case 3}. $c$ is a line incident to some point in $A$ and parallel to some line $\ell\in A$. 
	\newline Since $c$ is hyperfree in $D$, it is not incident to any point from  $D\setminus A$. Moreover, if $d'\in D\setminus A$ is such that $d'\parallel c$ then $d'\parallel l$.  As in the previous cases, it follows that an element $d'\in D\setminus Ac$ is hyperfree in $D$ if and only if it is hyperfree in $D\setminus \{c \}$. Since by assumption $c$ is the unique element  in $D\setminus A$ to be hyperfree in $D$, it follows that  $D\setminus Ac$ contains no element hyperfree in $D$, contradicting  $A\oleq C$.
\end{proof}

We next consider the conditions from \ref{context}. As in the previous cases, conditions (\ref{condition:A}), (\ref{condition:B}), (\ref{condition:D}), (\ref{condition:E}) (\ref{condition:F}) and (\ref{condition:L}) follow immediately from the definition of the relation $\oleq$ for finite, open, partial affine planes. Condition (\ref{condition:C}) follows exactly as in the case of Steiner systems. Condition (\ref{condition:I})  follows immediately for $\mathbf{X}_\mathcal{K}=\{1\}$ and Condition~(\ref{condition:J}) follows by letting $\mathbf{n}_\mathcal{K}=2$. Condition~(\ref{condition:K}) follows by the definition of hyperfree elements in affine planes together with the definition of Gaifman closure \ref{def:interior_closure}. It remains to verify Conditions~(\ref{condition:G}) and (\ref{condition:H}). We first consider the algebraic amalgamation property.

\begin{definition}\label{parallelsim_classes}
	Let $A$ be a partial affine plane and $\ell\in A$ be a line. We say that $\ell$ forms a \emph{trivial parallelism class in $A$} if $\ell\parallel \ell'$, $\ell'\in A$ entail that $\ell=\ell'$. Also, we denote by $P(A)$ any subset of $A$ containing exactly one line for every parallelism class, i.e., for every $\ell\in A$ there is exactly one $\ell'\in P(A)$ such that $\ell\parallel \ell'$.
\end{definition}

\begin{proposition}[\ref{context}(\ref{condition:G})]\label{Affine:conditionG}
	The class $(\mathcal{K},\oleq)$ has the algebraic amalgamation property.
\end{proposition}
\begin{proof}
	Consider $A,B,C\in \mathcal{K}$ with $A\oleq B$ minimal, $A\leq_{|B\setminus A|} C$ and $B\cap C=A$. We distinguish the following cases. By Remark~\ref{Affine:algebraic_extension} the following arguments establish the algebraic amalgamation property.	
	
	\smallskip
	\noindent \underline{Case 1}. $B=A\cup \{\ell\}$ where $\ell$ is a line incident with at most one  point from $A$. 
	\newline If $\ell$ is not incident with any point from $A$, or if it forms a trivial parallelism class in $B$, then $B\otimes_A C\in \mathcal{K}$. So suppose that $p\, \vert\,\ell$ and $\ell\parallel \ell_0$ for some $p,\ell_0\in A$.
	
	\noindent \underline{Case 1.1}. $C$ does not contain a line parallel with $\ell_0$ and incident with $p$.
	\newline Then we immediately have $B\otimes_A C\in \mathcal{K}$.		
	
	\noindent \underline{Case 1.2}. $C$ contains a line $\ell'$ parallel with $\ell_0$ and incident to $p$.
	\newline Then since $A\leq_{|B\setminus A|} C$, it follows in particular that $\ell'$ is incident only with $p$ and so the  map $f$ defined by letting $f\restriction A=\mrm{id}_A$ and $f(\ell)=\ell'$ is an embedding. Moreover, if we additionally have that $A\oleq C$, then since $A\oleq A\ell'$ is algebraic, it follows from \ref{Affine:algebraic_lemma} that $f(B)\oleq C$.
	
	\smallskip
	\noindent \underline{Case 2}. $B=A\cup \{\ell\}$ where $\ell$ is a line incident with exactly two points $p_0,p_1$ from $A$ and parallel to no line in $A$.
	
	\noindent \underline{Case 2.1}.  $C$ does not contain a line incident with $p_0$ and $p_1$.
	\newline In this case we immediately have $B\otimes_A C\in \mathcal{K}$.	
	
	\noindent \underline{Case 2.2}. $C$ contains a line $\ell'$ incident with $p_0$ and $p_1$.
	\newline Since $A\leq_{|B\setminus A|} C$ it follows that $\ell'$ is incident only with $p_0,p_1$ and parallel with no line in $A$. It follows that the  map $f$ such that $f\restriction A=\mrm{id}_A$ and $f(\ell)=\ell'$ is an embedding. Moreover, if we additionally have that $A\oleq C$, then since $\ell'$ is algebraic over $A$, it follows by \ref{Affine:algebraic_lemma} that $f(B)\oleq C$.
	
	\smallskip
	\noindent \underline{Case 3}. $B=A\cup \{p\}$ where $p$ is a point incident with at most two lines  from $A$.
	\newline If $p$ is incident with at most one line, then $B\otimes_A C\in \mathcal{K}$, so we suppose $p$ is incident to exactly two lines $\ell_0,\ell_1\in A$.
	
	\noindent \underline{Case 3.1}.   $C$ does not contain a point incident with $\ell_0$ and $\ell_1$.
	\newline In this case we immediately have $B\otimes_A C\in \mathcal{K}$.	
	
	\noindent \underline{Case 3.2}. $C$ contains a point $p'$ incident with $\ell_0$ and $\ell_1$
	\newline Since $A\leq_{|B\setminus A|} C$ it follows  that $p'$ is incident only to such lines in $A$. Then the  map $f$ such that $f\restriction A=\mrm{id}_A$ and $f(p)=p'$ is an embedding of $B$ into $C$. If we have that $A\oleq C$, then since $A\oleq Ap'$ is algebraic, it follows from \ref{Affine:algebraic_lemma} that $f(B)\oleq C$.
\end{proof} 

Next, we give details of the proof of \ref{context}(\ref{condition:H}), since this requires to take parallelism relations into account.

\begin{proposition}[\ref{context}(\ref{condition:H})]\label{Affine:conditionH}
	If $A\subseteq B \in \mathcal{K}$ and $A \noleq B$ is confined, then there is $n < \omega$ such that every $C \in \mathcal{K}$ contains at most $n$ many disjoint copies of $B$ over $A$.
\end{proposition}
\begin{proof}
	Suppose $A \noleq B$ is confined, then every point $p\in B\setminus A$ is incident with at least three lines from $B$ and every line $\ell\in B\setminus A$ is incident with at least three points from $B$, or it is incident with two points from $B$ while being parallel to some other line from  $B$.  For every line $\ell\in B$ we let $P_\ell$ be the set of points in $A$ that are incident with $\ell$, and for every point $p\in B$ we let $L_p$ be the set of lines in $A$ that are incident with $p$. Suppose $C\in \mathcal{K}$ contains at least three copies $(B_i)_{i<3}$ of $B$ isomorphic over $A$, then we claim that the set 
	\[ D= \bigcup_{p\in B_0}L_p\cup \bigcup_{\ell\in B_0}P_l \cup \bigcup_{i<3}(B_i\setminus A) \]
	witnesses that $C$ is not open. We need to consider the following elements.
	
	\smallskip
	\noindent  \underline{Case 1}. $\ell\in L_{p_0}$ for some $p_0\in B_0$.
	\newline  By definition $p_0\, \vert\,\ell$ where $p_0\in B_0$. Since $B_0,B_1,B_2$ are all isomorphic over $A$, there are $p_1\in B_1$ and $p_2\in B_2$ such that $p_1\, \vert\,\ell$ and $p_2\, \vert\,\ell$. Thus $\ell$ has three incidences in $D$ and it is not hyperfree.
	
		\smallskip
	\noindent  \underline{Case 2}. $p\in P_{\ell_0}$ for some $\ell_0\in B_0$.
	\newline  By reasoning exactly as in Case 1 it follows that $p$ is incident to three lines in $D$ and thus it is not hyperfree.
	
		\smallskip
	\noindent  \underline{Case 3}.  $p\in B_i\setminus A$ is a point, for $i<3$.
	\newline  By assumption $p$ is incident to at least three lines in $B_i$, and thus in  $L_p\cup (B_i\setminus A)\subseteq D$. Thus $p$ is not hyperfree in $D$.
	
		\smallskip
	\noindent  \underline{Case 4}.  $\ell\in B_i\setminus A$ is a line, for $i<3$.
	\newline Since $\ell$ is not hyperfree in $B_i$ we have three possible subcases.
	
		\smallskip
	\noindent  \underline{Case 4.1}. $\ell$ is incident to three points in $B_i$.
	\newline Then, by definition of the set $P_\ell$, it follows that $\ell$ is incident to three elements in $P_\ell \cup (B_i\setminus A)\subseteq D$. It follows that $\ell$ is not hyperfree in $D$.
	
		\smallskip
	\noindent  \underline{Case 4.2}. $\ell$ is incident to two points in $B_i$, and parallel to another line $\ell'\in B_i\setminus A$.
	\newline Reasoning as in case 4.1, it follows that $\ell$ is incident to two points also in $P_\ell \cup (B_i\setminus A)\subseteq D$. Then, since $\ell'\in (B_i\setminus A)\subseteq D$, we obtain that $\ell$ is incident to two points in $D$ while being parallel to some line $\ell'\in D$, showing that $\ell$ is not hyperfree in $D$.
	
	\smallskip
	\noindent  \underline{Case 4.3}. $\ell$ is incident to two points in $B_i$ while being parallel to some line $\ell'\in A$.
	\newline Assume w.l.o.g. that $i=0$. Reasoning as in case 4.1, it follows that $\ell$ is incident to two points also in $P_\ell \cup (B_0\setminus A)\subseteq D$. Now, since $B_0$ and $B_1$ are isomorphic over $A$, it follows that there is a line $\ell''$ such that $\ell\parallel \ell'$ and $\ell''\parallel \ell'$. By transitivity of the parallelism relation it follows that $\ell\parallel \ell''\in (B_1\setminus A)\subseteq D$. It follows that $\ell$ is not hyperfree in $D$.  This completes our proof.
\end{proof}

We next consider Assumptions \ref{main_th}(\hyperref[completeness_axiom]{C1})-(\hyperref[the_K_homogeneous_lemma]{C2}) and \ref{ass:hf_closure}(\hyperref[the_hf_axiom]{C3}). Since affine planes also comprise the relation of parallelism $\parallel$ in their language, we provide details for the definition of the $\mrm{HF}$-closure operator $\mrm{cl}_<\coloneqq \bigcup_{n<\omega}\mrm{cl}^n_<$.

\begin{definition}
	Let $A\subseteq B \models T_\forall^n$ and suppose $<$ is a $\mrm{HF}$-order of $B$ over $A$. For every line $\ell\in B$ we let $\mrm{H}_{<}(\ell)$ be a family of lines $(\ell_i)_{i\in I}$ all parallel to $\ell$ and s.t.:
	\begin{enumerate}[(1)]
		\item $\mrm{H}_<(\ell')=\mrm{H}_<(\ell)$ for all lines $\ell'\parallel \ell$;
		\item  for all non-empty $J\subseteq I$ the set $\{\ell_j : j\in J\}$ has a maximal element with respect to the ordering $<$;
		\item for all $\ell'\parallel \ell$ with $\ell'<\ell$ there is some $\ell_i\in \mrm{H}_{<}(\ell)$ with $\ell_i=\ell'$ or $\ell_i<\ell'$.
	\end{enumerate}
\end{definition}

\begin{definition}\label{affine:closure_operator} Let $A\subseteq B \models T_\forall^n$ and suppose $<$ is a $\mrm{HF}$-order of $B$ over $A$. For all $C\subseteq B\setminus A$ we define $\mrm{cl}_<(C)\coloneqq \bigcup_{n<\omega }\mrm{cl}_<^{n}(C)$ by letting $\mrm{cl}_<^0(C) \coloneqq C$ and, for all $n<\omega$, we define:
\begin{enumerate}[(1)]
	\item $\mrm{gcl}^{\mrm{i}}_{A\mrm{cl}_<^{n}(C)^\downarrow}(\mrm{cl}_<^{n}(C)) \subseteq \mrm{cl}_<^{n+1}(C)$;
	\item if $\ell\in \mrm{cl}_<^{n}(C)$ and $\mrm{H}_<(\ell)$ has a least element $\ell'$, then $\ell'\in \mrm{cl}_<^{n+1}(C)$;
	\item if $\ell\in \mrm{cl}_<^{n}(C)$, there is $\ell_0\parallel \ell$ with $\ell_0<\ell$, and $\ell_1$ is the greatest element in $\mrm{H}_<(\ell)$ which does not belong to  $\mrm{cl}_<^{n}(C)$, then $\ell_1\in \mrm{cl}_<^{n+1}(C)$;
	\item no other element belongs to $\mrm{cl}_<^{n+1}(C)$.
\end{enumerate}
\end{definition}

\begin{proposition}[{\ref{ass:hf_closure}(\hyperref[the_hf_axiom]{C3})}]\label{affine:HF_orders}
	Every model $A\subseteq B\models T^n_\forall$ with an associated $\mrm{HF}$-order $<$ of $B$ over $A$ has a $\mrm{HF}$-closure operator $\mrm{cl}_<\coloneqq\bigcup_{n<\omega}\mrm{cl}^n_<$.
\end{proposition}
\begin{proof}
	Let $<$ be a $\mrm{HF}$-order of $B$ over $A$ and let $\mrm{cl}_<(C)\coloneqq \bigcup_{n<\omega }\mrm{cl}_<^{n}(C)$ be the operator defined in \ref{affine:closure_operator}. It is straightforward to verify that  $\mrm{cl}_<(C)\coloneqq \bigcup_{n<\omega }\mrm{cl}_<^{n}(C)$ satisfies conditions (1)-(2) from Definition~\ref{def_closure_operator}, whence it suffices to verify that 
	\[A\mrm{cl}_<(CD)=A\mrm{cl}_<(C)\otimes_{A\mrm{cl}_<(C)\cap A\mrm{cl}_<(D)} A\mrm{cl}_<(D)\oleq B\]
	holds for all $C\subseteq B\setminus A$.
	
	\medskip
	\noindent First, notice that if there is a relation of incidence between some $c\in \mrm{cl}_<(C)$ and $d\in \mrm{cl}_<(D)$, then since $<$ is a linear order it follows from Definition~\ref{affine:closure_operator} that either $c\in \mrm{cl}_<(D)$ or $d\in \mrm{cl}_<(C)$. Similarly, suppose there is a relation of parallelism between two lines $\ell_0\in \mrm{cl}_<(C)$ and $\ell_1\in \mrm{cl}_<(D)$ and assume without loss of generality that $\ell_0<\ell_1$. Then from Definition~\ref{affine:closure_operator} it follows that either $\ell_0\in \mrm{cl}_<(D)$ or there is some $\ell_k\in \mrm{H}_<(\ell_0)=\mrm{H}_<(\ell_1)$ such that $\ell_k\in \mrm{cl}_<(C)\cap \mrm{cl}_<(D)$. It follows that  $A\mrm{cl}_<(CD)=A\mrm{cl}_<(C)\otimes_{A\mrm{cl}_<(C)\cap A\mrm{cl}_<(D)} A\mrm{cl}_<(D)$.

	\medskip
	\noindent It remains to show that $A\mrm{cl}_<(CD)\oleq B$. By Corollary \ref{equivalence_HFo.ordering} it suffices to prove that $<\restriction B\setminus(A\mrm{cl}_<(CD)) $ is a $\mrm{HF}$-order of $B$ over $A\mrm{cl}_<(CD)$. Condition \ref{def_HF_order}(\hyperref[HF1]{H1}) is immediate to verify. We show that  Condition \ref{def_HF_order}(\hyperref[HF2]{H2}) holds as well. 
	
	\smallskip
	\noindent Let $b\in B\setminus A\mrm{cl}_<(CD)$ and suppose towards contradiction that $A\mrm{cl}_<(CD)E\noleq A\mrm{cl}_<(CD)Eb$ for some finite $E\subseteq b^\downarrow \setminus \{b\}$. By Definition \ref{infinite_strong extensions}, there is a finite subset $C_0\subseteq \mrm{cl}_<(CD)$ such that $AC_0E\noleq AC_0Eb$. Additionally, we can also assume that $C_0$ is minimal with this property, i.e., for all $C_1\subsetneq C_0$ it holds that $AC_1E\oleq AC_1Eb$. Recall the notion of distance from \ref{gaifman_graph}, we then distinguish the following cases. The first three cases are exactly as in Proposition~\ref{psedoplane:HF_orders}.
	
	\smallskip 
	\noindent \underline{Case 1}. $c<b$ for all $c\in C_0$.
	\newline Then $AC_0E\noleq AC_0Eb$ contradicts the fact that $<$ is an $\mrm{HF}$-order of $B$ over $A$.
	
	\smallskip 
	\noindent \underline{Case 2}. $b<c$ for some $c\in C_0$ and  $d(c,b/A\mrm{cl}_<(CD)b^\downarrow)>1$.
	\newline Let $C_1=C_0\setminus \{c \}$. Since there is no relation between $c $ and $b $, it follows that $ b$ has the same Gaifman closure in $AC_0E$ and $AC_1E$, thus by  Condition \ref{context}(\ref{condition:K}) it follows that $AC_1E\noleq AC_1Eb$, contradicting the minimality of $C_0$.
	
	\smallskip 
	\noindent \underline{Case 3}. $b<c$ for some $c\in C_0$ and  $d_{\mrm{i}}(c,b/A\mrm{cl}_<(CD)b^\downarrow)= 1$.
	\newline Then $b\in \mrm{gcl}_{A\mrm{cl}_<(CD)}(c)$ and so, by the definition of $\mrm{HF}$-closure, it follows that $ b\in \mrm{cl}_<(CD)$, contradicting our choice of $b$.
	
	\smallskip 
	\noindent \underline{Case 4}. $b<c$ for some $c\in C_0$ and  $d_{\mrm{p}}(c,b/A\mrm{cl}_<(CD)b^\downarrow)= 1$.
	\newline Suppose also that $d_{\mrm{i}}(c,x/A\mrm{cl}_<(CD)b^\downarrow)= 0$ for all $x\in C_0$.	Then either $b$ is the least line in the ordering $<$ which is parallel to $c$, in which case we have $ b\in \mrm{cl}_<(CD)$ and we proceed exactly as in Case 3 above, or there are a line $\ell\parallel b \parallel c$ such that $\ell\in \mrm{cl}_<(CD)$ and a line $\ell'\parallel \ell$ such that $\ell'<b$. Let $C_1=C_0\setminus\{c\}$, then by the choice of $C_0$ we have that $AC_1E\oleq AC_1E b$. Since we can assume without loss of generality that $c$ is the unique element in $C_0$ parallel to $b$, and we have that there is no incidence between $b$ and elements of $C_0$, by Condition~(\ref{condition:K}) we can also assume without loss of generality that $C_1\subseteq b^\downarrow$. Then, since $<$ is a $\mrm{HF}$-order, we also obtain that  $AC_1E\ell'\oleq AC_0E\ell' b$ and, since $\ell'\parallel c\parallel b$, we get by Condition~(\ref{condition:K}) that $AC_1Ec\oleq AC_0Ec b$, contradicting $AC_0E\noleq AC_0Eb$.	
	
	\smallskip 
	\noindent We thus conclude that $<\restriction B\setminus(A\mrm{cl}_<(CD)) $ satisfies also Condition \ref{def_HF_order}(\hyperref[HF2]{H2}) and thus it is a $\mrm{HF}$-order of $B$ over $A\mrm{cl}_<(CD)$, completing our proof.
\end{proof}

 We next consider \ref{main_th}(\hyperref[completeness_axiom]{C1})-(\hyperref[the_K_homogeneous_lemma]{C2}). As in the previous cases we verify \ref{main_th}(\hyperref[completeness_axiom]{C1}) by showing  \ref{technical_assumptions}(\hyperref[trivial_condition]{D1}) and \ref{technical_assumptions}(\hyperref[minimality_condition]{D2}). We add a condition to the statement of \ref{technical_assumptions}(\hyperref[trivial_condition]{D1}) to ease our inductive argument. 

\begin{proposition}[{\ref{technical_assumptions}(\hyperref[trivial_condition]{D1})}]\label{Affine:trivial_condition}
	If $M \models T^+$, $A \subseteq M$ is finite, $<$ is a $\mrm{HF}$-order of $M$ and $A \oleq Ac \in \mathcal{K}$ is a trivial extension, then for all $n < \omega$ there is $ c'_n \in M$ s.t.:
	\begin{enumerate}[(a)]
		\item $Ac \cong_A Ac'$;
		\item $Ac' \oleq A\mrm{cl}_<^n(c')$;
		\item $A < \mrm{cl}_<^{n+1}(c')$.
	\end{enumerate}
\end{proposition}
\begin{proof} By induction on $n < \omega$ we prove that for every $A \oleq M$ and $A \oleq Ac \in \mathcal{K}$ we can find $c'_n$ as in (a)-(c). If $n = 0$, then it is easy to see that we can find find $c'_0$ as wanted. So suppose the inductive hypothesis and let $c$ be such that $A \oleq Ac \in \mathcal{K}$ is a trivial extension, we want to define $c'_{n+1}$ so that (a)-(c) hold. We assume without loss of generality that $c$ is a line, since the case for points is analogous.
	
	\smallskip	
	\noindent By inductive hypothesis we can a point $d^0_n$ which satisfies (a)-(c) with respect to $A$. Additionally, we can apply the induction hypothesis again to the set $A\mrm{cl}_<^n(d^0_n)$ and obtain a second point $d^1_n$ satisfying (a)-(c)  with respect to the set $A\mrm{cl}_<^{n}(d^0_n)$, whence in particular $A\mrm{cl}_<^{n}(d^0_n) < \mrm{cl}_<^{n+1}(d^1_n)$ and $A\mrm{cl}_<^{n}(d^0_n)d^1_n \oleq A\mrm{cl}_<^{n}(d^0_n)\mrm{cl}_<^n(d^1_n)$. We let $c'_{n+1}\coloneq d^0_n\lor d^1_n$ and we claim it satisfies conditions (a)-(c) for the value $n+1$. 
	
	\smallskip	
	\noindent  Firstly, notice that since $A\mrm{cl}_<^{n}(d^0_n) < \mrm{cl}_<^{n+1}(d^1_n)$ it follows immediately that $d^0_n<d^1_n<c'_{n+1}$. Then, it follows from the fact that $<$ is a $\mrm{HF}$-order of $M$ that the only elements incident to $c'_{n+1}$ occurring before it in the ordering $<$ are $d_n^0$ and $d^1_n$. By construction it follows immediately that $Ac \cong_A Ac'_{n+1}$. Additionally, we have:
	\begin{align*}
	\mrm{cl}_<^{n+2}(c'_{n+1})&= \{ c'_{n+1}\}\cup \mrm{cl}_<^{n+1}(d^0_n)\cup\mrm{cl}_<^{n+1}(d^1_n), \\
	\mrm{cl}_<^{n+1}(c'_{n+1})&= \{ c'_{n+1}\}\cup \mrm{cl}_<^{n}(d^0_n)\cup\mrm{cl}_<^{n}(d^1_n).
	\end{align*}
	Then, it follows immediately from the induction hypothesis and the choice of $d^0_n,d^1_n$ that $A<\mrm{cl}_<^{n+2}(c'_{n+1})$. Finally, we define a $\mrm{HF}$-order  $<'$ witnessing $Ac'_{n+1}\oleq A\mrm{cl}_<^n(c'_{n+1})$. It suffices to start from $Ac'_{n+1}$ and let $c'_{n+1}<'d_n^1<'d^0_n$ (by the choice of our elements this is a $\mrm{HF}$-order). As remarked above, $ c'_{n+1} $ is incident only to $d^0_n$ and $d^1_n$ in $\mrm{cl}_<^{n+1}(c'_{n+1})$, and moreover by our choice of $d^1_n$ no element in $\mrm{cl}_<^{n}(d^1_n)$ is incident to elements in $ \mrm{cl}_<^{n}(d^0_n) $. Thus, we can use the inductive hypothesis that $Ad^0_n \oleq A\mrm{cl}_<^n(d^0_n)$ and $A\mrm{cl}_<^{n}(d^0_n)d^1_n \oleq \mrm{cl}_<^{n}(d^0_n)\mrm{cl}_<^n(d^1_n)$ to continue the construction of a $\mrm{HF}$-order and obtain that $Ac'_{n+1}\oleq A\mrm{cl}_<^{n+1}(c'_{n+1})$. This completes our proof.
\end{proof}	

\begin{proposition}[{\ref{technical_assumptions}(\hyperref[minimality_condition]{D2})}]\label{Affine:minimality_condition}
	If $M \models T^+$ is $\aleph_1$-saturated, $A \oleq M$ is countable, and for every trivial extension $A\oleq Ab$ there is an element $b' \in M$ such that $Ab \cong_A Ab'\oleq M$, then for every minimal extension $A\oleq Ac$ there is an element $c' \in M$ such that $Ac \cong_A Ac'\oleq M$.
\end{proposition}
\begin{proof}
	The proof is analogous to the case of Steiner systems (Proposition \ref{Steiner:minimality_condition}).
\end{proof}

We next verify Assumption \ref{main_th}(\hyperref[the_K_homogeneous_lemma]{C2}), which we do as in the previous cases by showing \ref{technical_assumptions}(\hyperref[extension]{D3}) and  simultaneously proving \ref{assumptions:no_prime}(\hyperref[F=C]{C5}). We recall the free completion process for affine planes from \cite[p.~754]{funk2}, and we point out that this is essentially equivalent to the one given by Definition \ref{general_free_amalgam}. Recall that, after \ref{parallelsim_classes}, given a partial affine plane $A$ we denote by $P(A)$ any set containing exactly one representative for every parallelism class.

\begin{definition}[Free completion]	
	Let $A$ be a partial affine plane. We let $A_0=A$ and for every $n<\omega$ we define $A_{3n+1}, A_{3n+2}$ and $A_{3n+3}$ as follows.	
	\begin{enumerate}[(a)]
		\item First, for every pair of distinct points $p_0,p_1\in A_{3n}$ not incident to a common line in $A_{3n}$, we add a new line $\ell$ incident only to $p_0,p_1$ and not parallel to any other line from $A_{3n}$. We denote the resulting partial plane by $A_{3n+1}$.		
		\item Secondly, for every pair of lines $\ell_0,\ell_1\in A_{3n+1} $ which are not incident to any common point in $A_{3n+1}$ and which are not parallel, we add a new point $p$ incident only to $\ell_0$ and $\ell_1$. We denote the resulting partial plane by $A_{3n+2}$.
		\item Thirdly, for every line $\ell\in P(A_{3n+2})$ and every point $p\in A_{3n+2}$ such that $A_{3n+2}$ contains no line parallel to $\ell$ and incident to $p$, we add a new line satisfying these two conditions. We denote the resulting partial plane by $A_{3n+3}$.
	\end{enumerate}	
	The structure $F(A):=\bigcup_{n < \omega}A_n$ is called the \emph{free completion} of $A$ and we say that $F(A)$ is freely generated over $A$. We say that $A$ is \emph{non-degenerate} if $F(A)$ contains infinitely many elements of each sort, and \emph{degenerate} otherwise. 
\end{definition}

\begin{remark}\label{affine:non-degenerate}
	We notice that, as in the projective case, it is enough that $A$ contains four non-collinear points in order for it to be non-degenerate. Let $\mathbf{m}_\mathcal{K}=5$, then  in particular any partial affine plane $A$ which contains at least $\mathbf{m}_\mathcal{K}$ many elements of both sorts is non-degenerate. Thus the following proposition establishes \ref{main_th}(\hyperref[the_K_homogeneous_lemma]{C2}). We refer the reader to  \cite[p.~29]{schleiermacher_strambach_1} for a precise characterisation of non-degenerate partial affine planes.
\end{remark}

\begin{proposition}[{\ref{technical_assumptions}(\hyperref[extension]{D3})}, {\ref{assumptions:no_prime}(\hyperref[F=C]{C5})}]\label{free_theory_affine}
	Suppose $A\models T_\forall$, and $A$ is non-degenerate, then $F(A)\models T^+$, $A\oleq F(A)$, $|F(A)|=|A|+\aleph_0$ and:
	\begin{enumerate}[(a)]
			\item $F(A)$ is unique up to $A$-isomorphisms;
			\item $F(A)$ is $\oleq$-prime over $A$.
		\end{enumerate}
		In particular, it follows that if $A\cong B\onleq M\models T_n^+$ and $M$ is $\aleph_1$-saturated, then $F(A)\cong F(B)\preccurlyeq M$.
\end{proposition}
\begin{proof}
	The proof proceeds exactly as in the case of Steiner systems.
\end{proof}

	We have thus verified all the conditions from \ref{context}, \ref{main_th}(\hyperref[completeness_axiom]{C1})-(\hyperref[the_K_homogeneous_lemma]{C2}), \ref{ass:hf_closure}(\hyperref[the_hf_axiom]{C3}) and \ref{assumptions:no_prime}(\hyperref[F=C]{C5}). For what concerns the assumptions from \ref{assumption:no-superstability_2}(\hyperref[CP]{C4}) and \ref{assumptions:no_prime}(\hyperref[hopf]{C6})-(\hyperref[delta-rank]{C7}) we simply notice the following. The configuration from \cite[Cons.~9.1]{paolini&hyttinen} (or, after Remarks~\ref{remark:construction_principle}-\ref{ngons:prime_super}  the one from \cite[\S5.4]{HPQ}) does work (with minor modifications) also in the present setting of affine planes to establish \ref{assumption:no-superstability}(\hyperref[technical:D4]{D4}), and thus we do not provide the full details of the construction. Similarly, Assumption  \ref{assumptions:no_prime}(\hyperref[hopf]{C6}) can be proved by reasoning as in the case of Steiner systems, and Assumption \ref{assumptions:no_prime}(\hyperref[delta-rank]{C7}) can be verified by using the definition of $\delta(A)$ from \cite[Def.~11]{funk2}.

\subsection{Projective M\"obius, Laguerre and Minkowski planes}\label{last_section}

We conclude this article  by providing a last application of our framework to the setting of Benz planes. Differently from the previous examples, Benz planes are not a single instance of incidence geometry but rather a family of theories closely related to each other: M\"obius planes, Laguerre planes and Minkowski planes. These were originally introduced by Walter Benz in \cite{benz} and they axiomatise the structure induced by a 3-dimensional projective space on one of its quadratic sets (cf.~\cite[\S~5]{delandtsheer} and \cite[p.~21]{pasini}). As in the previous cases, we will study Benz planes from the synthetic point of view, essentially as a multigraph with points and blocks. From our current perspective, the main interest of this family of theories is that, additionally to the relation of incidence (and parallelism) they also involve a notion of tangency between blocks, which is a local equivalence relation parametrised by points. Crucially, it is important to keep in mind that there are in the literature different versions of Benz planes, and it is essential in our context that we consider \emph{projective} Benz planes (we elaborate this further in Remark \ref{projective_remark}). Additionally, we stress that for  brevity and simplicity we restrict our treatment only to the case of M\"obius planes. The key difference with Laguerre and Minkowski planes is simply that they add respectively one and two parallelism relations between points, and it is then straightforward to adapt our arguments to these settings. We refer the reader to \cite{benz} and \cite{funk2} for more details on Laguerre and Minkowski planes. As in the previous cases, our results in this section establish the following theorem.

\begin{theorem}\label{corollary:benz}
	Let $(\mathcal{K},\oleq)$ be the class of finite, open, partial  M\"obius planes (resp.~of Laguerre and Minkowski planes), and let $T^+$ be defined as in \ref{the_theory}. Then $T^+$ is complete, stable, and with $\ind=\ind^{\otimes}$. Moreover, $T^+$ is not superstable, it does not have a prime model, it is not model complete and it does not eliminate quantifiers.
\end{theorem}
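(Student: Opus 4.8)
\textbf{Proof plan for Theorem \ref{corollary:benz}.} The statement is, once again, an instance of the general machinery from Section \ref{sec_general}: it suffices to exhibit a class $(\mathcal{K},\oleq)$ of finite, open, partial M\"obius planes (and analogously for Laguerre and Minkowski planes) satisfying Context \ref{context}, Assumptions \ref{main_th}, \ref{assumptions_forking}, \ref{assumption:no-superstability} and \ref{assumptions:no_prime}, and then invoke Theorem \ref{prop_completeness}, Theorem \ref{stability}, Theorem \ref{characterisation_forking}, Theorem \ref{failure:superstability} and Theorem \ref{model_completeness_QE} verbatim. Thus the plan mirrors the structure of Sections \ref{sec_steiner}--\ref{section:affine_planes}. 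First I would fix the language: two sorts (points and \emph{circles}/blocks), an incidence relation $p\,\vert\,c$, and — crucially, since we work with the \emph{projective} version — a tangency relation, which is a local equivalence relation on circles parametrised by a point (two circles are tangent at $p$ if they meet only at $p$; in the projective completion this is made total so that it is genuinely an equivalence relation in the sense of \ref{local_equivalence_relations}), placing tangency into $L_{\mrm{p}}$. I would then define the notion of \emph{hyperfree} element exactly as in \cite[p.~753]{funk2}: a point incident with at most $2$ circles, or a circle incident with at most $2$ points that is not tangent to any other circle, or a circle incident with exactly $3$ points and tangent to no other circle — and define $A\oleq B$ to mean every nonempty $C\subseteq B\setminus A$ contains an element hyperfree in $AC$. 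As in the affine case, the first genuine check is that hyperfreeness is inherited by subsets, which I would verify by the same case distinction on the type of the hyperfree element.

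Next I would run through the conditions of \ref{context}. As in all previous concrete cases, \ref{context}(\ref{condition:A})--(\ref{condition:F}), (\ref{condition:I}) (with $X_\mathcal{K}=\{1\}$), (\ref{condition:J}) (with $n_\mathcal{K}$ equal to the maximal number of incidences allowed for a hyperfree block, i.e.\ $3$), (\ref{condition:K}) and (\ref{condition:L}) are immediate from the definitions, and \ref{context}(\ref{condition:C}) follows by the two-case argument used for Steiner systems. The two substantive points are amalgamation \ref{context}(\ref{condition:G}) and the algebraicity of closed extensions \ref{context}(\ref{condition:H}); I would prove these in parallel with Assumption \ref{assumptions_forking}(\hyperref[condition:amalgam]{C4}), exactly as in Propositions \ref{Affine:conditionG}--\ref{Affine:conditionH}, first spelling out in a remark (analogue of \ref{Affine:algebraic_extension}) which one-element extensions are algebraic — a point on two circles, a circle on three points tangent to nothing, a circle on two points tangent to nothing, or a circle tangent to a given circle at a given point — and proving the corresponding algebraic-strong-extension lemma (analogue of \ref{Affine:algebraic_lemma}, which simultaneously gives \ref{assumptions_forking}(\hyperref[general_algebraic_lemma]{C5})). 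The only new flavour relative to the affine case is that tangency, being parametrised by a point rather than being a global equivalence relation, must be tracked through the Gaifman closure $\mrm{gcl}$ of \ref{def:interior_closure}; but this is precisely why Funk and Strambach's Gaifman-closure formalism was set up, and the bookkeeping is the same as for parallelism in affine planes.

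Then I would verify Assumptions \ref{main_th}: the completeness condition (\hyperref[completeness_axiom]{C1}) via Lemma \ref{the_K_saturated_lemma}, i.e.\ by checking \ref{the_K_saturated_lemma}(\hyperref[trivial_condition]{D1}) and (\hyperref[minimality_condition]{D2}). Condition (\hyperref[trivial_condition]{D1}) I would prove by an induction on $n$ constructing, for a trivial one-element extension, a realisation $c'_{n+1}$ obtained by intersecting/joining two suitably deep configurations (two points determine the circle through them together with a third ``new'' point, as in the degree-of-freedom count for M\"obius planes), in close analogy with Proposition \ref{Affine:trivial_condition}; here the auxiliary clause $A<\mrm{cl}^{n+1}_<(c')$ again streamlines the induction, and one must additionally arrange that the new circle is tangent to nothing that has already appeared, which the $\mrm{HF}$-order makes automatic. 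Condition (\hyperref[minimality_condition]{D2}) then follows from the algebraic-extension lemma exactly as in \ref{Steiner:minimality_condition}. For stability I would verify Assumption \ref{main_th}(\hyperref[the_auto_axiom]{C2}) by recalling the free completion process for projective M\"obius planes from \cite[p.~754ff.]{funk2} and checking, as in Proposition \ref{free_theory_affine}, that it coincides with Definition \ref{free_algebraic_completion} and that $F(A)\models T^+$ whenever $A$ is non-degenerate (this simultaneously gives \ref{assumptions:no_prime}(\hyperref[hopf]{C8}) and, together with \ref{key_properties:free_completion}, \ref{assumptions:no_prime}(\hyperref[F=C]{C7})); ``non-degenerate'' here means $F(A)$ is infinite, and I would note that any $A$ containing at least $m_\mathcal{K}$ elements of each sort — where $m_\mathcal{K}$ is a small explicit constant — is non-degenerate, exactly as \cite[p.~757]{funk2} records. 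Finally, Assumption \ref{assumption:no-superstability} needs a concrete configuration $C\in\mathcal{K}$ with a clean-arc-like ``chain'' ($c_0c_1\oleq C$ with no relation between $c_0,c_1$, $c_n\noleq C$ of the same sort as $c_0$, all $k$-iterates in $\mathcal{K}$): I would build one by stringing together copies of the basic non-algebraic-then-algebraic gadget, as was done for Steiner systems and $k$-nets, and for \ref{assumptions:no_prime}(\hyperref[delta-rank]{C9}) I would take the predimension $\delta$ already written down in \cite[Def.~11]{funk2} and check its four properties by the same routine counting argument as in \ref{definition:delta}.

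\textbf{Main obstacle.} I expect the genuinely delicate point to be the correct handling of tangency as a \emph{local} (point-parametrised) equivalence relation inside the amalgamation and closed-extension arguments, and making sure the \emph{projective} completion (as opposed to affine Benz planes) is the right ambient notion — in the affine versions tangency classes are not total, the relevant sentence is not universal, and the machinery of \ref{local_equivalence_relations}, \ref{def:interior_closure} and \ref{general_free_amalgam} (which needs $L_{\mrm{p}}$-symbols to be genuine local equivalence relations) does not apply cleanly. Once the projective formulation is pinned down so that tangency lies in $L_{\mrm{p}}$ and the Gaifman closure behaves as for affine planes, every other step is a direct transcription of the arguments already carried out in Sections \ref{sec_steiner}--\ref{section:affine_planes}, and I would omit the details of \ref{assumption:no-superstability} and \ref{assumptions:no_prime} for the same reasons as in those sections.
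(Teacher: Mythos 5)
Your overall strategy is exactly the paper's: Section \ref{last_section} proves the theorem by verifying Context \ref{context} and Assumptions \ref{main_th}, \ref{assumptions_forking}, \ref{assumption:no-superstability} and \ref{assumptions:no_prime} for the class of finite, open, partial \emph{projective} M\"obius planes (with tangency treated as a point-parametrised local equivalence relation in $L_{\mrm{p}}$), and then quotes the general theorems; your identification of projectivity as the delicate point is precisely the content of Remark \ref{projective_remark}, and your assignments $X_\mathcal{K}=\{1\}$, $n_\mathcal{K}=3$, the use of \cite[Def.~11]{funk2} for the predimension, and the reduction of (\hyperref[completeness_axiom]{C1}) to (\hyperref[trivial_condition]{D1})--(\hyperref[minimality_condition]{D2}) all match the paper.

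There is, however, one concrete slip that would break the argument if carried through literally. Your definition of hyperfree block (``at most two points and tangent to nothing, or exactly three points and tangent to nothing'') is not the one the paper extracts from Funk--Strambach: by Definitions \ref{moebius:valency} and \ref{Moebius:hyperfree}, a block is hyperfree when the number of incident points \emph{plus} the number of its tangency classes is at most $3$, so in particular a block incident with two points and tangent to one other block is hyperfree. That case is exactly the ``touching block'' whose existence and uniqueness is M\"obius axiom (3): under the paper's definition it is a strong \emph{algebraic} one-element extension (Remark \ref{Moebius:algebraic_extension}(c)), hence it is added by the free algebraic completion and is guaranteed in models of $T^+$ by clause \ref{the_theory}(\ref{the_theory:2}); under your definition it would be a non-strong extension, $F(A)$ would never add touching blocks, $F(A)$ would fail to be a projective M\"obius plane, and \ref{assumptions:no_prime}(\hyperref[F=C]{C7}) would fail. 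Relatedly, your list of algebraic extensions is off: a block through two points tangent to nothing is \emph{not} algebraic (one can have arbitrarily many hyperfree circles through two fixed points in an open configuration), and a block merely tangent to a given circle at a given point is not algebraic either unless it also carries a second incidence. Once the valency-based definition is adopted, the rest of your plan transcribes correctly onto the paper's proofs.
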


We recall the axiomatisation of projective M\"obius planes from \cite[p.~748]{funk2},  and we introduce the associated class $(\mathcal{K},\oleq)$ of finite, open, partial M\"obius planes.

\begin{definition}
	Let $L$ be the two-sorted language where the symbols $p_0,p_1,\dots$ denote \emph{points} and the symbols $b_0,b_1,\dots$ denote \emph{blocks}, the symbol $p\,\vert \,b$ means that the point $p$ is incident to the block $b$, and the ternary tangency symbols $\rho(b_0,b_1,p)$ means that the two blocks $b_0,b_1$ are tangent in $p$. The theory $T$ of \emph{projective M\"obius planes} consists of the following axioms:
	\begin{enumerate}[(1)]
		\item  every three different points are incident to a unique block;
		\item  any two blocks tangent to the same point are incident to it, i.e.,
		\[\forall p\forall b_0 \forall b_1 \; (\rho(b_0,b_1,p_0) \to (p_0\, \vert\,b_0\land p_0\, \vert\,b_1));    \]
		\item given two points $p_1,p_2$ and a block $b_1$ such that $p_1$ is incident to $b_1$ but $p_2$ is not incident to $b_1$, there is a unique block $b_2$ incident to $p_2$ and s.t. $\rho(b_1,b_2,p_1)$ (we say this is the unique block incident to $p_2$ that ``touches'' $b_1$ in $p_1$);
		\item  for every point $c$, the relation induced by $\rho(x,y,c)$ is an equivalence relation;
		\item every two different blocks are either tangent in one common point, or they are incident to two common points.
	\end{enumerate}
	The universal fragment of the theory of projective M\"obius planes is axiomatised exactly by the axioms (2) and (4), by the axiom stating that two different blocks are incident with at most two common points, and by the uniqueness requirements from axioms (1) and (3) stating that the block incident to three given points and the touching block are unique. We refer to models of this universal theory as \emph{partial projective M\"obius planes}.
\end{definition}

Before defining what open M\"obius planes are, we introduce the following notion of valency in M\"obius planes. This should not be confused with the predimension function $\delta$, although, as one can then see from \cite[Def.~11]{funk2}, they are closely related.

\begin{definition}\label{moebius:valency} 
	Let $B\in \mathcal{K}$. Given a block $b\in B$, we let $T(b/B)$ be a maximal set of blocks $b'\in B$ such that $B\models \rho(b,b',p)$ for some $p\in B$ and every $b',b''\in T(b/B)$ are not tangent to each other.	Given an element $c\in B$ we write $I(c/B)$ for the set of elements incident to $c$ in $B$.  We then define the \emph{valency} of an element $c\in B$ as the number $\vartheta(c/B)$ given by 
		\[  \vartheta(c/B) \coloneqq |I(c/B)| + |T(c/B)|. \]
	Notice that, by Definition \ref{def:interior_closure}, we immediately have that $\vartheta(c/B)=| \mrm{gcl}_B(c) |$.
\end{definition}

\begin{definition}\label{Moebius:hyperfree}
	For finite partial projective M\"obius planes  $A\subseteq B$, we say that an element $a\in A$ is \emph{hyperfree} in $B$ if it satisfies one of the following conditions:
		\begin{enumerate}[(i)]
			\item  $a$ is a point with valency $\vartheta(a/B)\leq 2$;
			\item  $a$ is a block with valency $\vartheta(a/B)\leq 3$.
		\end{enumerate}
	We then write $A \oleq B$ if every non-empty $C \subseteq B \setminus A$ contains an element which is hyperfree in $AC$. We then adopt the notions of \emph{open}, \emph{closed}, \emph{confined}, etc. specialising Definition~\ref{def_extensions}.  We let $\mathcal{K}$ be the set of open partial projective M\"obius planes and we let $T_\forall$ be the universal theory of $\mathcal{K}$. The previous definitions extend to infinite models of $T_\forall$ as in \ref{infinite_strong extensions}. The notion of hyperfree elements in partial projective M\"obius planes is from \cite[p.~753]{funk2}.
\end{definition}

\begin{remark}\label{moebius:tangency_remark} \label{Moebius:algebraic_extension}
	We notice that, in particular, an extension $A\oleq Ac$ is algebraic if $c$ is a point with valency 2, or if $c$ is a block with valency $3$, i.e., if
	\begin{enumerate}[(i)]
		\item $c$ is a point incident with two blocks from $A$;	
		\item $c$ is a block incident to three points from $A$;	
		\item $c$ is a block incident to two points from $A$ and tangent to a block from $A$.
	\end{enumerate}
\end{remark}

\noindent  We proceed as in the previous cases  and we first consider the following key lemma concerning strong  algebraic extensions. 

\begin{lemma}\label{Moebius:algebraic_lemma}
	Let $A\oleq C\models T_\forall $ and let $c\in C$ be such that $A\oleq Ac$ is an algebraic strong extension. Then $Ac\oleq C$. 
\end{lemma}
\begin{proof}
	The proof follows analogously to the case of affine planes.
\end{proof}

We now consider the conditions from \ref{context}.  As before, conditions (\ref{condition:A}), (\ref{condition:B}), (\ref{condition:D}), (\ref{condition:E}), (\ref{condition:F}),  (\ref{condition:K}) and (\ref{condition:L}) follow immediately from our definition of the relation $\oleq$ for finite, open, partial M\"obius planes. Condition (\ref{condition:C}) follows exactly as in the case of Steiner systems. Condition (\ref{condition:I})  follows by letting $\mathbf{X}_\mathcal{K}=\{1\}$ and Condition (\ref{condition:J}) follows by letting $\mathbf{n}_\mathcal{K}=3$. Then, the algebraic amalgamation property from Condition (\ref{condition:G}) can be proved similarly to the case of affine planes. To showcase how to work with the relation of tangency, and also make it explicit that it behaves essentially as the parallelism relation of projective planes, we include the proof of Condition (\ref{condition:H}).

\begin{proposition}[\ref{context}(\ref{condition:H})]\label{Moebius:conditionH}
	If $A\subseteq B \in \mathcal{K}$ and $A \noleq B$ is confined, then there is $n < \omega$ such that every $C \in \mathcal{K}$ contains at most $n$ many disjoint copies of $B$ over $A$.
\end{proposition}
\begin{proof}
	Suppose $A \noleq B$ is confined, then every element in $B\setminus A$ is not hyperfree. In particular, for every point $p\in B$ we have $\vartheta(p/B)\geq 3$ and for every block in $b\in B$ we have that $\vartheta(b/B)\geq 4$.  For every element $c\in B\setminus A$ we write $I(c/A)$ for the set of elements in $A$ that are incident with $c$. Suppose  $C\in \mathcal{K}$ contains at least four copies $(B_i)_{i<4}$ of $B$ over $A$, then we claim that the set 
	\[ D= \bigcup_{c\in B_0\setminus A}I(c/A) \cup  \bigcup_{i<4}(B_i\setminus A)  \]
	witnesses that $C$ is not open. We consider the following elements.
	
	\noindent \underline{Case 1}. $d\in I(c/A)$ for some $c\in B\setminus A$.
	\newline Then, from the fact that $c$ is incident to $d$ and that $B_0,B_1,B_2,B_3$ are all isomorphic over $A$, it follows that $\vartheta(d/D)\geq 4$, showing that $d$ is not hyperfree in $D$.
		
	\noindent\underline{Case 2}. $c\in B_i\setminus A$ is a point, for $i\leq 3$.
	\newline By assumption $\vartheta(c/B_i)\geq 3$. Then, by Definition \ref{moebius:valency} and the definition of $D$, it follows immediately that $\vartheta(c/D)\geq 3$, whence $c$ is not hyperfree in $D$.
	
	\noindent \underline{Case 3}. $c\in B_i\setminus A$ is a block, for $i\leq 3$.
	\newline We let w.l.o.g. $i=0$, then by assumption $\vartheta(c/B_0)\geq 4$. We claim that $\vartheta(c/D)\geq 4$. Firstly, notice that every point in $B_0$ incident to $c$ belongs to $I(c/B_0)\cup(B_0\setminus A)\subseteq D$, thus $|I(c/B_0)|=|I(c/D)|$. Moreover, consider a block $d\in B_0$ such that $B_0\models \rho(c,d,p)$ for some point $p\in B_0$. If $p\in B_0\setminus A$ then we have that $d\in D$, since clearly $p$ is incident with $d$. If $p\in A$, then there is a block $c'\in B_1\setminus A$ such that $B_1\models \rho(c,c',p)$ and $B_1\models \rho(c',d,p)$. Therefore, $D$ contains a representative of all tangency classes parametrised by points in $B_0$. By Definition \ref{moebius:valency} it follows that  $\vartheta(c/D)\geq 4$ and so $c$ is not hyperfree in $D$. This completes our proof.
\end{proof}

We next consider Assumptions \ref{main_th}(\hyperref[completeness_axiom]{C1})-(\hyperref[the_K_homogeneous_lemma]{C2}) and Assumption \ref{ass:hf_closure}(\hyperref[the_hf_axiom]{C3}). We first verify \ref{ass:hf_closure}(\hyperref[the_hf_axiom]{C3}) and define the $\mrm{HF}$-closure operator $\mrm{cl}_<\coloneqq \bigcup_{n<\omega}\mrm{cl}^n_<$ in the setting of projective M\"obius planes.

\begin{definition}
	Let $A\subseteq B \models T_\forall^n$ and suppose $<$ is a $\mrm{HF}$-order of $B$ over $A$. For every block $b\in B$ and point $p\in B$ such that $p\,\vert \,b$ we let $\mrm{H}_{<}(b,p)$ be a family of blocks $(b_i)_{i\in I}$ all tangent to $b$  in $p$ and s.t.:
	\begin{enumerate}[(1)]
		\item $\mrm{H}_<(b,p)=\mrm{H}_<(b',p)$ for all blocks $b'\in B$ tangent to $b$ in $p$;
		\item  for all non-empty $J\subseteq I$ the set $\{b_j : j\in J\}$ has a maximal element with respect to the ordering $<$;
		\item for all $b'$ with $B\models \rho(b,b',p)$ there is some $b_i\in \mrm{H}_{<}(b,p)$ with $b_i=b'$ or $b_i<b'$.
	\end{enumerate}
\end{definition}

\begin{definition}\label{benz:closure_operator} Let $A\subseteq B \models T_\forall^n$ and suppose $<$ is a $\mrm{HF}$-order of $B$ over $A$. For all $C\subseteq B\setminus A$ we define $\mrm{cl}_<\coloneqq \bigcup_{n<\omega }\mrm{cl}_<^{n}(C)$ by letting $\mrm{cl}_<^0(C) \coloneqq C$ and, for all $n<\omega$, we define:
	\begin{enumerate}[(1)]
		\item $\mrm{gcl}^{\mrm{i}}_{A\mrm{cl}_<^{n}(C)^\downarrow}(\mrm{cl}_<^{n}(C)) \subseteq \mrm{cl}_<^{n+1}(C)$;
		\item if $b,p\in \mrm{cl}_<^{n}(C)$ and $\mrm{H}_<(b,p)$ has a least element $b'$, then $b'\in \mrm{cl}_<^{n+1}(C)$;
		\item if $b,p\in \mrm{cl}_<^{n}(C)$, there is $b_0<b$ with $\rho(b,b_0,p)$, and $b_1$ is the greatest element in $\mrm{H}_<(b,p)$ which does not belong to  $\mrm{cl}_<^{n}(C)$, then $b_1\in \mrm{cl}_<^{n+1}(C)$;		
		\item no other element belongs to $\mrm{cl}_<^{n+1}(C)$.
	\end{enumerate}
\end{definition}

\begin{proposition}[{\ref{ass:hf_closure}(\hyperref[the_hf_axiom]{C3})}]\label{benz:HF_orders}
	Every model $A\subseteq B\models T^n_\forall$ with an associated $\mrm{HF}$-order $<$ of $B$ over $A$ has a $\mrm{HF}$-closure operator $\mrm{cl}_<\coloneqq\bigcup_{n<\omega}\mrm{cl}^n_<$.
\end{proposition}
\begin{proof}
	This can be proved by reasoning similarly to how we proceed in Proposition~\ref{benz:HF_orders} for the case of affine planes.
\end{proof}

Then, to verify Assumption \ref{main_th}(\hyperref[completeness_axiom]{C1}) we reason as in the previous cases and we prove the technical conditions from \ref{technical_assumptions}(\hyperref[trivial_condition]{D1}) and \ref{technical_assumptions}(\hyperref[minimality_condition]{D2}). We add an extra condition to the statement of the next proposition to ease our inductive argument. 

\begin{proposition}[{\ref{technical_assumptions}(\hyperref[trivial_condition]{D1})}]\label{Moebius:trivial_condition}
	If $M \models T^+$, $A \subseteq M$ is finite, $<$ is a $\mrm{HF}$-order of $M$ and $A \oleq Ac \in \mathcal{K}$ is a trivial extension, then for $n < \omega$ there is $ c'_n \in M$ s.t.:
	\begin{enumerate}[(a)]
		\item $Ac \cong_A Ac'$;
		\item $Ac' \oleq A\mrm{cl}_<^n(c')$;
		\item $A < \mrm{cl}_<^{n+1}(c')$. 
	\end{enumerate}
\end{proposition}
\begin{proof} By induction on $n < \omega$ we prove that for every $A \oleq M$ and trivial extension $A \oleq Ac$ we can find $c'_n$ as in (a)-(c). If $n = 0$, then it is easy to see that we can find find $c'_0$ as wanted. So suppose the inductive hypothesis holds and let $c$ be such that $A \oleq Ac$ is a trivial extension, we find $c'_{n+1}$ so that (a)-(c) hold.
	
	\smallskip 
	\noindent \underline{Case 1}. $c$ is a block.
	\newline By inductive hypothesis we can find three points $p^0_n,p^1_n,p^2_n$ such that (a)-(c) hold respectively for $A$, $A\mrm{cl}^n_<(p^0_{n})$ and $A\mrm{cl}^n_<(p^0_{n})\mrm{cl}^n_<(p^1_{n})$. Let $c'_{n+1}$ be the unique block incident to $p^0_n,p^1_n,p^2_n$. By the assumption (c) and the choice of the elements it follows in particular that $c'_{n+1}>p^2_n>p^1_n>p^0_n$. We have:
	\begin{align*}
	\mrm{cl}^{n+2}_<(c'_{n+1})&=\{ c'_{n+1} \}\cup \mrm{cl}^{n+1}_<(p^0_{n})  \cup \mrm{cl}^{n+1}_<(p^1_{n}) \cup \mrm{cl}^{n+1}_<(p^2_{n}), \\
	\mrm{cl}^{n+1}_<(c'_{n+1})&=\{ c'_{n+1} \}\cup \mrm{cl}^{n}_<(p^0_{n})  \cup \mrm{cl}^{n}_<(p^1_{n}) \cup \mrm{cl}^{n}_<(p^2_{n}).
	\end{align*}
	Hence we immediately have that $A < \mrm{cl}_<^{n+2}(c'_{n+1})$ and also $Ac \cong_A Ac'_{n+1}$. We define a $\mrm{HF}$-order $<'$ to show that $Ac'_{n+1} \oleq A\mrm{cl}_<^{n+1}(c'_{n+1})$. We start by letting $c'_{n+1}<'p^2_n<'p^1_n<'p^0_n$. By the condition (b) and the choice of $p^0_n,p^1_n,p^2_n$ we have that $Ap^0_n \oleq A\mrm{cl}_<^n(p^0_n)$, $A\mrm{cl}_<^n(p^0_n)p^1_n\oleq A\mrm{cl}_<^n(p^0_n)\mrm{cl}_<^n(p^1_n) $ and $A\mrm{cl}^n_<(p^0_{n})\mrm{cl}^n_<(p^1_{n})p^2_n\oleq A\mrm{cl}^n_<(p^0_{n})\mrm{cl}^n_<(p^1_{n})\mrm{cl}_<^n(p^2_n)$. By condition (c) it follows that there is no incidence between elements of $\mrm{cl}^{n}_<(p^0_{n})$, $\mrm{cl}^{n}_<(p^1_{n})$ and $ \mrm{cl}^{n}_<(p^2_{n})$. Also, since $<$ is a $\mrm{HF}$-order, we have that  	$p^0_n,p^1_n,p^2_n$ are the only elements in $\mrm{cl}^{n+1}_<(c'_{n+1})$ to which $c'_{n+1}$ is incident.  It follows that we can extend the ordering $<'$ using the  induction hypothesis and show that $Ac'_{n+1} \oleq A\mrm{cl}_<^n(c'_{n+1})$.
	
	\smallskip	
	\noindent \underline{Case 2}. $c$ is a point.
	\newline  Similarly to Case~1, we use the inductive hypothesis to find two blocks $b^0_n$ and $b^1_n$ such that they satisfy conditions (a)-(c) with respect to $A$ and $A\mrm{cl}^n_<(b^0_{n})$. Then we let $c'_{n+1}$ be the unique block incident to $b^0_n$ and $b^1_n$. Then by reasoning as in Case~1 one can verify that conditions (a)-(c)  are satisfied.
\end{proof}		

\begin{proposition}[{\ref{technical_assumptions}(\hyperref[minimality_condition]{D2})}]\label{Moebius:minimality_condition}
	If $M \models T^+$ is $\aleph_1$-saturated, $A \oleq M$ is countable, and for every trivial extension $A\oleq Ab$ there is $b' \in M$ s.t. $Ab \cong_A Ab'\oleq M$, then for every minimal extension $A\oleq Ac$ there is $c' \in M$ s.t. $Ac \cong_A Ac'\oleq M$.
\end{proposition}
\begin{proof}
	This is proved by Lemma \ref{Moebius:algebraic_lemma} reasoning as in the previous cases.
\end{proof}

We next verify Assumption \ref{main_th}(\hyperref[the_K_homogeneous_lemma]{C2}), which we prove by establishing \ref{technical_assumptions}(\hyperref[extension]{D3}) and  simultaneously showing also \ref{assumptions:no_prime}(\hyperref[F=C]{C5}). We recall the free completion process for projective M\"obius planes from \cite[pp.~755-756]{funk2} and we leave to the reader to verify that this is equivalent to the construction given by \ref{free_algebraic_completion}.

\begin{definition}	\label{Moebius:free completion}	
	Let $A$ be a partial M\"obius planes. We let $A_0=A$ and for every $n<\omega$ we define $A_{4n+1}, A_{4n+2},A_{4n+3},A_{4n+4}$ as follows.	
	\begin{enumerate}[(a)]
		\item First, for every distinct points $p_0,p_1,p_2\in A_{4n}$ which are not incident to any common block, we add a new block $b$ incident only to $p_0,p_1,p_2$ and not tangent any other block from $A_{4n}$. We denote the resulting partial plane by $A_{4n+1}$.		
		\item Secondly, for every block $b\in A_{4n+1}$, every point $p$ incident to $b$ and every point $q$ not incident to it, we add a new block $b'$ incident only only to $p,q$ in $A_{4n+1}$, and tangent to $b$ exactly in $p$. We denote the resulting partial plane by $A_{4n+2}$.
		\item Thirdly, for every two non-tangent blocks $b_0,b_1\in A_{4n+2}$ incident to only one common point $p_0\in A_{4n+2}$, we add a new point $p_1$ incident to both $b_0,b_1$. We denote the resulting partial plane by $A_{4n+3}$.
		\item Finally, for every two non-tangent blocks $b_0,b_1\in A_{4n+2}$ incident to no common point in $A_{4n+3}$, we add two new points $p_0,p_1$ incident to both $b_0,b_1$. We denote the resulting partial plane by $A_{4n+4}$.
	\end{enumerate}	
	The structure $F(A):=\bigcup_{n < \omega}A_n$ is called the free completion of $A$ and we say that $F(A)$ is freely generated over $A$. We say that $A$ is \emph{non-degenerate} if $F(A)$ contains infinitely many elements of all sorts, and \emph{degenerate} otherwise. 
\end{definition}

\begin{remark}
	A partial projective M\"obius plane is non-degenerate if it contains at least one block, two points incident to it, and one point not incident to it  \cite[p.~756]{funk2}. It follows in particular that any set $A$ containing at least $\mathbf{m}_\mathcal{K}=4$ elements of every sort is non-degenerate. Thus the following proposition also establishes \ref{main_th}(\hyperref[the_K_homogeneous_lemma]{C2}).
\end{remark}

\begin{proposition}[{\ref{technical_assumptions}(\hyperref[extension]{D3})}, {\ref{assumptions:no_prime}(\hyperref[F=C]{C5})}]\label{free_theory_Moebius}
Suppose $A\models T_\forall$, and $A$ is non-degenerate, then $F(A)\models T^+$, $A\oleq F(A)$, $|F(A)|=|A|+\aleph_0$ and:
\begin{enumerate}[(a)]
		\item $F(A)$ is unique up to $A$-isomorphisms;
		\item $F(A)$ is $\oleq$-prime over $A$.
	\end{enumerate}
\end{proposition}
\begin{proof}
The proof proceeds exactly as in the case of Steiner systems.
\end{proof}

\begin{remark}\label{projective_remark}
	The definition of free completion process in M\"obius plane makes it clear why the projectivity requirement is essential in the context of Benz planes. In fact, if we drop Clause (d) from Definition \ref{Moebius:free completion}, what we obtain is the free completion process for arbitrary M\"obius planes (cf.~\cite[p.~755-756]{funk2}). Now, let $A$ be a partial M\"obius plane containing two blocks $b_0,b_1$ and three points $p_0,p_1,p_2$ such that $p_0,p_1$ are incident only to $b_0$, and $p_2$ is not incident with neither $b_0$ nor $b_1$. Let $F_{\mrm{aff}}(A)$ be the free \emph{affine} completion of $A$, i.e., the model obtained by closing $A$ under all the clauses from Definition \ref{Moebius:free completion} with the exception of Clause (d). Then  $F_{\mrm{aff}}(A)$ satisfies all the axioms of the theory of infinite, open M\"obius with the exception of the projectivity requirement, since the blocks $b_0$ and $b_1$ have no common incidence point in $F_{\mrm{aff}}(A)$. It follows that $F_{\mrm{aff}}(A)$ and $F(A)$ are two models of the theory of infinite open M\"obius planes without the projectivity axiom which are \emph{not} elementarily equivalent. This shows that the projectivity requirement  is essential to obtain a complete theory $T^+$. Notice that this is reflected in the proof of Assumption \ref{technical_assumptions}(\hyperref[trivial_condition]{D1}), since in Case~2 we crucially used that $M$ is projective.
\end{remark}

We thus have verified all the conditions from \ref{context}, and the assumptions from \ref{main_th}(\hyperref[completeness_axiom]{C1})-(\hyperref[the_K_homogeneous_lemma]{C2}), \ref{ass:hf_closure}(\hyperref[the_hf_axiom]{C3}), \ref{assumption:no-superstability_2}(\hyperref[CP]{C4}) and \ref{assumptions:no_prime}(\hyperref[F=C]{C5}).  It remains to consider \ref{assumption:no-superstability_2}(\hyperref[CP]{C4}) and \ref{assumptions:no_prime}(\hyperref[hopf]{C6})-(\hyperref[delta-rank]{C7}). We notice that \ref{assumptions:no_prime}(\hyperref[hopf]{C6}) is verified by essentially the same argument as in the case of Steiner systems, and  \ref{assumptions:no_prime}(\hyperref[delta-rank]{C7}) can be verified by using the definition of $\delta(A)$ from \cite[Def.~11]{funk2}. The next proposition exhibits a configuration which satisfies the technical assumption \ref{assumption:no-superstability}(\hyperref[technical:D4]{D4}), and so by Proposition~\ref{prop:technical_superstability} it also establishes  \ref{assumption:no-superstability_2}(\hyperref[CP]{C4}). Together with the previous results, the following proposition finally completes the proof of Theorem \ref{corollary:benz}.

\begin{proposition}[{\ref{assumption:no-superstability}(\hyperref[technical:D4]{D4})}]
	There is a configuration $C\in \mathcal{K}$ satisfying the conditions from \ref{assumption:no-superstability}.
\end{proposition}
\begin{proof} 
We define a configuration $C$ as follow. Let $C=\{c_i : 0\leq i \leq 16 \}$, where $c_0,c_1,c_2,c_3,c_8,c_{9},c_{10},c_{12},c_{14},c_{16}$ are points, $c_4, c_5, c_{6}, c_7,c_{11},c_{13},c_{15}$ are blocks, and  the incidence relations between elements of $C$ is specified by the following table. 

\begin{table}[H]
	\begin{tabular}{|c||c|c|c|c|c|c|c|} \hline
		& $c_4$ & $c_5$ & $c_6$ & $c_7$ & $c_{11}$ & $c_{13}$ & $c_{15}$ \\ \hline \hline
		$c_0$  & $\times$ &   & $\times$ & $\times$ &    &    &    \\ \hline
		$c_1$  & $\times$ & $\times$ &   & $\times$ &    &    &    \\ \hline
		$c_{2}$  & $\times$ & $\times$ & $\times$ &   &    &    &    \\ \hline
		$c_3$  &   & $\times$ & $\times$ & $\times$ &    &    &    \\ \hline
		$c_8$  & $\times$ &   &   &   & $\times$ &    & $\times$  \\ \hline
		$c_9$  &   & $\times$ &   &   & $\times$  &    & $\times$  \\ \hline
		$c_{10}$ &   &   & $\times$ &   & $\times$  & $\times$  &    \\ \hline
		$c_{12}$ &   &   &   & $\times$ & $\times$  & $\times$  &    \\ \hline
		$c_{14}$ & $\times$ &   &   &   &    & $\times$  & $\times$  \\ \hline
		$c_{16}$ &   &   &   &   &    & $\times$  & $\times$ \\ \hline
	\end{tabular}
\end{table}

\noindent Crucially, recall that we are working in the projective setting, thus we can always find \emph{two} incidence points between two non-tangent blocks. Then, it follows that $C$ is a partial M\"obius plane and the order $c_0<c_1<\dots<c_{16}$ is a $\mrm{HF}$-order, which means that $C\in \mathcal{K}$. As the other requirements from  \ref{assumption:no-superstability} are easily verified, it follows that $C$ witnesses that \ref{assumption:no-superstability} holds.
\end{proof}

\end{document}